\documentclass{amsart}
\usepackage{amsmath,amsfonts,amsthm,amssymb}
\usepackage[alphabetic]{amsrefs}
\usepackage{hyperref}

\newtheorem{lem}{Lemma}[section]
\newtheorem{tw}[lem]{Theorem}

\newtheorem{cor}[lem]{Corollary}
\newtheorem{prop}[lem]{Proposition}

\newtheorem*{twA}{Theorem A}
\newtheorem*{twB}{Theorem B}
\newtheorem*{twC}{Theorem C}
\newtheorem*{twD}{Theorem D}
\newtheorem*{twE}{Theorem E}

\newcommand {\dow}{\begin{proof}}
\newcommand {\kon}{\end{proof}}

\newcommand {\mr}{\mathrm}
\newcommand {\lk}{\left\{}
\newcommand {\rk}{\right\}}
\newcommand {\lan}{\langle}
\newcommand {\ran}{\rangle}
\newcommand {\rems}{\noindent {\bf Remarks. }}
\newcommand {\rem}{\noindent {\bf Remark. }}

\theoremstyle{definition}
\newtheorem{de}[lem]{Definition}

% ---  ---  ---  ---  ---  ---  ---  ---  --- my new defn's ---  ---  ---  ---  ---  ---  ---  ---  --- -%

\newcommand {\wt}{\widetilde}

\newcommand {\ov}{\overline}
\newcommand {\la}{\langle}
\newcommand {\ra}{\rangle}

% ---  ---  ---  ---  ---  ---  --- --end of my new defn's ---  ---  ---  ---  ---  ---  ---  --- -%

\usepackage{pictexwd,dcpic,epsf}
\usepackage{graphicx}

\begin{document}

\title{A combinatorial non-positive curvature I: weak systolicity}
\author{Damian Osajda}
\date{\today}
\thanks{Partially supported by MNiSW
grant N N201 541738.}
\address{Universit\"at Wien, Fakult\"at f\"ur Mathematik\\
Oskar Morgensternplatz 1, 1090 Wien, Austria\\
and}
\address{(on leave from) Instytut Matematyczny,
Uniwersytet Wroc\l awski\\
pl. Grunwaldzki 2/4,
50--384 Wroc{\l}aw, Poland}
\email{dosaj@math.uni.wroc.pl}

\begin{abstract}
We introduce the notion of weakly systolic complexes and groups, and initiate regular studies of them. Those are simplicial complexes with nonpositive-curvature-like properties and groups acting on them geometrically. We characterize weakly systolic complexes as simply connected simplicial complexes satisfying some local combinatorial conditions. We provide several classes of examples  ---  in particular systolic groups and CAT(-1) cubical groups are weakly systolic.
We present applications of the theory, concerning Gromov hyperbolic groups, Coxeter groups and systolic groups.
\end{abstract}

\subjclass[2010]{20F67 (Primary), 20F65 (Secondary)}
\keywords{(weakly) systolic groups, non-positive curvature, Gromov hyperbolic groups}
\maketitle

\section{Introduction}
\label{intro}
\subsection{Overview}
In recent decades exploration of various notions of nonpositive curvature (NPC) became one of the leading subjects in geometry and in related fields. In particular, in geometric group theory a significant role is played by studies of Gromov hyperbolic and CAT(0) spaces and groups; cf.\ e.g.\ \cites{BrHa,ECH,Gr-hg}. Objects arising in this way have nice algorithmic properties and some applications (beyond the pure mathematics) for them have been found. Moreover, the framework allows to treat at the same time many classical groups, e.g.\ fundamental groups of nonpositively curved manifolds.

The combinatorial approach to nonpositive curvature, i.e.\ studying groups acting on NPC (nonpositively curved) complexes, yields a rich source of NPC groups with interesting and often unexpected properties. A good example in dimension two is the classical theory of small cancellation groups that emerged initially in the combinatorial group theory. As for higher dimensional objects, at the moment the two leading subjects of interests are CAT(0) cubical complexes and groups (cf.\ \cites{BrHa,Gr-hg}), and systolic complexes and group (cf.\ \cites{JS1,Ha}). The combinatorial nature allows in those cases to get quite profound insight into their structure and thus to obtain strong results. For example, there is a big effort to ``cubulate" many classical groups, in order to get e.g.\ various separability results.
On the other hand many constructions of NPC complexes have been developed recently, giving us useful tools for producing examples of NPC groups with interesting properties; cf.\ \cite{DO,Ha,JS1,O-chcg,OS}.\smallskip

In this paper we introduce \emph{weakly systolic complexes} and initiate regular studies of them and of groups acting on them geometrically, i.e.\ \emph{weakly systolic groups}.
Weakly systolic complexes are simplicial complexes possessing many nonpositive-curvature-like properties. They can be characterized as simply connected simplicial complexes satisfying some local combinatorial conditions.
This forms a new class of \emph{``combinatorially nonpositively curved''} complexes and groups.
Systolic complexes and groups (cf.\ \cite{JS1}) are weakly systolic. Groups acting geometrically on simply connected cubical complexes with flag-no-square links (i.e.\ on simply connected locally $5$--large, or CAT(-1), cubical complexes) are weakly systolic. We describe also other classes of examples. We provide some applications of ideas and results from this paper, to the theory of systolic groups, Coxeter groups, and Gromov hyperbolic groups. Other results concerning the theory of weakly systolic complexes and groups can be found in e.g.\ \cites{BCC+,OCh, O-chcg, OS}.

\subsection{Main results}
In Section~\ref{prosdn} we define \emph{weakly systolic} complexes as flag simplicial complexes satisfying a global combinatorial property $SD_n$  ---  of \emph{simple descent on balls}.
Immediately from that definition the following important property of weakly systolic complexes is derived there.

\begin{twA}[cf.\ Proposition~\ref{sdncontr} in the text]
  Finitely dimensional weakly systolic complexes are contractible.
\end{twA}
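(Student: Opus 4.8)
Throughout, let $X$ be a finite-dimensional weakly systolic complex, i.e.\ a flag simplicial complex with property $SD_n$. The plan is to exhaust $X$ by combinatorial balls about a fixed vertex, prove each ball contractible by induction on its radius, and then pass to the colimit. Fix $v \in X$ and set $B_i := B_i(v,X)$ and $S_i := S_i(v,X)$, so that $X = \bigcup_{i \ge 0} B_i$ is an increasing union of finite-dimensional subcomplexes with $B_0 = \{v\}$. The claim to establish inductively is that $B_{i+1}$ deformation retracts onto $B_i$. Granting this for all $i$, each $B_i$ is contractible (induction starting from the point $B_0$), and then $X$, being a CW-complex which is the increasing union of the $B_i$ along cofibrations, has $\pi_k(X) = \varinjlim_i \pi_k(B_i) = 0$ for all $k$; Whitehead's theorem then yields that $X$ is contractible.

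The inductive step is where $SD_n$ enters. The complex $B_{i+1}$ is obtained from $B_i$ by attaching the vertices of $S_{i+1}$ together with the simplices they span. To each $w \in S_{i+1}$ the property $SD_n$ associates a \emph{simple descent}: the trace of $w$ on the previous ball --- roughly $\overline{\mathrm{St}}(w) \cap B_i$, or the corresponding residue --- is a nonempty simplex, in particular contractible. One then either (i) builds an explicit deformation retraction $r \colon B_{i+1} \to B_i$ sending each $w \in S_{i+1}$ into its descent and extending affinely over simplices, or (ii) removes the open stars of the vertices of $S_{i+1}$ one at a time. For (i) one must check that the descents of the vertices of a common simplex of $B_{i+1}$ assemble compatibly, so that $r$ is well defined and continuous; for (ii) one must check that, in a suitable order, the link in the current complex of the vertex being removed stays contractible --- which amounts, via a nested induction over simplices $\sigma \subseteq S_{i+1}$, to the descent of $\sigma$ onto $B_i$ being a simplex. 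Either way, the combinatorial input is exactly the ``simple descent on balls'' packaged in $SD_n$.

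The main obstacle is precisely this last verification. The existence of a simple descent for a \emph{single} vertex of $S_{i+1}$ is immediate from the definition; the work lies in controlling how the descents --- equivalently, the links in $B_{i+1}$ --- of the various vertices and simplices of $S_{i+1}$ interact, so that the retraction of approach (i) is consistent on overlaps, or the collapses of approach (ii) can be performed in an order keeping every intermediate link contractible. I expect this to require a secondary induction --- on the dimension of simplices contained in $S_{i+1}$, using the flag condition together with the local conditions encoded by $SD_n$ --- and this, rather than the outer induction on the radius or the concluding Whitehead argument, is the technical heart. Finite-dimensionality of $X$ guarantees that each ball $B_i$ is a finite-dimensional simplicial complex, so that the collapsing and retraction arguments take place in the standard combinatorial setting.
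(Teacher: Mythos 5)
Your plan is essentially the paper's own proof: for each radius $i$ one retracts $B_{i+1}(v)$ onto $B_i(v)$ through the filtration $Q^{d-1}\supseteq\cdots\supseteq Q^{-1}=B_i(v)$ indexed by the dimension of simplices $\sigma\subseteq S_{i+1}(v)$, collapsing each join $\langle\sigma\cup\pi_v(\sigma)\rangle$ onto $\langle(\partial\sigma)\cup\pi_v(\sigma)\rangle$ (Lemma~\ref{defretr}, Proposition~\ref{sdncontr}), and then composes these retractions over all $i$ --- your ``secondary induction on the dimension of simplices in $S_{i+1}$'' is exactly this filtration, and finite dimensionality makes the composition finite. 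The one step you fold into the definition is that weak systolicity is defined by the weaker conditions $\widetilde{SD}_n(v)$ (descent for vertices and edges only), so the simplex-wise descent $\pi_v(\sigma)$ being a nonempty simplex, on which your retraction relies, must first be derived; this is Lemma~\ref{sd simpl}, whose proof is short but a genuine argument.
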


On the way we provide an equivalent definition of the $SD_n$ property that is more convenient for applications.

In Section~\ref{locglo} we define a local combinatorial condition $SD_2^{\ast}$ and prove the following important result.

\begin{twB}[Theorem~\ref{logl} in the text]
  Let a simplicial complex satisfy the condition $SD_2^{\ast}$. Then its universal
cover is weakly systolic.
\end{twB}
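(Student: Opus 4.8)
The plan is to reduce Theorem~B to a Cartan--Hadamard-type argument. First I would note that $SD_2^{\ast}$ is a \emph{local} condition: it is a requirement on the links of vertices, together with flagness, which is itself local. Since a covering map $\widetilde{X}\to X$ restricts to an isomorphism on the (closed) star of every vertex, the universal cover $\widetilde{X}$ again satisfies $SD_2^{\ast}$; in addition it is simply connected. Hence it suffices to prove the intrinsic statement: \emph{a simply connected simplicial complex $Y$ satisfying $SD_2^{\ast}$ is weakly systolic}, i.e.\ satisfies $SD_n$. From now on I work with such a $Y$, and it is convenient to verify $SD_n$ in the reformulated ``more convenient'' form introduced earlier in the paper, since that form isolates exactly the local data $SD_2^{\ast}$ supplies.

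Next I would fix a base vertex $v\in Y$ and prove, by induction on $n\ge 1$, that the balls $B_n=B_n(v)$ exhibit the required simple descent onto $B_{n-1}$ --- each vertex $w\in S_{n+1}=S_{n+1}(v)$ has a nonempty ``downward residue'' $\mathrm{lk}(w)\cap B_n$ whose structure is dictated by the $SD_n$ axiom (single-valued projection, completeness of the relevant downward simplices) --- while \emph{simultaneously} establishing a convexity property of $B_n$ in $Y$ (e.g.\ $3$-convexity), which is the technical engine that lets the induction close. The base cases $n=1,2$ are immediate: they essentially \emph{are} $SD_2^{\ast}$, read off from the links of $v$ and of the vertices of $S_1$. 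For the inductive step I would pass to a vertex $u\in S_n$ and work inside $\mathrm{lk}(u,Y)$, where $SD_2^{\ast}$ controls empty squares and the interaction between the downward part $B_{n-1}\cap\mathrm{lk}(u)$ and the horizontal/upward part; assembling these local pieces along a cycle in $S_{n+1}$ yields a closed path which, by simple connectedness, bounds a simplicial disk that I then push into $B_{n+1}$.

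The heart of the matter --- and the main obstacle --- is precisely this passage from a condition around single vertices to a statement about balls of unbounded radius. A failure of simple descent at level $n$ produces an essential loop that cannot be contracted inside $B_{n+1}$, equivalently a ``hole'' or surviving empty square in the ball. To rule this out I would take a van Kampen--type filling disk of minimal combinatorial area (number of triangles), arrange its boundary so that it records the hypothetical bad configuration, and then analyze an interior (or near-boundary) vertex of the diagram using $SD_2^{\ast}$ together with the inductive hypothesis to perform an area-reducing ``diamond''/shelling move, contradicting minimality. This is the combinatorial incarnation of ``locally nonpositively curved $+$ simply connected $\Rightarrow$ globally nonpositively curved''; the delicate bookkeeping is checking that each reduction move produces a genuine filling disk in $Y$ of strictly smaller area, which is where the precise form of $SD_2^{\ast}$ (flag, no bad squares, the prescribed link structure) gets used in full.

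Once $SD_n$ is established for all $n$, flagness of $\widetilde{X}$ (inherited from $X$) completes the verification that $\widetilde{X}$ is weakly systolic, and in the finite-dimensional case Theorem~A then yields contractibility of $\widetilde{X}$ as an immediate corollary --- a sanity check that the argument is consistent.
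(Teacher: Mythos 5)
Your proposal does not follow the paper's route, and as it stands it has a genuine gap: the entire content of the theorem is delegated to a minimal-diagram argument that is described but never carried out. After the (legitimate) reduction to ``a simply connected $SD_2^{\ast}$ complex satisfies $SD_n(v)$ for all $n$'', everything rests on the claim that a failure of the edge or vertex condition at radius $n+1$ yields a loop whose minimal-area van Kampen filling admits an area-reducing ``diamond/shelling'' move forced by $SD_2^{\ast}$; you acknowledge this as ``delicate bookkeeping'' but do not exhibit the moves, and that bookkeeping \emph{is} the theorem. Worse, there is a concrete reason to doubt that this engine runs under the hypotheses of Theorem~\ref{logl}: the present $SD_2^{\ast}$ (Definition~\ref{sd2*}) allows full $4$--cycles (only $4$--wheels are forbidden) and does not require wheels with pendant triangles to be full, and the paper points out explicitly (Remark after Definition~\ref{sd2*} and after the proof of Theorem~\ref{logl}) that the minimal-diagram proof of the Cartan--Hadamard theorem in \cite{OCh} needs exactly those stronger assumptions. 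With empty squares permitted, minimal fillings are not controlled the way they are in the systolic case (compare the Gauss--Bonnet argument in Proposition~\ref{syst>sdn}), so the reduction moves you invoke are not available without substantial extra work; your sketch would, at best, reprove the weaker statement of \cite{OCh}, not Theorem~\ref{logl}.

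The paper avoids diagrams altogether: it builds the universal cover directly as an increasing union of balls $\wt B_i$, adjoining new vertices as equivalence classes $[\wt w,z]$ of pairs in the set $Z$, where transitivity of the gluing relation is precisely where the no-$4$--wheel condition and condition (b) on $5$--wheels with a pendant triangle are used, and the three conditions $(P_i)$, $(Q_i)$, $(R_i)$ (ball structure, the $SD_{i-1}(v)$ property, local injectivity/isomorphism of $f_i$) are propagated inductively; weak systolicity of $\wt X$ and the covering property then come out of the construction simultaneously, and contractibility (Proposition~\ref{sdncontr}) identifies $\wt X$ with the universal cover. Two smaller points about your reduction step: the assertion that $SD_2^{\ast}$ lifts to covers is true but not immediate from ``covering maps are isomorphisms on stars'', since a $5$--wheel with a pendant triangle is not contained in a single closed star --- one must check that its image is again such a configuration and lift the dominating vertex (this is Lemma~\ref{cover} in the paper); and note that in the paper's logic this lifting is not needed for Theorem~\ref{logl} at all, because the cover is constructed rather than assumed. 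To turn your outline into a proof you would need either to strengthen the hypotheses to those of \cite{OCh} or to supply a genuinely new analysis of minimal diagrams in the presence of empty squares.
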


Together with Theorem A, the latter result form a version of the Cartan-Hadamard theorem, and is the fundament of the whole theory.

In Section~\ref{convex} we derive some convexity properties of balls in weakly systolic complexes. Those results provide technical tools used in other places of the paper and beyond. Then, in Section~\ref{ex}, we provide important examples of weakly systolic complexes, and of \emph{weakly systolic groups}, i.e.\  groups acting geometrically on weakly systolic complexes. By the way, we develop a useful tool  ---  the \emph{thickening} of cell complexes; cf.\ Subsection~\ref{cc-1}.

\begin{twC}[cf.\ Section~\ref{ex}]
  The following classes of groups are weakly systolic:
  \begin{itemize}
    \item Systolic groups;
    \item CAT(-1) cubical groups;
    \item Uniform lattices of right-angled hyperbolic buildings.
  \end{itemize}
\end{twC}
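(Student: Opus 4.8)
The plan is to prove all three items by a single mechanism. In each case I will exhibit a simply connected simplicial complex $X$ on which the group $G$ acts geometrically and which satisfies the local condition $SD_2^{\ast}$; then, since a simply connected complex is its own universal cover, Theorem~\ref{logl} gives that $X$ is weakly systolic, whence $G$ is weakly systolic by definition. When the complex on which $G$ naturally acts is not simplicial --- a cube complex, or the Davis realization of a building --- I will first replace it by its thickening from Subsection~\ref{cc-1}: this is a simplicial complex, it is $G$--equivariantly homotopy equivalent to the original complex (in particular simply connected when the original is), and it carries an induced geometric $G$--action. So everything reduces to two link computations: that systolic simplicial complexes satisfy $SD_2^{\ast}$, and that the thickening of a cube complex with flag-no-square links satisfies $SD_2^{\ast}$.

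For systolic groups: a systolic complex is by definition a simply connected flag complex all of whose vertex links are $5$--large (flag, with no empty $4$-- or $5$--cycles). I will check directly that this local structure already yields $SD_2^{\ast}$; concretely, the ``fullness'' clause and the ``no short empty cycle'' clause of $SD_2^{\ast}$, read off in the balls $B_1(v)$ and $B_2(v)$, are consequences of --- and in fact weaker than --- local $6$--largeness. Feeding this into Theorem~\ref{logl} shows that systolic complexes, hence systolic groups, are weakly systolic. (Alternatively one may verify the global property $SD_n$ for a systolic complex directly, using the standard combinatorial descent estimates for balls in systolic complexes.)

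For CAT(-1) cubical groups: let $Y$ be a CAT(0) cube complex with flag-no-square links, equivalently a simply connected locally $5$--large cube complex, on which $G$ acts geometrically. Form the thickening $\mathrm{Th}(Y)$, with the same vertex set as $Y$ and with a finite set of vertices spanning a simplex exactly when those vertices lie in a common cube of $Y$. The crux is to describe $\mathrm{lk}(v,\mathrm{Th}(Y))$ and to show it obeys the local requirements of $SD_2^{\ast}$; this is precisely where flag-no-squareness of the links of $Y$ enters, since it is exactly what forbids the empty $4$-- and $5$--cycles that would otherwise appear. As $Y$ is simply connected and $\mathrm{Th}(Y)$ is homotopy equivalent to $Y$, the complex $\mathrm{Th}(Y)$ is simply connected, so by the above together with Theorem~\ref{logl} it is weakly systolic, and the induced geometric $G$--action exhibits $G$ as weakly systolic. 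For a uniform lattice $G$ of a right-angled hyperbolic building $\Delta$, the Davis realization of $\Delta$ is a CAT(0) cube complex whose links are flag-no-square precisely because $\Delta$ is hyperbolic --- equivalently, the underlying right-angled Coxeter group is word-hyperbolic, equivalently its nerve contains no empty square; $G$ acts geometrically on this cube complex, so this item follows from the previous one.

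The step I expect to be the main obstacle is neither the reduction through Theorem~\ref{logl} nor the formal properties of the thickening, which are routine, but the two link computations themselves: showing that ``$5$--large links'' (systolic case) and ``flag-no-square links'' (cubical case) both unwind to exactly the local package $SD_2^{\ast}$. A secondary point requiring care in the building case is that the Davis cube complex is locally infinite, so one must make sure that flag-no-squareness --- a finitary condition on each individual link --- is the right hypothesis and that cocompactness of the lattice suffices for the action on the thickening to remain geometric. I do not anticipate genuine difficulties beyond a patient unwinding of definitions once Subsection~\ref{cc-1} is in place.
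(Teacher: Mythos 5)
Your first two items essentially follow the paper's route: for systolic groups the paper's primary proof is a direct disk--diagram verification of the conditions (E) and (V) of $\wt{SD}_n(v)$ (Proposition~\ref{syst>sdn}), but it also records exactly your alternative, namely Lemma~\ref{systsd2*} (locally $6$--large $\Rightarrow SD_2^{\ast}$) fed into Theorem~\ref{logl}; and for CAT(-1) cubical groups the paper likewise passes to the thickening and applies Theorem~\ref{logl} (Corollary~\ref{cat-1cc}). The genuine gap is your third item. You reduce it to the cubical case by asserting that the Davis realization of a right-angled hyperbolic building is a CAT(0) cube complex with flag-no-square links. For a thick building (some panel containing more than two chambers) this is false as stated: the cells of the standard realization are products of cones over finite sets of chambers, not cubes, and the paper explicitly remarks at the end of Subsection~\ref{prebuild} that although $\Sigma(W,S)$ admits a locally $5$--large (CAT(-1)) cubulation when $W$ is hyperbolic, no such cubulation is known for a general right-angled hyperbolic building. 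So the building case does not follow from the cubical one. The paper instead builds a thickening directly from the building (Definition~\ref{thbuild}): vertices are the cone points $b_K$ of chambers, and a set of them spans a simplex iff the corresponding chambers meet a common star. The key inputs are Lemma~\ref{notrst} (no-$\Delta$ of stars, proved by pushing stars down via a folding map $\pi_K\colon \Phi\to\Sigma$ onto cubes of the CAT(-1) cubulation of $\Sigma$ and invoking the Helly property there), Lemma~\ref{l5lb} (local $5$--largeness of $Th(\Phi)$ from $5$--largeness of the nerve, via Moussong's theorem), and Lemma~\ref{flthb} (flagness and contractibility via the nerve theorem); Proposition~\ref{wsbuild} then runs the same $SD_2^{\ast}$ plus Theorem~\ref{logl} argument as in the cubical case.

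A secondary point: in the cubical case your plan understates what the verification of $SD_2^{\ast}$ for $Th(Y)$ actually requires. Condition (a) (no $4$--wheels) does come from local $5$--largeness of the thickening, but even that is not a one-link unwinding --- the paper needs the nontrivial combinatorial Lemma~\ref{combinat} to prove Lemma~\ref{k-l.thick}, that thickening preserves local $k$--largeness. More importantly, condition (b) (the $5$--wheel-with-pendant-triangle condition) is not a consequence of flag-no-square links alone: in Proposition~\ref{5lnotr} it is extracted from the no-$\Delta$ (Helly) property of the cells, which for a cubical complex is supplied by Lemma~\ref{flag thick} and rests on simple connectedness (the clique-Helly property of median graphs) --- a global input, not a link computation. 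Your worry about local finiteness in the building case is not the real obstacle; the missing cubulation and the Helly-type argument are.
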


It is worth to notice that some CAT(-1) cubical groups provide examples of weakly systolic groups that are not systolic (cf.\ Remarks after Corollary~\ref{cat-1cc}), despite the fact that most of weakly systolic techniques are quite ``systolic".
We find also other examples of weakly systolic groups, e.g.\ some subgroups of such groups.

In Section~\ref{neg} we study ``negative curvature" in the context of weak systolicity. In Theorem~\ref{noflat} we show that for weakly systolic groups Gromov hyperbolicity is equivalent to the non-existence of isometrically embedded flats. Then (Subsection~\ref{nega}) we define a local combinatorial condition $SD_2^{\ast}(7)$, an analogue of a negative curvature. The following result justifies the analogy.

\begin{twD}[Theorem~\ref{hyp} in the text]
  The universal cover of an $SD_2^{\ast}(7)$ complex is Gromov hyperbolic.
\end{twD}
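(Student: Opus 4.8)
The plan is to combine Theorem~B with a linear isoperimetric inequality. First I would observe that $SD_2^{\ast}(7)$, being a local condition, is inherited by any covering; hence the universal cover $\widetilde X$ of an $SD_2^{\ast}(7)$ complex is simply connected, satisfies $SD_2^{\ast}(7)$, and in particular satisfies $SD_2^{\ast}$. By Theorem~B it is therefore weakly systolic, so the convexity of balls from Section~\ref{convex} and the simple-descent structure are at our disposal in $\widetilde X$. Since Gromov hyperbolicity of $\widetilde X$ is detected on its $1$-skeleton equipped with the path metric (to which $\widetilde X$ with its natural metric is quasi-isometric), I may restrict attention to that graph.

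The core of the argument is to show that $\widetilde X$ satisfies a linear isoperimetric inequality, adapting the systolic argument of Januszkiewicz--\'Swi\k{a}tkowski \cite{JS1}. Given a closed combinatorial path $\gamma$ in $\widetilde X^{(1)}$, take a \emph{reduced} simplicial disc diagram $f\colon D\to\widetilde X$ spanning $\gamma$ of minimal area. The crucial geometric input is that reducedness together with $SD_2^{\ast}(7)$ forces every interior vertex $v$ of $D$ to be surrounded by at least $7$ triangles: the link of $v$ in $D$ maps to a cycle in the appropriate link/residue in $\widetilde X$ that contains no short essential full sub-cycle, so it has length $\ge 7$. Consequently each such interior vertex carries strictly negative combinatorial curvature, at most $2\pi-\tfrac{7}{3}\pi=-\tfrac{\pi}{3}<0$, while boundary vertices carry curvature bounded above by a universal constant. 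The combinatorial Gauss--Bonnet identity $\sum_v\kappa(v)=2\pi\chi(D)=2\pi$ then bounds the number of interior vertices, and with a little more care the area, linearly in $|\gamma|$.

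Finally, a simply connected simplicial complex (equivalently, its $1$-skeleton with the path metric) satisfying a linear isoperimetric inequality is Gromov hyperbolic (see e.g.\ \cite{BrHa}); applying this to $\widetilde X$ completes the proof. One could alternatively route the argument through Theorem~\ref{noflat}-type reasoning, showing that $SD_2^{\ast}(7)$ precludes isometrically embedded flats, but the isoperimetric approach is more direct here, since $\widetilde X$ need carry no group action.

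The main difficulty lies in the second step. In the \emph{weakly} systolic setting reduced disc diagrams are not locally injective in the way they are over systolic complexes, and the extra configurations intrinsic to weak systolicity --- squares, and the ``fat'' simplices arising from the thickening construction of Subsection~\ref{cc-1} --- must be incorporated into the curvature count. One must check that such cells can be removed by area-non-increasing surgery, or else that they contribute non-positive combinatorial curvature and do not spoil the strict negativity coming from interior triangle-vertices. Equally, pinning down the precise translation of $SD_2^{\ast}(7)$ into the statement ``interior vertices of reduced diagrams see at least $7$ triangles'' --- which is exactly where the hypothesis is used --- requires unwinding the descent structure and the local combinatorics, and is the technical heart of the proof.
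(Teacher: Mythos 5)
Your reduction to the universal cover is fine (covers inherit $SD_2^{\ast}(7)$, cf.\ Lemma~\ref{cover}, and Theorem~\ref{logl} makes it weakly systolic), but the pivotal claim of your diagrammatic step --- that reducedness plus $SD_2^{\ast}(7)$ forces every interior vertex of a minimal disc diagram to lie in at least $7$ triangles --- is false, not merely technically demanding. Condition (a) of Definition~\ref{sd2*k} excludes only $4$--wheels, so an $SD_2^{\ast}(7)$ complex is in general only locally $5$--large: $5$--wheels and $6$--wheels are allowed, and condition (b) does not forbid them, it only requires $l$--wheels with pendant triangles ($l=5,6$) to be contained in the ball of radius~$1$ around some vertex. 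Such wheels genuinely occur in the main examples: the thickening of a CAT(-1) cubical complex satisfies $SD_2^{\ast}(k)$ for all $k\geqslant 6$ (Corollary~\ref{cat-1c}), yet for the standard cubulation of the hyperbolic plane by right-angled pentagons a vertex whose cubical link is a $5$--cycle has, by Proposition~\ref{weksej}, a thickened link containing that $5$--cycle as a full cycle; so the thickening contains full $5$--wheels. The boundary of such a wheel is a full $5$--cycle, and any non-degenerate filling of it has an interior vertex of degree $5$ (a filling without interior vertices would map a diagonal of the disc to an edge of the complex, contradicting fullness). Hence no Gauss--Bonnet count based on ``$\geqslant 7$ triangles at interior vertices'' is available, and the hypothesis $SD_2^{\ast}(7)$ cannot be ``unwound'' into the degree bound you want; an isoperimetric proof would have to exploit the domination condition on wheels with pendant triangles in an essentially different way (e.g.\ by regrouping triangles into larger cells), which is precisely what your sketch leaves open.

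For comparison, the paper's proof avoids disc diagrams entirely: $SD_2^{\ast}(7)$ is used to prove a strict contraction property of the double projection $\pi_v^2$ along geodesics (Lemma~\ref{strcon}), this yields that geodesic bigons are $1$--thin (Lemma~\ref{bigons}), and hyperbolicity follows from Papasoglu's thin-bigon criterion \cite{Papa}. The Remark after Theorem~\ref{hyp} does acknowledge that a filling-diagram proof in the spirit of \cite{JS1} and \cite{OCh} should be possible, but carrying it out requires handling exactly the degree-$5$ configurations above, so as written your argument has a genuine gap at its central step.
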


In particular, we obtain a nice description of Gromov boundaries of negatively curved weakly systolic complexes (see Subsection~\ref{nega} for the details).

\begin{twE}[Theorem~\ref{bdryinv} in the text]
  The Gromov boundary of a weakly systolic $SD_2^{\ast}(7)$ complex is homeomorphic to the associated inverse limit of spheres.
\end{twE}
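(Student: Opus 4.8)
The plan is to realize both spaces as the set of ``combinatorial directions'' of $X$ seen from a fixed vertex $v_0$, and to match this with the coarse-geometric boundary using the Gromov hyperbolicity provided by Theorem~D. Since $X$ is weakly systolic it is contractible (Theorem~A), hence equal to its own universal cover, so by Theorem~D its metric realization is a proper, geodesic, $\delta$-hyperbolic space; thus $\partial_\infty X$ is a well-defined compact metrizable space, which I describe via fellow-travelling classes of geodesic rays issuing from $v_0$, or equivalently via vertex sequences converging to infinity. The inverse system of spheres is $\cdots\xrightarrow{p_{n+1}} S_{n+1}\xrightarrow{p_n} S_n\xrightarrow{p_{n-1}}\cdots\to S_0=\{v_0\}$, where $S_n=S_n(v_0)$ is the combinatorial sphere and the bonding map $p_n$ is ``projection towards $v_0$'': it sends a vertex $w$ at distance $n+1$ to (the barycentre of) the simplex $B_1(w)\cap B_n(v_0)$, which is a single simplex of $S_n$ by the convexity of balls in Section~\ref{convex}, and it is extended over higher simplices using the join/convexity properties of $SD_n$ complexes. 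One first checks that $p_n$ is a well-defined continuous map of metric realizations and that $\{|S_n|,p_n\}$ is an inverse system with compact (finite, when $X$ is locally finite, as we assume) terms, so that $S_\infty:=\varprojlim|S_n|$ is compact metrizable.

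Next I would build $\Phi\colon\partial_\infty X\to S_\infty$. Given $\xi\in\partial_\infty X$ and a vertex sequence $(u_i)$ representing it, I claim that for each fixed $n$ the projection $\pi_n(u_i)$ of $u_i$ onto the (convex) ball $B_n(v_0)$ stabilizes to a simplex $x_n=x_n(\xi)$ as $i\to\infty$ — here hyperbolicity enters, since the Gromov product condition forces the $u_i$ to leave $B_n(v_0)$ through an eventually constant direction — and the transitivity $\pi_n=p_n\circ\pi_{n+1}$ on balls (a convexity identity) gives $p_n(x_{n+1})=x_n$, so $(x_n)_n\in S_\infty$. Independence of the representing sequence follows because fellow-travelling geodesics agree up to any prescribed radius from some index on, so their crossing simplices with each $S_n$ coincide for large $i$; this uses the combinatorial input that ``coherent'' nearby points of $S_m$ (in the sense controlled by $\delta$) have equal projection after descending a uniformly bounded number of levels, which is the negative-curvature/$SD_2^{\ast}(7)$ feature of the projections coming out of Section~\ref{neg}. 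Continuity of $\Phi$ is then a translation of the two topologies: $\xi,\xi'$ are close in $\partial_\infty X$ iff their rays fellow-travel to a large radius $N$, which forces $\Phi(\xi),\Phi(\xi')$ to have the same coordinates in $|S_0|,\dots,|S_N|$.

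For the inverse $\Psi\colon S_\infty\to\partial_\infty X$, given $(x_n)_n$ pick for each $n$ a vertex $v_n$ of the simplex $x_n$; then $d(v_0,v_n)=n$, and $p_n(x_{n+1})=x_n$ forces $d(v_n,v_{n+1})\le C$ for a uniform $C$. Concatenating geodesics $[v_n,v_{n+1}]$ yields a path $\alpha\colon[0,\infty)\to|X|$ with $d\bigl(\alpha(s),\alpha(t)\bigr)\le C|s-t|+C$; since $d(v_0,v_m)=m$ we automatically get $d(v_n,v_m)\ge|m-n|$, so $\alpha$ is a $(C,C)$-quasigeodesic ray. By the Morse stability lemma in $|X|$ it lies at bounded Hausdorff distance from a genuine geodesic ray $\gamma$, and I set $\Psi\bigl((x_n)_n\bigr)=\gamma(\infty)$; different vertex choices give uniformly fellow-travelling quasigeodesics, so $\Psi$ is well defined, and continuity is again immediate. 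One then verifies $\Phi\circ\Psi=\mathrm{id}$ and $\Psi\circ\Phi=\mathrm{id}$ — reading a ray off through the spheres and reconstructing a ray from its trace returns a fellow-travelling ray — so $\Phi$ is a continuous bijection between compact Hausdorff spaces, hence a homeomorphism; a cofinality argument on the sphere systems shows independence of $v_0$.

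The main obstacle I expect is establishing that the projections $p_n$ genuinely assemble into an inverse system reflecting the coarse geometry: that $B_1(w)\cap B_n(v_0)$ is always a single simplex, that $p_n$ extends over all of $|S_{n+1}|$ continuously, that $\pi_n=p_n\circ\pi_{n+1}$, and — the real difficulty — that $SD_2^{\ast}(7)$ makes these projections ``contracting'' enough that hyperbolic divergence of geodesics translates into eventual disagreement of trace simplices (for injectivity of $\Phi$) while bounded fellow-travelling translates into identical trace sequences (for well-definedness of $\Phi$). This is a combinatorial elaboration of the convexity of Section~\ref{convex} and the estimates of Section~\ref{neg}; once in place, the topological part is routine Gromov-hyperbolic bookkeeping.
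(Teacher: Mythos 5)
Your outline reproduces the standard hyperbolic-space bookkeeping (rays versus threads, compactness, continuous bijection), but it leaves unproved exactly the combinatorial core on which the whole statement rests, and it sets the inverse system up in a way that makes that core harder, not easier, to supply. The paper's proof is a transplant of the argument of \cite[Lemma 4.1]{O-ib7scg}, and the only new input needed to make that argument run here is precisely Lemma~\ref{strcon} (for adjacent vertices $w_1\sim w_2$ of a sphere the \emph{double} projections are nested, $\pi_v^2(w_i)\subseteq\pi_v^2(w_j)$ --- this is where $SD_2^{\ast}(7)$ enters) together with Lemma~\ref{newpi} (so that $\pi_v^2$ is a genuine simplicial map $(S_{n+2}(v,X))'\to(S_n(v,X))'$ of barycentric subdivisions). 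This is why the theorem is stated for the system $\lk (S_{2n}(v,X))',\pi_v^2\rk$ and not for single-step projections. In your setup the bonding map $p_n$ sends a vertex $w\in S_{n+1}(v)$ to the barycentre of $X_w\cap B_n(v)$ and is ``extended over higher simplices using join/convexity''; but for adjacent $w_1\sim w_2$ in $S_{n+1}(v)$ the simplices $\pi_v(w_1)$ and $\pi_v(w_2)$ are in general incomparable (they merely contain the common simplex $\pi_v(\langle w_1,w_2\rangle)$), so their barycentres do not span a simplex of $(S_n(v))'$ and the extension is not the routine step you suggest --- one is forced into the subdivision formalism, and the controlled (nested) behaviour is available only for the two-step map and only under $SD_2^{\ast}(7)$.

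More seriously, every nontrivial property of your map $\Phi$ rests on claims you do not establish and which are stronger than what the combinatorics gives: that the projections of a sequence $u_i\to\xi$ down to a fixed sphere \emph{stabilize} to a simplex, that $\pi_n=p_n\circ\pi_{n+1}$ holds exactly at the simplex level, that fellow-travelling rays have \emph{identical} trace coordinates up to radius $N$, and that ``coherent nearby points have equal projection after descending a uniformly bounded number of levels''. Lemma~\ref{strcon} yields nesting of double projections, not equality, and turning nesting into the exact stabilization/agreement statements you invoke is the actual content of the proof of \cite[Lemma 4.1]{O-ib7scg}; note also that iterating the single-step projection along a ray produces inclusions that alternate in direction, so stabilization is not even monotone. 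You candidly flag this as ``the real difficulty'' and defer it to ``a combinatorial elaboration'' of Sections~\ref{convex} and~\ref{neg}, but since well-definedness, injectivity and continuity of $\Phi$ (equivalently, injectivity of your $\Psi$) all hinge on it, the proposal as written has a genuine gap at the heart of the argument; the surrounding topological reasoning (quasigeodesics from threads, compact Hausdorff bijection, cofinality in the basepoint) is fine but is the easy part.
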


We also show (Corollary~\ref{qcsbgp}) that quasiconvex subgroups of weakly systolic $SD_2^{\ast}(7)$ groups are weakly systolic $SD_2^{\ast}(7)$ groups as well.

In Section~\ref{sec-lasf} we study \emph{weakly systolic complexes with $SD_2^{\ast}$ links} and groups acting on them geometrically. These are weakly systolic complexes and groups whose asymptotic behavior resembles a lot the one of systolic counterparts. In particular we show that finitely presented subgroups of such torsion-free groups are of the same type (Theorem~\ref{fps})  ---  this is an analogue of a corresponding systolic result of D.\ Wise. Then we prove various properties of asymptotic asphericity; see  Theorem~\ref{coninfth} and Theorem~\ref{shatw}.

\subsection{Motivations and applications}
Our initial motivation was to provide a set of local combinatorial conditions on a simplicial complex that guarantee, that the universal
cover of such complex exhibits various nonpositive-curvature-like properties. Moreover, we would like a new theory to include the existing
examples of such ``combinatorial nonpositive curvature". In particular we would like that systolic groups (cf.\ \cite{JS1}) and CAT(0)
cubical groups (i.e.\ groups acting geometrically on simply connected cubical complexes with flag links; cf.\ \cite{BrHa}) act on complexes
from the new class. The question of such a ``unification" (of systolic and CAT(0) cubical theories) has been raised number of times by
various people. Let us mention here Mladen Bestvina, Fr\' ed\' eric Haglund, Tadeusz Januszkiewicz an
d Jacek \' Swi\polhk{a}tkowski, who suggested this problem to us. 

The goal of the unification is achieved only partially via the theory of weakly systolic complexes. Systolic groups and CAT(-1) cubical
groups are weakly systolic, but we conjecture (cf.\ Section \ref{othex}) that some CAT(0) cubical groups are not weakly systolic.
Nevertheless, the theory provides powerful tools for studying a large class of groups. In fact most of the techniques developed in this
paper (and in \cites{OCh, O-chcg, OS}) are quite ``systolic" in the spirit. And, surprisingly, they can be used in very (i.e.\
asymptotically) ``non-systolic" cases; cf.\ e.g.\ Section \ref{cc-1}. Several applications of the theory has been already found (see below). And the main idea of the \emph{simple
descent on balls} (i.e.\ the $SD_n$ property from Definition \ref{sdn(A)new}), being the core of the combinatorial nonpositive curvature
in our approach, is being developed by us to provide a full unification theory. Actually, we believe that other theories
in the same spirit can provide a framework to treat various classes of groups, including many classical ones. On the other hand, from the topological viewpoint, our main results --- Theorem A and Theorem B --- provide local combinatorial conditions on a simplicial complex guaranteeing its asphericity. Not many such conditions are known, the well known being CAT(0) property and systolicity; for other ones see e.g.\ \cite{BCC+}.

Similarly as systolic complexes, weakly systolic complexes are \emph{flag}, which means that they are determined by their $1$--skeleta.
Graphs being $1$--skeleta of systolic complexes are \emph{bridged graphs}. This class of graphs has been intensively studied within the metric and algorithmic graph theory for last decades; cf.\ e.g.\ \cites{AnFa,BaCh,Ch-class,Ch-bridged,Ch-CAT} and references therein.
Also $1$--skeleta of CAT(0) cubical complexes, known as \emph{median graphs}, has been explored in graph theory since a long time, before the recent rise of interest in the corresponding complexes; cf.\ e.g.\ \cites{BaCh,Ch-CAT} and references therein.
To the contrary, \emph{weakly bridged graphs} (see Subsection~\ref{wbg}) being $1$--skeleta of weakly systolic complexes, were not
investigated in metric graph theory before. They appearance seems quite natural in this context, though. And introducing them leads to
many interesting structural problems; some of them are treated in \cites{OCh,BCC+} (and some are solved there using tools from the
current paper).
\medskip

The techniques and results presented in this article found already few applications, and we are working on further ones. The proof of
Cartan-Hadamard theorem (Theorem B above) given in Section~\ref{locglo}, presents a general scheme for proving
results of this type. It can be used in other NPC cases (we use it, even following closely the notations, in \cite{BCC+}, and for some
more general complexes, in progress) and beyond (see \cite{CCO} where we use it in a case of ``positive curvature"). Moreover, such a
proof allows to show developability of complexes of groups with certain (in particular ``weakly systolic") local developments properties
(work in progress). This in turn leads to new constructions of combinatorially nonpositively curved groups --- fundamental groups of the
corresponding complexes of groups (in progress).

Another important tool introduced in this paper is thickening. The \emph{thickening} of a cell complex is a simplicial complex whose simplices correspond to cells of the former complex --- see Section~\ref{ex} for details. The use of
thickening reduces studies of a complicated complex to studying the corresponding simplicial complex, whose properties (e.g.\
homotopical properties) resemble the ones of the original object. In Section~\ref{ex} we use thickening to show that CAT(-1) cubical
groups and lattices of right-angled hyperbolic buildings are weakly systolic (Theorem C above). Those results form the core of new
constructions of Gromov hyperbolic groups provided in \cite{O-chcg,OS}.
The technique of thickening may be used in a case of quite general complexes (an instance being the thickening of buildings from Subsection~\ref{build}), and we believe that it is a very useful tool.

Furthermore, the thickening together with the combinatorial negative curvature (Theorem D) introduced in Section~\ref{neg}, allows one to e.g.\
describe nicely Gromov boundaries of some classical groups; see Theorem E and Remark after Theorem~\ref{bdryinv}. Such description
gives a good insight into the structure of the boundary as a limit of polyhedra. We believe that this can be helpful in exploring various structures on such boundaries.

Weakly systolic groups with $SD_2^{\ast}$ links introduced in Section~\ref{sec-lasf} are groups whose asymptotic behavior resembles a lot the one of systolic groups. The latter groups lead to examples of highly dimensional groups with interesting asphericity properties; see \cites{JS2,O-ciscg,O-ib7scg,Sw-propi}. Results from Section~\ref{sec-lasf} allowed us in \cite{OS} to provide
new constructions of groups with the same properties. On the other hand, using the technique of thickening and tools from Section~\ref{neg} we can provide the first examples of non-systolic highly dimensional ``asymptotically aspherical" groups (in progress).\medskip

Finally, let us note that in the current paper we provide few more elementary and/or (at the same time) more general
proofs of some already known results concerning systolic complexes. Those are e.g.: the proof of contractibility of systolic complexes provided in Subsection~\ref{syst} (see Corollary~\ref{systcontr} and Remark afterwards); the proof of Przytycki's theorem about
flats versus hyperbolicity (Theorem~\ref{noflat}).

\subsection{Acknowledgments}
I am grateful to Mladen Bestvina, Victor Chepoi, Tade\-usz Januszkiewicz and Jacek \' Swi\polhk atkowski for stimulating and helpful discussions.
I thank the most Fr\' ed\' eric Haglund. The whole theory was inspired by his ideas about combinatorial nonpositive curvature. I am grateful for his continuous encouragement and his support.

\tableofcontents

\section{Preliminaries}
\label{prel}

\subsection{Simplicial complexes}
Let $X$ be a simplicial complex. We do not usually distinguish between a simplicial complex and its geometric realization. The $i$--skeleton of $X$ is denoted by $X^{(i)}$. A subcomplex $Y$ of $X$ is \emph{full} if
every subset $A$ of vertices of $Y$ contained in a simplex of $X$, is
contained in a simplex of $Y$.
For a subcomplex $A$ of $X$, by $\langle A \rangle$ we denote the \emph{span} of $A$, i.e.\ the smallest full subcomplex of $X$ containing $A$. If $A$ is the set of vertices $v_1,v_2,\ldots$, then we write $\langle v_1,v_2,\ldots \rangle$ for $\langle A \rangle$. Thus ``$\langle A \rangle \in X$" or ``$\langle v_1,v_2,\ldots \rangle \in X$" mean that the corresponding sets span a simplex in $X$.
If vertices $v\neq w$ span an edge $\langle v,w \rangle$, we denote this edge simply by $vw$.
We write $v\sim v'$ (respectively $v\nsim v'$) if $\langle v,v' \rangle \in X$ (respectively $\langle v,v' \rangle \notin X$). Moreover, we write $v\sim v_1,v_2,\ldots$ (respectively $v\nsim v_1,v_2,\ldots$) when $v\sim v_i$ (respectively $v\nsim  v_i$) for $i=1,2,\ldots$.

A simplicial complex $X$ is \emph{flag} whenever every finite set of vertices of $X$ joined pairwise by edges in $X$, is contained in a simplex of $X$.
A \emph{link} of a simplex $\sigma$ of $X$ is a simplicial complex $X_{\sigma}=\lk \tau \; | \; \tau \in X \; \& \; \tau \cap \sigma=\emptyset \; \& \; \langle \tau \cup \sigma \rangle \in X \rk$. Let $k\geqslant 4$. A \emph{$k$--cycle} $(v_0,\ldots,v_{k-1},v_0)$ is a triangulation of a circle consisting of $k$ edges $v_iv_{i+1\; (\mr{mod}\; k)}$ and $k$ vertices: $v_0,\ldots,v_{k-1}$.

If it is not stated otherwise we consider the \emph{(combinatorial) metric} on the set of vertices $X^{(0)}$, defined as the number of edges in the shortest path connecting given two vertices; it is usually denoted by $d(\cdot, \cdot)$. For subcomplexes $A,B$ of $X$ we set $d(A,B)=\inf \lk d(v,w)|\; v\in A^{(0)}, \; w\in B^{(0)} \rk$.
For a subcomplex $Y$ of a flag simplicial complex $X$, by $B_i(Y,X)$ (respectively, $S_i(Y,X)$) we denote the \emph{(combinatorial) ball}
(respectively, \emph{sphere}) of radius $i$ around $Y$, i.e.\ the subcomplex of $X$ spanned by the set of vertices at distance at most $i$ (respectively, exactly $i$) from the set of vertices of $Y$. If it does not lead to a confusion we write $B_i(Y)$ or $S_i(Y)$ instead of $B_i(Y,X)$ or $S_i(Y,X)$.

By $X'$ we denote the first barycentric subdivision of a simplicial complex $X$. For a simplex $\sigma$ of $X$, by $b_{\sigma}$ we denote a vertex of $X'$ corresponding to $\sigma$.

\subsection{Cell complexes}
\label{cellc}

For the definition and basics concerning cell complexes we follow \cite[Appendix A]{Davi-b}.
\medskip

A \emph{cell complex} is a collection of convex polytopes, called \emph{cells}, such that each sub-cell is a cell, and
the intersection of every two cells is again a (possibly empty) cell. A simplicial complex is a cell complex whose all cells are simplices. A \emph{cubical complex} is a cell complex whose all cells are cubes $[0,1]^n$.

Consider a cell $c$ of a cell complex $X$. Let $\mathcal P$ be the poset of nonempty cells strictly containing
$c$. The cell complex whose poset of cells is isomorphic to $\mathcal P$ is called the \emph{link} of $c$ in $X$, and is denoted by
$X_c$. A cell complex is \emph{simple} if all its links are simplicial complexes.

\begin{de}[loc.\ $k$--large]
 \label{l4lcc}
Let $k$ be a natural number $\geqslant 4$. A simple cell complex $Y$ is
\emph{locally $k$--large} if every its link is $4$--large.
\end{de}

\rem By a lemma of Gromov \cite{Gr-hg} a locally $4$--large (respectively, locally $5$--large) cubical
complex is a locally CAT(0) (respectively, locally CAT(-1)) space when equipped with a piecewise Euclidean (respectively hyperbolic) metric in
which every cube is isometric to the standard cube in $\mathbb E^n$ (respectively, in $\mathbb H^n$).

\section{Property $SD_n$ and weak systolicity}
\label{prosdn}

In this section we introduce central objects of this paper: the $SD_n$ property and the weakly systolic complexes. We also prove that weakly systolic complexes are contractible.

\begin{de}[Property $SD_n(A)$]
\label{sdn(A)new}
Let $X$ be a flag simplicial complex, let $A$ be its subcomplex and let $n\in \{0,1,2,3,\ldots\}$.
We say that
$X$ \emph {satisfies the property $SD_n(A)$} (simple descent on balls of radii at most $n$ around $A$) if for every $i=1,2,\ldots,n$ the following
condition holds. For every simplex $\sigma \in S_{i+1}(A)$ the intersection
$\pi_A(\sigma)=X_{\sigma}\cap B_i(A)$ is a non-empty simplex.
\end{de}

For technical reasons the following, weaker variation of the definition above will be useful.

\begin{de}[Property $\wt {SD}_n(A)$]
\label{sdn(A)}
Let $X$ be a flag simplicial complex, let $A$ be its subcomplex and let $n\in \{0,1,2,3,\ldots\}$.
We say that
$X$ \emph {satisfies the property $\wt{SD}_n(A)$} if for every $i=1,2,\ldots,n$ the following two conditions hold.
\begin{description}
  \item[({\bf E}) \rm(edge condition)] For every
edge $e \in S_{i+1}(A,X)$ the intersection
 $X_e\cap B_i(A,X)$ is non-empty.
  \item[({\bf V}) \rm{(vertex condition)}] For every vertex $w \in S_{i+1}(A,X)$ the intersection
 $X_w\cap B_i(A,X)$ is a single simplex.
\end{description}
\end{de}

\begin{de}[Weakly systolic]
\label{sdn}
A simplicial complex $X$ is \emph {weakly systolic} if it is flag and if it satisfies the property $\wt{SD}_n(v)$ for every natural number $n$ and for every vertex $v\in X$.
\end{de}

\begin{lem}[SD vertices vs.\ simplices]
\label{sd vert}
Let $A$ be a subcomplex of a flag simplicial complex $X$ and let $\sigma$ be a simplex
of $X$. Assume that $X_{\sigma}\cap A \neq \emptyset$.
Assume that $X_v\cap A$ is a single simplex, for some vertex $v$ of $\sigma$.
Then
$X_{\sigma}\cap A$ is a single simplex.
\end{lem}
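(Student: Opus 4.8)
The plan is to exploit that $v\in\sigma$ in order to squeeze $X_\sigma\cap A$ inside the single simplex $X_v\cap A$, and then to check that $X_\sigma\cap A$ is ``directed'' enough to be a simplex itself.

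First I would record the inclusion $X_\sigma\subseteq X_v$. Indeed, if $\tau\in X_\sigma$ then $\tau\cap\sigma=\emptyset$, so in particular $\tau\cap\{v\}=\emptyset$, and $\langle\tau\cup\sigma\rangle\in X$, hence the face $\langle\tau\cup\{v\}\rangle$ of $\langle\tau\cup\sigma\rangle$ also lies in $X$; thus $\tau\in X_v$. Therefore $X_\sigma\cap A\subseteq X_v\cap A$. By hypothesis $X_v\cap A$ is a single simplex $\rho$ (so $\rho\in A$ and, $A$ being a subcomplex, every face of $\rho$ lies in $A$), and $X_\sigma\cap A$ is a nonempty subcomplex of the simplex $\rho$.

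Next I would prove that $X_\sigma\cap A$ is closed under spans of pairs: if $\tau_1,\tau_2\in X_\sigma\cap A$ then $\langle\tau_1\cup\tau_2\rangle\in X_\sigma\cap A$. Since $\tau_1,\tau_2$ are faces of $\rho$, so is $\langle\tau_1\cup\tau_2\rangle$; in particular it is a simplex of $X$ lying in $A$, and it is disjoint from $\sigma$ because $\tau_1$ and $\tau_2$ are. It remains to see that $\langle\tau_1\cup\tau_2\rangle\in X_\sigma$, i.e.\ that $\langle\tau_1\cup\tau_2\cup\sigma\rangle\in X$. Since $X$ is flag, it suffices to check that any two vertices of $\tau_1\cup\tau_2\cup\sigma$ are joined by an edge: two vertices inside $\tau_1\cup\tau_2$ span an edge because this vertex set is a face of $\rho$; two vertices of $\sigma$ do since $\sigma$ is a simplex; and a vertex of $\tau_i$ together with a vertex of $\sigma$ span an edge because $\langle\tau_i\cup\sigma\rangle\in X$ (as $\tau_i\in X_\sigma$). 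Hence $\langle\tau_1\cup\tau_2\rangle\in X_\sigma\cap A$.

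Finally I would assemble the conclusion. Let $\rho'$ be the span of the set of all vertices occurring in simplices of $X_\sigma\cap A$; this set is nonempty (because $X_\sigma\cap A\neq\emptyset$) and consists of vertices of $\rho$, so $\rho'$ is a face of $\rho$. Each such vertex is a $0$--simplex of $X_\sigma\cap A$, so applying the previous step finitely many times yields $\rho'\in X_\sigma\cap A$; and by construction every simplex of $X_\sigma\cap A$ is a face of $\rho'$. Thus $X_\sigma\cap A$ consists exactly of $\rho'$ and its faces, i.e.\ it is the single simplex $\rho'$. I do not foresee a genuine obstacle; the one point deserving care is that $A$ need not be a full subcomplex, which is precisely why one keeps $X_\sigma\cap A$ inside $X_v\cap A$ rather than trying to argue flagness of $X_\sigma\cap A$ directly.
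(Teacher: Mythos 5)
Your argument is correct and is exactly the paper's (one-line) proof spelled out in detail: the paper simply invokes flagness together with the inclusion $X_{\sigma}\subseteq X_v$, which are precisely the two ingredients you verify and combine. No gap to report.
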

\begin{proof}
It follows easily from the flagness and the fact that $X_{\sigma}\subset X_v$.
\end{proof}

The following lemma
shows that in the Definition \ref{sdn} we can use the (more intuitive) property $SD_n(v)$ instead of $\wt{SD}_n(v)$.

\begin{lem}[$SD_n$ vs. $\wt {SD}_n$]
\label{sd simpl}
Let $X$ be a weakly systolic complex.
Then $X$ satisfies the property $SD_n(v)$ for every natural number $n$ and for every vertex $v\in X$.
\end{lem}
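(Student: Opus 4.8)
The plan is first to reduce to a non-emptiness statement. By Lemma~\ref{sd vert} together with the vertex condition (\textbf{V}) of $\wt{SD}_n(v)$, it suffices to prove that for every vertex $v$, every $i\geqslant 1$ and every simplex $\sigma\subseteq S_{i+1}(v)$ the projection $\pi_v(\sigma)=X_\sigma\cap B_i(v)$ is non-empty: once this holds, since $X_\sigma\subseteq X_u$ for a vertex $u$ of $\sigma$ and $X_u\cap B_i(v)$ is a single simplex by (\textbf{V}), Lemma~\ref{sd vert} gives that $\pi_v(\sigma)$ is a single simplex. For $\dim\sigma=0$ this non-emptiness is exactly (\textbf{V}), and for $\dim\sigma=1$ it is the edge condition (\textbf{E}). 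I would then fix $v$ and argue by induction on $d=\dim\sigma$, handling the inductive step by contradiction.

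So suppose $d=k\geqslant 2$ and that $\pi_v(\sigma)=\emptyset$ for some simplex $\sigma=\langle v_0,\dots,v_k\rangle\subseteq S_{i+1}(v)$. I would first record the elementary fact that $X_\tau\cap B_i(v)\subseteq S_i(v)$ whenever $\tau\subseteq S_{i+1}(v)$ is a non-empty simplex (a vertex adjacent to a vertex of $\tau$ lies at distance $\geqslant i$ from $v$). Put $\rho_j:=X_{\sigma\setminus v_j}\cap B_i(v)$; by the inductive hypothesis each $\rho_j$ is a non-empty single simplex, contained in $S_i(v)$. Three observations then follow immediately: (a) no vertex of $\rho_j$ is adjacent to $v_j$, for otherwise such a vertex would lie in $X_\sigma\cap B_i(v)=\pi_v(\sigma)=\emptyset$; (b) $\rho_j\cap\rho_l\subseteq\pi_v(\sigma)=\emptyset$ for $j\neq l$, since then the vertices of $\sigma\setminus v_j$ and of $\sigma\setminus v_l$ together exhaust $\sigma$; (c) for $l\neq j$ every vertex of $\rho_j$ is adjacent to $v_l$, so $\rho_j$ is a face of the single simplex $X_{v_l}\cap B_i(v)$ (this being a single simplex by (\textbf{V})).

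Now comes the main point. Since $k+1\geqslant 3$, any two indices $j\neq j'$ avoid a third index $l$, so by (c) the vertex sets of $\rho_j$ and $\rho_{j'}$ both lie in the single simplex $X_{v_l}\cap B_i(v)$ and are therefore joined pairwise by edges; hence the vertices of $\rho_0\cup\dots\cup\rho_k$ are pairwise adjacent, and by flagness $R:=\langle\rho_0\cup\dots\cup\rho_k\rangle$ is a single simplex, whose vertex set is the disjoint (by (b)) union of the $\rho_j^{(0)}$. Choosing $w_j\in\rho_j$ for each $j$, the $2k+2$ vertices $v_0,\dots,v_k,w_0,\dots,w_k$ then span precisely the flag complex on the $1$--skeleton of a cross-polytope of dimension $k+1$: the $v$'s are mutually adjacent, the $w$'s are mutually adjacent (being vertices of $R$), and $v_m\sim w_j$ exactly when $m\neq j$, by (a) and (c). In particular $w_0\nsim v_0$ while $w_0\sim v_1$ and $v_1\sim v_0$, whence $d(v_0,w_0)=2$, i.e.\ $w_0\in S_2(v_0)$. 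Applying the vertex condition (\textbf{V}) of $\wt{SD}_1(v_0)$ --- available because $X$ is weakly systolic --- forces $X_{w_0}\cap B_1(v_0)$ to be a single simplex; but it contains both $v_1$ and $w_1$ (each is adjacent to $w_0$ and to $v_0$, hence lies in $B_1(v_0)\cap X_{w_0}$), and $v_1\nsim w_1$. This contradiction finishes the inductive step, and hence the proof.

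The crux of the argument is the passage in the third paragraph: recognizing that a minimal failure of ``the projection of a simplex is a simplex'' forces $\sigma$ together with the projections of its facets into an octahedral (cross-polytope) configuration, and that this configuration is exactly what the local vertex condition forbids. I would stress that the contradiction is obtained by applying the $\wt{SD}$ condition at a vertex $v_0$ \emph{different} from the centre $v$; so, unlike the base cases, the inductive step genuinely uses that $X$ satisfies $\wt{SD}_n$ at every vertex and not merely at $v$, which is consistent with the fact that $\wt{SD}_n(v)$ alone does not suffice.
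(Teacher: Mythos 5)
Your proof is correct, and while its skeleton coincides with the paper's --- the same reduction via Lemma~\ref{sd vert} and condition (V) to the non-emptiness of $\pi_v(\sigma)$, and the same induction on $\dim\sigma$ with base cases given by (V) and (E) --- the inductive step is carried out along a genuinely different route. The paper argues directly: it takes one vertex $z'$ joinable with a facet $\sigma'$ and one vertex $z$ joinable with an edge $\langle w',w\rangle$ (inductive hypothesis in dimensions $m$ and $1$ only), introduces a maximal simplex $\sigma''\subseteq S_{i+1}(v)$ joinable with both $z$ and $z'$, and concludes by a three-case analysis whose last case is excluded by analyzing the $4$--cycle $(w'',w,z,z',w'')$. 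You instead argue by contradiction using the projections $\rho_j$ of \emph{all} $k+1$ facets: their non-emptiness, pairwise disjointness, mutual joinability and non-adjacency to the opposite vertices force the cross-polytope configuration on $v_0,\ldots,v_k,w_0,\ldots,w_k$, which is then destroyed by the vertex condition of $\wt{SD}_1(v_0)$ applied at a vertex of $\sigma$ rather than at the centre $v$. The two arguments are of comparable length; the paper's is slightly more economical in its use of the inductive hypothesis, while yours is more symmetric, isolates a reusable ``no octahedral configuration'' obstruction, and makes explicit a point that is only implicit in the paper's appeal to weak systolicity in its Case~3: the hypothesis must be invoked at vertices other than $v$. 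Your closing aside is indeed a fact --- one can build a small flag complex satisfying $\wt{SD}_n(v)$ at a single vertex $v$ for all $n$ in which a triangle in $S_2(v)$ has empty projection --- so the asymmetric use of the hypothesis is unavoidable in any correct proof.
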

\begin{proof}
By Lemma \ref{sd vert} it is enough to prove that the intersection
$X_{\sigma}\cap B_i(v,X)$ is non-empty, for every $i$, every $v$ and for every simplex $\sigma$ in $S_{i+1}(v)$.

We show this by induction on $m=\mr {dim}(\sigma)$. By weak systolicity it is true for
$m=0,1$.
Assume this is proved for $m$. We show that it holds for
$m+1$.
Let $\sigma$ be an $(m+1)$--dimensional simplex in $S_{i+1}(v)$.
Let $\sigma=\lan \sigma' \cup w \ran$ where $w\notin \sigma'$. Let $w'$ be a vertex of $\sigma'$.
By inductive assumption there exist vertices $z,z'\in B_i(v)$ such
that $\langle\sigma', z'\rangle,\langle w',w,z\rangle\in X$. Let $\sigma''$ be a maximal
simplex in $S_{i+1}(v)$ joinable with both $z,z'$. Observe that
$w'\in \sigma''$.
We consider separately three cases.
\medskip

\noindent
\emph{Case 1.} If $w\in \sigma ''$ then $\langle\sigma, z'\rangle\in X$ and the
lemma follows.
\medskip

\noindent
\emph{Case 2.} If $\sigma' \subseteq \sigma''$ then $\langle\sigma, z\rangle\in X$
and the lemma follows.
\medskip

\noindent
\emph{Case 3.} If neither Case 1 nor 2 hold then there exists a
vertex $w''\in \sigma' \setminus \sigma''$. Analyzing the cycle
$(w'',w,z,z',w'')$ we have, by weak systolicity, that either $w''\sim z$ or $w\sim z'$. In both cases we get contradiction.
\end{proof}

{\bf Remark.} Let $A$ be a subcomplex of a flag simplicial complex
$X$. Observe that if
$X$ satisfies the property $\wt {SD}_n(A)$ then
$X_{\sigma}\cap B_n(A)$ can be empty for a simplex
$\sigma \in
S_{n+1}(A)$.
\medskip

Now we prove that weakly systolic complexes are contractible. We follow the
approach of \cite[Section 5]{JS1} (for the case of systolic
complexes). Let $X$ be a complex of dimension $d<\infty$
and let $v$ be its vertex.
Assume that $X$ satisfies the property $SD_n(v)$ for every $n$.
For $l=0,1,\ldots,d-1$ we define (compare
\cite[Section 5]{JS1}) the sets $Q^l=B_{i}(v)\cup \bigcup \lk
\lan \sigma \cup \pi_v(\sigma) \ran \;| \; \; \sigma \in S_{i+1}(v) ,\; \;
\mr{dim}(\sigma)\leqslant l
\rk$. We set also $Q^{-1}=B_i(v)$. Observe that $Q^{d-1}=B_{i+1}(v)$.

\begin{lem}
 \label{defretr}
There exists a (canonical) deformation retraction
$r^l\colon Q^l\to Q^{l-1}$.
\end{lem}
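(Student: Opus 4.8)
The plan is to exhibit $Q^l$ as $Q^{l-1}$ with a family of simplices glued on along parts of their boundaries, to retract each glued simplex onto the gluing locus, and to assemble these retractions. For each $l$--dimensional simplex $\sigma\in S_{i+1}(v)$ set $P_\sigma:=\langle\sigma\cup\pi_v(\sigma)\rangle$. Since $\pi_v(\sigma)=X_\sigma\cap B_i(v)$ is a non--empty simplex (by the property $SD_n(v)$), the set $\sigma\cup\pi_v(\sigma)$ spans a single simplex and $P_\sigma$ is the join $\sigma\ast\pi_v(\sigma)$; straight from the definition one has $Q^l=Q^{l-1}\cup\bigcup_\sigma P_\sigma$, the union running over all $l$--dimensional simplices of $S_{i+1}(v)$.

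The first ingredient is the combinatorial identity
\[
P_\sigma\cap Q^{l-1}=(\partial\sigma)\ast\pi_v(\sigma),
\]
together with $P_\sigma\cap P_{\sigma'}\subseteq Q^{l-1}$ for distinct $l$--dimensional $\sigma,\sigma'$; here $\partial\sigma$ denotes the boundary of $\sigma$, so the right--hand side is the union of the $\dim\sigma+1$ facets of the simplex $P_\sigma$ obtained by deleting a vertex of $\sigma$. I would prove it by distance bookkeeping: the vertices of $\sigma$ lie in $S_{i+1}(v)$ and those of $\pi_v(\sigma)$ in $B_i(v)$, so in any simplex common to $P_\sigma$ and to $B_i(v)$, or to some $P_{\sigma'}$ with $\dim\sigma'\le l$, $\sigma'\neq\sigma$, the vertices inherited from $\sigma$ form a proper face of $\sigma$ --- in the first case the empty face (no vertex of $S_{i+1}(v)$ lies in $B_i(v)$), in the second because the $\sigma$--vertices occurring also lie in $\sigma'$ while $\sigma\not\subseteq\sigma'$. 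Conversely, for a proper face $\tau\subsetneq\sigma$ one has $\tau\in S_{i+1}(v)$, $\dim\tau\le l-1$ and $\pi_v(\sigma)\subseteq\pi_v(\tau)$ (as $X_\sigma\subseteq X_\tau$), hence $\langle\tau\cup\pi_v(\sigma)\rangle\subseteq P_\tau\subseteq Q^{l-1}$, while faces of $P_\sigma$ contained in $\pi_v(\sigma)$ lie in $B_i(v)\subseteq Q^{l-1}$. This is the only place where $SD_n(v)$ is used.

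The heart of the matter is a statement about a single simplex, applied to each $P_\sigma$: the simplex $\Delta=\sigma\ast\pi_v(\sigma)$ deformation retracts onto the subcomplex $(\partial\sigma)\ast\pi_v(\sigma)$ by a homotopy that fixes this subcomplex at all times. Writing a point of $\Delta$ in barycentric coordinates as $z=\sum_j a_j e_j+\sum_i b_i f_i$, where $e_0,\dots,e_l$ are the vertices of $\sigma$ and $f_0,\dots,f_k$ those of $\pi_v(\sigma)$ (so $k\ge0$, all coordinates $\ge0$ summing to $1$), and putting $\mu(z)=\min_{0\le j\le l}a_j$, one may take for $t\in[0,1]$
\[
H_t(z)=\sum_j\bigl(a_j-t\,\mu(z)\bigr)\,e_j+\sum_i\Bigl(b_i+\frac{(l+1)\,t\,\mu(z)}{k+1}\Bigr)\,f_i .
\]
All coefficients stay $\ge0$ with sum $1$, so $H_t$ maps $\Delta$ into itself; $H_0=\mathrm{id}$; $H_1(z)$ has some $e_j$--coordinate equal to $0$, hence lies in $(\partial\sigma)\ast\pi_v(\sigma)$; and if some $a_j$ is already $0$ then $\mu(z)=0$ and $H_t(z)=z$. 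Continuity in $(z,t)$ is clear, since $z\mapsto\mu(z)$ is continuous on $\Delta$ and the rest of the formula is affine. I expect this to be the only genuine obstacle: the simplex $\sigma$ alone does \emph{not} retract onto $\partial\sigma$, so the $\pi_v(\sigma)$--coordinates (present because $\pi_v(\sigma)\neq\emptyset$) are indispensable, and the mass removed from the $e_j$--coordinates must be redistributed \emph{uniformly} over the $f_i$--coordinates --- redistributing it in proportion to the existing $b_i$'s would destroy continuity along $\sigma$.

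Finally I glue: define $r^l$ to equal $H_1$ on each $P_\sigma$ and the identity on $Q^{l-1}$. By the previous paragraph these agree on every overlap $P_\sigma\cap P_{\sigma'}$ and $P_\sigma\cap Q^{l-1}$, which by the first ingredient is contained in $(\partial\sigma)\ast\pi_v(\sigma)$ (and in $Q^{l-1}$), where $H_1$ is the identity; so $r^l\colon Q^l\to Q^{l-1}$ is a well--defined map. It is continuous, because a function on a simplicial complex is continuous as soon as its restriction to each closed simplex is, and every simplex of $Q^l$ lies in $Q^{l-1}$ or in some $P_\sigma$. The homotopies $H$ glue in exactly the same way to a homotopy from $\mathrm{id}_{Q^l}$ to $r^l$ relative to $Q^{l-1}$, so $r^l$ is a deformation retraction; and since the construction depends only on the intrinsic partition of the vertices of $P_\sigma$ into those of $\sigma$ and those of $\pi_v(\sigma)$, and is symmetric within each block, it is canonical.
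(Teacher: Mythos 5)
Your argument is correct and is essentially the paper's own proof, which simply declares $r^l$ on each $\langle\sigma\cup\pi_v(\sigma)\rangle$ to be the standard deformation retraction of that simplex onto $\langle(\partial\sigma)\cup\pi_v(\sigma)\rangle$; you supply the details the paper leaves implicit (the intersection identities $P_\sigma\cap Q^{l-1}=(\partial\sigma)\ast\pi_v(\sigma)$ and $P_\sigma\cap P_{\sigma'}\subseteq Q^{l-1}$, an explicit rel-boundary formula for the single-simplex retraction, and the gluing of the homotopies). The verification is complete and correct, including the key use of $SD_n(v)$ to guarantee $\pi_v(\sigma)\neq\emptyset$.
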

\begin{proof}
The deformation retraction
is defined by the property that for every $l$--simplex $\sigma \in
S_{i+1}(v)$ the restriction $r^l|_{\lan \sigma \cup \pi_v(\sigma)\ran}\colon \lan \sigma \cup
\pi_v(\sigma) \ran \to \lan (\partial \sigma)\cup \pi_v(\sigma)\ran$ is a standard deformation retraction
of a simplex.
\end{proof}

\begin{prop}[$SD_n(v)$ implies contractibility]
\label{sdncontr}
Let $X$ be a finite dimensional simplicial complex satisfying $SD_n(v)$ property for some vertex $v$ and for every natural number $n$. Then $X$ is contractible.
\end{prop}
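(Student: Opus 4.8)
The plan is to build a deformation retraction of $X$ onto the single vertex $v$ by retracting successively larger balls onto smaller ones. Concretely, set $r_i\colon B_{i+1}(v)\to B_i(v)$ to be the composition $r^0\circ r^1\circ\cdots\circ r^{d-1}$ of the canonical retractions produced by Lemma~\ref{defretr}; since each $r^l$ is a deformation retraction $Q^l\to Q^{l-1}$, and $Q^{d-1}=B_{i+1}(v)$ while $Q^{-1}=B_i(v)$, the composite $r_i$ is a deformation retraction of $B_{i+1}(v)$ onto $B_i(v)$. Note here one uses Lemma~\ref{sd simpl}: the hypothesis $SD_n(v)$ supplies, for every simplex $\sigma\in S_{i+1}(v)$, a nonempty simplex $\pi_v(\sigma)=X_\sigma\cap B_i(v)$, which is exactly what is needed for the sets $Q^l$ and the maps $r^l$ to be defined.

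Next I would iterate. Since $X$ is finite dimensional, $B_0(v)=\{v\}$, and $X=\bigcup_{i\ge 0}B_i(v)$ is an increasing union of subcomplexes with $B_i(v)$ a strong deformation retract of $B_{i+1}(v)$ for every $i$. Composing, $B_i(v)$ deformation retracts onto $\{v\}$ for each fixed $i$, so every ball is contractible. To conclude that $X$ itself is contractible, I would invoke the standard fact that if $X=\bigcup_i X_i$ is an ascending union of subcomplexes, each a strong deformation retract of the next, then $X$ strong deformation retracts onto $X_0$: one concatenates the homotopies $B_i(v)\times[0,1]\to B_{i+1}(v)$ on the time intervals $[1-2^{-i},1-2^{-(i+1)}]$, say, rescaled appropriately, obtaining a well-defined continuous homotopy $X\times[0,1]\to X$ from the identity to the constant map at $v$ (continuity at time $1$ follows because every compact subset of $X$, in particular every point together with its trajectory, lies in some finite $B_N(v)$, on which the homotopy has already stabilized). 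Hence $X$ is contractible.

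The main obstacle, and the only point requiring real care, is the passage from "each ball is contractible" to "$X$ is contractible" — i.e.\ making the infinite concatenation of homotopies genuinely continuous, and in particular continuous at the final parameter value. This is where finite dimensionality of $X$ is used a second time, implicitly: it guarantees that the retraction $B_{i+1}(v)\to B_i(v)$ is built from a \emph{fixed finite} tower $Q^{d-1}\supset\cdots\supset Q^{-1}$, so the homotopies are honest cellular homotopies and the telescoping argument goes through verbatim as in \cite[Section 5]{JS1}. Everything else is formal: the existence of the building-block retractions $r^l$ is Lemma~\ref{defretr}, and their availability at every radius is precisely the content of the hypothesis together with Lemma~\ref{sd vert} and Lemma~\ref{sd simpl}.
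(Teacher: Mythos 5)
Your proof is correct and follows essentially the same route as the paper: compose the canonical retractions of Lemma~\ref{defretr} to obtain the deformation retraction $r_i\colon B_{i+1}(v)\to B_i(v)$, then telescope over $i$ (the paper leaves this infinite concatenation implicit, which you rightly spell out). One minor remark: the appeal to Lemma~\ref{sd simpl} is superfluous here, since the hypothesis is the property $SD_n(v)$ itself, which by Definition~\ref{sdn(A)new} already provides the nonempty simplices $\pi_v(\sigma)$ directly.
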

\begin{proof}
For every $i$ we define a deformation retraction $r_i\colon
B_{i+1}(v)\to B_i(v)$ as the composition $r^0 \circ r^1 \circ \cdots
\circ r^{d-1} $. Contractibility of $X$ follows by applying
compositions of maps $r_i$ for different $i$'s.
\end{proof}

\rem Observe that the above approach yields in fact collapsibility of complexes in question. This is a property
stronger than contractibility, but we do not need to use it in this paper. In \cite{OCh} we show that weakly systolic
complexes satisfy even stronger property --- their $1$--skeleta are dismantlable graphs.

\subsection{Weakly bridged graphs}
\label{wbg}

For systolic complexes, their $1$--skeleta belong to the class of \emph{bridged} graphs. Here we define the corresponding class of weakly bridged graphs. Note, that $1$--skeleta of weakly systolic complexes do not contain infinite cliques, since we consider only finite simplices.
For arbitrary graphs one can define balls and links of vertices analogously as we did above in Section~\ref{prel} (e.g.\ link is a graph spanned by neighbors of a given vertex). Thus the definition of the property $\wt {SD}_n(v)$ makes sense.

\begin{de}[Weakly bridged graph]
\label{defwb}
A graph $\Gamma$ is \emph {weakly bridged} if it satisfies the property $\wt{SD}_n(v)$ for every natural number $n$ and for every vertex $v\in X$.
\end{de}

\rem In \cite{OCh} we show some structural properties of weakly bridged graphs relating them to well known classes of graphs.

\section{Local to global}
\label{locglo}
The aim of this section is to show that some local conditions on a complex $X$ imply that the universal cover of $X$ is weakly systolic and thus, by Proposition \ref{sdncontr}, contractible.
This result is an analogue of the Cartan-Hadamard theorem for nonpositively curved spaces (cf.\ \cite[Chapter II.4]{BrHa}) and is the heart of the whole theory developed in our paper.

A rough idea is that if $X$ satisfies the $SD_2$ property then its universal cover
satisfies the conditions $SD_n$, for all $n$. However, we will prove a stronger result where instead of the property $SD_2$ we consider its version --- the property $SD_2^{\ast}$. It is much more useful for providing examples --- cf.\ Section \ref{sec-lasf} and \cite{O-chcg}.

\begin{de}[Wheels]
 \label{wheels}
Let $X$ be a simplicial complex and let $k\geqslant 4$ be a natural number.
A \emph{$k$--wheel} $(v_0;v_1,\ldots,v_k)$, where $v_i$'s are vertices of
$X$, is a full subcomplex of $X$ such that $(v_1,\ldots,v_k,v_1)$ is a full
$k$--cycle and $v_0$ is joinable with $v_i$, for $i=1,\ldots,k$.

A \emph{$k$--wheel with a pendant triangle} $(v_0;v_1,\ldots,v_k;t)$ is a
subcomplex of $X$ being the union of a $k$--wheel $(v_0;v_1,\ldots,v_k)$ and a triangle
$\langle v_1,v_2,t\rangle\in X$, with $t\neq v_i$, $i=0,1,\ldots,k$.% is not joinable with $v_0$.
\end{de}

\begin{figure}[t]
\begin{center}
\scalebox{0.5}{\includegraphics{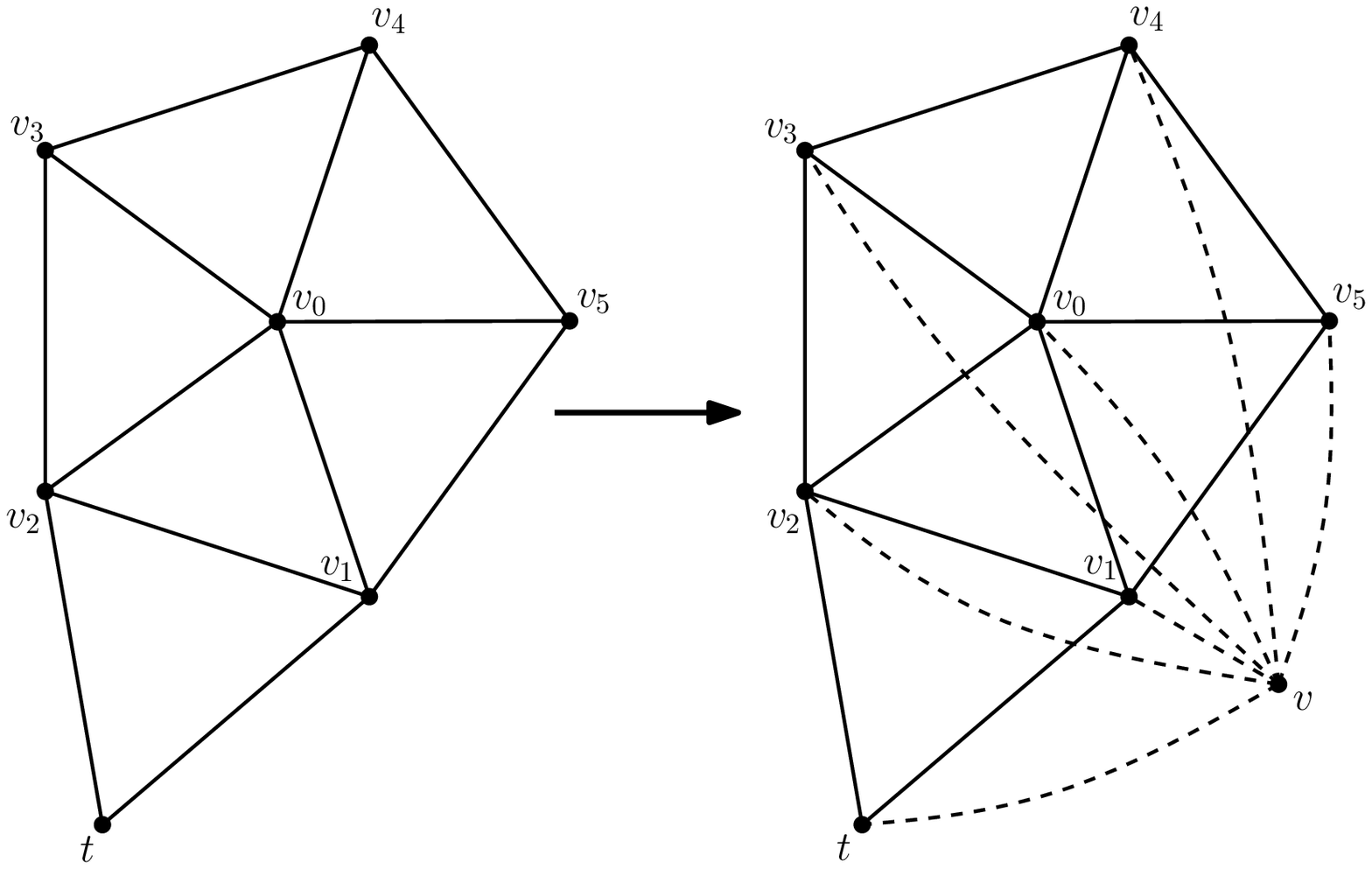}}%
\end{center}
\caption{$SD_2^{\ast}$ property}
\label{SD2}
\end{figure}

\begin{de}[$SD_2^{\ast}$ property]
\label{sd2*}
A flag simplicial complex $X$ satisfies the $SD_2^{\ast}$ property if the following two conditions hold.

(a) $X$ does not contain $4$--wheels,

(b) for every $5$--wheel with a pendant triangle $\widehat W$ in $X$, there exists a vertex $v$ with $\widehat W\subseteq B_1(v,X)$.
\end{de}

\rem Note that the local conditions ($SD_2^{\ast}$) considered in the current paper are weaker than the ones in \cite{OCh}. First, our definition of a wheel with a pendant triangle does not assume that it is full. Then, the main difference is that we allow $4$--cycles to appear (but not $4$--wheels). This implies that our main result here, Theorem~\ref{logl} below, is stronger; cf.\ Remark after its proof.

\begin{lem}
\label{fulwheel}
Let $X$ satisfy the $SD_2^{\ast}$ property and let $\widehat W=(v_0;v_1,\ldots,v_k;t)$ be a $5$--wheel with a pendant triangle contained in $X$. If $t\nsim v_0$ then $\widehat W$ is a full subcomplex of $X$.
\end{lem}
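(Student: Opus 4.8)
We have a $5$-wheel with a pendant triangle $\widehat W=(v_0;v_1,\ldots,v_5;t)$ in $X$, with $X$ satisfying $SD_2^*$, and the hypothesis $t\nsim v_0$. We must show $\widehat W$ is a full subcomplex. Now $\widehat W$ consists of vertices $v_0,v_1,\ldots,v_5,t$, so by flagness of $X$ it suffices to show that no pair of these vertices spans an edge of $X$ beyond those already in $\widehat W$. The edges already present are: $v_0v_i$ for $i=1,\ldots,5$; the cycle edges $v_iv_{i+1}$ (indices mod $5$); and $tv_1$, $tv_2$. The non-edges to rule out are: the five "diagonals" $v_iv_{i+2}$ of the cycle (these are absent because $(v_1,\ldots,v_5,v_1)$ is a full $5$-cycle), $tv_0$ (absent by hypothesis), and the three remaining $t$-$v_i$ pairs, namely $tv_3$, $tv_4$, $tv_5$.

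**Plan.** Note the diagonals $v_iv_{i+2}$ are already excluded since $\widehat W$ contains a \emph{full} $5$-cycle, and $tv_0$ is excluded by hypothesis; so the whole problem reduces to showing $t\nsim v_3$, $t\nsim v_4$, $t\nsim v_5$. I would argue by contradiction, in each case producing either a $4$-wheel or else a forbidden edge, using condition (a) of $SD_2^*$ (no $4$-wheels) and the fullness of the $5$-cycle. For $t\sim v_4$: then $(v_2;v_1,t,v_4,v_5,v_1)$ — wait, I should look for a $4$-cycle with a hub. Consider $v_0$: it is joined to $v_1,v_2,v_3,v_4,v_5$; if $t\sim v_4$ then look at the $4$-cycle $(v_1,v_2,\ldots)$... more carefully: if $t\sim v_3$, then $t\sim v_1,v_2,v_3$ and we have the cycle $(t,v_1,v_5,v_4,v_3,t)$? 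That is a $5$-cycle. Instead I would hunt directly for a $4$-wheel. If $t\sim v_4$, consider the cycle $(v_1,t,v_4,v_5,v_1)$: its edges are $v_1t$, $tv_4$, $v_4v_5$, $v_5v_1$, all present, and $v_0$ is joined to all four of $v_1,v_4,v_5$ — but not necessarily to $t$. Hmm — so that does not immediately give a $4$-wheel unless we know $v_0\sim t$, which is exactly what we are denying. So the correct move is to get $v_0\sim t$ as a \emph{consequence} of $t\sim v_i$ for $i\in\{3,4,5\}$, contradicting the hypothesis. For instance if $t\sim v_5$: then $(v_1,t,v_5)$ is a triangle (edges $tv_1$, $tv_5$, $v_5v_1$ all present), and $(v_1,v_5)$ together with the $5$-cycle and $v_0$... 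I'd look at whether $(v_0;v_1,t,v_5,\ldots)$ forms something. Alternatively apply (b): the subcomplex on $v_0,v_1,\ldots,v_5,t$ with the extra edge might contain a genuine $5$-wheel with pendant triangle whose hub $v$ must then be adjacent to $t$, forcing $t\sim v_0$ after identifying $v$.

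**Execution details I'd fill in.** The cleanest route: suppose one of $tv_3, tv_4, tv_5$ is an edge. By the cyclic symmetry $v_3\leftrightarrow v_5$ (reflecting the cycle fixing $v_1,v_2$ — wait, that reflection swaps $v_3\leftrightarrow v_5$ but fixes $v_4$ only if... reflection through the edge $v_1v_2$ swaps $v_5\leftrightarrow v_3$ and fixes... actually in a $5$-cycle $v_1v_2v_3v_4v_5$ the reflection fixing the edge $v_1v_2$ swaps $v_1\leftrightarrow v_2$, $v_3\leftrightarrow v_5$, fixes $v_4$; but $t$ is attached asymmetrically to $v_1,v_2$ so it respects $t$). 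So it suffices to treat the cases $t\sim v_4$ and $t\sim v_3$ (the latter covering $t\sim v_5$). Case $t\sim v_4$: then $t\sim v_1,v_4$ and $v_1\nsim v_4$ (diagonal), while $v_0\sim v_1,v_4$; consider the $4$-cycle $(v_1,v_0,v_4,t,v_1)$ — edges $v_1v_0$, $v_0v_4$, $v_4t$, $tv_1$ all present, and it is a genuine $4$-cycle since $v_1\nsim v_4$ and we may assume $v_0\nsim t$ (else done — wait, $v_0\sim t$ is the contradiction we seek). If instead $v_0\sim t$ we contradict the hypothesis directly; otherwise the $4$-cycle $(v_1,v_0,v_4,t)$ is full, and $v_2$ is joined to $v_1$, to $v_0$, to... $v_2\sim v_4$? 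No, diagonal. So that doesn't give a $4$-wheel hub. I would instead use $v_3$: $v_3\sim v_4, v_0$; is $v_3\sim v_1$? diagonal, no. So the hub must be found among existing vertices joined to all of $v_1,v_0,v_4,t$ — this seems to fail, so Case $t\sim v_4$ should instead be handled via condition (b): the $5$-wheel $(v_0;v_1,v_2,v_3,v_4,v_5)$ now has \emph{two} pendant triangles or a richer structure, and applying (b) to an appropriate $5$-wheel-with-pendant-triangle inside $\widehat W\cup\{tv_4\}$ yields a hub vertex $v$ with $B_1(v)$ containing both $v_0$ and $t$ — but that still doesn't force $v_0\sim t$.

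**The main obstacle.** The genuinely delicate point is case $t\sim v_4$ (equivalently, $t$ adjacent to the "far" vertex of the cycle): the other two cases $t\sim v_3$, $t\sim v_5$ should fall out quickly because they create a $4$-cycle through $t, v_0$ and two consecutive cycle vertices which, together with condition (a) or the fullness of the $5$-cycle, forces $t\sim v_0$. For $t\sim v_4$ I expect one must apply condition (b) of $SD_2^*$ to the $5$-wheel-with-pendant-triangle $(v_0; v_2,v_3,v_4,v_5,v_1; t)$ (now legitimately having $t$ attached to $v_1$ and... no, $t$ attached to $v_1,v_2$; relabel so the pendant is on the pair through which no extra chord exists), obtaining a hub $w$ with the whole configuration in $B_1(w)$; then show $w$ must be joined to $t$ and to $v_0$, and that $w\in\{v_0,\ldots,v_5,t\}$ is forced by flagness and the $5$-cycle being full, yielding $w=v_0$ hence $v_0\sim t$ — the contradiction. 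I would carefully set up which $5$-wheel with pendant triangle to feed into (b) so that its hub is pinned down, and verify the hub cannot be a "new" vertex by checking it would then be adjacent to three pairwise-non-adjacent cycle vertices, contradicting that the $5$-cycle is full together with part (a). That bookkeeping — choosing the right auxiliary wheel and pinning its hub — is the crux.
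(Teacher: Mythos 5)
There is a genuine gap: the decisive case $t\sim v_4$ is never resolved, and the fallback you propose for it rests on a false premise. You correctly isolate the full $4$--cycle $(t,v_1,v_0,v_4)$ (full because $t\nsim v_0$ by hypothesis and $v_1\nsim v_4$ since the $5$--cycle is full), but then conclude that no hub is available and switch to trying to pin down the vertex produced by condition (b) as being $v_0$, arguing that a ``new'' hub would be adjacent to three pairwise non-adjacent cycle vertices and hence contradict fullness of the $5$--cycle together with (a). That claim is false: fullness of the $5$--cycle only constrains edges among $v_1,\dots,v_5$ themselves, and a vertex adjacent to many pairwise non-adjacent cycle vertices is perfectly legal ($v_0$ itself is adjacent to all five), so the hub from (b) cannot be forced to equal $v_0$ and your intended contradiction ``$w=v_0$, hence $v_0\sim t$'' does not materialize. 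The point you missed is that condition (b) should be applied to the given $\widehat W$ exactly as it stands: it yields a vertex $v$ with $\widehat W\subseteq B_1(v,X)$, i.e.\ $v\sim t,v_1,v_0,v_4$ (and $v$ is distinct from these four, since $t\nsim v_0$ and $v_1\nsim v_4$). Whether $v$ is new or old is irrelevant; it is precisely the hub you were searching for, and $(v;t,v_1,v_0,v_4)$ is then a $4$--wheel, contradicting (a). This is the paper's argument.

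The easier cases are also only gestured at. For $t\sim v_3$ (and symmetrically $t\sim v_5$) you say the configuration ``forces $t\sim v_0$'' via (a) or fullness, but you never exhibit the required $4$--wheel: the relevant one is $(v_2;t,v_3,v_0,v_1)$, whose underlying $4$--cycle is full because $t\nsim v_0$ and $v_1\nsim v_3$, and whose hub $v_2$ is adjacent to all four vertices by the wheel and pendant-triangle structure. With that hub named, these cases are immediate; without it, the claim that (a) applies is unsupported, since (a) forbids $4$--wheels, not hub-less full $4$--cycles.
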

\begin{proof}
Assume that $t\nsim v_0$. We have to prove that $t\nsim v_i$, for $i=3,4,5$.

Suppose that $t\sim v_3$. Then, since $t\nsim v_0$ and $v_1\nsim v_3$, we have that there exists a $4$--wheel $(v_2;t,v_3,v_0,v_1)$ --- contradiction. Thus $t\nsim v_3$ and, by an analogous argument, $t\nsim v_5$.

Suppose that $t\sim v_4$. Let $v$ be a vertex joinable with all vertices of $\widehat W$. Then, since $t\nsim v_0$ and $v_1\nsim v_3$, we have that
 there exists a $4$--wheel $(v;t,v_1,v_0,v_4)$ --- contradiction. Thus $t\nsim v_4$
 and the lemma is proved.
\end{proof}

\begin{lem}[$SD_2\Rightarrow SD_2^{\ast}$]
\label{sd2>sd2*}
If $X$ satisfies the condition $SD_2(v)$, for every vertex $v\in X$, then it satisfies also the condition $SD_2^{\ast}$.
\end{lem}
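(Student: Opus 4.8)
The plan is to verify the two clauses of Definition~\ref{sd2*} separately. In both cases the mechanism is the same: produce, inside a single projection simplex $\pi_v(\sigma)=X_\sigma\cap B_i(v)$ guaranteed by $SD_2(v)$, three vertices that are not pairwise adjacent, contradicting ``single simplex''. For clause (b) one extra ingredient is needed: after building an auxiliary vertex, the last required adjacency is forced by appealing to clause (a), which is why I would prove (a) first.

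\emph{Clause (a).} Suppose $(v_0;v_1,v_2,v_3,v_4)$ is a $4$--wheel in $X$, so $v_1\nsim v_3$, $v_2\nsim v_4$ and $v_0\sim v_1,v_2,v_3,v_4$. Since $v_1\nsim v_3$ while $v_1\sim v_0\sim v_3$, we have $v_3\in S_2(v_1)$, so $SD_2(v_1)$ forces $\pi_{v_1}(v_3)=X_{v_3}\cap B_1(v_1)$ to be a single simplex; but $v_0,v_2,v_4$ all lie in it (each is adjacent to $v_3$ and to $v_1$) and $v_2\nsim v_4$ --- a contradiction. Hence $X$ contains no $4$--wheel.

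\emph{Clause (b).} Let $\widehat W=(v_0;v_1,\ldots,v_5;t)$ be a $5$--wheel with a pendant triangle. If $t\sim v_0$, then $\widehat W\subseteq B_1(v_0)$ and we are done, so assume $t\nsim v_0$. First I would re-derive $t\nsim v_3,v_4,v_5$ directly from $SD_2$ (rather than citing Lemma~\ref{fulwheel}, whose proof already invokes clause (b)): e.g.\ if $t\sim v_3$ then $v_3\in S_2(v_1)$ and $v_0,v_2,t$ all lie in the single simplex $\pi_{v_1}(v_3)$, contradicting $v_0\nsim t$; the cases $t\sim v_4$ (via $\pi_{v_1}(v_4)\ni v_0,v_5,t$) and $t\sim v_5$ (via $\pi_{v_2}(v_5)\ni v_0,v_1,t$) are identical. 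Next I would check $v_0,v_3,v_4,v_5\in S_2(t)$: the values $d(t,v_0)=d(t,v_3)=d(t,v_5)=2$ are immediate, and $d(t,v_4)=2$ holds because if instead $d(t,v_4)=3$ then $SD_2(t)$ with $i=2$ would force $\pi_t(v_4)=X_{v_4}\cap B_2(t)$ to be a single simplex containing the non-adjacent pair $v_3,v_5$. Now $\langle v_0,v_4,v_5\rangle$ is a simplex lying in $S_2(t)$, so $SD_2(t)$ with $i=1$ yields a non-empty simplex $\pi_t(\langle v_0,v_4,v_5\rangle)$; choose a vertex $b$ in it, so $b\sim v_0,v_4,v_5,t$. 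Since $b\sim v_0$ and $b\sim t$ with $t\in S_2(v_0)$, the vertex $b$ belongs to the single simplex $\pi_{v_0}(t)=X_t\cap B_1(v_0)$, which also contains $v_1$ and $v_2$; hence $b\sim v_1,v_2$. Thus $b$ is adjacent to every vertex of $\widehat W$ except possibly $v_3$. Finally, if $b\nsim v_3$ then $(v_3,v_4,b,v_2,v_3)$ is a full $4$--cycle (its only potential chords $v_3b$ and $v_4v_2$ are both absent) with common neighbour $v_0$, so $(v_0;v_3,v_4,b,v_2)$ is a $4$--wheel, contradicting clause (a). Therefore $b\sim v_3$ too, so $\widehat W\subseteq B_1(b,X)$, which establishes clause (b).

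The routine parts --- flagness, the distance bookkeeping, and distinctness of the vertices entering the auxiliary $4$--cycle --- are straightforward. I expect the genuinely delicate point to be the organisation of (b): one must know that $d(t,v_4)=2$ so that $\langle v_0,v_4,v_5\rangle$ actually sits on the sphere $S_2(t)$ where $SD_2(t)$ has teeth, and one must have clause (a) already in hand to dispose of the last adjacency $b\sim v_3$. Choosing the two--dimensional simplex $\langle v_0,v_4,v_5\rangle$ (rather than a vertex or an edge) for the projection producing $b$, and doing things in the order ``(a) then (b)'', is the crux; the rest is routine manipulation of the definition of $\pi$.
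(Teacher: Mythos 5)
Your proof is correct and takes essentially the same route as the paper's: both hinge on applying $SD_2(t)$ to project part of the wheel into $B_1(t)$ to obtain a dominating vertex adjacent to $t,v_0,v_4$ (you get $v_5$ too by projecting the triangle $\langle v_0,v_4,v_5\rangle$ instead of the edge $\langle v_0,v_4\rangle$), and then force the remaining adjacencies by further single-simplex applications of $SD_2$. The only cosmetic differences are that the paper obtains the last adjacencies ($v\sim v_3,v_5$) by applying $SD_2$ at $v_3$ and $v_5$ directly rather than invoking the no-$4$--wheel clause, and it does not need your preliminary non-adjacencies $t\nsim v_3,v_5$.
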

\begin{proof}
It is clear that $X$ as in the lemma satisfies the condition (a) of Definition \ref{sd2*}. Thus we show only that $X$ satisfies condition (b).
Let $\widehat W=(v_0;v_1,v_2,v_3,v_4,v_5;t)$ be a $5$--wheel with a pendant
triangle.

Observe that  $d(v_4,t)\leqslant 2$. If this was not true then
$d(v_4,t)=3$ and, by the property $SD_2(t)$ (Definition \ref{sdn(A)new}), we would have that
$v_5\sim v_3$ --- contradiction with the definition of a $5$--wheel.

If $d(v_4,t)=1$ then, by property $SD_2(t)$, we have that either
$v_0\sim t$ or $v_1\sim v_4$. In the first case we have $\widehat W\subseteq B_1(v_0)$ and we are done. The second case contradicts the definition of $\widehat W$.

Thus further we assume that $d(v_4,t)=2$ and that $v_0\nsim t$.
By the property $SD_2(t)$, there exists a vertex
$v\in S_1(t)$
such that $\langle v_0,v_4,v\rangle\in X$. Consider the cycle $(t,v_1,v_0,v,t)$.
By property $SD_2(t)$ we have that $v_1\sim v$.
Similarly, we have $v_2\sim v$. Analogous considerations for cycles $(v,v_2,v_3,v_4,v)$ and $(v,v_1,v_5,v_4,v)$ yield that $v \sim v_3,v_5$. This proves the lemma.
\end{proof}

\rem For $k=4,5$, the $k$--cycle satisfies condition $SD_2^{\ast}$ and does not satisfy condition $SD_2(v)$ for any vertex $v$.

\begin{tw}[Cartan-Hadamard theorem]
\label{logl}
Let $X$ be a simplicial complex satisfying the condition $SD_2^{\ast}$. Then its universal
cover is weakly systolic and in particular contractible.
\end{tw}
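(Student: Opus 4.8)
The plan is to pass to the universal cover $\wt X$, which is simply connected, flag (flagness is a local property inherited by covers, and $SD_2^{\ast}$ is local so $\wt X$ also satisfies $SD_2^{\ast}$), and to verify the property $\wt{SD}_n(v)$ for every vertex $v$ and every $n$ by induction on $n$. The base cases $n=1$ follow directly from condition (a) (no $4$-wheels) together with flagness: the edge and vertex conditions at distance $1,2$ from $v$ amount to small local statements that $SD_2^{\ast}$ encodes. The inductive step is the heart of the matter: assuming $\wt{SD}_{n-1}(v)$ holds, one must establish the edge condition (E) and vertex condition (V) for simplices in $S_{n+1}(v)$.

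First I would record the structural consequences of having $\wt{SD}_{n-1}(v)$: by Lemma~\ref{sd vert} and (the proof technique of) Lemma~\ref{sd simpl}, the projection $\pi_v(\sigma)=\wt X_\sigma\cap B_{i}(v)$ is a single nonempty simplex for all $\sigma\in S_{i+1}(v)$ with $i\le n-1$, so balls of radius $\le n$ already have the ``simple descent'' structure. The key geometric input is a \emph{local-to-global} argument controlling how a geodesic or a short cycle based near $S_{n}(v)$ projects inward: given a vertex $w\in S_{n+1}(v)$ and two vertices $u,u'\in B_n(v)\cap \wt X_w$, I want to show $u\sim u'$. The standard approach (following \cite{JS1} and \cite[Chapter II.4]{BrHa}) is to build a minimal disc diagram or to analyze a short cycle through $w$, $u$, $u'$ and a vertex of $B_{n-1}(v)$ supplied by the inductive hypothesis; using simple connectivity of $\wt X$ one fills such a cycle and pushes a minimal-area filling against the local conditions. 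Whenever a $4$-cycle or a configuration forcing a $4$-wheel or an uncovered $5$-wheel-with-pendant-triangle appears, one derives a contradiction with $SD_2^{\ast}$ (here Lemma~\ref{fulwheel} is used to promote partial configurations to full wheels). The conclusion combines: once every $\wt{SD}_n(v)$ holds, $\wt X$ is weakly systolic by Definition~\ref{sdn}, and finite-dimensional weakly systolic complexes are contractible by Proposition~\ref{sdncontr}; in the infinite-dimensional case contractibility follows by exhausting $\wt X$ by the contractible balls $B_i(v)$ via the retractions of Lemma~\ref{defretr}.

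The main obstacle is the inductive step, specifically propagating the vertex condition (V): one must rule out that two non-adjacent vertices $u,u'$ both project-witness a vertex $w\in S_{n+1}(v)$, and the only tools are the purely local $SD_2^{\ast}$ hypotheses together with simple connectivity. Making this rigorous requires a careful combinatorial disc-diagram argument — choosing a minimal-area van Kampen-type diagram for an offending cycle, showing it contains an interior vertex, and using $SD_2^{\ast}$ (and the absence of $4$-wheels) to reduce its area, contradicting minimality. A subtle point, flagged in the Remark after the theorem, is that $4$-cycles are \emph{allowed}, so the diagram analysis cannot assume squares are absent and must instead exploit precisely that $4$-\emph{wheels} do not occur and that $5$-wheels with pendant triangles are ``locally conical''; handling the interaction of these two conditions in the inductive step is where the real work lies.
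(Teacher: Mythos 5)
Your plan takes a genuinely different route from the paper --- the paper never passes to an abstractly given universal cover and never uses disc diagrams for this theorem; it \emph{constructs} $\wt X$ directly as an increasing union of balls $\wt B_i$, adjoining new vertices as equivalence classes $[\wt w,z]$ of pairs in $Z$ and verifying inductively the conditions ($P_i$), ($Q_i$), ($R_i$) (descent property $SD_{i-1}(v)$ plus local injectivity/isomorphism of $f_i$), with the $SD_2^{\ast}$ hypotheses entering only through two small claims about the relation $\stackrel{e}{\sim}$ being an equivalence. The filling/minimal-diagram route you propose is the one used in \cite{OCh}, and, as the remarks around Definition~\ref{sd2*} and after the proof of Theorem~\ref{logl} point out, that argument is carried out there only under \emph{stronger} local hypotheses (full wheels with pendant triangles, in particular a setting where the relevant $4$--cycle configurations are excluded); the whole point of Theorem~\ref{logl} is to get by with the weaker $SD_2^{\ast}$.

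This is where your proposal has genuine gaps rather than just unfinished routine work. First, the base case is not ``direct'': the vertex condition (V) at $i=1$ asserts that a simply connected $SD_2^{\ast}$ complex contains no induced $4$--cycle whose antipodal pair is at distance $2$, i.e.\ no induced $4$--cycle at all; but the $4$--cycle itself satisfies $SD_2^{\ast}$ (Remark after Lemma~\ref{sd2>sd2*}), so this is not a ``small local statement that $SD_2^{\ast}$ encodes'' --- it already requires the global, simple-connectivity input, which in the paper is built into the identifications defining $\wt B_2$. Second, the inductive step, which you correctly identify as the heart, is exactly what is missing: the Gauss--Bonnet/minimal-area scheme you invoke is the one the paper uses for \emph{systolic} complexes (Proposition~\ref{syst>sdn}), where local $6$--largeness forces interior vertices of a minimal diagram to lie in at least $6$ triangles; under $SD_2^{\ast}$ alone, $4$-- and $5$--cycles (hence low-degree interior vertices in fillings) are permitted, so the curvature count does not close, and it is not shown --- and not known from the cited diagram arguments --- how to reduce area using only ``no $4$--wheels'' plus the non-full $5$--wheel-with-pendant-triangle condition. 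As written, the proposal defers precisely the step where the weaker hypotheses must be exploited, so it does not constitute a proof; to salvage this statement at the paper's level of generality you would either need to supply that diagram analysis in detail or switch to the paper's explicit tower-of-balls construction.
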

\begin{proof}
We construct the universal cover $\widetilde X$ of $X$ as an increasing
union $\bigcup_{i=1}^{\infty}\widetilde B_i$ of combinatorial balls. The covering map is then the union
$$
\bigcup_{i=1}^{\infty}f_i\colon \bigcup_{i=1}^{\infty}\widetilde B_i \to X,
$$
where
$f_i \colon \widetilde B_i \to X$ is locally injective and $f_i |_{\widetilde B_j}=f_j$, for $j\leqslant i$.

We proceed by induction. Choose a vertex $v$ of $X$. Define $\wt B_0=\lk v \rk$, $\widetilde B_1=B_1(v,X)$ and $f_1=\mr {Id}_{B_1(v)}$.
Assume that we have constructed the balls $\widetilde B_1,\widetilde B_2,\ldots,\widetilde B_i$ and
the corresponding maps $f_1,f_2,\ldots,f_i$ to $X$ so that the following conditions are satisfied:
\medskip

\noindent
\begin{description}
  \item[($P_i$)] $\wt B_j=B_j(v,\wt B_i)$ for $j=1,2,\ldots,i$;
  \item[($Q_i$)] $\wt B_i$ satisfies the $SD_{i-1}(v)$ property;
  \item[($R_i$)] $f_i|_{B_1(\wt w,\wt B_i)}\colon B_1(\wt w,\wt B_i) \to B_1(f_i(\wt w),X)$ is
injective for $\wt w\in \wt B_{i}$ and it is
an isomorphism for $\wt w\in \wt {B}_{i-1}$.
\end{description}

Observe that those conditions are satisfied for $\wt B_1$ and $f_1$, i.e.\ that conditions ($P_1$), ($Q_1$) and ($R_1$) hold.
Now we construct $\widetilde{B}_{i+1}$ and the map
$f_{i+1}\colon \widetilde{B}_{i+1}\to X$. For a simplex $\wt {\sigma}$ of
$\wt B_i$, we denote by $\sigma$ its image $f_i(\wt {\sigma})$ in $X$.
Let $\wt S_i=S_i(v,\wt B_i)$ and let
\begin{align*}
Z=\lk (\widetilde w,z)\in \widetilde S_i^{(0)}\times X^{(0)}|\; \; z\in X_{w}\setminus
f_i((\widetilde B_i)_{\widetilde{w}}) \rk.
\end{align*}
Define a relation $\stackrel{e}{\sim}$ on $Z$ as follows:
\begin{align*}
(\widetilde{w},z)\stackrel{e}{\sim} (\widetilde w',z') \; \; \mr{iff} \; \; (z=z' \;\; and \;\; \la \widetilde{w},\widetilde w'\ra \in \wt B_i^{(1)}).
\end{align*}

\medskip

\noindent
{\bf Claim 1.} The relation $\stackrel{e}{\sim}$ is an equivalence relation.
\medskip

\noindent
{\bf Proof of Claim 1.} To show the claim it is enough to show
that $\stackrel{e}{\sim}$ is transitive. Let $(\widetilde{w},z)\stackrel{e}{\sim}(\widetilde w',z')$ and
$(\widetilde w',z')\stackrel{e}{\sim} (\widetilde w'',z'')$. We will prove that then
$(\widetilde{w},z)\stackrel{e}{\sim} (\widetilde w'',z'')$.
This will follow easily from the following claim.
\medskip

\noindent
{\bf Claim 2.} The vertices $w$ and $w''$ coincide or are connected by an edge in $X$.
\medskip

\noindent
{\bf Proof of Claim 2.}
By conditions ($P_i$) and ($Q_i$) we have the following. There exist simplices
$\widetilde{\rho},\widetilde{\sigma},\widetilde{\tau}\in \widetilde B_{i-1}$ such that
$\langle\widetilde{w} \cup \widetilde w' \cup \widetilde{\rho}\rangle,\langle\widetilde w'\cup \widetilde{\sigma}\rangle,
\langle\widetilde w' \cup \widetilde w'' \cup \widetilde{\tau}\rangle$ are simplices in $\wt B_i$. Observe that
$\widetilde{\rho}, \widetilde{\tau} \subseteq \widetilde{\sigma}$.

Consider the case $i=1$ for which $\sigma=\rho=\tau=v$. If $d(z,v)\leqslant 1$ then $z\in f_i((\wt B_1)_{\wt w})$ and thus
$(\wt w,z)\notin Z$ --- contradiction. Thus $d(z,v)=2$. If $d(w,w'')=2$ then
there exists a $4$--wheel $(w';v,w,z,w'')$ in $X$. This contradicts the condition $SD_2^{\ast}$ for $X$. Hence $d(w,w'')\leqslant 1$ and Claim 2 holds
in the case $i=1$.

Thus for the rest of the proof of Claim 2 we assume that $i\geqslant 2$.
Then there exists a vertex $\widetilde t\in \wt B_{i-2}$ joinable with $\wt {\sigma}$.
Assume that $d(w,w'')=2$ --- this will lead us to contradiction.

First observe that $d(z,u)=2$ for every vertex $u\in \sigma$. It is because if $d(z,u)\leqslant 1$ then $z\in f_i((\wt B_i)_{\wt w'})$ and hence $(\wt w',z) \notin Z$ --- a contradiction.
Moreover, by condition ($R_i$), we have $d(w,t)=d(w',t)=d(w'',t)=2$.

Choose two vertices: $u\in \rho$ and $u''\in \tau$.
We show that $d(w,u'')=2$. It is clear that $w\neq u''$ and if $d(w,u'')=1$ then there exists a $4$--wheel $(w';u'',w,z,w'')$ --- contradiction with the $SD_2^{\ast}$ property.
Similarly, we have $d(w'',u)=2$.
Thus there is a $5$--wheel with a pendant triangle $\widehat {W}_5=(w';u,u'',w'',z,w;t)$ in $X$.
By the condition $SD_2^{\ast}$ there exists in $X$ a vertex $w_c\neq w'$ such that
$\widehat {W}_5 \subseteq X_{w_c}$. By the condition $(R_i)$ we have that there exist a vertex $\wt u \in \wt{\tau}$ and a vertex $\wt{w_c}\in (\wt {B_i})_{\wt u}$ such that $f_i(\wt{w_c})=w_c$, $f_i(\wt u)=u$.

Observe that, by the condition ($R_i$) (for $B_1(\wt u,\wt B_i)$), we have
that $d(\wt w_c,\wt {w})= 1$ and $d(\wt w_c,{\wt t})= 1$.
Thus $d(\wt {w_c},\wt v)=i-1$.
Since $\langle w_c,z\rangle\in X$ we have that $z\in f_i((\wt{B_i})_{\wt{w}})$ and hence
$(\wt{w},z)\notin Z$ --- a contradiction. This shows that $d(w,w'')\neq 2$ and finishes the proof of Claim 2.
\medskip

We come back to the proof of Claim 1. Choose again $u\in \rho$ and $u''\in \tau$. If $w\nsim u''$ and $w''\nsim u$ (both in $X$) then we have a $4$--wheel
$(w';w,w'',u'',u)$ --- contradiction. W.l.o.g. we can thus assume that $w''\sim u$. Then, by condition ($R_i$) (for $B_1(\wt u ,\wt {B_i})$), we have that
$\langle\wt w,\wt {w''}\rangle\in \wt {B_i}$ and
$(\widetilde{w},z)\stackrel{e}{\sim} (\widetilde{w''},z'')$. This finishes the proof of Claim 1.
\medskip

We define the flag simplicial complex $\wt{B}_{i+1}$ as follows.
Its $0$--skeleton is by definition the set
$\wt{B}_{i+1}^{(0)}=B_{i}^{(0)}\cup (Z/\stackrel{e}{\sim})$.
Now we define the $1$--skeleton $\wt{B}_{i+1}^{(1)}$ of $\wt{B}_{i+1}$ as follows.
Edges between vertices of $\wt {B_i}$ are the same as in $\wt {B_i}$.
Moreover, for every $\wt w\in \wt{S_i}^{(0)}$, there are edges joining $\wt {w}$ with $[\wt {w},z] \in Z/\stackrel{e}{\sim}$ (here
$[\wt {w},z]$ denotes the equivalence class of $(\wt {w},z)\in Z$) and
the edges joining $[\wt {w},z]$ with $[\wt {w},z']$, for $\langle z,z'\rangle\in X$.
Having defined $\wt{B}_{i+1}^{(1)}$ the higher dimensional skeleta are
determined by the flagness property.
\medskip

Definition of the map $f_{i+1}\colon \wt{B}_{i+1}^{(0)}\to X$ is clear:
$f_{i+1}|_{\wt {B_i}}=f_i$ and $f_{i+1}([\wt {w},z])=z$.
We show that it can be simplicially extended.
It is enough to do it for simplices in $\wt{B}_{i+1}\setminus \wt {B_{i-1}}$.
Let
$\wt {\sigma}=\langle[\wt {w_{1}},z_1],\ldots,[\wt {w_{l}},z_l],\wt {w_1'},\ldots,\wt {w_m'}\rangle\in \wt{B}_{i+1}$ be a simplex.
Then, by definition of edges in $\wt{B}_{i+1}$, we have that
$\langle z_p,z_q\rangle\in X$ and $\langle z_r,w_s'\rangle\in X$, for $p,q,r\in \lk 1,2,\ldots,l\rk$
and $s\in \lk 1,2,\ldots,m \rk$.
Since $f_{i+1}([\wt {w_p},z_p])=z_p$, $f_{i+1}(\wt {w_s'})=w_s'$ and
since $f_i$ was simplicial, it follows that
$\lan \lk f_{i+1}(\wt {w})|\;\; \wt {w}\in \wt {\sigma}\rk \ran \in X$. Hence, by the simplicial extension, we can define the map $f_{i+1}\colon \wt{B}_{i+1}\to X$.
\medskip

Now we check that $\wt{B}_{i+1}$ and $f_{i+1}$ satisfy conditions ($P_{i+1}$), ($Q_{i+1}$) and ($R_{i+1}$).
\medskip

\noindent
{\bf Condition} ($P_{i+1}$). Since for every $[\wt {w},z]\in \wt{B}_{i+1}$ we have $d(v, [\wt {w},z])=i+1$, it is clear that $\wt {B_j}=B_j(v, \wt{B}_{i+1})$, for $j=3,4,\ldots,i+1$. Thus ($P_{i+1}$) holds.
\medskip
\medskip

\noindent
{\bf Condition} ($R_{i+1}$).
Observe that, since the property ($R_i$) is satisfied by $B_i$, it is enough to consider only vertices $\wt w\in \wt{B}_{i+1}\setminus \wt {B_{i-1}}$. We consider separately the two following cases.
\medskip

\noindent
\emph{(Case 1: $\wt w\in \wt {S_i}$.)}
First we prove injectivity of the map $f_{i+1}|_{B_1(\wt w, \wt{B}_{i+1})}$. Let $\wt x\neq \wt {x'}\in (\wt{B}_{i+1})_{\wt w}$ be two vertices.
If $\wt x,\wt {x'}\in \wt {S_i}$, then $f_{i+1}(\wt x)\neq
f_{i+1}(\wt {x'})\neq f_{i+1}(\wt w)\neq f_{i+1}(\wt {x})$, by local injectivity of $f_i$.
If $\wt x\in \wt {S_i}$ and $\wt {x'}=[\wt w,z]$ then
$f_{i+1}(\wt x)\neq
f_{i+1}(\wt {x'})=z\neq f_{i+1}(\wt w)\neq f_{i+1}(\wt {x})$, by local injectivity of $f_i$ and by the definition of the set $Z$ containing $z$.
If $\wt {x}=[\wt w,z]$ and $\wt {x'}=[\wt w,z']$ then
$f_{i+1}(\wt x)=z\neq
f_{i+1}(\wt {x'})=z'\neq f_{i+1}(\wt w)\neq f_{i+1}(\wt {x})=z$ by the definition of $Z$ and by the fact that $z\neq z'$.

Surjectivity of the map $f_{i+1}|_{B_1(\wt w, \wt{B}_{i+1})}$ follows from the fact that, for $z\in X_w\setminus f_i(B_1(\wt w, \wt{B_i}))$, we have $f_{i+1}([\wt w, z])=z$.
\medskip

\noindent
\emph{(Case 2: $\wt w=[\wt y,z]$.)}
Here we prove only injectivity of $f_{i+1}|_{B_1(\wt w, \wt{B}_{i+1})}$.
Let $\wt x\neq \wt {x'}\in (\wt{B}_{i+1})_{\wt w}$ be two vertices.
If $\wt x,\wt {x'}\in \wt {S_i}$, then $f_{i+1}(\wt x)\neq
f_{i+1}(\wt {x'})\neq f_{i+1}(\wt w)=z\neq f_{i+1}(\wt {x})$, by local injectivity of $f_i$ and by the definition of the set $Z$.
If $\wt x\in \wt {S_i}$ and $\wt {x'}=[\wt {y'},z']$ then
$f_{i+1}(\wt x)\neq
f_{i+1}(\wt {x'})=z'\neq f_{i+1}(\wt w)=z\neq f_{i+1}(\wt {x})$, by the definition of the set $Z$ and since $z\neq z'$.
If $\wt {x}=[\wt {y'},z']$ and $\wt {x'}=[\wt {y''},z'']$ then
$d(\wt {y'},\wt{y''})\leqslant 1$ and thus $z'\neq z''$. Since $z'\neq z\neq z''$ we have
$f_{i+1}(\wt x)=z'\neq
f_{i+1}(\wt {x'})=z''\neq f_{i+1}(\wt w)=z\neq f_{i+1}(\wt {x})=z'$,
\medskip

\noindent
{\bf Condition} ($Q_{i+1}$).
By the condition ($Q_i$) it is enough to show the following.
Let $\sigma$ be a simplex in $\wt {S_{i+1}}$. Then $(\wt{B}_{i+1})_{\sigma}\cap \wt{B_i}$ is a non-empty simplex (cf.\ Definition \ref{sdn(A)new}).

By the definition of edges in $\wt {S_{i+1}}$ it is clear that
$(\wt{B}_{i+1})_{\sigma}\cap \wt{B_i}$ is non-empty.
Thus, by Lemma \ref{sd vert}, it is now enough to show that for every vertex $[\wt {w},z]$, if
$\langle[\wt {w},z],\wt{w'}\rangle,\langle[\wt {w},z],\wt {w''}\rangle\in \wt{B}_{i+1}$ then
$\langle\wt {w'},\wt {w''}\rangle\in \wt{B}_{i+1}$.
If $\wt{w'},\wt{w''}$ are as above then, by definition of edges
in $\wt{B}_{i+1}$, we have $[\wt {w},z]=[\wt {w'},z]$ and
$[\wt {w},z]=[\wt {w''},z]$. Thus, by definition of $\stackrel{e}{\sim}$ we get
that $\langle\wt {w'},\wt {w''}\rangle\in \wt {B_i}$. Hence ($Q_{i+1}$) follows.
\medskip

Having established conditions ($P_{i+1}$), ($Q_{i+1}$) and ($R_{i+1}$) we conclude that, inductively, we construct a complex
$\wt X=\bigcup_{i=1}^{\infty} \wt {B_i}$ and a map
$f=\bigcup_{i=1}^{\infty} f_i\colon \wt X \to X$ with the following properties.
The complex $\wt X$ satisfies the property $SD_n(v)$ for every $n$ and the map $f$ is a covering map. Thus, by Proposition \ref{sdncontr}, the complex $\wt X$ is contractible and in particular it is the universal cover of $X$.
Since the vertex $v$ was chosen arbitrarily in our construction and since the universal cover of $X$ is unique it follows that $\wt X$ satisfies the property $SD_n(v)$ for every vertex $v$ and for every natural number $n$.
This finishes the proof of the theorem.
\end{proof}

\rem The above proof of the local-to-global result is a combinatorial version of the well known construction of the universal cover, as a quotient of a space of paths. Similar proof was first given in the systolic case in \cite[Section 4]{JS1}. In \cite{OCh} we present another proof of the Cartan-Hadamard theorem for weakly systolic complexes, basing on the minimal diagram technique. However, the assumptions there are stronger (cf.\ Remark after Definition~\ref{sd2*} above), hence the corresponding result is weaker than Theorem~\ref{logl} above. Moreover, the scheme for proving local-to-global statements presented in the proof above is useful in many other situations (where other methods seem not to work). We use it (nearly following the notations from here) in \cite{BCC+} for a class of complexes generalizing weakly systolic and CAT(0) cubical complexes (note however that the result there is again weaker if restricted to weakly systolic case). More intriguingly, the same method is used for a construction of the universal cover in a ``positive curvature" case in \cite{CCO}.

\section{Convexity}
\label{convex}

Convexity of some balls is a crucial property of weakly systolic complexes, implying many nonpositive-curvature-like properties. In this section we show few, rather technical results concerning convexity. They are used e.g.\ in Section \ref{neg}.

\begin{de}[Convexity]
\label{3convex}
A subcomplex $Z$ in a flag simplicial complex $X$ is
\emph{convex} if it is full, connected, and if for every two vertices $v,w\in Z$, each $1$--skeleton geodesic between $v$ and $w$ (in $X$) is contained in $Z$. The subcomplex
$Z$ is \emph{$3$--convex} if it
is full, and for any $1$--skeleton geodesic $(v_1, v_2, v_3)$ in $X$, if $v_1, v_3\in Z$ then $v_2\in Z$. A subcomplex $Z$ is \emph{locally $3$--convex} if it is full, and if for every vertex $v\in Z$ the link $Z_v$ is $3$--convex in the corresponding link $X_v$.
\end{de}

\rem The notion of $3$--convexity appears e.g.\ in \cite[Section 4]{Ch-class} (under the name ``\emph{local convexity}"), and in {\cite[Section 3]{JS1}}, where the term ``$3$--convexity" is introduced.

\begin{lem}[loc.\ convex $\equiv$ convex]
\label{lco>co}
For $Z$ being a subcomplex of a weakly systolic complex $X$, the following conditions are equivalent.

\begin{enumerate}
\item
$Z$ is convex.

\item
$Z$ is connected and locally $3$--convex.

\item
$Z$ is connected and $3$--convex.
\end{enumerate}
\end{lem}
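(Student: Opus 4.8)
The plan is to prove the cycle of implications $(1)\Rightarrow(3)\Rightarrow(2)\Rightarrow(1)$, of which the first is essentially immediate and the genuine work is in the last step. For $(1)\Rightarrow(3)$: a convex subcomplex is full by definition, and if $(v_1,v_2,v_3)$ is a geodesic in $X$ with $v_1,v_3\in Z$, then this geodesic lies in $Z$ by convexity, so $v_2\in Z$; thus $Z$ is $3$--convex. For $(3)\Rightarrow(2)$: I would show that $3$--convexity implies local $3$--convexity. Fix $v\in Z$; I must check that $Z_v$ is $3$--convex in $X_v$. A geodesic of length two in the link $X_v$ is a path $(a,b,c)$ with $a\sim b\sim c$, $a\nsim c$ in $X$, and all of $a,b,c\sim v$. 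If $a,c\in Z_v$ then $a,c\in Z$ and, crucially, $(a,v,c)$ is a geodesic in $X$ (since $a\nsim c$). By $3$--convexity of $Z$ applied to this geodesic we get $v\in Z$, which we already knew — so this needs a little more care: the right move is to apply $3$--convexity to deduce that $b\in Z$. Indeed $a\sim b\sim c$ and we want a geodesic with endpoints in $Z$ whose midpoint is $b$; since $a\nsim c$ one can take $(a,b,c)$ itself once $a,c\in Z$, giving $b\in Z$, hence $b\in Z_v$ (as $b\sim v$ and $Z$ is full). So $Z_v$ is $3$--convex, and $Z$ connected is hypothesized, giving $(2)$.

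**The main step, $(2)\Rightarrow(1)$.** This is where weak systolicity enters and is the real obstacle. Suppose $Z$ is connected and locally $3$--convex; I want to show that every $1$--skeleton geodesic between two vertices of $Z$ lies in $Z$ (fullness of $Z$ coming with local $3$--convexity). The natural approach is induction on $d(v,w)$ for $v,w\in Z$. The base cases $d\le 1$ are trivial. For the inductive step, take a geodesic $\gamma=(v=u_0,u_1,\dots,u_n=w)$; I must show each $u_j\in Z$. The plan is to use the projection/descent structure of weak systolicity: by Lemma~\ref{sd simpl}, $X$ satisfies $SD_n(v)$ for all $n$, so there is a well-defined nonempty simplex $\pi_v(\sigma)=X_\sigma\cap B_{k-1}(v)$ for $\sigma\in S_k(v)$. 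The idea is to ``push'' the geodesic toward $v$: consider the sphere $S_n(v)$ containing $w$ and the projection $\pi_v(w)$; one shows a vertex of $\pi_v(w)$ lies on some geodesic from $v$ to $w$ and, using local $3$--convexity at $w$ together with the fact that $\pi_v(w)$ is a simplex, that this projection vertex is forced into $Z$. Then $d(v,\pi_v(w))<n$, and $\pi_v(w)\in Z$, so by induction the geodesic from $v$ to $\pi_v(w)$ lies in $Z$; finally one needs to know that an arbitrary geodesic from $v$ to $w$ agrees (outside the last step) with a geodesic through $\pi_v(w)$, or more precisely that $u_{n-1}\in Z$ for every such geodesic. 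This last point uses that all vertices of $S_{n-1}(v)\cap B_1(w)$ that can occur as $u_{n-1}$ are related to the simplex $\pi_v(w)$ via the $SD$ structure, and local $3$--convexity of $Z$ at $w$ controls exactly this set inside the link $X_w$.

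**Where the difficulty concentrates.** The crux is showing that if $v,w\in Z$ with $d(v,w)=n$ and $Z$ is locally $3$--convex and connected, then the entire ``penultimate layer'' of geodesics — i.e.\ $S_{n-1}(v)\cap B_1(w)$, or at least its intersection with whatever geodesics actually occur — lands in $Z$. One expects to argue: $B_1(w)\cap S_{n-1}(v)$ spans a simplex $\pi_v(w)$ (by $SD$); $Z_w$ is $3$--convex in $X_w$; and any two vertices of $\pi_v(w)$ together with $w$ sit in a common simplex, so once one vertex of $\pi_v(w)$ is known to be in $Z$, fullness plus $3$--convexity of the link propagates membership to the rest — but getting that first vertex in requires an auxiliary descent argument, possibly a secondary induction or an appeal to convexity of balls themselves. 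I would also need the fact (or prove en route) that $Z$ being locally $3$--convex makes each $Z\cap B_i(v)$ behave well, so that the inductive hypothesis at distance $n-1$ is applicable to the pair $(v,u)$ for $u\in\pi_v(w)\subset Z$. This bookkeeping — coordinating the global induction on distance with the local link condition at $w$ and the $SD_n$ projection — is the part I expect to be delicate, and it is essentially the systolic-style argument of \cite[Section~3]{JS1} adapted to the weakly systolic setting via the wheel-free/$5$--wheel conditions underlying weak systolicity.
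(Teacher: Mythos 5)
Your easy implications are fine: $(1)\Rightarrow(3)$ is immediate, and your $(3)\Rightarrow(2)$ argument (a length-two geodesic $(a,b,c)$ in $X_v$ is, by flagness, a length-two geodesic in $X$, so $3$--convexity of $Z$ puts $b$ in $Z$ and fullness puts it in $Z_v$) is correct. But the whole weight of the lemma sits on your $(2)\Rightarrow(1)$, and there you have only a plan with the key mechanism missing --- and you say so yourself (``getting that first vertex in requires an auxiliary descent argument''). Local $3$--convexity at $w$ can only be invoked once you already have two \emph{non-adjacent} neighbors of $w$ inside $Z$, and nothing in your setup produces even one neighbor of $w$ in $Z$ that lies closer to $v$: connectedness of $Z$ gives a path in $Z$ from $v$ to $w$, but its vertex adjacent to $w$ may be at distance $n$ or $n+1$ from $v$, so the projection $\pi_v(w)$ cannot be ``seeded'' into $Z$ this way. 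The idea that fills this hole in the paper is a global minimality argument on paths \emph{inside} $Z$: by connectedness choose a full (induced) path $\gamma=(v_0,\dots,v_l)$ in $Z$ joining the two endpoints of the geodesic, minimizing $S(\gamma)=\sum_i d(v,v_i)$; if some $v_i$ has $d(v,v_i)\geqslant 2$, take $i$ with $d(v,v_i)$ maximal and use the vertex condition (V) of Definition~\ref{sdn(A)} (via Lemma~\ref{sd simpl}) to find $v_i'$ one step closer to $v$ and adjacent to $v_{i-1},v_{i+1}$; since $\gamma$ is full, $v_{i-1}\nsim v_{i+1}$, so local $3$--convexity at $v_i$ forces $v_i'\in Z$, and replacing $v_i$ by $v_i'$ strictly decreases $S$ --- contradiction. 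This is applied at \emph{interior} vertices of the path, not at $w$, which is exactly what your sketch lacks.

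A second, independent point: even after this descent one only obtains $3$--convexity of $Z$, and the remaining step ``connected $+$ $3$--convex $\Rightarrow$ convex'' is itself not formal. The paper does not reprove it but quotes Chepoi's result (\cite{Ch-class}, Theorem 7) for weakly modular graphs, weakly bridged graphs being weakly modular. Your intended induction on $d(v,w)$ would work \emph{if} you could show that for every geodesic from $v$ to $w$ with $v,w\in Z$ the penultimate vertex lies in $Z$, but establishing that is essentially the content of the quoted theorem (it needs the triangle/quadrangle-type conditions, not just local $3$--convexity at $w$). So as written the proposal has a genuine gap at its central implication; it would become a proof only after importing either the minimal-path descent argument plus the weakly modular convexity theorem, or a full replacement for both.
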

\dow
The implications (1)$\Rightarrow$(2) and (1)$\Rightarrow$(3) are obvious.
\medskip

\noindent
(2)$\Rightarrow$(3).
Let $(z,v,w)$ be a $1$--skeleton geodesic in $X$ with $w,z\in Z$. By connectedness, there is a $1$--skeleton path $\gamma=(v_0=w,v_1,\ldots,v_l=z)$ being a full subcomplex of $Z$. Assume that $\gamma$ is chosen so that the sum $S(\gamma)=\sum_{i=0}^{l} d(v,v_i)$
is minimal (among full paths connecting $z$ and $w$ in $Z$).
If $l=2$ then, by the property $SD_1(w)$ we have that $v_1\sim v$ and thus, by the local $3$--convexity (at vertex $v_1$) $v\in Z$.

Thus further we assume that $l\geqslant 3$. If $v_i=v$ for some $i$ then we are done. Assume this is not the case. If $\gamma \subseteq X_v$ then, by the local $3$--convexity at $v_1$ we get $v\in Z$. Hence, it remains to consider the case when $d(v_i,v)\geqslant 2$, for some $i$. Let $i$ be the smallest number such that $n=d(v,v_i)$ is maximal. Then $d(v_{i-1},v)=n-1$ and, by the $SD_{n-1}(v)$ property, there exists a vertex $v_{i}'\in S_{n-1}(v,X)$ such that $v_i'\sim v_{i-1},v_{i+1}$. By the local $3$--convexity of $Z$ (at $v_i\in Z$) we have that $v_i'\in Z$.
Consider now the path $\gamma'=(v_0,v_1,v_{i-1},v_i',v_{i+1},\ldots,v_l)$. One can find a full path $\gamma''$ containing some (possibly all) vertices of $\gamma'$, and no other vertices.
However, then we have
\begin{align*}
&S(\gamma'')\leqslant S(\gamma')=\\
&=d(v,v_0)+\cdots + d(v,v_{i-1})+d(v,v_i')+d(v,v_{i+1})+\cdots +d(v,v_l) = S-1.
\end{align*}
This contradicts the choice of $\gamma$.
\medskip

\noindent
(3)$\Rightarrow$(1).
 Connectedness together with the $3$--convexity imply convexity by \cite[Theorem 7]{Ch-class} (weakly bridged graphs are obviously \emph{weakly modular}; cf.\ \cite{OCh}).
\kon

\begin{cor}
\label{ballvert}
Balls around vertices in weakly systolic complexes are convex.
\end{cor}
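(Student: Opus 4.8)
The plan is to reduce the statement to Lemma~\ref{lco>co}, which does the heavy lifting: it asserts that for a subcomplex $Z$ of a weakly systolic complex, convexity is equivalent to being connected together with $3$--convexity. So I would fix a vertex $v$ and a radius $i$, set $Z=B_i(v,X)$, and just verify that $Z$ is connected and $3$--convex; convexity then follows at once from the implication (3)$\Rightarrow$(1) of that lemma.

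Connectedness is essentially free: $B_i(v,X)$ is by definition a full subcomplex, and any vertex $w$ in it is joined to $v$ by a $1$--skeleton geodesic all of whose vertices lie at distance $\le i$ from $v$, hence in $Z$. So the real content is $3$--convexity. Here I would take a $1$--skeleton geodesic $(v_1,v_2,v_3)$ in $X$ with $v_1,v_3\in Z$ and argue by contradiction that $v_2\in Z$. Since $v_2\sim v_1$ we have $d(v,v_2)\le i+1$; if $v_2\notin Z$ then $d(v,v_2)=i+1$, which forces $d(v,v_1)=d(v,v_3)=i$ and puts $v_2$ in $S_{i+1}(v)$. Now I would invoke weak systolicity: $X$ satisfies $\widetilde{SD}_{i+1}(v)$, so the vertex condition says that $X_{v_2}\cap B_i(v)$ is a single simplex. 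But $v_1$ and $v_3$ both lie in it (each is a neighbour of $v_2$ at distance $i$ from $v$), so $v_1\sim v_3$ --- contradicting that $(v_1,v_2,v_3)$ is a geodesic. Hence $v_2\in Z$, so $Z$ is $3$--convex.

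The only point requiring a little care is the bookkeeping of indices: the property $\widetilde{SD}_n(v)$ concerns the spheres $S_{j+1}(v)$ for $j\le n$, so one must check that the relevant sphere index $i+1$ is covered --- which it is, since weak systolicity provides $\widetilde{SD}_n(v)$ for every $n$. No other obstacle is expected; the corollary is a direct consequence of Lemma~\ref{lco>co} together with the single-simplex vertex condition built into the definition of weak systolicity. (Alternatively one could verify local $3$--convexity of $B_i(v)$ link by link and quote the equivalence (2)$\Leftrightarrow$(1) of Lemma~\ref{lco>co}, but the direct $3$--convexity check sketched above is shorter.)
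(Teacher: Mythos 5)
Your proposal is correct and matches the paper's argument: the paper also observes that balls around vertices are connected and that the vertex condition (V) of Definition~\ref{sdn(A)} gives $3$--convexity immediately, then concludes via Lemma~\ref{lco>co}. Your write-up merely spells out the distance bookkeeping that the paper leaves implicit.
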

\dow
Clearly, balls around vertices are connected. Vertex condition (V) from Definition~\ref{sdn(A)} implies immediately their $3$--convexity.
\kon

\begin{lem}
\label{conv>contr}
A convex subcomplex of a weakly systolic complex is weakly systolic.
\end{lem}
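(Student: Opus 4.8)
The plan is to verify the defining property of weak systolicity directly for a convex subcomplex $Z$ of a weakly systolic complex $X$, namely that $Z$ is flag and satisfies $\wt{SD}_n(v)$ for every vertex $v\in Z$ and every $n$. Flagness of $Z$ is immediate: $Z$ is full in $X$ (being convex), and a full subcomplex of a flag complex is flag. So the substance is the edge condition (E) and the vertex condition (V) for balls $B_i(v,Z)$ around a vertex $v\in Z$.

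First I would record the key point that convexity makes intrinsic and ambient distances agree: for $v,w\in Z$, every $1$--skeleton geodesic in $X$ between them lies in $Z$, so $d_Z(v,w)=d_X(v,w)$, and consequently $B_i(v,Z)=B_i(v,X)\cap Z$ and $S_i(v,Z)=S_i(v,X)\cap Z$. This is what lets us transport the $\wt{SD}_n(v)$ data of $X$ down to $Z$. Now take $i\geqslant 1$ and a vertex $w\in S_{i+1}(v,Z)=S_{i+1}(v,X)\cap Z$. By the vertex condition (V) for $X$ (with $A=\{v\}$), the intersection $X_w\cap B_i(v,X)$ is a single simplex $\tau$. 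I claim $Z_w\cap B_i(v,Z)=\tau$. Indeed, $Z_w\cap B_i(v,Z)\subseteq X_w\cap B_i(v,X)=\tau$. Conversely, let $u$ be a vertex of $\tau$; then $u\sim w$, $u\in B_i(v,X)$, and $d(v,w)=i+1$, so $d(v,u)=i$ (it is $\geqslant i$ since $d(v,w)=i+1$, and $\leqslant i$ by hypothesis) — wait, $u\in B_i(v,X)$ gives $d(v,u)\leqslant i$, and $d(v,u)\geqslant d(v,w)-1=i$, hence $d(v,u)=i$; then $(v\text{-geodesic to }u)$ followed by the edge $uw$ is a geodesic from $v$ to $w$ in $X$, so by convexity $u$ and the edge $uw$ lie in $Z$. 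Thus every vertex of $\tau$ lies in $Z_w\cap B_i(v,Z)$, and since $\tau$ is a simplex of $X$ with all vertices in the full subcomplex $Z$, $\tau$ itself lies in $Z$; this gives $Z_w\cap B_i(v,Z)=\tau$, a single simplex, so (V) holds for $Z$. For the edge condition (E), take an edge $e=\langle a,b\rangle\in S_{i+1}(v,Z)$; since $e\in S_{i+1}(v,X)$ as well, condition (E) for $X$ gives a vertex $u\in X_e\cap B_i(v,X)$, i.e.\ $u\sim a,b$ with $d(v,u)\leqslant i$; exactly as above $d(v,u)=i$ and $u$, together with the edges $ua$ and $ub$, lies on geodesics from $v$ to $a$ and from $v$ to $b$, hence in $Z$ by convexity, so $u\in Z_e\cap B_i(v,Z)$, which is therefore non-empty.

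Having verified flagness, (E) and (V) for all $i\leqslant n$ and all $v\in Z$, and all $n$, we conclude that $Z$ is weakly systolic by Definition~\ref{sdn}. I expect the only delicate point to be the bookkeeping around $B_i(v,Z)=B_i(v,X)\cap Z$ and the repeated ``$d(v,u)=i$ hence $u$ lies on a $v$--$w$ geodesic, hence $u\in Z$'' argument; everything else is bookkeeping with fullness. One should take a little care that the witness vertex/simplex found in $X$ actually has all of its relevant vertices in $Z$ before invoking fullness to pull in the higher simplex — but this is exactly what the geodesic argument plus convexity delivers.
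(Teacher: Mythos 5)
Your proof is correct and follows essentially the same route as the paper, whose own proof is just the one-line assertion that the lemma "follows easily from the definition of convexity and from Definition~\ref{sdn(A)}"; you have simply spelled out those details (distance agreement $d_Z=d_X$ via convexity, then pulling the witness vertex/simplex for (E) and (V) into $Z$ by the geodesic argument and fullness). No gaps.
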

\dow
This follows easily from the definition of convexity and from Definition \ref{sdn(A)} of the property $\wt {SD}_n(A)$.
\kon

\rem In general balls around simplices in weakly systolic complexes are not convex. As an example consider the ball of radius one around an edge in the $5$--wheel, not adjacent to the central vertex. The following results establish convexity of some particular balls in weakly systolic complexes, and the corresponding simple descent properties.

\begin{lem}[Edges descend on balls]
\label{edge_desc}
Let $\sigma$ be a simplex of a weakly systolic complex $X$.
Let $e=zz'$ be an edge contained in the sphere $S_i(\sigma).$
Then there exists a vertex $w\in \sigma$ and a vertex $v\in B_{i-1}(\sigma)$ such that $v$
is adjacent to $z,z'$ and $d(v,w)=i-1.$
\end{lem}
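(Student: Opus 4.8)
The plan is to reduce the statement to the ``vertex'' case, i.e.\ to the edge condition of $\wt{SD}$ around a single vertex. First I would produce a vertex $w\in\sigma$ with $d(w,z)=d(w,z')=i$, so that the edge $e$ lies in the sphere $S_i(w,X)$. Granting this, for $i\geqslant 2$ the edge condition (E) of $\wt{SD}_{i-1}(w)$ (valid since $X$ is weakly systolic) provides a vertex $v\in X_e\cap B_{i-1}(w)$, while for $i=1$ one simply takes $v=w$. In either case $v\sim z,z'$ and $v\in B_{i-1}(w)\subseteq B_{i-1}(\sigma)$, and since $v\sim z$ and $d(z,w)=i$ we must have $d(v,w)=i-1$; thus $w\in\sigma$ and $v$ are as required, and the entire weight of the proof falls on finding such a $w$.

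To find $w$, put $W_z=\{u\in\sigma\mid d(u,z)=i\}$ and $W_{z'}=\{u\in\sigma\mid d(u,z')=i\}$. Both sets are non-empty because $d(z,\sigma)=d(z',\sigma)=i$, and it is enough to show $W_z\cap W_{z'}\neq\emptyset$. Suppose this fails. Choose $w\in W_z$ and $w'\in W_{z'}$; since $W_z\cap W_{z'}=\emptyset$ these are distinct vertices of the simplex $\sigma$, hence $w\sim w'$. From $z\sim z'$ we get $i\leqslant d(w,z')\leqslant d(w,z)+1=i+1$, and $w\notin W_{z'}$ forces $d(w,z')=i+1$; symmetrically $d(w',z)=i+1$. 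Let $a$ be the second vertex of a geodesic from $z$ to $w$, so $a\sim z$ and $d(a,w)=i-1$ (for $i=1$, $a=w$).

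Now I would split into two cases. If $a\sim z'$, then $a\in X_{z'}\cap B_i(w)$; but every neighbour of $z'$ lies at distance at least $d(z',w)-1=i$ from $w$, so $d(a,w)\geqslant i$, contradicting $d(a,w)=i-1$. If $a\nsim z'$, then $a\sim z$ together with $d(z,w')=i+1$ gives $i\leqslant d(a,w')\leqslant d(a,w)+d(w,w')=i$, so $d(a,w')=i$; hence $a$ and $z'$ both belong to $X_z\cap B_i(w')$, which by the vertex condition (V) of $\wt{SD}_i(w')$ (note $z\in S_{i+1}(w')$) is a single simplex, forcing $a\sim z'$ --- again a contradiction. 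Hence $W_z\cap W_{z'}\neq\emptyset$, which completes the proof.

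I expect the only genuine obstacle to be the intersection claim $W_z\cap W_{z'}\neq\emptyset$. This is precisely where combinatorial non-positive curvature is used --- via the vertex condition applied to the ``far'' vertex $z$ and a candidate vertex $w'$ of $\sigma$ --- and it genuinely fails for arbitrary flag complexes: it breaks already for the graph consisting of an edge $ww'$, an edge $zz'$, and two internally disjoint paths of length $i$, one from $w$ to $z$ and one from $w'$ to $z'$, in which $z$ and $z'$ have no common neighbour at all. Everything else in the argument is the triangle inequality together with a direct appeal to the edge condition of weak systolicity.
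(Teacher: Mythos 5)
Your proof is correct, and its skeleton matches the paper's: both reduce the lemma to finding a vertex $w\in\sigma$ with $d(w,z)=d(w,z')=i$ and then invoke the edge condition (E) of Definition~\ref{sdn(A)}, and both prove the existence of such a $w$ by contradiction, starting from two witnesses $w,w'\in\sigma$ with $d(w,z')=d(w',z)=i+1$. The difference lies in how the contradiction is reached. The paper notes that $z$ lies on a geodesic from $w$ to $z'$, that $w,z'\in B_i(w')$ while $z\notin B_i(w')$, and appeals to convexity of balls around vertices (Corollary~\ref{ballvert}, which rests on Lemma~\ref{lco>co}, whose implication (3)$\Rightarrow$(1) in turn cites an external theorem). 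You instead introduce the neighbour $a$ of $z$ on a geodesic to $w$ and apply the vertex condition (V) at $w'$ directly (after ruling out $a\sim z'$ by the triangle inequality). Your route is thus entirely self-contained --- only the definition of weak systolicity plus distance estimates --- at the cost of a slightly longer case analysis, whereas the paper's is shorter because convexity of vertex-balls is already on record at that point. A small bonus of your write-up is the explicit treatment of $i=1$ (taking $v=w$), which is genuinely needed since condition (E) only governs edges in spheres of radius at least $2$; the paper passes over this trivial case silently. (The only cosmetic omission is that in your Case 2 you use $a\neq z'$ when concluding $a\sim z'$ from the single-simplex property, but this is immediate from $d(a,w)=i-1\neq i+1=d(z',w)$.)
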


\begin{proof}
If there exists a vertex $w\in \sigma$ such that $z,z'\in S_i(w)$ then
the assertion follows from the edge condition (E) of Definition~\ref{sdn(A)} of the property $\wt{SD}_i(w)$. Thus further we assume that such a vertex of $X$ does not exists. Let $w,w'$  be two
vertices of $\sigma$ with $d(w,z)=d(w',z')=i.$  Since $d(w',z)=d(w,z')=i+1,$ we
conclude that $z$ belongs to a geodesic connecting $w$ and $z'$. Since $w,z'\in B_i(w')$ and $z\notin B_i(w'),$ this contradicts
the convexity of $B_i(w').$
\end{proof}

\begin{lem}[Big balls are convex]
\label{bbac}
Let $\sigma$ be a simplex of a weakly systolic complex $X$ and let $i\geqslant 2.$ Then
the ball $B_i(\sigma)$ is convex. In particular, $B_i(\sigma)\cap X_z$ is a simplex for any vertex
$z\in S_{i+1}(\sigma)$.
\end{lem}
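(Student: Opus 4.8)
The plan is to prove convexity of $B_i(\sigma)$ for $i\geqslant 2$ by using the characterization of convexity from Lemma~\ref{lco>co}: since $B_i(\sigma)$ is clearly connected and full, it suffices to show it is $3$--convex. So I would take a $1$--skeleton geodesic $(v_1,v_2,v_3)$ in $X$ with $v_1,v_3\in B_i(\sigma)$ and argue that $v_2\in B_i(\sigma)$. The only way this can fail is if $d(v_2,\sigma)=i+1$, so assume $v_1,v_3\in S_i(\sigma)$ and $v_2\in S_{i+1}(\sigma)$, aiming at a contradiction. First I would apply Lemma~\ref{edge_desc} to the two edges $v_1v_2$ and $v_2v_3$ (both lie in $S_{i+1}(\sigma)$, since $d(v_1,v_3)=2$ forces $d(v_i,\sigma)\in\{i,i+1\}$ for the relevant endpoints — actually one checks $v_1,v_3\in S_i(\sigma)$ after noting neither can be in $S_{i-1}(\sigma)$ or they'd pull $v_2$ into $B_i(\sigma)$): well, rather I would work directly with $v_2\in S_{i+1}(\sigma)$ and the vertices $v_1,v_3\in B_i(\sigma)\cap X_{v_2}$.

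The key step is to use the property $\wt{SD}_{i}$: more precisely, since $i\geqslant 2$ we have that $\wt B_i(\sigma)$-type descent is available, and in particular the vertex condition would give that $X_{v_2}\cap B_i(\sigma)$ is a single simplex were $v_2\in S_{i+1}(A)$ for $A=\sigma$ — but $\sigma$ is a simplex, not a vertex, so $\wt{SD}$ is not directly available; instead I would pick a vertex $w\in\sigma$ realizing $d(w,v_2)=i+1$ and, applying Lemma~\ref{edge_desc} to the edges $v_1v_2,v_2v_3\in S_{i+1}(w)$ if they lie there, or more robustly, I would use the convexity of the ball $B_i(w)$ around a vertex (Corollary~\ref{ballvert}): choose $w\in\sigma$ with $d(w,v_2)$ maximal. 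If $d(w,v_2)=i+1$ then $v_1,v_3\in B_i(w)$ while $v_2\notin B_i(w)$, contradicting convexity of $B_i(w)$ directly. So the remaining case is $d(w,v_2)\leqslant i$ for every $w\in\sigma$, i.e.\ $v_2\in B_i(\sigma)$ after all — contradiction with $v_2\in S_{i+1}(\sigma)$. Hence $3$--convexity holds, and Lemma~\ref{lco>co} gives convexity.

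For the ``in particular'' clause, convexity of $B_i(\sigma)$ together with Lemma~\ref{conv>contr} makes $B_i(\sigma)$ weakly systolic, and for a vertex $z\in S_{i+1}(\sigma)$ the set $B_i(\sigma)\cap X_z$ is the link in $X$ of $z$ intersected with a convex (hence $3$--convex, hence full and ``locally'' well-behaved) subcomplex; by Corollary~\ref{ballvert}-style reasoning — more precisely since $z$ has a neighbor in $B_i(\sigma)$ (take a geodesic from $z$ to $\sigma$) and $B_i(\sigma)$ is convex, any two neighbors $y,y'$ of $z$ in $B_i(\sigma)$ span an edge by $3$--convexity applied to the geodesic $y,z,y'$ (it has length $2$ unless $y\sim y'$, and if it had length $2$ then $z\in B_i(\sigma)$ forces nothing — rather: $y,y'\in B_i(\sigma)$ and $B_i(\sigma)$ $3$--convex gives $z\in B_i(\sigma)$ only if $y,z,y'$ is a geodesic, which is false precisely when $y\sim y'$), so $X_z\cap B_i(\sigma)$ has pairwise-adjacent vertices and, by flagness, is a simplex; it is nonempty as just noted.

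I expect the main obstacle to be the first clause: handling the fact that $\wt{SD}_n$ as stated in the excerpt is only assumed around \emph{vertices} (Definition~\ref{sdn}), not around arbitrary simplices, so one cannot invoke a ``vertex condition for $B_i(\sigma)$'' directly. The clean route around this is exactly the reduction to convexity of balls around \emph{vertices} $w\in\sigma$ via a maximal-distance choice, together with Lemma~\ref{edge_desc}; verifying that the edge $v_1v_2$ (resp.\ $v_2v_3$) actually lies in a sphere $S_{i+1}(w)$ for a well-chosen $w\in\sigma$, or else that some $w\in\sigma$ already witnesses $v_1,v_3\in B_i(w)\not\ni v_2$, is the delicate bookkeeping step. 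The hypothesis $i\geqslant 2$ should enter precisely in ensuring there is ``enough room'' — for $i=1$ the ball $B_1(\sigma)$ around an edge in a $5$--wheel is a genuine counterexample (noted in the Remark before the lemma), so the argument must break there, and indeed the maximal-$w$ trick combined with Lemma~\ref{edge_desc}'s descent to radius $i-1\geqslant 1$ only produces a usable intermediate vertex when $i\geqslant 2$.
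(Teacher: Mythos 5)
Your reduction of the first clause to $3$--convexity via Lemma~\ref{lco>co}, and your derivation of the ``in particular'' clause from convexity plus flagness, both match the paper and are fine. But the core step of your convexity argument has a genuine gap: from $v_1,v_3\in B_i(\sigma)$ you conclude $v_1,v_3\in B_i(w)$ for your chosen vertex $w\in\sigma$ with $d(w,v_2)$ maximal. This does not follow. The ball $B_i(\sigma)$ is the \emph{union} of the vertex balls $B_i(u)$, $u\in\sigma$, not their intersection, so membership in $B_i(\sigma)$ only guarantees that each of $v_1,v_3$ is within distance $i$ of \emph{some} vertex of $\sigma$ --- possibly different ones, and possibly neither equal to your $w$. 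The genuinely hard configuration, which your dichotomy silently assumes away, is exactly $d(v_1,u)=i$, $d(v_1,v)=i+1$ and $d(v_3,v)=i$, $d(v_3,u)=i+1$ for two distinct vertices $u,v\in\sigma$ (already possible when $\sigma$ is an edge); then no single vertex ball $B_i(w)$ contains both $v_1$ and $v_3$ while excluding $v_2$, and Corollary~\ref{ballvert} yields no contradiction. (Your case analysis is also incomplete on its own terms, since the maximal distance $d(w,v_2)$ can be $i+2$.)

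This mixed case is where all the work in the paper's proof lies: after disposing of the case $u=v$ essentially as you do, it uses the edge condition (E) and vertex condition (V) of $\wt{SD}_n(u)$ and $\wt{SD}_n(v)$ to manufacture auxiliary vertices ($w$ adjacent to $z$ and $y$ at distance $i$ from $u$, then $u'$ adjacent to $w,x$ at distance $i-1$ from $u$, then a neighbor $u''$ of $u$ on a geodesic from $u'$ to $u$), and it invokes the absence of $4$--wheels to exclude one subcase; the hypothesis $i\geqslant 2$ enters precisely in guaranteeing $d(u',u)=i-1\geqslant 1$ so that $u''$ exists, which is consistent with the $5$--wheel counterexample for $i=1$. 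So the proposal is not a correct proof as it stands: it needs an argument handling the case where the two endpoints of the geodesic project to different vertices of $\sigma$, which is the substance of the lemma.
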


\begin{proof}
To prove convexity it is enough, by Lemma~\ref{lco>co}, to prove local $3$--convexity. Let a vertex $z$ be adjacent to $x,y\in B_i(\sigma)$, with $d(x,y)=2$. We have to show that $z\in B_i(\sigma)$. Suppose by way of contradiction that $z\in S_{i+1}(\sigma)$.
Let $u$ and $v$ be  vertices
of $\sigma$ located at distance $i$ from $x$ and $y$, respectively. If $u=v$ then, by the property $\wt{SD_i}(u)$, the vertices $x,y$ must be adjacent. So, suppose that $u\neq v$, and $d(y,u)=d(z,u)=i+1$. By the edge condition (E) (of Definition~\ref{sdn(A)}), there exists a common neighbor $w$ of
$z$ and $y$, at distance $i$ from $u$.
Then, by vertex condition (V) (applied to $z$ and $u$), the vertices $x$ and $w$ are adjacent.
Again by edge condition (E), there exists a common neighbor $u'$ of $w$ and $x$ at distance $i-1$ from $u$.  If $d(w,v)=i+1$
then $y$ and $u'$ must be adjacent, by vertex condition (V) (for vertices $w$ and $v$). As a result, we obtain a $4$--cycle
defined by $x,z,y,u'$. Since $d(z,u)=i+1$ and $d(u',u)=i-1$, vertices $z$ and $u'$ cannot be adjacent, thus vertices $w,x,y,z,u'$ span a $4$--wheel, which is impossible.
Hence $d(w,v)=i$. Let $u''$ be a neighbor of $u$ on the geodesic between $u'$ and $u$ (it is possible that $u''=u'$). Since $d(y,u)=i+1$
and $d(u',u)=i-1, d(u',y)=2$, we conclude that $u'$ belongs to a geodesic between $y$ and $u$, implying that $u''$ belongs to a geodesic between $u$ and $y$. Since $v$ also belongs
to the latter geodesic, by vertex condition (V), the vertices $u''$ and $v$ must be adjacent. But in this case $d(x,v)=1+d(u',u'')+1=i$,
contrary to the assumption that $d(x,v)=i+1$.  This contradiction shows that $B_i(\sigma)$ is convex for any $i\geqslant 2.$
\end{proof}

\begin{prop}[$SD_n$ property for maximal simplices]
\label{SD_n-max}
A weakly systolic complex satisfies the property $SD_n(\tau)$ for any maximal simplex $\tau$ and every $n$.
\end{prop}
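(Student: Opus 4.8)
The plan is to reduce the statement, via Lemma~\ref{sd vert} and the inductive scheme of Lemma~\ref{sd simpl}, to verifying for $\tau$ and every radius the two conditions of $\wt{SD}$: the edge condition \textbf{(E)} and the vertex condition \textbf{(V)}. Indeed, once these hold for $\tau$ at all radii, the full property $SD_n(\tau)$ follows by repeating verbatim the argument of Lemma~\ref{sd simpl}: that proof uses \textbf{(E)} and \textbf{(V)} only as the base cases $\dim\sigma=0,1$ of an induction on $\dim\sigma$, together with weak systolicity of the ambient $X$ (through an analysis of $4$--cycles), and nothing there is specific to balls around a single vertex. So I would concentrate on \textbf{(E)} and \textbf{(V)}.

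Condition \textbf{(E)} is immediate from Lemma~\ref{edge_desc} applied with radius $i+1$: for an edge $e=zz'\subseteq S_{i+1}(\tau)$ it yields a vertex of $B_i(\tau)$ adjacent to both $z$ and $z'$. Condition \textbf{(V)} for radius $i\geq 2$ is exactly the second assertion of Lemma~\ref{bbac} (``$B_i(\sigma)\cap X_z$ is a simplex for $z\in S_{i+1}(\sigma)$''); non-emptiness of that intersection is trivial, the second vertex of a $1$--skeleton geodesic from $z$ to $\tau$ lying in $X_z\cap S_i(\tau)$. Observe that up to here maximality of $\tau$ has played no role --- consistently with the fact that, by the Remark after Lemma~\ref{conv>contr}, \textbf{(V)} at radius $1$ genuinely fails for non-maximal simplices.

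So the crux is \textbf{(V)} at radius $1$: for $v\in S_2(\tau)$ the set $X_v\cap B_1(\tau)$ is a single (non-empty) simplex. Non-emptiness is as above. Since $d(v,w)\geq 2$ for all $w\in\tau$, one has $X_v\cap B_1(\tau)=\bigcup_{w\in\tau}(X_v\cap X_w)$; for $w$ with $d(v,w)=2$ the set $X_v\cap X_w$ equals $X_v\cap B_1(w)$, hence is a single simplex by the vertex condition of $\wt{SD}_1(w)$, while for $d(v,w)=3$ it is empty. It remains to see that this union of simplices is a clique. Take $y_1\in X_v\cap X_{w_1}$ and $y_2\in X_v\cap X_{w_2}$; if $y_1,y_2$ have a common neighbour in $\tau$ then $y_1\sim y_2$ by $\wt{SD}_1$ at that vertex, so assume the faces $X_{y_1}\cap\tau$ and $X_{y_2}\cap\tau$ are disjoint, pick $w_1,w_2$ in them respectively, and suppose $y_1\nsim y_2$. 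Then $(v,y_1,w_1,w_2,y_2)$ is a full $5$--cycle. A general fact about weakly systolic complexes --- obtained by applying the edge condition of $\wt{SD}_2(v)$ to the edge $w_1w_2\subseteq S_2(v)$ and then the vertex condition twice --- is that every full $5$--cycle has a vertex adjacent to all of its vertices; call it $b$. Then $b\notin\tau$ and $b\sim w_1,w_2$; when $\dim\tau=1$ this already contradicts the maximality of $\tau$, and in general one propagates: the faces $X_b\cap\tau\supseteq\{w_1,w_2\}$ grow under repeated use of this construction, eventually forcing a vertex adjacent to all of $\tau$.

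I expect this last propagation to be the main obstacle: when $\dim\tau\geq 2$ a single $5$--wheel hub is adjacent to only two vertices of $\tau$, so it does not by itself contradict maximality, and a careful extremal (or inductive) argument over the faces $X_y\cap\tau$, $y\in X_v\cap B_1(\tau)$, is needed to finish --- this being the one point where maximality of $\tau$ is truly used. Everything else reduces to Lemma~\ref{edge_desc}, Lemma~\ref{bbac}, and the already-established $\wt{SD}$ properties at vertices.
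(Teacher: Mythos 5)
Your reduction is sound as far as it goes: condition \textbf{(E)} at every radius is indeed Lemma~\ref{edge_desc}, condition \textbf{(V)} at radii $i\geqslant 2$ is the second assertion of Lemma~\ref{bbac}, both legitimately available at this point, and re-running the induction of Lemma~\ref{sd simpl} with $\tau$ in place of a vertex is plausible. But the step you yourself flag --- \textbf{(V)} at radius $1$ when $\dim\tau\geqslant 2$ --- is a genuine gap, and the sketched ``propagation'' does not close it. The hub $b$ obtained from the full $5$--cycle $(v,y_1,w_1,w_2,y_2)$ is merely one more vertex of $X_v\cap B_1(\tau)$ whose trace on $\tau$ contains $\{w_1,w_2\}$; nothing in the construction forces traces to keep growing up to all of $\tau$, and a larger trace for $b$ says nothing by itself about the adjacency of the original pair $y_1,y_2$. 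No extremal or inductive scheme is actually specified, so the one place where maximality of $\tau$ must be used is exactly the place left open.

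For comparison, the paper does not split off radius $1$ at all: it proves $SD_n(\tau)$ directly for an arbitrary simplex $\sigma\subseteq S_{i+1}(\tau)$. The metric projections $\tau(v)=\{u\in\tau:\ d(u,v)=i+1\}$, $v\in\sigma$, are linearly ordered by inclusion (this uses convexity of balls around vertices, Corollary~\ref{ballvert}), so $\sigma\subseteq S_{i+1}(u)$ for a common $u\in\tau$ and non-emptiness of $X_\sigma\cap B_i(\tau)$ follows from $SD_n(u)$. Maximality of $\tau$ enters only once: for $v\in\sigma$ with maximal projection one must have $\tau(v)\neq\tau$, since otherwise $SD_n(v)$ applied to $\tau\subseteq S_{i+1}(v)$ would produce a common neighbour of $\tau$, contradicting maximality; then any $s\in\tau\setminus\tau(v)$ has $d(v,s)=i+2$, and any two vertices $x,y\in S_i(\tau)$ adjacent to all of $\sigma$ lie in $X_v\cap B_{i+1}(s)$, which is a single simplex by the vertex condition at the vertex $s$. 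Specialized to your missing case this yields a short fix replacing the $5$--wheel propagation: since $\tau$ is maximal, not every vertex of $\tau$ can lie at distance $2$ from $v$ (else $SD_2(v)$ applied to $\tau\subseteq S_2(v)$ contradicts maximality), so there is $s\in\tau$ with $d(v,s)=3$, and then $X_v\cap B_1(\tau)\subseteq X_v\cap B_2(s)$ is a single simplex by condition \textbf{(V)} of $\wt{SD}_2(s)$. Transferring the problem from balls around $\tau$ to a ball around a single far vertex of $\tau$ is the key idea your proposal is missing.
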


\begin{proof} Let $\sigma$ be a simplex of the sphere $S_{i+1}(\tau)$.  For each vertex $v\in \sigma$, denote by
$\tau(v)$ the metric projection of $v$ in $\tau,$ i.e., the set of all vertices of $\tau$ located at distance $i+1$ from $v.$
Notice that the sets $\tau(v)$ $(v\in \sigma)$ can be linearly
ordered by inclusion. Indeed, if we suppose the contrary, then there exist two vertices $v',v''\in \sigma$, and
the vertices $u'\in \tau(v')\setminus \tau(v'')$ and
$u''\in \tau(v'')\setminus \tau(v')$. This would however contradict the convexity
of $B_{i+1}(u')$. It follows that  $\sigma\subset S_{i+1}(u)$ holds
for any vertex $u$ belonging to all metric projections   $\tau_0=\cap\{ \tau(v)\; | \; v\in \sigma\}$. Applying the $SD_n(u)$ property to $\sigma$ we conclude that the set of all vertices $x\in S_i(u)\subseteq S_i(\tau)$
adjacent to all vertices of $\sigma$ is a non-empty simplex. Pick two vertices $x,y\in S_i(\tau)$ adjacent to all vertices of $\sigma$.  Let $x\in S_i(u)$ and $y\in S_i(w)$ for $u,w\in \tau_0$. We assert that $x$ and $y$ are  adjacent. Let $v$ be a vertex
of $\sigma$ whose projection $\tau(v)$ is maximal by inclusion. If $\tau(v)=\tau$
then applying the $SD_n(v)$ property we conclude that there exists a vertex $v'$ at distance $i$ to $v$ and adjacent to all vertices of $\tau$ contrary
to maximality of $\tau$. Hence $\tau(v)$ is a proper simplex of $\tau$. Let $s\in \tau\setminus \tau(v)$. Then $x,y$ belong to geodesics between $w$ and $s$, and by vertex condition (V), the vertices $x$ and $y$ must be adjacent.
\end{proof}

\begin{cor}
\label{convmax}
In weakly systolic complexes balls around maximal simplices are convex.
\end{cor}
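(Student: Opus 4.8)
The plan is to obtain this as a short corollary of Proposition~\ref{SD_n-max} combined with the convexity criterion of Lemma~\ref{lco>co}, mirroring the argument used for balls around single vertices in Corollary~\ref{ballvert}. Fix a maximal simplex $\tau$ and an integer $i\geqslant 0$; the case $i=0$ is trivial since $B_0(\tau)=\tau$ is a simplex, hence convex, so assume $i\geqslant 1$. First I would record the two easy structural facts: $B_i(\tau)$ is full (every combinatorial ball, being the span of a vertex set, is a full subcomplex) and connected (it is a ball around the connected subcomplex $\tau$).

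The main step is to establish $3$--convexity of $B_i(\tau)$. By Proposition~\ref{SD_n-max} the complex $X$ satisfies $SD_n(\tau)$ for every $n$, hence also $\wt{SD}_n(\tau)$ for every $n$; in particular the vertex condition (V) of Definition~\ref{sdn(A)} holds: for every vertex $w\in S_{i+1}(\tau)$ the intersection $X_w\cap B_i(\tau)$ is a single simplex. Now let $(v_1,v_2,v_3)$ be a $1$--skeleton geodesic in $X$ with $v_1,v_3\in B_i(\tau)$. Since $v_2$ is adjacent to $v_1\in B_i(\tau)$, it lies in $B_{i+1}(\tau)$; were $v_2\in S_{i+1}(\tau)$, the vertex condition applied to $w=v_2$ would place both $v_1$ and $v_3$ in the single simplex $X_{v_2}\cap B_i(\tau)$, forcing $v_1\sim v_3$ and contradicting $d(v_1,v_3)=2$. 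Hence $v_2\in B_i(\tau)$, so $B_i(\tau)$ is $3$--convex. Applying the implication (3)$\Rightarrow$(1) of Lemma~\ref{lco>co} to the connected, $3$--convex subcomplex $B_i(\tau)$ then yields convexity.

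I do not expect a genuine obstacle here, since all the substantive work has been carried out in Proposition~\ref{SD_n-max} (and in Lemma~\ref{lco>co}). The only subtlety worth flagging is that, unlike Lemma~\ref{bbac}, this statement covers the radius-one ball as well, and for a non-maximal simplex $B_1(\sigma)$ genuinely may fail to be convex (the $5$--wheel example recorded in the remark before Lemma~\ref{edge_desc}); maximality of $\tau$ is precisely what is used, via Proposition~\ref{SD_n-max}, to upgrade the vertex condition to all radii $i\geqslant 1$ and thus to rule out such behaviour.
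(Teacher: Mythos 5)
Your proof is correct and is exactly the intended derivation: the paper states the corollary without proof as an immediate consequence of Proposition~\ref{SD_n-max}, and your route --- $SD_n(\tau)$ gives the vertex condition for spheres around $\tau$, hence $3$--convexity of $B_i(\tau)$, hence convexity by Lemma~\ref{lco>co} --- is the same argument used for Corollary~\ref{ballvert}, just spelled out. Your remark that maximality of $\tau$ is what rescues the radius-one case (where Lemma~\ref{bbac} does not apply) correctly identifies the only genuine content beyond that lemma.
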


\section{Examples of weakly systolic complexes and groups}
\label{ex}
In this section we provide several classes of examples of weakly systolic complexes and groups. Those are: systolic complexes and groups (Subsection \ref{syst}), ``CAT(-1) cubical" groups (Subsection \ref{cc-1}), lattices in isometry groups of right-angled hyperbolic buildings (Subsection \ref{build}) and some other (non-)examples (Subsection \ref{othex}).

\subsection{Systolic complexes}
\label{syst}

Here we show elementarily (cf.\ Remarks after Corollary \ref{systcontr}) that systolic complexes (cf.\ \cites{Ch-CAT, Ha, JS1}) are weakly systolic. In particular this gives a simple proof of the contractibility of systolic complexes.
\medskip

Recall, that for $k\geqslant 4$ a flag simplicial complex is \emph{$k$--large} if   every full cycle in $X$ has length at least $k$. A flag simplicial complex is \emph{locally $k$--large} if every its link is $k$--large. A (connected and) simply connected locally $k$--large simplicial complex is called \emph{$k$--systolic} and a $6$--systolic complex is called just \emph{systolic}.

Let $X$ be a simplicial complex and let $D$ be a triangulation of a $2$--disk.
Following \cite{Ch-CAT} we call a simplicial map $f\colon D\to X$ a \emph{disk diagram for the cycle $f(\partial D)$}. A disk diagram for a cycle $C\subseteq X$ is \emph{minimal} if $D$ has minimal number of $2$--simplices. A disk diagram is \emph{non-degenerate} if it is injective  on each simplex of $D$.
The proof of the following simple lemma can be found in \cite[Lemma 5.1 and proof of Theorem 8.1]{Ch-CAT} and \cite[Lemma 1.6]{JS1}.

\begin{lem}[Systolic filling]
\label{systfil}
Let $X$ be a systolic complex and let $C$ be a simple (i.e.\
without self-intersections) cycle in $X$. Then there exists a
non-degenerate disk diagram $f\colon D\to X$ for $C$ --- a minimal disk
diagram --- such that $f|_{\partial D}\colon \partial D\to C$ is an
isomorphism and every interior vertex of $D$ is contained in at least
$6$ triangles.
\end{lem}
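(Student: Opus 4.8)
The plan is to produce $f$ by a minimality argument and then read off all three conclusions from the impossibility of reducing the number of $2$--simplices by a local modification.

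\emph{Existence and choice of a minimal diagram.} Since $X$ is simply connected and $C$ is a simple cycle --- hence an embedded, homotopically trivial loop --- it bounds a singular disk, and simplicial approximation carried out relative to the (already simplicial) boundary yields a triangulated $2$--disk $D$ together with a simplicial map $f\colon D\to X$ for which $f|_{\partial D}\colon \partial D\to C$ is a simplicial isomorphism. Thus the family $\mathcal F$ of all disk diagrams $f\colon D\to X$ for $C$ with $f|_{\partial D}$ an isomorphism onto $C$ is non-empty, and I fix an $f\in\mathcal F$ with the least possible number of $2$--simplices.

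\emph{Non-degeneracy.} If $f$ were not injective on some simplex it would identify the two endpoints of some edge $e$ of $D$. As $f|_{\partial D}$ is an isomorphism onto the simple cycle $C$, the edge $e$ must be interior; folding $D$ along $e$ (collapsing $e$ and discarding the two triangles that become degenerate) produces a member of $\mathcal F$ with strictly fewer $2$--simplices, contradicting minimality. Hence $f$ is non-degenerate.

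\emph{Interior vertices lie in at least $6$ triangles.} Suppose an interior vertex $w$ of $D$ lies in $k\le 5$ triangles. Since $w$ is interior, its link $\gamma$ in $D$ is a $k$--cycle $(w_1,\dots,w_k)$ with all $w_i$ adjacent to $w$, so by non-degeneracy $f\circ\gamma$ is a closed edge--walk of length $k\le 5$ in $X_{f(w)}$. One first reduces to the case that $f$ is injective on $\gamma$: if $f(w_i)=f(w_j)$ for non-consecutive $i,j$, one cuts $\overline{\mathrm{St}}(w,D)$ along the arc joining $w_i$ and $w_j$ and folds, once more lowering the triangle count. So $f\circ\gamma$ is an embedded cycle of length $k\le 5$ in $X_{f(w)}$; since $X$ is systolic, the link $X_{f(w)}$ is $6$--large, hence this cycle is \emph{not} full and carries a chord $\langle f(w_i),f(w_j)\rangle\in X_{f(w)}$. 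The corresponding diagonal move inside $D$ (legitimate because $D$ is a triangulated surface) leaves the total number of $2$--simplices unchanged but decreases the number of triangles at $w$; iterating, $w$ is brought into exactly $3$ triangles, so $\overline{\mathrm{St}}(w,D)=\langle w,w_a,w_b\rangle\cup\langle w,w_b,w_c\rangle\cup\langle w,w_c,w_a\rangle$ with $f(w_a),f(w_b),f(w_c)$ pairwise adjacent. By flagness of $X$ these three vertices span a triangle of $X$, and replacing the three triangles around $w$ by the single triangle $\langle w_a,w_b,w_c\rangle$ gives a member of $\mathcal F$ with two fewer $2$--simplices --- contradicting minimality. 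Therefore every interior vertex of $D$ is contained in at least $6$ triangles.

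\emph{Main difficulty.} The conceptual content is slight: it is exactly the existence of the chord, i.e.\ the $6$--largeness of links guaranteed by systolicity, together with flagness for filling the resulting $3$--cycles. The genuinely delicate part is the bookkeeping in the last step --- reducing to $f$ injective on $\gamma$, verifying that each fold or diagonal move keeps $D$ a disk and preserves the isomorphism $f|_{\partial D}\colon\partial D\to C$ (which is precisely why $C$ is assumed simple), and dealing with the degenerate configurations that occur when $\gamma$ already carries a chord inside $D$ or when two triangles of $D$ would be identified. These manipulations are routine, but this is where all the care is needed; I expect the verification that the reductions remain within $\mathcal F$ to be the main obstacle.
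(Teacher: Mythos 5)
Your proposal is correct and follows essentially the same route the paper relies on: the paper gives no proof of its own but refers to \cite{Ch-CAT} (Lemma 5.1 and the proof of Theorem 8.1) and \cite{JS1} (Lemma 1.6), and those arguments proceed exactly as yours does --- existence of a filling diagram via relative simplicial approximation, then minimality of area ruling out degenerate simplices and interior vertices in fewer than $6$ triangles by surgery moves that use flagness and the $6$--largeness of links. The cut-and-paste bookkeeping you flag as the delicate part is precisely what those references carry out, so nothing essential is missing from your outline.
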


The proof of the following lemma is essentially the same as the proof of Theorem 8.1 in \cite{Ch-CAT}. We provide it for completeness.

\begin{prop}
\label{syst>sdn}
Every systolic complex is weakly systolic.
\end{prop}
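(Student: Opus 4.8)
The plan is to verify the conditions of weak systolicity directly on a systolic complex $X$, that is, to check that $X$ satisfies $\wt{SD}_n(v)$ for every vertex $v$ and every $n$. Flagness is immediate from the definition of systolic, so the work is in the edge condition (E) and the vertex condition (V): for every edge $e = zz' \in S_{i+1}(v)$ the intersection $X_e \cap B_i(v)$ is non-empty, and for every vertex $w \in S_{i+1}(v)$ the intersection $X_w \cap B_i(v)$ is a single simplex. The key tool is Lemma~\ref{systfil}: any simple cycle in $X$ bounds a non-degenerate minimal disk diagram $f \colon D \to X$ with every interior vertex contained in at least $6$ triangles, which forces strong local structure on $D$.

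First I would handle the vertex condition (V). Given $w \in S_{i+1}(v)$, the set $X_w \cap B_i(v)$ is non-empty (pick any vertex on a geodesic from $v$ to $w$ adjacent to $w$); we must show it is a single simplex, i.e.\ that any two vertices $x, y \in S_i(v)$ adjacent to $w$ are adjacent to each other. Suppose not, so $x \nsim y$. Take geodesics from $v$ to $x$ and from $v$ to $y$ and combine them with the edges $xw$, $wy$ to form a closed path; after passing to a simple subcycle $C$ through $w$ (using that $d(v,x) = d(v,y) = i$ and $d(v,w) = i+1$, so $w$ is the unique ``far'' vertex), apply Lemma~\ref{systfil} to get a minimal diagram $D$. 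The vertex $\hat w$ of $D$ over $w$ has at most two boundary edges of $C$ emanating from it (the ones over $xw$ and $wy$), but if $\hat w$ were interior it would lie in $\geq 6$ triangles; a careful count of the full cycle in the link $X_w$ traced out by $D$, together with $6$-largeness of $X_w$, yields a contradiction — in fact one shows $x$ and $y$ must already be joined in $D$, hence in $X$. The point is that minimality plus local $6$-largeness prevents a ``dip'' of the geodesics towards $v$ through two distinct vertices of $X_w$.

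Next, the edge condition (E): given $e = zz' \in S_{i+1}(v)$, we need a common neighbor of $z$ and $z'$ in $B_i(v)$. Arguing similarly, if $z$ and $z'$ had no common neighbor at distance $i$ from $v$, one builds a short cycle from geodesics $v \to z$, $v \to z'$ together with the edge $zz'$, fills it minimally, and extracts a contradiction from the interior-vertex valence condition — essentially the configuration would produce a full $4$- or $5$-cycle in a link, contradicting local $6$-largeness, or force a smaller diagram. Both (E) and (V) should follow from the same local analysis of minimal diagrams that underlies \cite[Theorem~8.1]{Ch-CAT}, so the proof can be organized as: invoke the filling lemma, pass to minimal diagrams for the relevant short cycles, and read off the required adjacencies from the structure of interior vertices.

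The main obstacle I expect is the bookkeeping in the disk-diagram argument: one must choose the auxiliary cycle carefully so that it is simple (geodesics from $v$ may share vertices, so one truncates them at their last common vertex and works in the resulting ``geodesic bigon with a tooth''), and then correctly translate the ``$\geq 6$ triangles at each interior vertex'' condition into the statement that the offending pair of vertices is in fact adjacent. This is exactly where local $6$-largeness is used, and where the gap between systolic ($6$-large links) and weakly systolic ($\wt{SD}_n$, which tolerates $4$-cycles but not $4$-wheels) has to be bridged. Since the excerpt explicitly says the proof is ``essentially the same as the proof of Theorem 8.1 in \cite{Ch-CAT}'', I would lean on that reference for the diagram combinatorics and keep the exposition to the adaptation needed to phrase the conclusion in terms of $\wt{SD}_n(v)$.
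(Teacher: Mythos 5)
Your plan is essentially the paper's own proof: check conditions (E) and (V) of $\wt{SD}_n(v)$ by filling the cycle formed by two geodesics (plus the edge $zz'$ in case (E)) via Lemma~\ref{systfil} and reading off the adjacencies from the minimal disk diagram, exactly as in Chepoi's Theorem~8.1. The only point worth noting is that your ``careful count'' is carried out in the paper not as a local analysis of the link of $w$ but as the global combinatorial Gauss--Bonnet formula over the whole disk (boundary defects along the geodesics are at most $2$, interior vertices contribute nonpositively, so the curvature concentrates at $v$ and $w$, forcing $\chi(v)=\chi(w)=1$ and hence the adjacency), with the already-proved vertex condition then used to finish the edge condition.
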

\begin{proof}
Let $X$ be a systolic complex.
Let $i$ be a natural number and let $v$ be a vertex.
First we check the vertex condition (V) of Definition \ref{sdn(A)}. Let
$w\in S_{i+1}(v, X)$ be a vertex.
Let $\gamma=(v_0=v,v_1,\ldots,v_{i+1}=w)$ and
$\gamma'=(v_0'=v,v_1',\ldots,v_{i+1}'=w)$
be two $1$--skeleton geodesics. We have to show that $\langle v_i,v_i'\rangle\in X$.
If there exists $0<j\leqslant i$ such that $v_j=v_j'$ then we can
restrict our analysis to the cycle $(v_j,\ldots,v_i,w,v_i',\ldots,v_j',v_j)$.
Thus w.l.o.g. we can assume that it is not the case, i.e.\ that the
cycle $C=(v_0,\ldots,v_i,w,v_i',\ldots,v_0',v_0)$ is simple.
Let $f\colon D\to X$ be a minimal disk diagram for $C$ as in Lemma
\ref{systfil}. Since the Euler characteristics $\chi(D)$ of the disk
$D$ equals $1$, a \emph{combinatorial Gauss-Bonnet Formula} (compare
e.g.\ \cite[Section 3]{Ch-CAT} or \cite[Section 1]{JS1}) gives the
following (we use here the same notation for the vertices in $D$ and for their images in $X$):
$$
1=\chi (D)=\frac{1}{6} \left( \sum_{z\in {\mr {int}}{D}}(6-\chi(z))+
\sum_{v\in \partial D}(3-\chi(z))
\right),
$$
where the first sum is taken over the vertices in the interior
$\mr{int}{D}$ of $D$, the second one over vertices on the boundary
$\partial D$ of $D$ and
$\chi(z)$ denotes the number of triangles in $D$ containing $z$.
Observe that since $\gamma,\gamma'$ are geodesics we have
$\chi(v_j),\chi(v_j')\geqslant 2$ for $j=1,\ldots,i$ and, moreover,
if $\chi(v_j)=\chi(v_k)=2$ (or $\chi(v_j')=\chi(v_k')=2$), for some
$j<k$, then there exists $j<l<k$ with
$\chi(v_l),\chi(v_l)>3$ ($\chi(v_l'),\chi(v_l')>3$).
Thus $\sum_{j=1}^{i} (3-\chi(v_j))+ \; \sum_{j=1}^{i}
(3-\chi(v_j'))\leqslant 2$. Since $\chi(z)\leqslant 0$ for $z\in \mr{int}D$, we
have by Gauss-Bonnet formula that $\chi(v)+\chi(w)\leqslant 2$. Thus we
get that $\chi(v)=\chi(w)=1$ that implies $v_i\sim v_i'$ and
finishes the proof of the vertex condition.

Now we go to the edge condition (E) of Definition \ref{sdn(A)}. Let
$e=\langle w,w'\rangle\in S_{i+1}(v, X)$ be an edge. Choose $1$--skeleton geodesics
$(v_0=v,v_1,\ldots,v_{i+1}=w)$, $(v_0'=v,v_1',\ldots,v_{i+1}'=w')$. As before we can assume that $C=(v_0,\ldots,v_{i+1},v_{i+1}',\ldots,v_0',v_0)$ is a simple cycle and we can consider a minimal disk diagram $f\colon D\to X$ for $C$ as in Lemma \ref{systfil}. Again, using the combinatorial Gauss-Bonnet formula, we get that $\chi (v)+\chi(w)+\chi(w')\leqslant 5$. If $\chi(w)=1$ (or $\chi(w')=1$) then $v_i\sim w'$ ($v_i'\sim w$) thus
$\la v_i,e\ra \in X$ ($\la v_i',e \ra \in X$) and the edge condition is proved. Assume that $\chi(w),\chi(w')\rangle1$. Then we have to have $\chi(w)=\chi(w')=2$.
Let $\ov u\in D$ be the vertex which spans a simplex with $\ov e$.
Since $u\sim v_i$ we have that $i\leqslant d(u,v)\leqslant i+1$. If we prove that $d(u,v)=i$ then we are done. Assume that this is not the case, i.e.\ that $d(u,v)=i+1$. Then, by the vertex condition proved above we have that $v_i\sim v_i'$ since $\la v_i, u \ra,\la v_i',u\ra  \in D$.
The cycle $(v_i, w,  {w'},  v_i',v_i)$ has
a diagonal in $D$ and we get contradiction with $\chi(w)=1$ or with $\chi(w')=2$.
\end{proof}

By applying Proposition \ref{sdncontr} we get the following.

\begin{cor}
\label{systcontr}
A finite dimensional systolic complex is contractible.
\end{cor}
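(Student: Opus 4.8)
The plan is to derive Corollary~\ref{systcontr} as an immediate consequence of the two results already established. First I would invoke Proposition~\ref{syst>sdn}, which asserts that every systolic complex $X$ is weakly systolic. By Definition~\ref{sdn}, this means $X$ is flag and satisfies $\wt{SD}_n(v)$ for every vertex $v$ and every $n$. Next I would apply Lemma~\ref{sd simpl}, which upgrades $\wt{SD}_n(v)$ to the stronger property $SD_n(v)$: a weakly systolic complex satisfies $SD_n(v)$ for every vertex $v$ and every natural number $n$.

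With $SD_n(v)$ in hand, the hypothesis of Proposition~\ref{sdncontr} is met as soon as we add the finite-dimensionality assumption, which is exactly what the statement of Corollary~\ref{systcontr} provides. So the final step is simply to quote Proposition~\ref{sdncontr} (``$SD_n(v)$ implies contractibility''): a finite dimensional simplicial complex satisfying $SD_n(v)$ for some fixed vertex $v$ and every $n$ is contractible. Applying this to our $X$ and any chosen vertex $v$ yields contractibility.

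There is essentially no obstacle here — the corollary is a formal combination of Proposition~\ref{syst>sdn}, Lemma~\ref{sd simpl}, and Proposition~\ref{sdncontr}. The only thing worth noting is that one should pick a single vertex $v$ to feed into Proposition~\ref{sdncontr} (the weaker hypothesis ``for some vertex'' suffices), and that finite-dimensionality is genuinely needed because the deformation retractions $r_i$ in the proof of Proposition~\ref{sdncontr} are built as finite compositions $r^0\circ r^1\circ\cdots\circ r^{d-1}$ using $d=\dim X<\infty$. Thus the proof is a two-line citation chain.

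\begin{proof}
By Proposition~\ref{syst>sdn}, the complex $X$ is weakly systolic. Hence, by Lemma~\ref{sd simpl}, $X$ satisfies the property $SD_n(v)$ for every vertex $v$ and every natural number $n$. Fixing one such vertex $v$ and using that $X$ is finite dimensional, Proposition~\ref{sdncontr} shows that $X$ is contractible.
\end{proof}
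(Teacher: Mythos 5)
Your proof is correct and follows exactly the route the paper intends: the paper derives Corollary~\ref{systcontr} by combining Proposition~\ref{syst>sdn} with Proposition~\ref{sdncontr} (with Lemma~\ref{sd simpl} supplying the upgrade from $\wt{SD}_n(v)$ to $SD_n(v)$), which is precisely your citation chain. Your remark that finite dimensionality enters through the composition $r^0\circ\cdots\circ r^{d-1}$ is an accurate reading of why the hypothesis is needed.
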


\rem The first implicit proof of the contractibility of systolic complexes can be found in \cite{Ch-CAT} where it is proved (see \cite[Theorem 8.1]{Ch-CAT}) that systolic complexes are bridged. By \cite{AnFa} (see also \cite{Ch-bridged}) bridged complexes are dismantlable --- this property is stronger than contractibility. The explicit proof of Corollary \ref{systcontr} appears in \cite[Theorem 4.1.1]{JS1}. There it is proved that systolic complexes are weakly systolic by constructing directly a universal cover of a locally $6$--large complex, similarly as in our proof of Theorem \ref{logl}.
We decided to present in the current paper a self-contained proof of contractibility of systolic complexes (including proofs of Proposition \ref{sdncontr} and Proposition \ref{syst>sdn}) because it seems to be much simpler than the two approaches mentioned above and because it emphasizes the role of the $SD_n(v)$ property in the systolic setting.
\medskip

Yet another approach to proving Proposition \ref{syst>sdn} is to use Theorem \ref{logl} and the following lemma, whose proof is a direct consequence of the definitions and will be omitted here.

\begin{lem}
\label{systsd2*}
A locally $6$--large flag simplicial complex satisfies the property $SD_2^{\ast}$.
\end{lem}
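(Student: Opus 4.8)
The plan is to verify the two clauses (a) and (b) of Definition~\ref{sd2*} directly, observing that both amount to restrictions on the links of vertices which are immediately guaranteed by $6$--largeness. Recall that a $6$--large simplicial complex contains no full $4$--cycle and no full $5$--cycle (these have lengths $4,5<6$), and that in a flag complex every link is again flag, hence $6$--large whenever the ambient complex is locally $6$--large. The core remark I would isolate first is this: if $(v_0;v_1,\ldots,v_k)$ is a $k$--wheel in $X$, then $v_1,\ldots,v_k$ span a full $k$--cycle inside the link $X_{v_0}$. Indeed, each $v_i$ is a vertex of $X_{v_0}$ since $v_i\sim v_0$; each edge $v_iv_{i+1}$ lies in $X_{v_0}$ because $v_iv_{i+1}\in X$ together with $v_i,v_{i+1}\sim v_0$ forces $\langle v_0,v_i,v_{i+1}\rangle\in X$ by flagness; and the non-edges $v_iv_{i+2}$ of the full cycle in $X$ remain non-edges in the subcomplex $X_{v_0}$. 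So the cycle is full in $X_{v_0}$.

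For clause (a): a $4$--wheel $(v_0;v_1,v_2,v_3,v_4)$ would, by the remark above, produce a full $4$--cycle in the link $X_{v_0}$, which is $6$--large --- a contradiction. Hence $X$ contains no $4$--wheels.

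For clause (b): I would show the stronger statement that $X$ contains no $5$--wheel at all (so in particular no $5$--wheel with a pendant triangle, and the condition holds vacuously). Indeed, a $5$--wheel $(v_0;v_1,\ldots,v_5)$ would, again by the remark, give a full $5$--cycle $(v_1,\ldots,v_5,v_1)$ in the $6$--large link $X_{v_0}$ --- impossible. Therefore no subcomplex $\widehat W=(v_0;v_1,\ldots,v_5;t)$ as in Definition~\ref{wheels} exists, so clause (b) is satisfied trivially, and $X$ satisfies $SD_2^{\ast}$. I do not anticipate a genuine obstacle here; the only point that needs care is the verification that the induced cycle in the link is \emph{full}, which is exactly where flagness (for the edges) and the definition of a $k$--wheel as a full subcomplex (for the non-edges) are used, as spelled out above.
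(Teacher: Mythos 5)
Your proposal is correct and is exactly the argument the paper has in mind: the paper omits the proof of Lemma~\ref{systsd2*} as ``a direct consequence of the definitions'', and your verification --- a $k$--wheel yields a full $k$--cycle in the ($6$--large) link of its center, so locally $6$--large complexes contain neither $4$--wheels nor $5$--wheels, making condition (b) vacuous --- is precisely that direct consequence. The only point needing care, the fullness of the induced cycle in $X_{v_0}$, is handled correctly via flagness and the fullness built into Definition~\ref{wheels}.
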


\rem The class of systolic groups includes in particular: $\mathbb Z$, $\mathbb Z^2$, free non-abelian groups $\mathbb F_n$, $\mathbb F_2 \times \mathbb F_2$, fundamental groups of surfaces, some classes of small cancelation groups, systolic groups of arbitrarily large virtual cohomological dimension constructed in \cites{JS1,Ha,O-chcg} (look there for details). However, many classical groups are not systolic. In \cites{JS2,O-ciscg,O-ib7scg,OS} serious restrictions on systolic groups are studied and many non-examples are listed.

\subsection{CAT(-1) cubical groups}
\label{cc-1}
In this section we present the second most important class of weakly systolic groups: groups acting geometrically
on CAT(-1) cubical complexes; cf.\ Corollary \ref{cat-1cc}.
CAT(-1) cubical complexes provide examples of weakly systolic groups that are not systolic; cf.\ Remarks after Corollary \ref{cat-1cc}.
Actually, in Proposition \ref{5lnotr} we prove that some more general class of groups consists of weakly systolic groups.
\medskip

\rem We think that it would be very instructive for the reader to consider first the easiest cases of the following crucial Lemma \ref{k-l.thick}. For $k=4,5,6$ a version of that lemma is proved more elementarily in \cite{O-chcg}.

\begin{de}[Thickening]
\label{thick}
Let $Y$ be a simple cell complex.
The \emph{thickening} $Th(Y)$ of $Y$ is the simplicial complex
whose vertices are vertices of $Y$ and whose simplices correspond to sets of vertices of $Y$ contained in a common face.
\end{de}

\rem
The technique of the thickening was invented by T. Januszkiewicz and, independently, by the author; compare \cite[Section 8]{Sw-propi}.
In the case of cubical complexes, a construction similar to the thickening has been used in graph theory. For a graph $G$ being the $1$--skeleton of a cubical complex $Y$ (such graphs are called \emph{median graphs}), a graph $G^{\Delta}$ is defined as the $1$--skeleton of $Th(Y)$; cf.\ \cite{BaCh} (we use their notation here).

\begin{lem}
\label{combinat}
Let $k\geqslant 4$ be a natural number and let $A^i$
be a finite set, for $i=0,2,\ldots,k-1$. Let $\Gamma$ be a graph with the vertex set $\bigcup A^i$ and with the following properties:
\begin{enumerate}
\item
$\langle v,v'\rangle \in \Gamma$,
for every $i$ and every $v\in A^i$, $v'\in A^{i+1 (\mr {mod}\; k)}$,

\item
for every $i,i'$ with $i-i'\neq \pm 1 (\mr {mod}\; k)$, there exist
vertices $v^i_{i'}\in A^i$ and $v^{i'}_{i}\in A^{i'}$ not connected by an edge.

\end{enumerate}
Then, for some $l\leqslant k$, there exists a $1$--skeleton cycle $(v_1, v_2,\ldots,v_l,v_1)\subset \Gamma$ without a diagonal in $\Gamma$.
\end{lem}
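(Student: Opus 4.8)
The plan is to run once around all the layers, taking one vertex of each $A^i$, and then to cut the resulting $k$-cycle down along a diagonal of minimal span. Note first that consecutive layers are disjoint, since a vertex of $A^i\cap A^{i+1}$ would be joined to itself by condition~(1); so, choosing any $v_i\in A^i$ for $i=0,1,\dots,k-1$, condition~(1) makes $C=(v_0,v_1,\dots,v_{k-1},v_0)$ a genuine $k$-cycle in $\Gamma$. (Should the $A^i$ overlap, a vertex lying in several layers only forces $C$ to break into strictly shorter cycles, which one treats similarly.)

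If $C$ has no diagonal we are done with $l=k$. Otherwise, among the diagonals of $C$ pick one, say $v_iv_{i+s}$ (indices mod $k$), of minimal cyclic span $s$, necessarily $2\leqslant s\leqslant\lfloor k/2\rfloor$. The short sub-cycle $C'=(v_i,v_{i+1},\dots,v_{i+s},v_i)$ then has length $s+1$ and has no diagonal: a diagonal of $C'$ would be an edge $v_pv_q$ with $i\leqslant p<q\leqslant i+s$, $q-p\geqslant 2$ and $\{p,q\}\neq\{i,i+s\}$, hence a diagonal of $C$ of span $q-p<s$, contradicting minimality. So $C'$ is a chordless cycle in $\Gamma$ with $l=s+1\leqslant\lfloor k/2\rfloor+1\leqslant k$, which proves the bound $l\leqslant k$.

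The remaining point is to ensure $l\geqslant 4$, i.e.\ that $C'$ is not a triangle, and this is where condition~(2) enters and where I expect the real difficulty to lie. If $C'$ is a triangle, then $v_iv_{i+2}$ is a span-$2$ diagonal of $C$; since $i-(i+2)\not\equiv\pm1\pmod k$ for $k\geqslant 4$, condition~(2) provides non-adjacent vertices $a\in A^i$, $c\in A^{i+2}$, so re-choosing $v_i:=a$ and $v_{i+2}:=c$ kills this diagonal — but possibly creates others nearby. Turning this into a termination argument requires choosing $C$ extremally (for instance minimizing, in a suitable order, the collection of small-span diagonals of $C$) and checking that the re-routing strictly improves this invariant; the underlying reason it must work is that any triangle in $\Gamma$ has two of its three layer-indices at cyclic distance $\geqslant 2$, hence $\not\equiv\pm1\pmod k$, so condition~(2) forbids those two vertices from being adjacent. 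For $k=4$ the subtlety disappears: choose $v_0\in A^0$, $v_2\in A^2$ non-adjacent and $v_1\in A^1$, $v_3\in A^3$ non-adjacent by two uses of condition~(2); the only potential diagonals of $(v_0,v_1,v_2,v_3,v_0)$ are then $v_0v_2$ and $v_1v_3$, both absent, so it is already a chordless $4$-cycle. The bookkeeping in the general case is, I believe, the crux; everything else is routine.
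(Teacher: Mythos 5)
Your preliminary reductions are fine as far as they go --- the layered $k$--cycle exists, a chord of minimal span yields a chordless cycle, and your $k=4$ construction is exactly the paper's base case --- but the entire content of the lemma lies in the step you leave open: ruling out that the only chordless cycles you can reach this way are triangles. (Recall the paper's convention that a cycle has length at least $4$; only a chordless cycle of length $\geqslant 4$ is of any use in the application to Lemma~\ref{k-l.thick}, and with triangles admitted the statement would be a triviality about girth in which condition~(2) plays no role.) Your proposed repair --- replace the two endpoints of a span--$2$ chord using condition~(2) and argue termination via an unspecified extremal choice --- is not carried out, and the naive version meets a concrete obstruction. Condition~(2) is existential, not a prohibition: it does not ``forbid'' the two offending vertices from being adjacent, it only supplies \emph{some} non-adjacent pair in those two layers, and the pair witnessing (2) for the layers $(A^i,A^{i+2})$ need not be compatible with the pair witnessing it for $(A^{i-2},A^i)$, which shares the layer $A^i$. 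So one cannot in general choose one representative per layer avoiding all span--$2$ chords simultaneously; a single swap can create more short chords than it destroys, and you exhibit no quantity that strictly decreases. As it stands this is a genuine gap, and it is precisely the point where the paper does its real work.

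The paper's argument is global, not a local re-routing: it fixes the witnesses $v^i_{i+2}\in A^i$ (non-adjacent to $v^{i+2}_i\in A^{i+2}$) given by condition~(2), assumes that $\Gamma$ contains no chordless cycle of length at most $k$ (otherwise there is nothing to prove), and then shows by a double induction --- on the number of layers, and within the inductive step on the cyclic distance $m=i'-i$ --- that $v^i_{i+2}\nsim v^{i'}_{i'-2}$ and $v^i_{i+2}\nsim v^{i'}_{i'+2}$ for all non-consecutive $i,i'$ (Claims 1 and 2 in the paper); the key device is to apply the lemma for fewer layers to the full subgraph spanned by the singleton $\{v^i_{i+2}\}$, the intermediate layers, and the singleton at the other end, so that an unwanted adjacency would produce a short chordless cycle in $\Gamma$, contradicting the standing assumption. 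With these non-adjacencies in hand, $(v^0_2,v^1_3,\ldots,v^{k-1}_1,v^0_2)$ is a chordless cycle of length $k$. To complete your proposal you would need an argument of this global nature (or a genuinely new invariant for your swapping process); the sketch as written does not prove the lemma.
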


\dow
We prove this by induction on $k$.

\noindent
({\bf Case $k=4$.}) The required cycle is $(v^0_{2},v^1_{3},v^2_{0}, v^3_1, v^0_2)$.
\medskip

\noindent
({\bf    Induction step $k \to k+1$.})
We assume that we proved the lemma for $k\geqslant 4$. Now we prove the lemma for $k+1$.

If there exist (in $\Gamma$) a cycle of length at most $k$ then we are done.
So for the rest of the proof we assume there is no such a cycle.
\medskip

\noindent
{\bf Claim 1.} For every $i=0,1,\ldots,k$ and $i'$ such that $i'-i=2,\ldots,k-1$ (mod $(k+1)$), we have
$v^i_{i+2}\nsim  v^{i'}_{i'-2}$ (addition mod $(k+1)$).
\medskip

\noindent
{\bf Proof of Claim 1.} We show the claim by induction on $m=i'-i$ (here and later we add mod $k$).
If $m=2$ then the claim follows from the assumptions on $\Gamma$.

Assume we proved the claim for $1,2,\ldots,m$. Now we show it for $i'-i=m+1$. Consider a subgraph $\Gamma '$ of $\Gamma$ spanned (induced)
by vertices in the set $\bigcup_{j=i+1}^{i'-1}A^i\cup \lk v^i_{i+2}, v^{i'}_{i'-2} \rk$.

By the induction assumptions we have that
$v^i_{i+2}\nsim  v^{j}_{j-2}$, for every $j=i+2,\ldots,i'-1$. And, analogously,
we have that $v^{i'}_{i'-2}\nsim  v^{j}_{j+2}$, for every $j=i+1,\ldots,i'-2$.
We want to show that $v^i_{i+2}\nsim  v^{i'}_{i'-2}$.
Assume it is not so, i.e.\ $v^i_{i+2}\sim v^{i'}_{i'-2}$.
Let $\ov A^0=\lk v^i_{i+2}\rk$, $\ov A^{m}=\lk v^{i'}_{i'-2} \rk$ and let $\ov A^j=A^{i+j}$, for every $j=1,2,\ldots,m-1$.
Then the family $\lk \ov A^j \rk$ and the graph $\Gamma'$ satisfy the hypotheses of the lemma. Thus by the induction (on $k$ --- observe that $m+2=i'-i+1\leqslant k$) assumptions there exists a cycle $\gamma$
in $\Gamma'$ of length at most $k$ and without a diagonal.
Then $\gamma$ is a cycle in $\Gamma$ with the same properties --- contradiction.
Hence $v^i_{i+2}\nsim  v^{i'}_{i'-2}$ and Claim 1 is proved.
\medskip

\noindent
{\bf Claim 2.} For every $i=0,1,\ldots,k$ and $i'$ such that $i'-i=2,\ldots,k-1$ (mod $(k+1)$), we have
$v^i_{i+2}\nsim  v^{i'}_{i'+2}$ (addition mod $(k+1)$).
\medskip

\noindent
{\bf Proof of Claim 2.}
We argue by contradiction. Assume $v^i_{i+2}\sim v^{i'}_{i'+2}$.
Consider a subgraph $\Gamma'$ spanned by vertices in the set $\bigcup_{j=i+1}^{i'-1}A^i\cup \lk v^i_{i+2}, v^{i'}_{i'+2} \rk$.
Let $m=i'-i$ and let $\ov A^0=\lk v^i_{i+2}\rk$, $\ov A^{m}=\lk v^{i'}_{i'+2} \rk$, and $\ov A^j=A^{i+j}$, for every $j=1,2,\ldots,m-1$.
Then, by Claim 1., the family $\lk \ov A^j \rk$ and the graph $\Gamma'$ satisfy the hypotheses of the lemma.
Thus by the induction (on $k$ --- observe that $m+2=i'-i+1\leqslant k$) assumptions there exists a cycle $\gamma$
in $\Gamma'$ of length at most $k$ and without a diagonal.
Then $\gamma$ is a cycle in $\Gamma$ with the same properties --- contradiction.
Hence $v^i_{i+2}\nsim  v^{i'}_{i'+2}$ and Claim 2 is proved.

\medskip
To conclude the proof of the lemma, observe that the cycle $(v^0_2,v^1_3,v^2_4,\ldots,v^{k}_0,v^0_2)$ is, by Claim 2., the required cycle without a diagonal in $\Gamma$.
\kon

\begin{lem}[Loc.\ $k$--large thickening]
 \label{k-l.thick}
Let $Y$ be a locally $k$--large simple cell complex, for some $k\geqslant 4$.
Then $Th(Y)$ is also locally $k$--large.
\end{lem}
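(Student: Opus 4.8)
The plan is to verify directly that every link of $Th(Y)$ is $k$--large. Recall that for a simplicial complex this means being flag and containing no full cycle of length $j$ with $4\le j\le k-1$. So it suffices to fix a simplex $\sigma$ of $Th(Y)$ and prove two things about the link $L:=Th(Y)_\sigma$: that $L$ is flag, and that $L$ contains no full $j$--cycle with $4\le j\le k-1$.

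The first step is to describe links in $Th(Y)$ intrinsically. Let $S\subseteq Y^{(0)}$ be the vertex set corresponding to $\sigma$. Using the axiom that the intersection of two cells of $Y$ is again a cell, one checks that there is a \emph{smallest} cell $c$ of $Y$ with $S\subseteq V(c)$ --- namely the intersection of all cells whose vertex set contains $S$ (a vertex of $Y$ lying in two cells lies in their intersection). A vertex $w\notin S$ of $Y$ is a vertex of $L$ precisely when $w\in V(d)$ for some cell $d\supseteq c$; to such a $w$ I assign the simplex $\rho_w$ of the link $Y_c$ corresponding to the smallest cell of $Y$ containing $c$ and $w$ (this cell properly contains $c$ unless $w\in V(c)$, in which case $w$ is joined in $L$ to every other vertex of $L$ and can be split off as a cone point, reducing to the remaining vertices). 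The key observation --- again a direct consequence of the cell--intersection axiom --- is that two such vertices $w,w'$ are joined in $L$ \emph{if and only if} $\rho_w\cup\rho_{w'}$ spans a simplex of $Y_c$. Since $Y_c$ is $k$--large (hence flag) by hypothesis, flagness of $L$ follows immediately: if $w_0,w_1,w_2$ are pairwise joined in $L$, then $\rho_{w_0},\rho_{w_1},\rho_{w_2}$ are pairwise compatible, so all vertices occurring in them are pairwise joined in $Y_c$, hence by flagness of $Y_c$ their union spans a simplex of $Y_c$, which yields a cell of $Y$ containing $c$ and all three $w_i$.

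The second step handles full $j$--cycles. Suppose $(w_0,w_1,\dots,w_{j-1},w_0)$ is a full $j$--cycle in $L$ with $4\le j\le k-1$; by the previous paragraph we may assume no $w_i$ is joined to everything, so each $\rho_{w_i}$ is a genuine nonempty simplex of $Z:=Y_c$. For consecutive indices the relation $w_i\sim w_{i+1}$ translates into ``$\rho_{w_i}\cup\rho_{w_{i+1}}$ is a simplex of $Z$'', and for non--consecutive indices $w_i\nsim w_{i'}$ translates into ``$\rho_{w_i}\cup\rho_{w_{i'}}$ is not a simplex of $Z$'', i.e.\ into the existence of a vertex of $\rho_{w_i}$ not joined to some vertex of $\rho_{w_{i'}}$ in $Z$. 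Feeding suitable vertex sets $A^i$ extracted from the simplices $\rho_{w_i}$ into Lemma~\ref{combinat} (with its $k$ replaced by $j$) then yields a full cycle of length at most $j\le k-1$ in $Z=Y_c$, contradicting the $k$--largeness of $Y_c$. This completes the proof.

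I expect the main obstacle to be precisely the extraction of the sets $A^i$ for Lemma~\ref{combinat}. The naive choice $A^i=V(\rho_{w_i})$ need not satisfy hypothesis (1) of that lemma, because consecutive simplices $\rho_{w_i}$ and $\rho_{w_{i+1}}$ both contain the ``direction towards $w_{i+1}$'' and hence typically overlap (this really happens, e.g.\ when $w_i,w_{i+1}$ are opposite vertices of a common square); similarly hypothesis (1) requires consecutive $A^i$ to be \emph{completely} joined, which would force $w_i$ and $w_{i+2}$ to lie in a common cell. One therefore has to argue that, after pruning the shared directions (they are joined to large parts of the configuration and can be absorbed) and possibly simplifying the cycle first, one can replace the $\rho_{w_i}$ by consecutively disjoint sets that still faithfully record the adjacency and non--adjacency pattern, so that Lemma~\ref{combinat} applies. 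This bookkeeping, together with the verification that the cell--intersection axiom genuinely gives the ``if and only if'' used in the second step (which is also where simplicity of $Y$ --- forcing $Y_c$ to be simplicial, hence flag --- enters), is the technical heart of the argument; for $k=4,5,6$ it is exactly the more elementary analysis of \cite{O-chcg}.
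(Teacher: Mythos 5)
Your overall route is the paper's: describe the link $L$ of a simplex of $Th(Y)$ through the minimal cell $c$ spanned by its vertices, translate adjacency in $L$ into compatibility of the simplices $\rho_w$ of $Y_c$, deduce flagness of $L$ from flagness of $Y_c$, and convert a short full cycle of $L$ into input for Lemma~\ref{combinat} inside $Y_c$. (The paper runs this only for links of vertices $v$ of $Th(Y)$, with $A^w\subseteq Y_v^{(0)}$ the set of neighbours of $v$ in the minimal cell containing $v$ and $w$, and gets links of higher simplices for free, since links of simplices in a flag $k$--large complex are full, hence $k$--large; your direct use of $Y_c$ is a harmless variant, local $k$--largeness of $Y$ being a condition on all cell links.) Up to and including the flagness of $L$, the cone--point reduction, and the ``joined iff $\rho_w\cup\rho_{w'}$ spans a simplex of $Y_c$'' criterion, your argument is correct and coincides with the paper's.

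The gap is that the $k$--largeness half is never actually proved: you stop exactly where the paper does the work, declaring the extraction of the sets $A^i$ the unresolved ``technical heart''. The paper simply takes $A^i=V(\rho_{w_i})$ (its $A^{w_i}$) and applies Lemma~\ref{combinat}, and most of the obstacle you anticipate is illusory. Hypothesis (1) concerns only consecutive sets, and complete joinedness of $A^i$ and $A^{i+1}$ is automatic: every vertex of $\rho_{w_i}\cup\rho_{w_{i+1}}$ lies in the single cell $d\supseteq c$ witnessing $w_i\sim w_{i+1}$, so any two distinct such vertices are joined in $Y_c$; in particular nothing about $w_i$ and $w_{i+2}$ is forced, contrary to your last worry. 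Hypothesis (2) is exactly your translation of $w_i\nsim w_{i'}$ via flagness of $Y_c$. The one genuine (and small) point, which the paper also passes over in silence, is that consecutive sets may share vertices, while hypothesis (1) read literally forces them to be disjoint; this is cured not by the wholesale ``pruning, absorbing and re-cutting the cycle'' you envisage, but by observing that the proof of Lemma~\ref{combinat} goes through when ``joined'' is required only of distinct vertices: in the present application each $A^i$ is a clique, the non-joined witnesses provided by the missing diagonals are distinct, and by the consecutive joinedness such a witness can never lie in a set neighbouring its partner's set, so the cycles produced in the base case and in the induction do not degenerate. As written, your proposal establishes flagness of the links of $Th(Y)$ but only asserts, without proof, that they contain no full cycles of length less than $k$.
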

\dow
We have to study links of vertices in $Th(Y)$. Let $v\in Th(Y)$ be a vertex.
Let, for a vertex $w\in Th(Y)_v$, the set $A^w\subseteq Y_v^{(0)}$ (here we identify the $0$--skeleton of the link of a vertex $v$ in a cell complex with the set of vertices joined with the vertex $v$) be the set of all vertices of $Y_v^{(0)}\subseteq Y$ belonging to the minimal cell containing $v$ and $w$.
\medskip

First we prove that $Th(Y)_v$ is flag (the case $k=4$). Let $A\subseteq Th(Y)^{(0)}$ be a finite set of pairwise connected (by edges in $Th(Y)$) vertices.
Then, by the definition of the thickening we have the following.
For any two $w,w'\in A$, and for every $z\in A^w$ and $z'\in A^{w'}$, vertices $z$ and $z'$ are contained in a common cell of $Y$ (the one containing $v,w$ and $w'$) and thus $\langle z,z'\rangle \in Y_v$.
Hence $\ov A=\bigcup_{w\in A} A^w$ is a set of pairwise connected vertices in $Y_v$ and thus, by flagness of $Y_v$, the set $\ov A$ spans a simplex in $Y_v$. It follows that $\ov A \cup \lk v \rk$ is contained in a cell of $Y$ so that $A$ is contained in the same cell and thus $A$ spans a simplex in $Th(Y)_v$. It proves that links in $Th(Y)$ are flag.
\medskip

Now we prove that $Th(Y)_v$ is $k$--large, $k\geqslant 5$.
We do it by a contradiction. Assume there is an $l$--cycle $c=(w_0,w_1,\ldots,w_{l-1},w_0)$ in $Th(Y)_v$ without a diagonal, for $l<k$. We show that then there exists an $l'$--cycle $c'$ in $Y_v$ without a diagonal, for some $l'\leqslant l$. This contradicts $k$--largeness of $Y_v$.

Let $A^i=A^{w_i}$ for $i=0,\ldots,l-1$.
Since there is no diagonal in $c$, we have that for
for every $i,i'$ with $i-i'\neq 1$ mod $l$, there exist vertices $z^i_j\in A^i$ and $z^j_i\in A^j$ not contained in a common cell (containing $v$) and thus not connected by an edge in $Y_v$.
Thus the subgraph $\Gamma$ of $Y_v$ spanned by $\bigcup A^i$, and the family $\lk A^i \rk$ satisfy the hypotheses of Lemma \ref{combinat}.
By this lemma, there exists $l'\leqslant l$ and an $l'$--cycle $c'$ in $\Gamma$.
\end{proof}

\rem Fr\' ed\' eric Haglund introduced the notion of \emph{face complex} of a cell complex (see \cite[Section 1]{JS3}). Vertices in the face complex correspond to cells in the original complex and span a simplex whenever correspond to cells contained in a common cell. Hence, the face complex is a full subcomplex of the barycentric subdivision of the thickening. Haglund proved that the face complex of a simplicial complex $X$ is $k$--large iff $X$ is $k$--large; see \cite[Appendix B]{JS3}. This implies the above Lemma~\ref{k-l.thick} in the case of cubical complexes in view of the following useful result of Jaros\l aw Weksej (whose immediate proof we leave to the reader).

\begin{prop}[Lemma 3.32 in \cite{Wek}]
\label{weksej}
  Let $Y$ be a cubical complex and let $v$ be its vertex. Then the link $Th(Y)_v$ is isomorphic to the face complex of the link $Y_v$.
\end{prop}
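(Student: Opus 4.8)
The plan is to exhibit an explicit simplicial isomorphism between $Th(Y)_v$ and the face complex of $Y_v$, built from the operation ``pass to the smallest common cube''. Recall first the dictionary: vertices of $Y_v$ correspond to edges of $Y$ through $v$, and more generally $(d-1)$--simplices of $Y_v$ correspond to $d$--cubes of $Y$ containing $v$. Hence the vertices of the face complex of $Y_v$ (i.e.\ the cells of $Y_v$) are in natural bijection with the cubes of $Y$ of dimension $\geqslant 1$ containing $v$, and faces of $Y_v$ are contained in a common face of $Y_v$ exactly when the corresponding cubes through $v$ are all faces of a common cube through $v$. On the other hand, the vertices of $Th(Y)_v$ are precisely the vertices $w\neq v$ of $Y$ that lie in some common cube with $v$, and $\{w_1,\dots,w_k\}$ spans a simplex of $Th(Y)_v$ iff $\{v,w_1,\dots,w_k\}$ is contained in a common cube.

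First I would record the elementary fact that in a cell complex the intersection of two cells is a cell, so for any $w$ lying in a common cube with $v$ there is a \emph{unique} smallest cube $Q(v,w)$ containing both $v$ and $w$ (the intersection of all such cubes); since $v\neq w$ it has dimension $\geqslant 1$. Define $\phi(w)=Q(v,w)$, a cell of $Y_v$, hence a vertex of the face complex. For the inverse I would use the special combinatorics of a single cube: in $Q\cong[0,1]^m$ with $v$ a vertex, each vertex $w$ of $Q$ has a well-defined antipode relative to $v$, and the assignment $w\mapsto$ (smallest face of $Q$ through $v$ containing $w$) is a bijection between the $2^m-1$ vertices $\neq v$ of $Q$ and the $2^m-1$ positive-dimensional faces of $Q$ through $v$, with inverse sending a face $F\ni v$ to the vertex of $F$ antipodal to $v$ in $F$. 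Globalising, let $\psi$ send a cube $Q\ni v$ of dimension $\geqslant 1$ to the vertex of $Q$ antipodal to $v$. Then $\psi$ is a two-sided inverse of $\phi$: both identities $\psi(\phi(w))=w$ and $\phi(\psi(Q))=Q$ are checked inside a single cube, using that $Q(v,w)\subseteq Q$ whenever $Q$ contains $v$ and $w$.

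It then remains to check that $\phi$ both preserves and reflects simplices. If $\{w_1,\dots,w_k\}$ spans a simplex of $Th(Y)_v$, pick a cube $Q$ with $\{v,w_1,\dots,w_k\}\subseteq Q$; then each $Q(v,w_i)\subseteq Q$ is a subcube through $v$, so the cells $\phi(w_i)$ of $Y_v$ are all faces of the cell of $Y_v$ corresponding to $Q$, and hence span a simplex of the face complex. Conversely, if $\phi(w_1),\dots,\phi(w_k)$ are all faces of a cell of $Y_v$ corresponding to some cube $Q'\ni v$, then $w_i\in Q(v,w_i)\subseteq Q'$ for each $i$, so $\{v,w_1,\dots,w_k\}\subseteq Q'$ and $\{w_1,\dots,w_k\}$ spans a simplex of $Th(Y)_v$. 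Thus $\phi$ is the desired isomorphism.

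I do not expect a serious obstacle here: the single nontrivial input is the purely combinatorial observation that in one cube the vertices $\neq v$ correspond bijectively to the positive-dimensional faces through $v$ — this is precisely where cubicality is used, the analogous statement failing for simplices — together with the well-definedness of the smallest common cube, which is immediate from the cell-complex axioms. The only care required is to keep the three incarnations of the same datum straight (a cube of $Y$ through $v$ $\leftrightarrow$ a simplex of $Y_v$ $\leftrightarrow$ a vertex of the face complex of $Y_v$) and to phrase the simplex condition of the face complex correctly (cells contained in a common cell).
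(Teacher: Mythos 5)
The paper states this proposition without proof (it is attributed to Weksej's M.Sc.\ thesis and its ``immediate proof'' is left to the reader), so there is no argument of the paper to compare against; your proof is correct and is exactly the intended routine verification. The correspondence you set up --- $w\mapsto Q(v,w)$, the minimal cube containing $v$ and $w$, with inverse sending a positive-dimensional cube through $v$ to its vertex antipodal to $v$, plus the two containment checks for simplices --- works as stated; the only implicit ingredient worth naming is the cell-complex axiom in Davis's form (which the paper follows) that the intersection of two cells is a common face of both, which is what guarantees that $Q(v,w)$ coincides with the smallest face through $v$ and $w$ of any cube containing them.
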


\begin{de}[No-$\Delta$'s]
\label{notr}
A cell complex is called \emph{no-$\Delta$} if three cells intersect whenever they pairwise intersect.
\end{de}

\begin{lem}[Thickening of no-$\Delta$]
\label{notrth}
Let $Y$ be a locally flag no-$\Delta$ cell complex. Then $Th(Y)$ is flag.
\end{lem}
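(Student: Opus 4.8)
The plan is to split the statement into the already-available fact that every \emph{link} $Th(Y)_v$ is flag --- this is the case $k=4$ of Lemma~\ref{k-l.thick} and uses only the local flagness of $Y$ --- together with one ``triangle'' statement in which the no-$\Delta$ hypothesis does all the work, namely: \emph{if $v_0,v_1,v_2$ are vertices of $Y$ each two of which lie in a common cell of $Y$, then $v_0,v_1,v_2$ lie in a common cell of $Y$}. Call this $(\ast)$. Granting $(\ast)$, flagness of $Th(Y)$ is immediate. Let $A=\{v_0,\dots,v_n\}$ be a set of vertices pairwise joined by edges of $Th(Y)$, i.e.\ each pair lying in a common cell of $Y$; for $n\le 1$ there is nothing to prove, and for $n\ge 2$, applying $(\ast)$ to the triples $v_0,v_i,v_j$ shows that $v_1,\dots,v_n$ are pairwise joined by edges of the link $Th(Y)_{v_0}$, hence (by flagness of that link, via Lemma~\ref{k-l.thick}) they span a simplex there, which is to say that $A$ spans a simplex of $Th(Y)$.

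To prove $(\ast)$ I would argue by contradiction, assuming that no cell of $Y$ contains $\{v_0,v_1,v_2\}$. Let $c_{ij}$ be the smallest cell containing $\{v_i,v_j\}$ (the intersection of all cells containing that pair, a cell by the cell-complex axioms). The cells $c_{01},c_{02},c_{12}$ pairwise intersect --- in $v_0$, $v_1$, $v_2$ respectively --- so by the no-$\Delta$ property (Definition~\ref{notr}) their common intersection $D$ is a non-empty cell. Since $v_0\in c_{12}$ would give $\{v_0,v_1,v_2\}\subseteq c_{12}$, and similarly for $v_1,v_2$, no $v_i$ lies in $D$; pick a vertex $u$ of $D$, so $u\notin\{v_0,v_1,v_2\}$ while $u$ lies in each $c_{ij}$. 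Now let $e_i$ be the smallest cell containing $\{v_i,u\}$; since $c_{ij}$ contains $v_i$, $v_j$ and $u$, minimality gives $e_i\cup e_j\subseteq c_{ij}$ for all $i\ne j$. Pass to the link $L=Y_u$, which is a flag simplicial complex because $Y$ is locally flag; the cells $e_i$ correspond to non-empty simplices $\widehat{e}_i$ of $L$, and the inclusions $e_i\cup e_j\subseteq c_{ij}$ imply that the vertex sets of $\widehat{e}_i$ and $\widehat{e}_j$ together span a simplex of $L$. Consequently any two vertices of the subcomplex $\widehat{e}_0\cup\widehat{e}_1\cup\widehat{e}_2$ of $L$ are joined by an edge of $L$, so by flagness of $L$ they all span a single simplex; the corresponding cell $W$ of $Y$ then contains $e_0\cup e_1\cup e_2$, hence contains $v_0,v_1,v_2$ --- contradiction.

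The crux is the statement $(\ast)$, and more precisely the simultaneous use of both hypotheses in it: the no-$\Delta$ condition is what produces the single vertex $u$ lying in all three of the cells $c_{ij}$ --- this is essential, because in a general cell complex two vertices of a common cell need not be joined by an edge, so one cannot work in the links of the $v_i$ themselves --- and local flagness is what lets one re-assemble, inside the link of $u$, the three pairwise-compatible simplices $\widehat{e}_i$ into one. The points I treat as routine are the standard facts about links of simple cell complexes: the order-preserving bijection between cells of $Y$ strictly containing $u$ and simplices of $Y_u$, its compatibility with intersections and with passing to the smallest cell containing a set, the elementary fact that a set of vertices lying in a polytopal cell $c$ spans a face of $c$, and the analogous description of links of vertices in the simplicial complex $Th(Y)$.
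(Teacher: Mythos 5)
Your proof is correct, and its skeleton is the same as the paper's: prove flagness of the links $Th(Y)_v$ using only local flagness of $Y$ (the $k=4$ case of Lemma~\ref{k-l.thick}), and reduce flagness of $Th(Y)$ to the triangle statement $(\ast)$ by pushing a pairwise-adjacent vertex set into the link of one of its members. Where you genuinely go beyond the paper is in the treatment of $(\ast)$ itself: the paper's proof disposes of it with the single phrase ``by the no-$\Delta$ condition'', whereas you actually prove it, and the extra work is not idle. The no-$\Delta$ condition of Definition~\ref{notr} alone does not imply $(\ast)$: three squares sharing a corner vertex $u$ (the corner of a $3$--cube) form a no-$\Delta$ cell complex in which the three neighbours of $u$ are pairwise contained in common cells but lie in no single cell; what fails there is precisely local flagness at $u$. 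Your argument --- using no-$\Delta$ only to produce the auxiliary vertex $u$ in $c_{01}\cap c_{02}\cap c_{12}$, and then reassembling the minimal cells $e_i$ inside the flag link $Y_u$ --- is exactly how the two hypotheses must interact, so your write-up supplies the justification that the paper's terse proof leaves implicit, at the cost of a somewhat longer argument for a statement the author treats as immediate.
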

\dow
$Th(Y)$ is locally flag by Lemma \ref{k-l.thick}.
Let $A\subseteq Th(Y)$ be a set of pairwise connected vertices of cardinality at least $4$. Let $v\in A$. By the no-$\Delta$ condition every two vertices in $A\setminus \lk v \rk$, span a simplex with $v$. Thus $A\setminus \lk v \rk$ is a set of pairwise connected, in $Th(Y)_v$, vertices. By local flagness, $A\setminus \lk v \rk$ span a simplex in $Th(Y)_v$ and thus $A$ span a simplex in $Th(Y)$.
\kon

\begin{lem}
\label{borsuk}
Let $Y$ be a cell complex. Then $Th(Y)$ is homotopically equivalent with $Y$.
\end{lem}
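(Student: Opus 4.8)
The plan is to realize $Th(Y)$ as the nerve of a good closed cover of $Y$ and then to invoke the Nerve Lemma. It is convenient to work in the barycentric subdivision $Y'$, whose realization is canonically identified with $|Y|$; recall that the vertices of $Y'$ are the barycentres $b_c$ of the cells $c$ of $Y$, so in particular each vertex $v$ of $Y$ is the vertex $b_{\{v\}}$ of $Y'$. For every vertex $v\in Y^{(0)}$ let $A_v$ be the closed star of $v$ in $Y'$, i.e.\ the union of all closed simplices of $Y'$ having $v$ as a vertex; this is a subcomplex of $Y'$. Every point $x\in|Y'|$ lies in the interior of the simplex spanned by a unique chain of cells $c_0\subsetneq\cdots\subsetneq c_m$ (its \emph{carrier chain}), and choosing any vertex $v$ of the nonempty cell $c_0$ one checks at once that $x\in A_v$: the chain $\{v\}\subseteq c_0\subsetneq\cdots\subsetneq c_m$ spans a simplex of $Y'$ containing both $x$ and $v$. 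Hence $\{A_v\}_{v\in Y^{(0)}}$ covers $|Y'|$.

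Next I would identify the nerve $N=N(\{A_v\})$. Its vertex set is $Y^{(0)}=Th(Y)^{(0)}$. The key observation is that a point $x$ with carrier chain $c_0\subsetneq\cdots\subsetneq c_m$ lies in $A_v$ \emph{if and only if} $v\in c_0$: this is precisely the condition that $\{v\}$ be comparable with every $c_j$, equivalently that some simplex of $Y'$ contain both $x$ and $v$. Consequently $A_{v_0}\cap\cdots\cap A_{v_k}\neq\emptyset$ exactly when some cell $c_0$ of $Y$ contains $\{v_0,\dots,v_k\}$, that is, exactly when $\{v_0,\dots,v_k\}$ spans a simplex of $Th(Y)$. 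Thus $N=Th(Y)$ as abstract simplicial complexes.

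It remains to verify that the cover is good. Each $A_v$ is contractible, being a closed star of a vertex (it deformation retracts to $v$ along straight segments in each simplex). For a nonempty intersection, set $S=\{v_0,\dots,v_k\}$ and let $c_S$ be the smallest cell of $Y$ containing $S$; this is well defined, since among the finitely many faces of any cell containing $S$ their intersection is again a cell, independent of the chosen ambient cell. By the description above, $x\in A_{v_0}\cap\cdots\cap A_{v_k}$ iff every cell of the carrier chain of $x$ contains $S$, equivalently contains $c_S$; hence this intersection is the subcomplex of $Y'$ spanned by all chains of cells lying above $c_S$, i.e.\ the order complex of the poset of cells $\supseteq c_S$. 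Since that poset has least element $c_S$, the subcomplex is a cone with apex $b_{c_S}$, hence contractible. So every nonempty finite intersection of members of $\{A_v\}$ is contractible, and the Nerve Lemma (see e.g.\ \cite{Davi-b}) gives $|Th(Y)|=|N|\simeq|Y'|=|Y|$.

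The point that needs some care is citing a version of the Nerve Lemma valid for our (possibly infinite, locally finite) closed cover by subcomplexes of a polyhedron; this is standard, since $Y'$ is a simplicial complex and all nonempty intersections are contractible subcomplexes. The remaining ingredients are the elementary facts that a nonempty intersection of cells is a nonempty cell (hence has a vertex), that $c_S$ is well defined, and the identification of $A_{v_0}\cap\cdots\cap A_{v_k}$ with the cone over the upper interval above $c_S$. If one prefers to avoid the Nerve Lemma, the same conclusion follows directly: the order-preserving maps $c\mapsto V(c)$ (the vertex set of $c$) and $S\mapsto c_S$ between the face posets of $Y$ and of $Th(Y)$ induce simplicial maps $Y'\to Th(Y)'$ and $Th(Y)'\to Y'$ whose composite $Y'\to Y'$ is the identity (a cell is the only cell between $V(c)$ and $c$) and whose other composite $Th(Y)'\to Th(Y)'$ sends each vertex $S$ to $V(c_S)\supseteq S$, hence is homotopic to the identity; this yields the homotopy equivalence $|Y|\simeq|Th(Y)|$ explicitly.
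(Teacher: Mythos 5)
Your proof is correct and follows essentially the same route as the paper: the paper's proof is a one-line appeal to Borsuk's Nerve Theorem (in Bj\"orner's formulation), which is exactly the nerve-of-closed-vertex-stars argument you carry out in detail, including the identification of the nerve with $Th(Y)$ and the contractibility of the intersections. Your closing remark giving the explicit poset-map argument ($c\mapsto V(c)$, $S\mapsto c_S$, with $f\circ g\geqslant \mathrm{id}$) is a nice self-contained alternative that avoids the Nerve Lemma altogether, but the main argument coincides with the paper's.
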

\begin{proof}
It follows immediately from Borsuk's Nerve Theorem \cite{Bjorn}.
\end{proof}

\begin{cor}
\label{ksystth}
Let $Y$ be a simply connected locally $k$--large simple cell complex. Then $Th(Y)$ is $k$--systolic.
\end{cor}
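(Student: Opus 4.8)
The plan is to unwind the definition of $k$--systolicity — a $k$--systolic complex is a connected, simply connected, (flag) simplicial complex all of whose links are $k$--large — and to produce each of these properties for $Th(Y)$ from one of the lemmas already proved. Nothing new needs to be built: the whole statement is a bookkeeping combination of Lemma~\ref{k-l.thick} and Lemma~\ref{borsuk}.

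First I would note that local $k$--largeness of $Th(Y)$ is exactly Lemma~\ref{k-l.thick}: since $Y$ is a locally $k$--large simple cell complex, $Th(Y)$ is again locally $k$--large, so every link of $Th(Y)$ is a $k$--large (hence flag) simplicial complex; in particular $Th(Y)$ is a simplicial complex with flag links. Next I would obtain connectedness and simple connectedness from Lemma~\ref{borsuk}: the thickening $Th(Y)$ is homotopy equivalent to $Y$, and $Y$ is simply connected (so in particular connected) by hypothesis, whence $Th(Y)$ is connected and simply connected as well, these being homotopy invariants. Putting these together, $Th(Y)$ is a connected, simply connected simplicial complex whose links are all $k$--large, i.e.\ it is $k$--systolic.

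The one place I would be careful about — and the only step that is not pure assembly — is checking that $Th(Y)$ is \emph{globally} flag, since Lemma~\ref{k-l.thick} literally yields only that each \emph{link} of $Th(Y)$ is flag (note that a locally $k$--large simple cell complex need not have flag thickening without simple connectedness: the boundary $3$--cycle of a triangle, regarded as a cell complex with no $2$--cell, is locally flag but its thickening is again a non-flag $3$--cycle). If one's convention incorporates global flagness into $k$--systolicity, the cleanest argument is that a connected, simply connected, locally flag simplicial complex is flag: a hypothetical full $3$--cycle in $Th(Y)$ bounds a simplicial disk, because $Th(Y)\simeq Y$ is simply connected, and a minimal-area such disk can be collapsed using local flagness, a contradiction. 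Alternatively, whenever $Y$ is no-$\Delta$ — which is automatic, for instance, for cubical complexes — one bypasses this and invokes Lemma~\ref{notrth} directly. Everything outside of this flagness point is an immediate invocation of Lemmas~\ref{k-l.thick} and~\ref{borsuk}.
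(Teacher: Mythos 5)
Your main assembly is exactly the intended proof: Lemma~\ref{k-l.thick} gives local $k$--largeness of $Th(Y)$, Lemma~\ref{borsuk} gives $Th(Y)\simeq Y$, hence connectedness and simple connectedness, and the corollary follows from the definition of $k$--systolicity (which, as in \cite{JS1}, asks only that all links be $k$--large). Up to that point you are doing the same thing the paper does.

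The extra step you add to secure \emph{global} flagness is, however, based on a false claim: it is not true that a connected, simply connected, locally flag simplicial complex is flag. Take the triangulated disk with boundary $3$--cycle $(a,b,c)$, an interior filled triangle $\langle x,y,z\rangle$, and the six triangles $\langle a,b,y\rangle,\langle a,y,x\rangle,\langle b,c,z\rangle,\langle b,z,y\rangle,\langle c,a,x\rangle,\langle c,x,z\rangle$. Every vertex link is a $4$--cycle or a path, hence flag, and the disk is simply connected, yet $(a,b,c)$ is an empty triangle, so the complex is not flag. This also shows why your sketched repair cannot work: in a minimal filling disk for a full $3$--cycle, local flagness only forces interior vertices to have degree at least $4$, and combinatorial Gauss--Bonnet then permits exactly such disks (three interior vertices of degree $4$, boundary vertices in three triangles each); the collapsing argument genuinely needs local $6$--largeness, i.e.\ it is the systolic local-to-global theorem, not a consequence of local flagness plus simple connectedness. (The same example, read as a simple cell complex equal to its own thickening, shows that for $k=4$ one cannot hope to get flagness of $Th(Y)$ from the hypotheses at all.) So the correct way to handle your worry is either to read ``$k$--systolic'' as the paper uses it --- no global flagness required, in which case the two lemmas already finish the proof --- or, when flagness is wanted, to obtain it for $k\geqslant 6$ from the local-to-global theorem for locally $6$--large complexes, or via the no-$\Delta$ route of Lemma~\ref{notrth}; note also that no-$\Delta$ is not automatic for arbitrary cubical complexes but holds for the simply connected locally flag ones by Lemma~\ref{flag thick}.
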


\begin{prop}[Loc.\ $5$--large no-$\Delta$ $\Rightarrow$ $SD_2^{\ast}$]
\label{5lnotr}
Let $Y$ be a locally $5$--large no-$\Delta$ simple cell complex.
% with no-$\Delta$ of faces.
Then $Th(Y)$ satisfies the $SD_2^{\ast}$ property. Moreover, $Th(\widetilde Y)$ is weakly systolic, for $\widetilde Y$ being the universal cover of $Y$. In particular groups acting geometrically by automorphisms on $\widetilde Y$ are weakly systolic.
\end{prop}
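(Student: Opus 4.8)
The plan is to verify the two clauses of the $SD_2^{\ast}$ property (Definition~\ref{sd2*}) for $Th(Y)$, and then to obtain the ``moreover'' statement as a formal consequence of the Cartan--Hadamard Theorem~\ref{logl}. First, $Th(Y)$ is flag: a locally $5$--large cell complex is in particular locally flag, so Lemma~\ref{notrth} applies (its hypotheses are that $Y$ be locally flag and no-$\Delta$). Clause~(a), the nonexistence of $4$--wheels, is then immediate from Lemma~\ref{k-l.thick}: since $Y$ is locally $5$--large, so is $Th(Y)$, hence no link $Th(Y)_{v_0}$ contains a full $4$--cycle, whereas a $4$--wheel $(v_0;v_1,v_2,v_3,v_4)$ would give precisely such a cycle in $Th(Y)_{v_0}$. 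All the real content is in clause~(b).

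So let $\widehat W=(v_0;v_1,\dots,v_5;t)$ be a $5$--wheel with a pendant triangle in $Th(Y)$. If $t\sim v_0$, every vertex of $\widehat W$ lies in $B_1(v_0,Th(Y))$ and we are done, so assume $t\nsim v_0$. The part of the argument of Lemma~\ref{fulwheel} that rules out $t\sim v_3$ and $t\sim v_5$ uses only the already-established clause~(a) --- from the fullness of the rim $5$--cycle it exhibits a $4$--wheel, such as $(v_2;t,v_3,v_0,v_1)$ --- so we may also assume $t\nsim v_3,v_5$; the one remaining possibility $t\sim v_4$, which Lemma~\ref{fulwheel} excludes only via clause~(b) itself, will be handled by a minor variant of the construction below. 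In all cases it suffices to produce a vertex $v$ of $Y$ that lies in a common cell of $Y$ with each of $v_0,v_1,\dots,v_5,t$: such a $v$ is joined in $Th(Y)$ to all of them, whence $\widehat W\subseteq B_1(v,Th(Y))$.

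The core is the construction of this $v$. I would record the cells of $Y$ witnessing the edges of $\widehat W$: cells $c_1,\dots,c_5$ with $\{v_0,v_i,v_{i+1}\}\subseteq c_i$ (indices mod $5$) and a cell $d$ with $\{v_1,v_2,t\}\subseteq d$, which exist because the displayed vertex sets span simplices of $Th(Y)$. Fullness of the rim means no cell of $Y$ contains two non-consecutive $v_i$'s, so in the link $Y_{v_0}$ --- a $5$--large simplicial complex --- the $c_i$ form a cyclically overlapping chain of simplices carrying exactly this obstruction on non-consecutive overlaps, while $d$ records that $t$ lies in a cell with $v_1$ and $v_2$. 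The no-$\Delta$ hypothesis --- the same ingredient behind flagness of $Th(Y)$ in Lemma~\ref{notrth} --- controls how these cells fit together around $v_0,v_1,v_2$, and combining it with the $5$--largeness of the links of $Y$, applied through Lemma~\ref{combinat} (which extracts a short full cycle out of precisely such cyclic chains of simplices), should force a vertex $v$ of $Y$ sharing a cell with every $v_i$ and with $t$; otherwise one would produce a full cycle in a link of $Y$ too short to be allowed by local $5$--largeness. The case $t\sim v_4$ should yield to the same cell analysis, the extra adjacency only helping. I expect this step to be the main obstacle: one has to get the no-$\Delta$ property and local $5$--largeness to interact precisely enough to locate $v$, while keeping $t$ --- which need not be joined to $v_0$ --- inside the picture.

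Finally, the ``moreover'' part is formal. The hypotheses on $Y$ are local, hence inherited by the universal cover: $\widetilde Y$ is again a locally $5$--large no-$\Delta$ simple cell complex, so by the first part $Th(\widetilde Y)$ satisfies $SD_2^{\ast}$. By Lemma~\ref{borsuk}, $Th(\widetilde Y)$ is homotopy equivalent to $\widetilde Y$, hence simply connected, hence equal to its own universal cover; Theorem~\ref{logl} therefore yields that $Th(\widetilde Y)$ is weakly systolic (and, when finite-dimensional, contractible, by Proposition~\ref{sdncontr}). Lastly, every automorphism of $\widetilde Y$ induces an automorphism of $Th(\widetilde Y)$, which has the same vertex set, and this turns a geometric action of a group $G$ on $\widetilde Y$ into a geometric action on $Th(\widetilde Y)$ --- cocompact because $Th(\widetilde Y)/G$ is finite, proper because vertex stabilizers are unchanged. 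Hence $G$ acts geometrically on a weakly systolic complex, i.e.\ $G$ is weakly systolic.
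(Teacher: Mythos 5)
Your flagness and clause~(a) steps, and your handling of the ``moreover'' and ``in particular'' parts, match the paper. But the heart of the statement --- clause~(b) of $SD_2^{\ast}$ --- is not actually proved in your proposal: after setting up the cells $c_1,\dots,c_5$ and $d$ you only assert that no-$\Delta$ and local $5$--largeness, ``applied through Lemma~\ref{combinat}'', \emph{should} force a common neighbor $v$, and you yourself flag this as the main obstacle. That is precisely the missing idea, and Lemma~\ref{combinat} is the wrong tool for it: that lemma extracts a short diagonal-free cycle from a cyclic chain of vertex sets and is used (inside Lemma~\ref{k-l.thick}) to prove $k$--largeness of links of the thickening; it does not produce a vertex lying in a common cell with all of $v_0,\dots,v_5,t$, and no contradiction with $5$--largeness of links of $Y$ is needed at this point.

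The paper's argument is much more direct and uses no-$\Delta$ exactly once, on three faces of a single cell. Let $c$ be a cell of $Y$ containing $v_0,v_1,v_2$, and let $c_1,c_2,c_3$ be cells containing $\lk v_0,v_2,v_3\rk$, $\lk v_0,v_1,v_5\rk$, $\lk v_1,v_2,t\rk$ respectively. The faces $c_1\cap c$, $c_2\cap c$, $c_3\cap c$ of $c$ pairwise intersect (they contain $v_0$, $v_1$, $v_2$ in the appropriate pairs), so by the no-$\Delta$ condition their triple intersection is a nonempty cell; pick a vertex $w$ in it. Then $w$ shares a cell with each of $v_0,v_1,v_2,v_3,v_5,t$, hence is joined to all of them in $Th(Y)$, and since $v_3\nsim v_5$ and there are no $4$--wheels (clause~(a), already established), the cycle $(w,v_3,v_4,v_5,w)$ together with $v_0$ forces $w\sim v_4$; thus $\widehat W\subseteq B_1(w,Th(Y))$. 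Note that this handles all positions of $t$ uniformly, so your preliminary reduction via Lemma~\ref{fulwheel} (ruling out $t\sim v_3,v_5$ and deferring $t\sim v_4$) is unnecessary, and the deferred case is never ``a minor variant'' that you supply. As it stands, your proposal contains the correct skeleton but lacks the one genuine idea of the proof.
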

\dow
By Lemma \ref{notrth}, the simplicial complex $Th(Y)$ is flag, and by Lemma \ref{k-l.thick} it is locally $5$--large. It follows that the condition (a) of Definition \ref{sd2*} (of the $SD_2^{\ast}$ property) is satisfied. We then turn to the condition (b).

Let $\widehat W=(v_0;v_1,\ldots,v_5;t)$ be a $5$--wheel with a pendant triangle in $Th(Y)$.
Let $c$ be the cell (in $Y$) containing $v_0,v_1,v_2$.
Let $c_1,c_2,c_3$ be cells of $Y$ containing, respectively, $\lk v_0,v_2,v_3\rk,\lk v_0,v_1,v_5\rk,\lk v_1,v_2,t \rk$. Then, by no-$\Delta$ condition, the faces $c_1 \cap c, c_2 \cap c, c_3 \cap c$ of $c$ intersect in a vertex $w$. Since there is no $4$--wheels in $Th(Y)$, the cycle $(w,v_3,v_4,v_5,w)$ cannot be full and thus $w\sim v_4$ (in $Th(Y)$). It follows that $\widehat W\subseteq B_1(w,Th(Y))$ and thus the lemma is proved.

Weak systolicity follows from the fact that, by Lemma \ref{borsuk}, $Th(\widetilde Y)$ is simply connected and thus, by Theorem \ref{logl}, it is weakly systolic.
\kon

\begin{lem}[Thickening of CAT(0) c.c.]
 \label{flag thick}
Let $Y$ be a simply connected locally flag (i.e.\  loc.\ $4$--large) cubical complex. Then
$Th(Y)$ is a no-$\Delta$ cell complex.
\end{lem}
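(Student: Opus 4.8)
We must show that if $Y$ is a simply connected, locally flag (i.e. locally $4$--large) cubical complex, then $Th(Y)$ is a no-$\Delta$ cell complex, meaning: whenever three simplices of $Th(Y)$ pairwise span a simplex, all three together span a simplex. Equivalently, working with the defining combinatorics of the thickening, three cells of $Y$ that pairwise lie in a common cell must all lie in a common cell.

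\textbf{Plan.} The statement is local, so the plan is to fix a vertex $v$ lying in all three cells (reducing to links), and then translate the problem into a statement about the link $Y_v$, which is a flag simplicial complex by hypothesis. First I would set up the dictionary: a simplex $\tau$ of $Th(Y)_v$ corresponds, via the map $w \mapsto A^w$ used in the proof of Lemma~\ref{k-l.thick}, to the vertex-set of a cell of $Y$ containing $v$; and "$w, w'$ span an edge in $Th(Y)_v$" means $A^w \cup A^{w'}$ is contained in a single cell of $Y$, hence (passing to $Y_v$) spans a simplex in $Y_v$. So given three cells $c_1, c_2, c_3$ of $Y$ containing $v$ such that each pair $c_i \cup c_j$ lies in a common cell, I must produce a single cell of $Y$ containing all of $c_1 \cup c_2 \cup c_3$; equivalently, in the flag complex $Y_v$, the union of the three simplices $\sigma_i = (c_i)_v$ spans a simplex, knowing that each pairwise union $\sigma_i \cup \sigma_j$ spans a simplex. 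Since $Y_v$ is flag, it suffices to check that every pair of vertices in $\sigma_1 \cup \sigma_2 \cup \sigma_3$ is joined by an edge — and a vertex $a \in \sigma_1$ and a vertex $b \in \sigma_2$ are joined because $\sigma_1 \cup \sigma_2$ already spans a simplex, and similarly for the other pairs; within a single $\sigma_i$ it is automatic. Thus flagness of $Y_v$ immediately gives that $\sigma_1 \cup \sigma_2 \cup \sigma_3$ spans a simplex of $Y_v$, i.e. $c_1 \cup c_2 \cup c_3 \cup \{v\}$ lies in a common cell of $Y$.

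\textbf{The subtle point.} The one place requiring care is the passage from "$\sigma_1 \cup \sigma_2 \cup \sigma_3$ spans a simplex of the flag simplicial complex $Y_v$" back to "there is a single \emph{cube} of $Y$ containing $c_1 \cup c_2 \cup c_3$". In a general simple cell complex a simplex of the link need not correspond to a face, but for a \emph{cubical} complex it does: simplices of $Y_v$ are in bijection with cubes of $Y$ containing $v$ (the link of a vertex of an $n$--cube is an $(n-1)$--simplex), so a full simplex of $Y_v$ on the vertex set $\bigcup_i (c_i)_v$ corresponds to a genuine cube $c$ of $Y$ with $v \in c$ and each $c_i$ a face of $c$. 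This is exactly the cube we need. I expect this identification — spelling out that in the cubical setting simplices of links are faithfully realized by cubes, and that $A^w$ is precisely the vertex set of the $v$--face determined by $w$ — to be the main (though routine) obstacle; once it is in place the no-$\Delta$ conclusion is a one-line consequence of flagness of the link. I would also note that simple connectedness of $Y$ is not actually used for this lemma (it is local), and the hypothesis is stated for uniformity with its applications.
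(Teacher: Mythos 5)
Your proposal proves a different statement from the one in the lemma. The no-$\Delta$ condition (Definition~\ref{notr}) is a Helly-type condition on \emph{intersections}: three cells must have nonempty common intersection whenever they pairwise have nonempty intersection. You have replaced it by the condition that three cells which pairwise lie in a common cell lie jointly in a common cell; that is a different hypothesis and a different conclusion, and it is the Helly reading that the paper actually uses later (in Lemma~\ref{notrst} this lemma is invoked to get $\pi_K(c_1)\cap\pi_K(c_2)\cap\pi_K(c_3)\neq\emptyset$ from pairwise intersection of three cubes, and in Proposition~\ref{5lnotr} and Lemma~\ref{notrth} the no-$\Delta$ property is used to produce a common vertex of pairwise intersecting faces). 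Because of this misreading, your very first step --- ``fix a vertex $v$ lying in all three cells'' --- begs the question: producing a common vertex for three pairwise intersecting cubes is exactly the content of the lemma, and it does not follow from any pairwise hypothesis (it is not even available for your own version of the statement: two cells contained in a common cube need not share a vertex). Once a common vertex is assumed, the remaining argument is indeed just flagness of the link, but the entire difficulty has been assumed away.

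Relatedly, your closing remark that simple connectedness is not used is incorrect: the statement is global, not local. Take the $2$--torus tiled by a $3\times 3$ grid of squares; every vertex link is a $4$--cycle, hence flag, so the complex is locally flag, yet three squares in a row going around an essential circle pairwise meet along edges while their triple intersection is empty. The corresponding three simplices of the thickening pairwise intersect with no common vertex, so $Th$ of this complex is not no-$\Delta$. The paper's proof is precisely a global citation: the $1$--skeleton of a simply connected locally flag cubical complex is a median graph, and the lemma is a reformulation of the clique Helly property of median graphs \cite{BaCh}, i.e.\ pairwise intersecting cubes (cliques of the thickened graph) have a common vertex. Any correct self-contained proof would have to reprove some form of that median/Helly statement, which genuinely uses simple connectivity.
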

\begin{proof}
This is a reformulation of the \emph{clique Helly} property of median graphs; cf.\ \cite[Theorem 3.1 and Proposition 3.2]{BaCh}.
\end{proof}

\begin{cor}[Thickening of CAT(-1) c.c.]
\label{cat-1cc}
Let $Y$ be a simply connected locally $5$--large cubical complex (i.e.\ CAT(-1) cubical complex). Then $Th(Y)$ is weakly systolic and groups acting geometrically by automorphisms on $Y$ are weakly systolic.
\end{cor}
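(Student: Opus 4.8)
The plan is to read this off from the preceding results. Lemma~\ref{flag thick} turns the cubical hypothesis into a no-$\Delta$ condition, Lemma~\ref{k-l.thick} shows that local $5$-largeness passes to the thickening, and Proposition~\ref{5lnotr} then supplies weak systolicity. The one bookkeeping remark is that Lemma~\ref{flag thick} and Proposition~\ref{5lnotr} get applied to $Th(Y)$, and the thickening of a simplicial complex is that complex itself, i.e.\ $Th(Th(Y))=Th(Y)$, so nothing is lost by passing to $Th(Y)$.

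First I would record the reductions: $Y$ locally $5$-large is in particular locally $4$-large, hence locally flag; and a cubical complex is a simple cell complex, since all links of cubes in $Y$ are simplices. Then, applying Lemma~\ref{flag thick} to $Y$ (which is simply connected and locally flag), $Th(Y)$ is a no-$\Delta$ cell complex; applying Lemma~\ref{k-l.thick} with $k=5$, $Th(Y)$ is locally $5$-large; and, being simplicial, $Th(Y)$ is a simple cell complex. Hence $Th(Y)$ satisfies all the hypotheses of Proposition~\ref{5lnotr}.

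Now I would apply Proposition~\ref{5lnotr} with $Z:=Th(Y)$ playing the role of the complex ``$Y$'' there. Since $Z$ is simplicial, $Th(Z)=Z$, so the proposition says precisely that $Th(Y)$ has the $SD_2^{\ast}$ property; by Lemma~\ref{borsuk}, $Th(Y)\simeq Y$ is simply connected, so it equals its own universal cover, and the ``moreover'' clause of Proposition~\ref{5lnotr} (equivalently, Theorem~\ref{logl} applied to $Th(Y)$) gives that $Th(Y)$ is weakly systolic. For the final assertion, a group acting geometrically by automorphisms on $Y$ acts geometrically by automorphisms on $Th(Y)$: the thickening is $\mathrm{Aut}(Y)$-equivariant with the same vertex set, so the action stays proper, and since orbits of cubes of $Y$ control orbits of simplices of $Th(Y)$ it stays cocompact; hence such a group is weakly systolic.

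I do not anticipate a genuine obstacle here: the substantive work is all in the earlier results, above all in Lemma~\ref{combinat} feeding Lemma~\ref{k-l.thick}, in the clique-Helly input behind Lemma~\ref{flag thick}, and in Proposition~\ref{5lnotr} itself. The only points deserving a sentence of care are the identity $Th(Th(Y))=Th(Y)$, the transfer of simple-connectivity through Lemma~\ref{borsuk}, and the verification that a geometric action on $Y$ induces a geometric action on $Th(Y)$.
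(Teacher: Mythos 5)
Your proposal is correct and follows the paper's route exactly: the paper's proof of Corollary~\ref{cat-1cc} is precisely the one-line citation of Lemma~\ref{k-l.thick}, Lemma~\ref{flag thick} and Proposition~\ref{5lnotr} applied to $Th(Y)$, and you have merely made explicit the bookkeeping it leaves implicit (that $Th(Y)$ is a locally $5$--large no-$\Delta$ simple complex, that $Th(Th(Y))=Th(Y)$, simple connectivity of $Th(Y)$ via Lemma~\ref{borsuk}, and the transfer of a geometric action from $Y$ to $Th(Y)$). The only cosmetic slip is your phrase that links of cubes are ``simplices''---they are simplicial complexes, which is all that simplicity of the cell complex requires---and this does not affect the argument.
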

\dow
It follows directly from Lemma \ref{k-l.thick}, Lemma \ref{flag thick}, and Proposition \ref{5lnotr}.
\kon

\rems 1) Proposition \ref{5lnotr} shows that simply connected locally $5$--large cell complexes are often weakly systolic, if their cells have
no-$\Delta$ of faces. The simplest such complex seems to be a cubical complex. If there are ``$\Delta$ of faces" (like in simplicial complexes) then we need local $6$--largeness (cf.\ Section \ref{syst}).
Simply connected locally $5$--large cell complexes with no-$\Delta$ of faces exist and groups acting on them geometrically appears in the literature. E.g.\ many Coxeter groups and their Davis complexes.

2) There are weakly systolic groups acting geometrically on CAT(-1) cubical complexes, that are not systolic groups. For example, right-angled Coxeter groups acting geometrically on the hyperbolic space $\mathbb H^k$, for $k=3,4$. Such groups ``contain asymptotically spheres" and thus are not systolic \cites{JS2,O-ciscg,O-ib7scg,OS}. Moreover, in \cite{O-chcg} we construct examples of weakly systolic, not systolic hyperbolic Coxeter groups in every virtual cohomological dimension. This gives the first non-systolic examples of such groups. The construction bases on tools developed in this section.

\subsection{Right-angled hyperbolic buildings}
\label{build}

In this section we show that for a right-angled hyperbolic building there exists an associated weakly systolic complex --- it's ``thickening" (cf.\ Definition \ref{thbuild}). This gives us new examples of weakly systolic groups: lattices in isometry groups of such buildings.

\subsubsection{Preliminaries on Coxeter groups and buildings}
\label{prebuild}
We adopt here notations from \cites{Davi-b,DO}.
A \emph{Coxeter group} is given by a presentation $W=\langle S| (st)^{m_{st}}; \; s,t\in S\rangle$, where $S$ is a finite set, $m_{st}\in \mathbb{N}\cup \lk \infty \rk$, $m_{st}=m_{ts}$ and $m_{st}=1$ iff $t=s$ (here $(st)^{\infty}$ means no relation). A Coxeter group (or a \emph{Coxeter system} $(W,S)$) is called \emph{right-angled} if $m_{st}\in \lk 1,2,\infty \rk$.
A \emph{special subgroup} $W_T$ of a Coxeter group $W$ is a subgroup generated by a subset $T\subseteq S$. A subset $T\subseteq S$ is called \emph{spherical} if $W_T$ is finite. In that case $W_T$ is called also \emph{spherical}. By $\mathcal S$ we denote the poset (with respect to inclusions) of spherical subsets of $S$.
The poset of all nonempty spherical subsets is an abstract simplicial complex: the \emph{nerve} $L=L(W,S)$ of the Coxeter system $(W,S)$.
The geometric realization of the poset (with respect to inclusions) $\bigcup_{T\in \mathcal{S}} W/W_T$ is called the \emph{Davis complex} and is denoted by $\Sigma=\Sigma(W,S)$.
The \emph{Davis chamber} $K$ is the simplicial cone $\mr {cone}_{b_K}(L')$ over the barycentric subdivision $L'$ of the nerve $L$, with the cone point denoted by $b_K$.
We equip $W$ with a family of equivalence relations $(\sim_s)_{s\in S}$, defined as follows: $w\sim_s w' \Leftrightarrow w'\in \lk w,ws \rk$.

\medskip

For a given Coxeter system $(W,S)$ a  \emph{building of type $(W,S)$} will be denoted $\Phi$ and is defined as follows. $\Phi$ is a set (of \emph{chambers}) equipped with a family of equivalence relations $(\sim_s)_{s\in S}$ and with a family of subsets (called \emph {apartments}) isomorphic to $W$ (here an \emph{isomorphism} is a bijection preserving every $\sim_s$), such that:
\begin{enumerate}
\item
every two chambers are contained in a common apartment;
\item
if two chambers $x,y$ are both contained in apartments $A,A'$, then there exists an isomorphism $A\to A'$ fixing $x$ and $y$;
\item
if apartments $A,A'$ contain a chamber $x$ and both intersect an equivalence class $R$ of $\sim_s$, then there exists an isomorphism $A\to A'$ fixing $x$ and mapping $R\cap a$ to $R\cap A'$.
\end{enumerate}

For a subset $T\subseteq S$ and $x\in \Phi$ we define the \emph{residue} $\mr{Res}(x,T)$ as the set of all $y\in \Phi$ such that there exists a sequence
$(s_1,\ldots,s_l)$ of elements of $T$ and $x_i\in \Phi$ with $x_0=x\sim_{s_1} x_1 \sim_{s_2} x_2 \sim_{s_3} \cdots \sim_{s_l} x_l=y$.

We consider here the following geometric realization $|\Phi|$ of the building $\Phi$. For a point $p$ in a Davis chamber $K$, let $S(p)=\lk s\in S|\; p\in B_1(s,L')\rk$. Observe that $S(p)\neq \emptyset$ iff $p\in L$. Let $|\Phi|=\Phi\times K / \sim$, where $(x,p)\sim (y,q)$ iff $p=q$ and $x\in \mr{Res} (y,S(p))$. In this construction chambers correspond to copies of $K$.
In the sequel we do not distinguish a building $\Phi$ from its geometric realization $|\Phi|$, and both are denoted by $\Phi$.
A building is \emph{right-angled} (respectively \emph{hyperbolic}) if the corresponding Coxeter system is right-angled (respectively Gromov hyperbolic).
Observe that $W$ itself (respectively $\Sigma$) is a building (respectively a geometric realization of a building). By a theorem of Moussong (see \cite[Corollary 12.6.3]{Davi-b}), a right-angled Coxeter group $(W,S)$ is Gromov hyperbolic iff its nerve $L=L(W,S)$ is a $5$--large simplicial complex.
In that case there is a locally $5$--large cubulation of $\Sigma$ (i.e.\ CAT(-1) cubulation), but we do not know any such cubulation for a general right-angled hyperbolic building.

\subsubsection{Thickenings of buildings are weakly systolic}

Vertices in $K$ being barycenters of maximal simplices in $L$ will be called \emph{vertices of $\Phi$}. For a vertex $v$ of $\Phi$, its \emph{star}, denoted $\mr {St}_v$, is the union of all simplices of $\Phi$ (in the triangulation corresponding to the barycentric subdivision of $L$) containing $v$.
\medskip

\rem In the sequel we usually denote stars by $c,c_i$ etc. It follows from the fact that they play a role of ``cells" (as in Section \ref{cc-1}) in our Definition \ref{thbuild} of a ``thickening", analogous to the thickening of a cell complex --- Definition \ref{thick}.

\begin{lem}
\label{starcub}
Let $\pi \colon \Phi\to \Sigma$ be a folding map. Then, for a vertex $v$ of $\Phi$, the image $\pi (\mr {St}_v)$ of its star is a cube of the standard cubulation of $\Sigma$ (cf.\ \cite[Chapter 1.2]{Davi-b}).
\end{lem}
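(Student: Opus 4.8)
The plan is to analyze the structure of the star $\mr{St}_v$ of a vertex $v$ of $\Phi$ in purely combinatorial terms, and then transport this through the folding map $\pi$. Recall that $v$ is the barycenter of a maximal simplex $\tau$ of the nerve $L$; by Moussong's theorem $L$ is $5$--large (hence in particular flag), and $\tau$ corresponds to a spherical subset $T=T(\tau)\subseteq S$. In the Davis chamber $K=\mr{cone}_{b_K}(L')$, the star of $v$ is a simplicial cone whose faces are indexed by the faces of $\tau$ together with the cone point $b_K$; when one passes to the standard cubulation of $\Sigma$ (cf.\ \cite[Chapter 1.2]{Davi-b}), the cell of $\Sigma$ ``dual'' to $T$ at the identity coset is precisely the cube $[0,1]^T$, whose vertices are the cosets $wW_{S\setminus T'}$, $T'\subseteq T$, or equivalently the subsets $T'\subseteq T$. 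So the first step is to identify, inside $\Sigma$, the subcomplex spanned by $\pi(\mr{St}_v)$ with this cube $[0,1]^T$: this is essentially a bookkeeping computation comparing the two standard decompositions of $K$ — the cone decomposition and the cubical one — and is carried out in \cite{Davi-b}.

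Second, I would lift this to $\Phi$. A vertex $v$ of $\Phi$ lies in a chamber, i.e.\ in a copy of $K$; but $\mr{St}_v$ also meets the neighbouring chambers, namely all chambers in the residue $\mr{Res}(x,T)$ for $x$ the chamber containing $v$ (this is exactly what the identification $(x,p)\sim(y,q)$ in the construction of $|\Phi|$ records, since for $p$ near $v$ one has $S(p)\subseteq T$). Using building axiom (1), choose an apartment $A$ containing $x$ and any other chamber of that residue; on $A$ the situation is modelled by $\Sigma$, and by the previous paragraph $\mr{St}_v\cap A$ maps under the restriction $\pi|_A$ (which is an isomorphism onto $\Sigma$) to the cube $[0,1]^T$. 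The point is that $\mr{St}_v$ is covered by such pieces $\mr{St}_v\cap A$, and building axioms (2) and (3) guarantee these pieces are glued compatibly, so that $\pi(\mr{St}_v)$ is exactly one copy of $[0,1]^T$ in the standard cubulation.

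The main obstacle I anticipate is the verification that $\pi$ does not ``fold'' the star onto something smaller — i.e.\ that $\pi$ restricted to $\mr{St}_v$ is injective, so that the image is a genuine cube rather than a quotient of one. This requires using the defining property of a folding (retraction onto an apartment) together with the fact that $\mr{Res}(x,T)$ lies in a single residue of spherical type $T$, on which any two apartments are identified by an isomorphism fixing a chamber (axioms (2),(3)); hence the local picture around $v$ in $\Phi$ is isomorphic to the local picture in $W$, and $\pi$ just realizes this isomorphism. Once injectivity is in hand, the identification of the image with a standard cube follows from the first step. I would present the argument fairly briefly, since each individual step is a standard manipulation with buildings and Davis complexes; the content is in assembling them.
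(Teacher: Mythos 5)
Your first two steps are sound and in fact already contain everything that is needed, but the step you single out as the ``main obstacle'' --- injectivity of $\pi$ restricted to $\mr{St}_v$, justified by the claim that the local picture around $v$ in $\Phi$ is isomorphic to the local picture in $W$ --- is false in the case that actually matters, namely thick buildings. Take the smallest example: $W=\langle s\mid s^2\rangle$, so $L$ is a point, $\Sigma$ is a single $1$--cube, and let $\Phi$ be the building in which the $\sim_s$--class has three chambers. Then $\mr{St}_v$ is a tripod (three edges meeting at $v$), the residue $\mr{Res}(x,\{s\})$ has three chambers while the corresponding residue in $W$ has two, and any folding onto an apartment identifies two of the three edges. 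So $\pi|_{\mr{St}_v}$ is never injective once the building is thick, and no argument via building axioms (2),(3) can make it so; what those axioms give is that $\pi$ maps the residue \emph{onto} the corresponding residue of the apartment, not bijectively.

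Fortunately the injectivity is also unnecessary: the image $\pi(\mr{St}_v)$ is a subcomplex of $\Sigma$, so it cannot be a ``quotient of a cube'' --- the only thing to check is that it coincides with $\mr{St}_{\pi(v)}$, and that is exactly what your second paragraph provides (containment because $\pi$ preserves the residue/adjacency structure and sends simplices containing $v$ to simplices containing $\pi(v)$; surjectivity because already $\mr{St}_v$ intersected with one apartment through the base chamber maps isomorphically onto $\mr{St}_{\pi(v)}$). This is precisely the paper's argument, which is a two--line observation: when $\Phi=\Sigma$ stars of vertices are cubes of the standard cubulation (your first, bookkeeping step), and in general $\pi(\mr{St}_v)=\mr{St}_{\pi(v)}$. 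So: drop the injectivity discussion, keep the rest, and your proof reduces to the paper's.
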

\begin{proof}
If $\Phi=\Sigma$ (i.e.\ if the equivalence class of each $\sim_s$ has two elements) then stars of vertices are cubes of the standard cubulation of $\Sigma$. In the general case observe that $\pi (\mr{St}_v)=\mr {St}_{\pi(v)}$.
\end{proof}

\begin{lem}
\label{staru}
Let $\Phi$ be a right-angled hyperbolic building and let $c_1,c_2$ be two of its stars, with $c_i=\mr {St}(v_i)$, for a vertex $v_i$ of $\Phi4$; $i=1,2$. Let $\sigma_i$ be the maximal simplex in $L$ with barycenter $v_i$ and let $A$ be the set of Davis chambers containing $v_1$ and $v_2$.
\begin{enumerate}
\item $\Phi$ is the union $\bigcup_v \mr{St}_v$ of stars over all vertices of $\Phi$.
\item
If, for a Davis chamber $K$, we have $c_i\cap K\neq \emptyset$, then $v_i\in K$.
\item
$A=\emptyset$ iff $c_1\cap c_2=\emptyset$.
\item
If $A\neq \emptyset$ and $\sigma_1\cap \sigma_2\neq \emptyset$ then $c_1\cap c_2=\bigcup_{K\in A} \mr{cone}_{b_K}(\sigma_1\cap \sigma_2)$.
\item
If $A\neq \emptyset$ and $\sigma_1\cap \sigma_2= \emptyset$, then $A=\lk K \rk$, and $c_1\cap c_2=b_K$, for some Davis chamber $K$.
\end{enumerate}
\end{lem}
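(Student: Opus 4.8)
The plan is to establish all five assertions of Lemma~\ref{staru} by unwinding the geometric realization $\Phi = \Phi \times K / \sim$ together with the residue description of the identification $\sim$, reducing everything to statements about residues and about simplices in the nerve $L$. The key background facts are: (i) each chamber corresponds to an isometric copy of the Davis chamber $K = \mr{cone}_{b_K}(L')$; (ii) the star $\mr{St}_v$ of a vertex $v$ (a barycenter of a maximal simplex $\sigma$ in $L$) is, inside a single chamber, the closed star of $v$ in the barycentric subdivision; (iii) two copies $(x,p)$ and $(y,p)$ of the same point $p$ are identified exactly when $x \in \mr{Res}(y, S(p))$, where $S(p) = \{s \in S \mid p \in B_1(s,L')\}$.

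First I would prove (1): every point of $\Phi$ lies in some chamber, hence in some copy of $K$, and every point of $K$ lies in the closed star of the barycenter of at least one maximal simplex of $L$ (this is just the fact that the barycentric subdivision $L'$ is covered by the closed stars of barycenters of maximal simplices of $L$, and the cone over this covers $K$). Then (2) is almost immediate: if $c_i \cap K \neq \emptyset$, pick a point in the intersection; since the identification only glues points with equal second coordinate $p$, and $v_i$ is the unique vertex of type "barycenter of maximal simplex" in the relevant face structure, the chamber $K$ must contain $v_i$ itself (here I use that $S(v_i)$ is the full set of generators indexing the maximal simplex, so the residue $\mr{Res}(y,S(v_i))$ stays within one chamber when $\sigma_i$ is maximal — more precisely, one tracks that moving $v_i$ by $\sim_s$ for $s$ outside $\sigma_i$ changes nothing, while $s \in \sigma_i$ fixes $v_i$). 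For (3), $A = \emptyset$ means no chamber contains both $v_1$ and $v_2$; combined with (2) this forces $c_1 \cap c_2 = \emptyset$, and conversely if some chamber $K$ contains both vertices then $b_K$-cones over $\sigma_i$ both meet $K$ so the cone points coincide there and the intersection is nonempty.

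The computations (4) and (5) are the heart, and I expect (4) to be the main obstacle. Assume $A \neq \emptyset$. Within a single Davis chamber $K \in A$, the closed stars $\mr{St}(v_1) \cap K$ and $\mr{St}(v_2) \cap K$ are $\mr{cone}_{b_K}$ of the closed stars of $v_1,v_2$ in $L'$; the closed star of $b_{\sigma_i}$ in $L'$ is the union of simplices of $L'$ containing $b_{\sigma_i}$, which corresponds to chains of faces of $\sigma_i$. The intersection of the two closed stars, when $\sigma_1 \cap \sigma_2 \neq \emptyset$, is exactly the closed star data of $\sigma_1 \cap \sigma_2$ — this is a routine but slightly fiddly simplicial-complex computation about barycentric subdivisions. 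Taking the union over all $K \in A$, and checking that the identifications $\sim$ respect this (two points of $c_1 \cap c_2$ in different chambers of $A$ get glued precisely by the building axioms governing $\mr{Res}$), yields the stated formula. When $\sigma_1 \cap \sigma_2 = \emptyset$, the only common point of the two closed stars in $L'$-cone is the cone point $b_K$ itself, so $c_1 \cap c_2 \cap K = \{b_K\}$; and I must then show $A$ is a singleton: if two distinct Davis chambers $K, K'$ both contained $v_1$ and $v_2$, gluing their cone points would require $b_K = b_{K'}$ only when the residue identification collapses them, which (using building axiom (2) and the fact that $\sigma_1, \sigma_2$ are disjoint, hence their union is not spherical in a way that would fuse the chambers) does not happen unless $K = K'$. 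The care needed is in the last point — pinning down exactly why disjoint maximal simplices force $|A| = 1$ — so I would isolate that as a sub-claim and verify it via the gallery/residue combinatorics of the building, using that a minimal gallery between chambers both containing $v_1$ and $v_2$ must stay in $\mr{Res}(\cdot, \sigma_1) \cap \mr{Res}(\cdot, \sigma_2) = \mr{Res}(\cdot, \sigma_1 \cap \sigma_2) = \mr{Res}(\cdot, \emptyset)$, i.e.\ be trivial.
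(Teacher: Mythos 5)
Your plan is essentially a fleshed-out version of what the paper leaves implicit, plus a genuinely different argument for part (5). The paper's own proof consists of a one-line observation for (1), the declaration that (2)--(4) are obvious, and, for (5), the single remark that the intersection of two distinct chambers has diameter at most two in the barycentric subdivision of the nerve --- which kills the possibility of $K\neq K'$ both containing $v_1$ and $v_2$, since $d_{L'}(b_{\sigma_1},b_{\sigma_2})\geqslant 3$ when the maximal simplices $\sigma_1,\sigma_2$ are disjoint. You instead get $|A|=1$ from the residue side: the chambers containing $v_i$ are exactly those of $\mr{Res}(K,\sigma_i)$, and $\mr{Res}(K,\sigma_1)\cap\mr{Res}(K,\sigma_2)=\mr{Res}(K,\sigma_1\cap\sigma_2)=\lk K\rk$, either via the parabolic intersection property $W_{T_1}\cap W_{T_2}=W_{T_1\cap T_2}$ or via your gallery-convexity argument (a nontrivial minimal gallery in both residues would use a generator $s\in\sigma_1\cap\sigma_2=\emptyset$). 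Both routes are valid; the paper's is more metric, yours is more algebraic and makes the role of disjointness of the $\sigma_i$ transparent. Your reduction of (4) and (5) to a single-chamber computation, using (2) to see that every chamber meeting $c_1\cap c_2$ lies in $A$, is exactly the right skeleton.

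Two spots need tightening. In (2) your parenthetical is wrong as literally written: $\mr{Res}(y,\sigma_i)$ certainly does not stay within one chamber. The correct point, which your ingredients already contain, is that by maximality of $\sigma_i$ every non-cone-point vertex of a simplex of $\mr{St}_{v_i}$ is a barycenter $b_{\tau}$ with $\tau\subseteq\sigma_i$; hence every point $q$ of $\mr{St}_{v_i}$, lying in some chamber $K'$ containing $v_i$, has $S(q)\subseteq\sigma_i$, so any chamber $K$ containing $q$ satisfies $K\in\mr{Res}(K',S(q))\subseteq\mr{Res}(K',\sigma_i)$ and therefore contains $v_i$ (the case $S(q)=\emptyset$ forces $K=K'$). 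In (4), ``the closed star data of $\sigma_1\cap\sigma_2$'' should be pinned down: since $\sigma_i$ is maximal, the closed star of $b_{\sigma_i}$ in $L'$ is the subdivided simplex $\sigma_i'$, so the intersection of the two closed stars is $(\sigma_1\cap\sigma_2)'$ --- not the closed star of $b_{\sigma_1\cap\sigma_2}$ in $L'$, which is strictly larger --- and this is precisely what $\bigcup_{K\in A}\mr{cone}_{b_K}(\sigma_1\cap\sigma_2)$ asserts. With these clarifications your proposal is complete and correct.
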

\begin{proof}
(1)
Every vertex of $L$, as well as the cone point in $K$ belong to cone over some maximal simplex in $L$. Thus  $\Phi=\bigcup_v \mr{St}_v$.

(2), (3) and (4) are obvious.

(5) follows from the fact that the intersection of two distinct chambers has diameter at most two in corresponding barycentric subdivisions of nerves.
%If $c_1\cap c_2\neq \emptyset$ then there exist a chamber $K=\mr {cone} (L')$ such that $b_K\in c_1 \cap c_2$, or such that there exists a vertex $w\in L'$ belonging to $c_1\cap c_2$. In the latter case we have also $b_K\in c_1 \cap c_2$.
\end{proof}

\begin{lem}[no-$\Delta$ of stars]
\label{notrst}
A right-angled hyperbolic building has \emph{no-$\Delta$ of stars}, i.e.\ if three stars $c_1,c_2,c_3$ pairwise intersect, then they all intersect.
\end{lem}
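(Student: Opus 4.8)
The plan is to translate the statement about stars into a Helly-type property of residues of the building $\Phi$, and then to deduce that property from the two standard facts that residues of buildings are convex and gated. No use of hyperbolicity or right-angledness is needed for this particular lemma; those hypotheses enter only later (to make the thickening locally $5$--large, hence weakly systolic).

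First I would set up the dictionary between stars and residues. Write $c_i=\mr{St}_{v_i}$, where $v_i=b_{\sigma_i}$ is the barycenter of a maximal simplex $\sigma_i$ of $L$, and put $T_i=\mr{Vert}(\sigma_i)$, a spherical subset of $S$. Since $S(v_i)=T_i$ (for the barycenter of a simplex $\tau$ one has $b_\tau\in B_1(s,L')$ exactly for $s\in\mr{Vert}(\tau)$), the construction $|\Phi|=\Phi\times K/\!\sim$ shows that the set $R_i$ of Davis chambers of $\Phi$ containing $v_i$ is precisely a spherical residue of $\Phi$ of type $T_i$. By Lemma~\ref{staru}(3), $c_i\cap c_j\neq\emptyset$ if and only if $R_i\cap R_j\neq\emptyset$. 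In the other direction, if some Davis chamber $K$ contains $v_1,v_2,v_3$, then for each $i$ the edge $\langle b_K,v_i\rangle$ is a simplex of $K$ (as $\sigma_i$ is maximal, $v_i$ is a vertex of $L'$), so $b_K\in\mr{St}_{v_i}$; hence $R_1\cap R_2\cap R_3\neq\emptyset$ implies $c_1\cap c_2\cap c_3\neq\emptyset$. Thus it suffices to prove that three residues of $\Phi$ that pairwise intersect have a common chamber.

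For the latter I would invoke the well-known properties of buildings (cf.\ \cite{Davi-b} and references therein): every residue $R$ is \emph{gated}, i.e.\ for each chamber $C$ there is a chamber $\mr{proj}_R(C)\in R$ with $d(C,E)=d(C,\mr{proj}_R(C))+d(\mr{proj}_R(C),E)$ for all $E\in R$ (gallery distance); and every residue is \emph{convex}, i.e.\ contains every minimal gallery joining two of its chambers. Now pick a chamber $C\in R_2\cap R_3$ and set $D=\mr{proj}_{R_1}(C)\in R_1$. Choosing any $E\in R_1\cap R_2$, the gate property lets us concatenate a minimal gallery from $C$ to $D$ with a minimal gallery from $D$ to $E$ into a minimal gallery from $C$ to $E$ passing through $D$; since $C,E\in R_2$, convexity of $R_2$ forces this gallery, and hence $D$, into $R_2$. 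Repeating the argument with some $E'\in R_1\cap R_3$ and convexity of $R_3$ gives $D\in R_3$. Therefore $D\in R_1\cap R_2\cap R_3$, which completes the proof.

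The argument is short; the only step requiring genuine care is the bookkeeping of the first paragraph — identifying ``the Davis chambers through $v_i$'' with a residue and checking both directions of the star/residue correspondence — but Lemma~\ref{staru} already does most of this, and the remainder rests entirely on the standard convexity and gate properties of residues in buildings.
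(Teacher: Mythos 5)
Your argument is correct, and it takes a genuinely different route from the paper. The paper never passes to the chamber system: it picks a chamber $K$ with $b_K\in c_1\cap c_2$, applies the $K$--based folding map $\pi_K\colon\Phi\to\Sigma$, observes (via Lemma~\ref{starcub}) that the three stars map to three pairwise intersecting cubes of the standard cubulation of the Davis complex, invokes the clique--Helly property of CAT(0) cubical complexes (Lemma~\ref{flag thick}) to intersect them there, and then pulls back using $\pi_K^{-1}(\pi_K(c_i))=c_i$ for $i=1,2$, which is exactly why $K$ was chosen in $c_1\cap c_2$. You instead translate stars into residues --- your bookkeeping that the chambers through $v_i$ form the residue $\mr{Res}(x,T_i)$, and that Lemma~\ref{staru}(3) plus the cone point $b_K$ give both directions of the dictionary, is correct --- and then you run the standard Helly argument for gated convex subsets: project $C\in R_2\cap R_3$ onto $R_1$ and use convexity of $R_2$ and $R_3$ twice. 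What each approach buys: the paper's proof stays entirely inside its own toolkit (the cubical structure of $\Sigma$, hence right-angledness, and lemmas already established for the thickening), whereas yours imports the standard but external facts that residues are gated and convex; in exchange your proof is shorter and strictly more general, since it uses neither right-angledness nor hyperbolicity and would prove the no-$\Delta$-of-stars property for the Davis realization of an arbitrary building. If you write it up, cite the gatedness and convexity of residues explicitly (they are not proved anywhere in this paper), but there is no gap in the argument.
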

\begin{proof}
Let $c_1,c_2,c_3$ be three pairwise intersecting stars. By Lemma \ref{staru}, there exists a chamber $K$ with $b_K\in c_1\cap c_2$. Let $\pi_K \colon \Phi\to \Sigma$ be the $K$--based folding map.
Observe that, by Lemma \ref{starcub}, $\pi_K (c_1), \pi_K (c_2),\pi_K (c_3)$ are three pairwise intersecting cubes in the CAT(-1) cubical complex $\Sigma$. Thus, by Lemma \ref{flag thick}, $\pi_K (c_1)\cap \pi_K (c_2)\cap \pi_K (c_3)\neq \emptyset$. By our choice of $K$, we have $\pi_K^{-1}(\pi_K (c_i))=c_i$, for $i=1,2$, and $\pi_K^{-1}(\pi_K (c_1)\cap \pi_K (c_2))=c_1\cap c_2$.
Thus, if $c_3\cap (c_1\cap c_2)=\emptyset$ then $c_3 \cap \pi_K^{-1}(\pi_K (c_1)\cap \pi_K (c_2))=\emptyset$ that implies $\pi_K(c_3) \cap (\pi_K (c_1)\cap \pi_K (c_2)) =\emptyset$; contradiction.
\end{proof}

\begin{de}[Thickening]
\label{thbuild}
A \emph{thickening} $Th(\Phi)$ of a building $\Phi$ is a flag simplicial complex defined as follows. Vertices of $Th(\Phi)$ are cone points $b_K$, for all chambers $K$. The set of vertices span a simplex if they are all contained in a common star of $\Phi$.
\end{de}

\begin{lem}
\label{coflag}
Let $A$ be a finite collection of simplices in a flag simplicial complex $X$, such that every two simplices in $A$ are contained in a common simplex of $X$. Then there exists a simplex containing all simplices from $A$.
\end{lem}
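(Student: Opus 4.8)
The plan is to take the union of the vertex sets of all the simplices in $A$ and show that, by flagness, this union spans a single simplex of $X$ containing every member of $A$.

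First I would set $V=\bigcup_{\sigma\in A}\sigma^{(0)}$, the union of the $0$--skeleta of the simplices belonging to $A$. Since $A$ is finite and every simplex has finitely many vertices, $V$ is a finite set of vertices of $X$. The key step is to verify that $V$ is a clique, i.e.\ that any two distinct vertices $v,w\in V$ satisfy $v\sim w$ in $X$. For this, choose $\sigma,\tau\in A$ with $v\in\sigma^{(0)}$ and $w\in\tau^{(0)}$. By the hypothesis of the lemma there is a simplex $\mu$ of $X$ with $\sigma\subseteq\mu$ and $\tau\subseteq\mu$; then $v$ and $w$ are both vertices of $\mu$, and being distinct they span an edge of $\mu$, hence of $X$ (the case $\sigma=\tau$ is immediate, since $\sigma$ is itself a simplex). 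Thus every pair of distinct vertices of $V$ is joined by an edge.

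Now flagness of $X$ applies directly: a finite set of pairwise adjacent vertices is contained in a simplex, so $V$ spans a simplex $\rho=\langle V\rangle\in X$. For each $\sigma\in A$ we have $\sigma^{(0)}\subseteq V=\rho^{(0)}$, and since the faces of a simplex $\rho$ are exactly the subsets of its vertex set, each $\sigma$ is a face of $\rho$. Hence $\rho$ is a simplex containing all simplices from $A$, which is the assertion. I do not expect any genuine obstacle here; the only point requiring a moment's care is that "contained in a common simplex'' is used precisely to extract pairwise adjacency of the vertices in $V$, after which the statement is just the definition of flagness applied to the finite vertex set $V$.
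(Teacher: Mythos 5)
Your proposal is correct and is essentially the paper's own argument: take the union $\bigcup_{\sigma\in A}\sigma^{(0)}$ of the vertex sets, observe that any two of its vertices lie in a common simplex (the one containing the two members of $A$), hence are adjacent, and apply flagness to span a simplex containing all of $A$. No difference in approach and no gaps.
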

\begin{proof}
Let $A^{(0)}=\bigcup_{\sigma \in A}\sigma^{(0)}$. Every two vertices in $A^{(0)}$ are connected by an edge in $X$ since the two simplices in $A$ containing them are themselves contained in a common simplex. Thus, by flagness, $\lan A^{(0)} \ran$ is a simplex containing all simplices in $A$.
\end{proof}

\begin{lem}[$Th(\Phi)$ loc.\ $5$--large]
\label{l5lb}
The thickening $Th(\Phi)$ of a hyperbolic right-angled building $\Phi$ is locally $5$--large.
\end{lem}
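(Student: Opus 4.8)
The plan is to reduce the local $5$-largeness of $Th(\Phi)$ to the already-established machinery for cubical complexes via a folding map, exactly in the spirit of Lemma~\ref{notrst}. First I would fix a vertex $b_K$ of $Th(\Phi)$, i.e.\ a cone point of a chamber $K$, and analyze its link. Using the $K$-based folding map $\pi_K\colon \Phi\to \Sigma$ and Lemma~\ref{starcub}, stars of $\Phi$ containing $b_K$ map to cubes of the standard (CAT(-1)) cubulation of $\Sigma$ containing $\pi_K(b_K)$. Because $K$ itself is fixed by $\pi_K$, one checks (as in the proof of Lemma~\ref{notrst}, using that $\pi_K^{-1}(\pi_K(c))=c$ whenever $b_K\in c$) that $\pi_K$ restricts to an \emph{isomorphism} from the link $Th(\Phi)_{b_K}$ onto the link $\Sigma_{\pi_K(b_K)}$ in the cubulation, equivalently onto $Th(\Sigma)_{\pi_K(b_K)}$.

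Then I would invoke the results already proved for cubical complexes: by Lemma~\ref{k-l.thick} (applied to the locally $5$-large cubical complex $\Sigma$, which is locally $5$-large by Moussong's theorem since $L$ is $5$-large), the thickening $Th(\Sigma)$ is locally $5$-large, so its vertex links are $5$-large. Transporting this back along the isomorphism $Th(\Phi)_{b_K}\cong Th(\Sigma)_{\pi_K(b_K)}$ gives that $Th(\Phi)_{b_K}$ is $5$-large. Since $b_K$ was arbitrary, $Th(\Phi)$ is locally $5$-large.

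For this to make sense I first need to know that $Th(\Phi)$ is a simple cell complex, or at least that the objects called ``links'' here are simplicial complexes; flagness of $Th(\Phi)$ (hence of its links, by the usual argument that full subcomplexes of flag complexes are flag) follows from no-$\Delta$ of stars (Lemma~\ref{notrst}) together with Lemma~\ref{coflag} and the local structure, paralleling Lemma~\ref{notrth}. So a preliminary step in the proof is to record flagness of $Th(\Phi)$ and identify a simplex of $Th(\Phi)_{b_K}$ with a collection of stars through $b_K$ having a common refinement, which by Lemma~\ref{coflag} is again a star; this is what makes the link combinatorially a ``thickening'' of the cubical link.

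The main obstacle I anticipate is verifying carefully that $\pi_K$ really does induce an \emph{isomorphism} of links — not merely a surjection — i.e.\ that distinct stars of $\Phi$ through $b_K$ have distinct images under $\pi_K$, and that adjacency of their cone points in $Th(\Phi)$ corresponds exactly to adjacency in $Th(\Sigma)$. This amounts to the statements in Lemma~\ref{staru}(3)--(5) about when $c_1\cap c_2$ is nonempty and what it looks like, combined with the fact that $\pi_K^{-1}\pi_K$ is the identity on objects meeting $K$; one must check that two stars through $b_K$ intersect in a face of $K$ (or along a cone over a common face of the two maximal simplices) iff their $\pi_K$-images, which are cubes through $\pi_K(b_K)$, share a face, and that this correspondence is bijective on the poset of such stars. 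Once this poset isomorphism at the level of stars through $b_K$ is in hand, the $5$-largeness transfers formally, so the only real work is this local identification.
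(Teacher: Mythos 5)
There is a genuine gap, and it sits exactly at the step you yourself flagged as the main obstacle: the claim that the $K$--based folding map $\pi_K\colon\Phi\to\Sigma$ induces an \emph{isomorphism} $Th(\Phi)_{b_K}\cong Th(\Sigma)_{\pi_K(b_K)}$ is false whenever the building is thick (and thick buildings are the whole point of this section --- uniform lattices of right-angled hyperbolic buildings). If $K'$ and $K''$ are two distinct chambers, both different from $K$, lying in a common $s$--panel with $K$ (a thick building has at least three chambers per panel), then $b_{K'}$ and $b_{K''}$ are distinct vertices of $Th(\Phi)_{b_K}$, but the retraction $\pi_K$ sends both $K'$ and $K''$ to the same chamber of $\Sigma$ (the one at Weyl distance $s$ from the base chamber). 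So $\pi_K$ is not injective on the vertices of the link; the link of $b_K$ in $Th(\Phi)$ is strictly larger than the link in $Th(\Sigma)$. What is true is only that the \emph{stars through} $b_K$ correspond bijectively to the maximal cubes through $\pi_K(b_K)$ (Lemma~\ref{starcub}); but the vertices of $Th(\Phi)_{b_K}$ are the other \emph{chambers} in those stars, not the stars themselves, and that is where your identification with the face complex of the cubical link (Proposition~\ref{weksej}) breaks down. Moreover, $5$--largeness cannot be ``transported back'' along a non-injective simplicial surjection: a full $4$--cycle in $Th(\Phi)_{b_K}$ may have an image in $Th(\Sigma)_{\pi_K(b_K)}$ with repeated or newly adjacent vertices, so no contradiction with the $5$--largeness of $Th(\Sigma)$ is obtained. (By contrast, in Lemma~\ref{notrst} the folding map is only used to pull back a \emph{nonempty intersection} of stars through $b_K$, which is a much weaker statement and does not require any injectivity on neighboring chambers.)

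The paper avoids the Davis complex altogether at this point and argues directly in the nerve $L$: to each neighbor $b_{K'}$ of $b_K$ one assigns the minimal simplex $L_{K'}$ of $L$ with $K'\in\mathrm{Res}(K,L_{K'})$. Flagness of the link then follows from Lemma~\ref{coflag} (pairwise adjacent neighbors give pairwise ``compatible'' simplices $L_{K_i}$, hence a common maximal simplex of $L$, hence a common star), and a full $4$--cycle $(b_{K_0},b_{K_1},b_{K_2},b_{K_3})$ in $Th(\Phi)_{b_K}$ would yield, by choosing for each $i$ a vertex $v_i\in L_{K_i}$ not adjacent to $v_{i+2}$, a full $4$--cycle $(v_0,v_1,v_2,v_3)$ in $L$, contradicting the $5$--largeness of $L$ guaranteed by Moussong's theorem. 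If you want to keep your strategy, the correct replacement for your ``link isomorphism'' is precisely this type map $b_{K'}\mapsto L_{K'}$ into the nerve, not the folding map onto $\Sigma$; your preliminary observations about flagness via Lemma~\ref{notrst} and Lemma~\ref{coflag} are fine and are indeed part of the paper's argument.
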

\begin{proof}
Let $b_K$ be a vertex of $Th(\Phi)$, with $K=\mr{cone}_{b_K}(L')$. We check $5$--largeness of $Th(\Phi)_{b_K}$.
For every $b_{K'}\in Th(\Phi)_{b_K}$ let $L_{K'}$ be the minimal simplex in $L$ such that $K'\in \mr{Res}(K,L_{K'})$.

First we prove flagness.
Let $\lk b_{K_i} \rk_{i\in I} $ be a set of vertices pairwise connected by edges.
Then the collection $A=\lk L_{K_i} \rk_{i\in I}$ has the following property. For every $i,j\in I$, simplices $L_{K_i}$ and $L_{K_j}$ are contained a common simplex --- the maximal simplex of $L$ corresponding to a star containing $b_{K_i}$ and $b_{K_j}$. Then, by Lemma \ref{coflag} there is a maximal simplex containing all simplices in $A$, i.e.\ there exists a star containing each $b_{K_i}$. Thus $\lk b_{K_i} \rk_{i\in I} $ spans a simplex in $Th(\Phi)_{b_K}$.

Now we show that there is no full $4$--cycles in $Th(\Phi)_{b_K}$. Suppose that $(b_{K_0},b_{K_1},b_{K_2},b_{K_3},b_{K_0})$ is such a cycle. Since it has no diagonals, for every $i\in\lk 0,1,2,3 \rk$, there exists a vertex $v_i \in L_{K_i}$ with $\langle v_i,v_{i+2\;(\mr{mod}\; 4)}\rangle \notin L$.
Then the cycle $(v_0,v_1,v_2,v_3,v_0)$ is a full cycle in $L$, which contradicts the $5$--largeness of $L$. It implies that there are no full $4$--cycles in
$Th(\Phi)_{b_K}$ and thus $Th(\Phi)$ is locally $5$--large.
\end{proof}

\begin{lem}
\label{flthb}
$Th(\Phi)$ is flag and it is homotopically equivalent to $\Phi$. In particular it is contractible.
\end{lem}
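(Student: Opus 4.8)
The plan is to establish the two assertions separately --- flagness of $Th(\Phi)$, and the homotopy equivalence $Th(\Phi)\simeq\Phi$ --- after which contractibility is automatic: $\Phi$ is the geometric realization of a building, hence CAT($0$) (even CAT($-1$), as the building is hyperbolic), and so contractible. The two tools I would rely on are the no-$\Delta$ of stars together with local $5$--largeness (Lemmas~\ref{notrst} and~\ref{l5lb}) for flagness, and Borsuk's Nerve Theorem together with the analysis of star/chamber intersections (Lemma~\ref{staru}) for the homotopy statement.

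\emph{Flagness.} I would argue exactly as in the proof of Lemma~\ref{notrth}. By Lemma~\ref{l5lb}, $Th(\Phi)$ is locally $5$--large, so each of its links is flag; by Lemma~\ref{notrst}, $\Phi$ has no-$\Delta$ of stars. Given vertices $b_{K_0},\ldots,b_{K_n}$ of $Th(\Phi)$ pairwise joined by edges, fix $b_{K_0}$. For any $i,j$ the three stars realizing the edges $b_{K_0}b_{K_i}$, $b_{K_0}b_{K_j}$, $b_{K_i}b_{K_j}$ pairwise intersect, so by no-$\Delta$ they have a common point; inspecting the shape of star intersections in Lemma~\ref{staru}(3)--(5) one then sees that $b_{K_0},b_{K_i},b_{K_j}$ lie in a common star. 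Hence $\{b_{K_1},\ldots,b_{K_n}\}$ is a clique in the flag link $Th(\Phi)_{b_{K_0}}$, spans a simplex there, and the join of that simplex with $b_{K_0}$ contains all of $b_{K_0},\ldots,b_{K_n}$. (Alternatively, one may deduce flagness by folding: picking a chamber $K$ with $b_K$ in the relevant intersection and pulling back along a folding map $\pi_K\colon\Phi\to\Sigma$, as in the proof of Lemma~\ref{notrst}, reduces the question to flagness of $Th(\Sigma)$, which holds by Corollary~\ref{cat-1cc}, $\Sigma$ carrying a locally $5$--large cubulation.)

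\emph{Homotopy equivalence.} I would cover $\Phi$ by its closed Davis chambers $\{\overline K\}$ --- equivalently, by the closed stars of the cone points $b_K$ --- and apply Borsuk's Nerve Theorem~\cite{Bjorn}, as in the proof of Lemma~\ref{borsuk}. Each $\overline K=\mathrm{cone}_{b_K}(L')$ is a simplicial cone, hence contractible, and every nonempty intersection of finitely many closed chambers is a single ``mirror", again a simplicial cone over a full subcomplex of $L'$, hence contractible. Since a nonempty mirror corresponds to a spherical subset $T\subseteq S$ and therefore contains the barycenter of some maximal simplex of $L$ containing $T$, every nonempty intersection of closed chambers contains a vertex of $\Phi$; combining this with Lemma~\ref{staru} shows that cone points $b_{K_0},\ldots,b_{K_n}$ lie in a common star precisely when $\overline{K_0}\cap\cdots\cap\overline{K_n}\neq\emptyset$. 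Thus the nerve of this cover is canonically $Th(\Phi)$, so $Th(\Phi)\simeq\Phi$, and contractibility of $\Phi$ gives contractibility of $Th(\Phi)$.

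\emph{Main obstacle.} The flagness half is routine once Lemmas~\ref{notrst} and~\ref{l5lb} are in place. The delicate point is the Nerve Theorem step: one must verify that \emph{every} nonempty intersection of finitely many closed chambers is a single contractible face --- transparent when $\Phi=\Sigma$ and obtained in general by pulling back along a folding map, exactly as in the proof of Lemma~\ref{notrst} --- and one must match the nerve with $Th(\Phi)$ on the nose rather than merely up to homotopy, which is precisely the content of the intersection description in Lemma~\ref{staru}. Beyond that it is all bookkeeping.
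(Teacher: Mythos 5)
Your proof is correct and, for the flagness half, identical to the paper's (which simply invokes Lemmas~\ref{notrst} and~\ref{l5lb} and repeats the argument of Lemma~\ref{notrth}). For the homotopy equivalence both you and the paper use Borsuk's Nerve Theorem, but with different covers of $\Phi$: the paper covers $\Phi$ by the stars $\mr{St}_v$ and cites Lemma~\ref{staru} for contractibility of their intersections, so the nerve of its cover has the \emph{vertices} of $\Phi$ as its vertex set and the identification with $Th(\Phi)$ is indirect (as in Lemma~\ref{borsuk}, one really compares both $\Phi$ and $Th(\Phi)$ with a common nerve), whereas you cover $\Phi$ by the closed Davis chambers, whose nerve is \emph{literally} $Th(\Phi)$ once one checks that chambers $K_0,\ldots,K_n$ meet iff $b_{K_0},\ldots,b_{K_n}$ lie in a common star --- a check you correctly reduce to the existence of a vertex of $\Phi$ in any nonempty intersection of chambers. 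The price of your route is that you must show every finite intersection of chambers is a contractible face $K_T$, which needs a little building theory (residues/Weyl distance, or a folding argument) beyond Lemma~\ref{staru}; the price of the paper's route is the extra step matching the star-cover nerve with $Th(\Phi)$, which it leaves implicit. Note also that the paper's Lemma~\ref{staru} only treats pairwise intersections of stars, while the nerve theorem requires all finite intersections to be contractible, so on this point your write-up is, if anything, more honest than the published one; your justification of contractibility of $\Phi$ itself (Davis realization of a building is CAT(0)) is likewise fine and fills in a step the paper leaves unstated.
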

\begin{proof}
Flagness follows from Lemma \ref{notrst} and Lemma \ref{l5lb} in the same way as in the proof of Lemma \ref{notrth}.

For the homotopy type of $Th(\Phi)$ observe that, by Lemma \ref{staru}, intersections of stars are contractible. Thus, as in Lemma \ref{borsuk}, we can apply directly the Borsuk Nerve Theorem \cite{Bjorn} to get the homotopy equivalence.
\end{proof}

\begin{prop}[$Th(\Phi)$ is weakly systolic]
\label{wsbuild}
Let $\Phi$ be right-angled hyperbolic building. Then $Th(\Phi)$ is weakly systolic. In particular uniform lattices in the isometry group of $\Phi$ are weakly systolic.
\end{prop}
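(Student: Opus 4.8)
The plan is to verify that the flag simplicial complex $Th(\Phi)$ satisfies the $SD_2^{\ast}$ property of Definition~\ref{sd2*} and that it is simply connected, and then to conclude by the Cartan-Hadamard Theorem~\ref{logl}. Simple connectivity comes for free: by Lemma~\ref{flthb} the complex $Th(\Phi)$ is flag and homotopy equivalent to $\Phi$, hence contractible, hence its own universal cover; so once $SD_2^{\ast}$ is established, Theorem~\ref{logl} yields that $Th(\Phi)$ is weakly systolic (and, in particular, contractible again). This is exactly the scheme of Proposition~\ref{5lnotr} for thickenings of genuine cell complexes, and the whole point of the proof is to rerun that argument with the r\^ole of cells played by the stars of $\Phi$.

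Condition~(a) of Definition~\ref{sd2*} --- the absence of $4$--wheels --- follows immediately from the local $5$--largeness of $Th(\Phi)$ proved in Lemma~\ref{l5lb}: a $4$--wheel $(v_0;v_1,\ldots,v_4)$ in $Th(\Phi)$ would produce a full $4$--cycle $(v_1,v_2,v_3,v_4,v_1)$ in the link $Th(\Phi)_{v_0}$, contradicting that this link is $5$--large.

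For condition~(b) I would transcribe the argument from the proof of Proposition~\ref{5lnotr}. Given a $5$--wheel with a pendant triangle $\widehat W=(v_0;v_1,\ldots,v_5;t)$ in $Th(\Phi)$, choose a star $c$ of $\Phi$ containing the triangle $\langle v_0,v_1,v_2\rangle$ and stars $c_1,c_2,c_3$ containing $\{v_0,v_2,v_3\}$, $\{v_0,v_1,v_5\}$ and $\{v_1,v_2,t\}$ respectively. Each of the triples $\{c,c_i,c_j\}$ is pairwise intersecting, so by the no--$\Delta$ of stars (Lemma~\ref{notrst}) the three sub-stars $c\cap c_1$, $c\cap c_2$, $c\cap c_3$ are pairwise intersecting inside $c$. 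Now apply a folding map $\pi=\pi_K$ based at a chamber $K$ with $b_K\in c$: by Lemma~\ref{starcub} it sends $c$ onto a cube of the CAT(-1) cubulation of $\Sigma$ (injectively on $c$, as in the proof of Lemma~\ref{notrst}) and sends $c\cap c_i$ to faces of that cube; since faces of a cube are no--$\Delta$ (Lemma~\ref{flag thick}), these three faces meet in a common nonempty face, which contains a vertex of the cubulation, namely $\pi(b)$ for a cone point $b\in c\cap c_1\cap c_2\cap c_3$. By construction $b$ is joinable in $Th(\Phi)$ with $v_0,v_1,v_2$ (all in $c$), with $v_3$ (in $c_1$), with $v_5$ (in $c_2$) and with $t$ (in $c_3$); and since $Th(\Phi)$ has no $4$--wheels the $4$--cycle $(b,v_3,v_4,v_5,b)$ in the link $Th(\Phi)_{v_0}$ cannot be full, so $b\sim v_4$ as well. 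Hence $\widehat W\subseteq B_1(b,Th(\Phi))$, which is condition~(b). I expect the delicate point to be precisely this transfer: making the no--$\Delta$ bookkeeping, transparent for cells of a cell complex, work for the star structure of a building --- in particular that the sub-stars of a single star behave like faces of a cube (via Lemma~\ref{starcub} and the description of star intersections in Lemma~\ref{staru}) and that their common intersection really does contain a cone point.

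Having shown that $Th(\Phi)$ satisfies $SD_2^{\ast}$ and is simply connected, Theorem~\ref{logl} gives that $Th(\Phi)$ is weakly systolic. For the final assertion, a uniform lattice $\Gamma$ in the isometry group of $\Phi$ acts properly discontinuously and cocompactly on $\Phi$; since automorphisms of $\Phi$ permute its stars and cone points, this induces a simplicial action of $\Gamma$ on $Th(\Phi)$ which is still proper (the cone point $b_K$ has the same stabilizer as the chamber $K$) and cocompact (finitely many $\Gamma$--orbits of stars, hence of simplices of $Th(\Phi)$). Thus $\Gamma$ acts geometrically on the weakly systolic complex $Th(\Phi)$, i.e.\ $\Gamma$ is weakly systolic, as claimed.
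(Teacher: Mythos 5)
Your overall scheme (flagness and local $5$--largeness for condition (a), a star-by-star transcription of Proposition~\ref{5lnotr} for condition (b), simple connectivity from Lemma~\ref{flthb}, then Theorem~\ref{logl}, then the lattice statement) is exactly the paper's intended argument, and most of it is fine. The gap is in your production of the vertex $b$. You base a folding map $\pi_K$ at a chamber $K$ with $b_K\in c$ and claim $\pi_K$ is injective on the star $c$ ``as in the proof of Lemma~\ref{notrst}''. That claim is false for a thick building: the star $\mr{St}_v$ contains \emph{all} chambers of the residue of type $S(v)$, and $\pi_K$ folds the (possibly many) chambers of each panel onto the two chambers of the corresponding panel of $\Sigma$, so $\pi_K|_c$ is far from injective; the proof of Lemma~\ref{notrst} never asserts injectivity --- it uses only the saturation property $\pi_K^{-1}(\pi_K(c_i))=c_i$, which holds only for stars whose \emph{centers lie in the base chamber} $K$, and then a contradiction argument for the third star. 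Without injectivity (or saturation for all of $c,c_1,c_2,c_3$, which you cannot arrange with a single base chamber), knowing that the three faces $\pi(c\cap c_1),\pi(c\cap c_2),\pi(c\cap c_3)$ of the cube $\pi(c)$ share a vertex only gives you three cone points $b_i\in c\cap c_i$ with the same image; these may be cone points of three distinct chambers folded together, so you cannot conclude that $c\cap c_1\cap c_2\cap c_3$ contains a cone point. As written, the pull-back step is circular: it assumes the very Helly-type statement in $\Phi$ you are trying to establish.

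The repair is simpler than what you attempted, and it is what ``analogous to Proposition~\ref{5lnotr}'' means here: you do not need $b$ to lie in $c$ at all. The stars $c_1,c_2,c_3$ pairwise intersect (at $v_0$, $v_2$, $v_1$ respectively), so Lemma~\ref{notrst} --- where all the folding-map work is already done --- gives a point $p\in c_1\cap c_2\cap c_3$. Take any chamber $K'$ containing $p$; since $K'$ meets each $c_i$, Lemma~\ref{staru}(2) shows the center of each $c_i$ is a vertex of $K'$, hence the cone point $b_{K'}$ lies in $c_1\cap c_2\cap c_3$. Then $b_{K'}$ shares a star with each of $v_0,v_2,v_3$ (namely $c_1$), with $v_0,v_1,v_5$ (namely $c_2$) and with $v_1,v_2,t$ (namely $c_3$), so $b_{K'}$ is adjacent in $Th(\Phi)$ to all of them, and your final no-$4$--wheel argument gives $b_{K'}\sim v_4$, whence $\widehat W\subseteq B_1(b_{K'},Th(\Phi))$. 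With this substitution the rest of your proof (conditions (a), simple connectivity, Theorem~\ref{logl}, and the action of a uniform lattice on $Th(\Phi)$) goes through and coincides with the paper's proof.
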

\begin{proof}
By Lemma \ref{notrst}, $\Phi$ has no $\Delta$ of stars.
The proof of the lemma is thus analogous to the one of Proposition \ref{5lnotr} --- we use Lemma \ref{l5lb} instead of Lemma \ref{k-l.thick}, and Lemma \ref{flthb} instead of Lemmas \ref{notrth} \& \ref{borsuk}.
\end{proof}

\subsection{Some other (non-)examples}
\label{othex}

\subsubsection{Subgroups}
\label{subgps}
In Subsection \ref{qconv} we show that quasi-convex subgroups of some hyperbolic weakly systolic groups are weakly systolic; cf.\ Corollary \ref{qcsbgp}. In Subsection \ref{sfps} we prove that finitely presented subgroups of some weakly systolic groups are weakly systolic; cf.\ Theorem \ref{fps}. We do not know at the moment whether finitely presented subgroups of every weakly systolic group are weakly systolic. This is true for systolic groups; cf.\ \cite{Wis} and Subsection \ref{fps} below.

\subsubsection{Products}
\label{prod}
Several constructions of ``free products" of $SD_2^{\ast}$ complexes resulting in an $SD_2^{\ast}$ complex are possible. For example, if $X,Y$ are $SD_2^{\ast}$ complexes and $\sigma \in X$, $\tau \in Y$ their maximal simplices, then the following amalgamated union $X'\cup_c Y'$ is an $SD_2^{\ast}$ complex. We set $X'=X\cup \mr{cone}_c(\sigma)$ and $Y'=Y\cup \mr{cone}_c(\tau)$, where $\mr{cone}_c(\sigma)$ denotes the cone over $\sigma$ with the cone point $c$.

However, in general we do not know whether a free product of weakly systolic groups with amalgamation over a finite subgroup is weakly systolic. A result like that holds for systolic groups; cf.\ \cite{OCh}.

\subsubsection{Non-examples}
\label{nonex}
In \cite{O-chcg} we construct weakly systolic groups of arbitrarily large virtual cohomological dimension. However, all the examples constructed there ``contain asymptotically" spheres of dimension at most $3$; see a discussion in \cite{O-chcg}. A conjecture by Januszkiewicz-\' Swi\polhk atkowski (personal communication) states that simply connected locally $5$--large cubical complexes ``contain asymptotically" spheres of dimension at most $3$. We do not know if this is true for weakly systolic groups (observe that systolic groups do not ``contain asymptotically" spheres above dimension one \cites{JS2,O-ciscg,O-ib7scg,OS}).

On the other hand it seems plausible that $\mathbb Z^3$ is a simple example of a (CAT(0) cubical) group that is not weakly systolic.

\section{A combinatorial negative curvature}
\label{neg}
In this section we introduce and study the local condition, $SD_2^{\ast}(7)$ (cf.\ Definition \ref{sd2*k} below), that is a combinatorial analogue of the negative curvature for weakly systolic complexes. In particular, in Theorem \ref{hyp}, we prove that groups acting geometrically on simply connected complexes satisfying that condition are Gromov hyperbolic.
\medskip

However, we begin with a general result concerning weakly systolic complexes and hyperbolicity.

\subsection{Flats vs. hyperbolicity}
\label{flats}

Recall that the \emph{systolic plane} is a $2$--dimensional simplicial complex isomorphic to the triangulation of the Euclidean plane by equilateral triangles. The following result is an analogue of \cite[Theorem 1.2]{Prz-noflat}.

\begin{tw}[Hyperbolicity $\equiv$ no-flats]
\label{noflat}
Let a group $G$ act geometrically by automorphisms on a weakly systolic complex $X$. Then $G$ is Gromov hyperbolic iff there is no isometric embedding of the systolic plane in $X$.
\end{tw}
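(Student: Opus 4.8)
The plan is to follow the strategy of Przytycki's proof in the systolic case, adapting it to the weakly systolic setting. One direction is immediate: if the systolic plane $E$ isometrically embeds in $X$, then since $G$ acts geometrically on $X$, the group $G$ contains a quasi-isometrically embedded copy of $\mathbb{Z}^2$ (coming from a cocompact lattice acting on a neighborhood of $E$, via a Milnor--\v Svarc argument), hence $G$ cannot be Gromov hyperbolic. So the content is in the converse: assuming $G$ is \emph{not} hyperbolic, one must produce an isometric embedding of the systolic plane into $X$.

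For the converse, first I would invoke the standard fact (from $\delta$-hyperbolicity theory, e.g.\ \cite{BrHa}) that if $G$ is not hyperbolic then $X$, being quasi-isometric to $G$, contains quasi-isometrically embedded flat disks of arbitrarily large radius; more precisely, for every $R$ there is an isometrically embedded (in the combinatorial metric) ``flat disk'' --- a ball in the systolic plane --- of radius $R$ in $X$. The hard part is the passage from arbitrarily large isometrically embedded flat disks to a single isometrically embedded copy of the whole systolic plane $E$. This is where weak systolicity enters through the convexity results of Section~\ref{convex}: one uses Corollary~\ref{ballvert} (balls around vertices are convex) and Lemma~\ref{bbac} (balls of radius $\geqslant 2$ around simplices are convex), together with the $\wt{SD}_n$ descent properties, to show that a large isometrically embedded flat disk is \emph{locally determined} --- its combinatorial structure near any interior vertex is forced to be that of the systolic plane (six triangles around each interior vertex, flag links with no short full cycles).

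The key steps, in order, would be: (1) dispatch the easy direction via Milnor--\v Svarc; (2) reduce non-hyperbolicity to the existence of isometrically embedded flat disks $D_R \hookrightarrow X$ of all radii $R$; (3) show, using convexity of balls and the simple-descent properties, that for $D_R$ with $R$ large each interior vertex has a link isomorphic to the $6$-cycle and that consecutive such disks overlap compatibly --- i.e.\ extract a coherent direct system; (4) take a limit (via a compactness/diagonal argument over the geometric $G$-action, so that translates of the $D_R$ accumulate) to obtain a simplicial map $E \to X$ which is locally injective and $1$-Lipschitz; (5) upgrade this to an isometric embedding, again using convexity of balls in $X$ to rule out shortcuts. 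I expect step (4)--(5), the limiting argument and the verification that the resulting map is globally isometric (not merely locally injective), to be the main obstacle: one must ensure that geodesics in the image flat disk remain geodesics in $X$, which is precisely where one leans on the fact (Lemma~\ref{bbac}, Corollary~\ref{convmax}) that balls in weakly systolic complexes are convex, so that a path which is locally geodesic in a convex flat subcomplex is globally geodesic in $X$. This mirrors \cite[Theorem 1.2]{Prz-noflat} but substitutes weakly systolic convexity for systolic convexity throughout.
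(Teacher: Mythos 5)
Your step (2) is where the argument breaks down. It is not a standard fact of $\delta$--hyperbolicity theory that a non-hyperbolic space contains isometrically embedded flat disks (balls of the systolic plane) of arbitrarily large radius: for general spaces non-hyperbolicity yields nothing of the sort, and for NPC-like complexes this implication is precisely the substance of the theorem (it is the analogue of the flat plane theorem, not an input to it). What non-hyperbolicity actually gives, via Papasoglu's criterion \cite{Papa}, is only a pair of geodesics $\gamma,\gamma'$ with common endpoints $v,w$ which diverge by an arbitrarily large amount $k$ at some index $i$, i.e.\ arbitrarily fat geodesic bigons. The paper's proof then manufactures the flat pieces by hand: it takes a geodesic $\delta$ from $v_i$ to $v_i'$, which lies in $B_i(v)\cap B_{n-i}(w)$ by convexity of balls (Corollary~\ref{ballvert}), and repeatedly applies the projection $\pi_v$ to push $\delta$ down, sphere by sphere, towards $v$. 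The two nontrivial points are exactly the ones your outline skips: (a) consecutive projected vertices $z^m_j$, $z^m_{j+1}$ never collapse, which is proved by exhibiting a $5$--wheel with a pendant triangle and using the $SD_2^{\ast}$-type properties of $X$ (no $4$--wheels, a cone vertex for the wheel) to contradict the existence of a pair of vertices at distance $3$; and (b) the resulting systolic equilateral triangle $\Delta_l$ of side $l$ is isometrically embedded, proved by a minimal-counterexample argument again resting on convexity of balls. Without (a) and (b) you have no flat disks to take a limit of, so the proposal assumes the hard part.

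By contrast, your steps (4)--(5), which you single out as the main obstacle, are the routine part: once one has isometrically embedded systolic equilateral triangles of every size, the passage to an embedded systolic plane is exactly \cite[Lemma 3.4]{Prz-noflat}, a compactness argument over the cocompact action, which the paper simply cites; no new convexity input is needed there. Finally, your easy direction is overcomplicated and, as stated, incorrect: there is no reason a cocompact lattice should act on a neighborhood of an embedded flat (flats need not be periodic), so Milnor--\v{S}varc does not produce a $\mathbb{Z}^2$ in $G$. One argues instead that the systolic plane is quasi-isometric to the Euclidean plane, so an isometric embedding of it is a quasi-flat in $X$, and a hyperbolic space contains no two-dimensional quasi-flat; since $G$ is quasi-isometric to $X$, the group $G$ is not hyperbolic.
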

\dow
The proof follows the idea of the proof of \cite[Theorem 1.2]{Prz-noflat}.
It is clear that If there is an embedding as in the lemma, then $X$ is not Gromov hyperbolic. Thus we now prove the converse.
\medskip

Assume that $X$ is not Gromov hyperbolic. Then, by the Papasoglu's criterion \cite{Papa}, for every $k\geqslant 2$, there exist vertices $v,w$ and $1$--skeleton geodesics $\gamma=(v_0=v,v_1,\ldots,v_n=w)$ and $\gamma'=(v_0'=v,v_1',\ldots,v_n'=w)$ with the following properties.
There exists $i$ such that $d(v_i,v_i')\geqslant k$.
Let $\delta=(z_0^0=v_i,z_1^0,\ldots,z_l^0=v_i')$ be a geodesic. By convexity of balls we have that $\delta \subseteq B_i(v,X)\cap B_{n-i}(w,X)$.
Let $z_j^1$, for $j=0,\ldots,n-1$, be a vertex in $\pi_v(\langle z_j^0, z_{j+1}^0 \rangle)$.
\medskip

\noindent
{\bf Claim 1.} $z_{j+1}^1\neq z_j^1$.
\medskip

\noindent
{\bf Proof of Claim 1.} Assume that $z_{j+1}^1= z_j^1$. Let $s$ and $t$ be vertices in, respectively, $\pi_w(\langle z_j^0, z_{j+1}^0 \rangle)$ and
$\pi_w(\langle z_{j+1}^0, z_{j+2}^0 \rangle)$. Then $s\neq t$, $s\nsim z_{j+2}^0$ and $t\nsim z_{j}^0$ because there are no $4$--wheels in $X$.
Thus there is a $5$--wheel with a pendant triangle $\widehat W=(z_{j+1}^0;s,t,z_{j+2}^0,z_{j}^1,z_{j}^0;u)$, where $u$ is a vertex in
$\pi_w(\langle s,t \rangle)$. Then, by the property $SD_2^{\ast}$, there exists a vertex $x$ with $\widehat W\subseteq B_1(x,X)$. But this is a contradiction, since $d(z_j^1,u)=3$. This finishes the proof of Claim 1.
\medskip

Observe that $z_j^1\sim z_j^0,z_{j+1}^0,z_{j+1}^1$. Inductively we define $z_j^m$ as a vertex in $\pi_v(\langle z_j^{m-1}, z_{j+1}^{m-1} \rangle)$, for $m=1,2,\ldots,l$ and $j=0,\ldots,l-m$.
Then $z_j^m\sim z_j^{m-1}, z_{j+1}^{m-1}, z_{j+1}^{m}$, and $z_j^m\nsim z_{j-1}^{m-1}, z_{j+2}^{m-1}$.
The full subcomplex $\Delta_l$ spanned by the set $\lk z_j^m \rk$ is isomorphic to an equilateral triangle on the systolic plane with the side of length $l$.
\medskip

\noindent
{\bf Claim 2.} The complex $\Delta_l$ is isometrically embedded in $X$.
\medskip

\noindent
{\bf Proof of Claim 2.}
We have to show that for every two vertices $z,u\in \Delta_l$ their distance $d_{\Delta_l}(z,u)$ in $\Delta_l$ is the same as their distance $d(z,u)$ in $X$.

Let $z=z_j^m$ and $u=z_{j'}^{m'}$.
Assume, by contrary, that there exist pairs $(z,u)$ with $d(z,u) < d_{\Delta_l}(z,u)$. Let a pair $(z,u)$ has the smallest $m'+m$ among such pairs. Observe that if $m=m'=0$ then $z,u$ lie on the geodesic $\delta$ and thus $d(z,u) = d_{\Delta_l}(z,u)$.
W.l.o.g. assume that $m+j\leqslant m'+j'$ and $m\leqslant m'$ (the other cases can be treated analogously by symmetry).

If $m=m'$ then consider the pair $(z'=z_j^{m-1},u)$. We have $d(z',u)\leqslant d(z,u) +1 < d_{\Delta_l}(z,u)+1=d_{\Delta_l}(z',u)$. This contradicts our assumptions on $(z,u)$.

Thus we assume that $m<m'$.
If $j'\leqslant j$ then $d_{\Delta_l}(u,z)=m'-m=d(u,z)$; contradiction. Hence $j< j'$. Consider the pair $(z,u')$ with $u'=z_{j'+1}^{m'-1}$. By our assumptions we have
\begin{align*}
d(z,u') &= d_{\Delta_l}(z,u')=(j'+1-j)+(m'-1-m)\\
& =(j'-j)+(m'-m)=d_{\Delta_l}(z,u)\geqslant d(z,u)+1.
\end{align*}
Thus there exists
a geodesic in $X$ between $z$ and $u'$ passing through $u$. But this contradicts the convexity of the ball $B_{(n-i)+(m'-1)}(w,X)$ containing $z$ and $u'$.
This finishes the proof of Claim 2.
\medskip

Thus, for every $l$ there exists a systolic equilateral triangle $\Delta_l$ with the side $l$ isometrically embedded in $X$. Since $X$ admits the geometric $G$--action it follows, cf.\ \cite[Lemma 3.4]{Prz-noflat}, that there is an isometrically embedded systolic plane in $X$.
\kon

\rem An analogous result holds also for CAT(0) groups; cf.\ \cite[Chapter III.$\Gamma$, Theorem 3.1]{BrHa}.

\subsection{Negative curvature}
\label{nega}
\begin{de}[$SD_2^{\ast}(k)$ property]
\label{sd2*k}
For $k\geqslant 5$, a flag simplicial complex $X$ satisfies the \emph{$SD_2^{\ast}(k)$ property} (or is an \emph{$SD_2^{\ast}(k)$ complex}) if the following two conditions hold.

(a) $X$ does not contain $4$--wheels,

(b) for $5\leqslant l<k$ and every $l$--wheel with a pendant triangle $\widehat W$ in $X$, there exists a vertex $v$ with $\widehat W\subseteq B_1(v,X)$.
\end{de}

Observe that the condition
$SD_2^{\ast}(6)$ is equivalent to $SD_2^{\ast}$, and that
$SD_2^{\ast}(k)$ implies $SD_2^{\ast}(l)$, for $l\leqslant k$.

\begin{lem}[Strict geodesic contraction]
\label{strcon}
Let $v$ be a vertex of a simply connected $SD_2^{\ast}(7)$ complex $X$.
Let $n\geqslant 2$ and let $w_1\sim w_2$ be two vertices on the sphere $S_n(v,X)$.
Then $\pi_v^2(w_i)=\pi_v(\pi_v (w_i))\subseteq \pi_v^2(w_j)=\pi_v(\pi_v (w_j))$ for some $i\neq j$.
\end{lem}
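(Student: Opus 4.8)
We work inside the universal cover $\widetilde X$, which by Theorem~\ref{logl} is weakly systolic; in particular all the convexity results of Section~\ref{convex} apply, and by Lemma~\ref{sd simpl} the complex satisfies $SD_n(v)$ for every $n$. Fix $v$ and put $B_k = B_k(v,X)$, $S_k = S_k(v,X)$. Recall the projection $\pi_v(\sigma) = X_\sigma \cap B_{k-1}$ for a simplex $\sigma \subseteq S_k$, which by $SD_n(v)$ is a non-empty simplex; thus $\pi_v(w_i)$ is a simplex in $S_{n-1}$, and $\pi_v^2(w_i) = \pi_v(\pi_v(w_i)) = X_{\pi_v(w_i)} \cap B_{n-2}$ is a simplex in $S_{n-2}$ (this second projection is well-defined by Lemma~\ref{sd vert}, since $\pi_v(w_i)$ is a simplex of $S_{n-1}$ and $SD_{n-1}(v)$ applies to its vertices). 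The claim is that $\pi_v^2(w_1)$ and $\pi_v^2(w_2)$ are nested, for the appropriate ordering of the indices.

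\textbf{Key steps.} First I would record that $\pi_v(w_1) \cup \pi_v(w_2)$ spans a simplex: since $w_1 \sim w_2$, the edge $w_1w_2 \subseteq S_n$, and by the edge condition combined with Lemma~\ref{sd vert} the intersection $X_{\langle w_1,w_2\rangle} \cap B_{n-1} = \pi_v(\langle w_1, w_2\rangle)$ is a single simplex containing a common neighbour; more carefully, one shows $\langle \pi_v(w_1), \pi_v(w_2)\rangle \in X$ by picking $z_i \in \pi_v(w_i)$ and ruling out $z_1 \nsim z_2$ via a $4$-wheel $(w_1; z_1, z_2, w_2)$ if $d(z_1,z_2)=2$, then invoking flagness and $SD$-properties — this is the standard "the union of the two projections is a simplex" lemma in this setting. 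Having that, let $u_i$ be the vertex of $\pi_v(w_i)$ with $\pi_v(u_i)$ largest (by inclusion) among vertices of $\pi_v(w_i)$; since the projections $\pi_v$ of vertices of a common simplex in $S_{n-1}$ are linearly ordered by inclusion (argue exactly as in Proposition~\ref{SD_n-max}, using convexity of balls around vertices: two incomparable projections of $u', u''$ in a simplex would give vertices $a \in \pi_v(u')\setminus\pi_v(u'')$, $b \in \pi_v(u'')\setminus\pi_v(u')$ contradicting convexity of $B_{n-2}$), one has $\pi_v^2(w_i) = \pi_v(u_i)$. So the problem reduces to: for two adjacent vertices $u_1, u_2 \in S_{n-1}$ lying in a common simplex, with $w_1, w_2 \in S_n$ as above, the simplices $\pi_v(u_1), \pi_v(u_2) \subseteq S_{n-2}$ are nested.

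\textbf{The negative-curvature input.} This last nesting is where $SD_2^{\ast}(7)$ enters, and I expect it to be the main obstacle. Suppose $\pi_v(u_1)$ and $\pi_v(u_2)$ are incomparable; pick $a \in \pi_v(u_1)\setminus \pi_v(u_2)$ and $b \in \pi_v(u_2)\setminus \pi_v(u_1)$. Then $a \nsim u_2$, $b \nsim u_1$, $a \sim u_1 \sim u_2 \sim b$, $a \sim b$ or not, $a,b \in S_{n-2}$, and moreover $w_1 \sim u_1$, $w_2 \sim u_2$, with $w_1 \nsim u_2$ (respectively $w_2 \nsim u_1$) forced in the generic situation — otherwise one already gets the desired containment by a direct argument on the simplex structure. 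Tracking all the non-adjacencies carefully (using that there are no $4$-wheels, and that $d(a,b) \le 2$ with $a, b \in S_{n-2}$ together with convexity of $B_{n-2}$ to connect them inside $S_{n-2}$ by an edge or a length-$2$ geodesic), one assembles a $6$-wheel with a pendant triangle, or more precisely a configuration forcing a full $6$-cycle with a pendant triangle that cannot be covered by the ball $B_1$ of a single vertex — contradicting condition (b) of $SD_2^{\ast}(7)$ for $l = 6$. The delicate part is the bookkeeping: one must verify each of the six edges of the cycle is present, each of the three "diagonals" is absent (these are exactly the non-adjacencies $a \nsim u_2$, $b \nsim u_1$, $w_1 \nsim w_2$-type relations translated into the cycle), the pendant triangle is genuinely pendant, and that if some adjacency that would shorten the cycle actually holds, then in that case the nesting $\pi_v^2(w_i) \subseteq \pi_v^2(w_j)$ holds outright. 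I would organize this as a case analysis on which of the potential "short-cut" edges are present, each short case yielding the conclusion directly and the remaining long case yielding the forbidden $6$-wheel-with-pendant-triangle.
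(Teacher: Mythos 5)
Your first ``key step'' is false, and the whole reduction built on it collapses. In a weakly systolic complex the union $\pi_v(w_1)\cup\pi_v(w_2)$ need \emph{not} span a simplex; in fact it fails to span one precisely in the case the lemma has content: if the first projections are incomparable, pick $w_0\in\pi_v(\langle w_1,w_2\rangle)$, $w_6\in\pi_v(w_1)\setminus\pi_v(w_2)$, $w_3\in\pi_v(w_2)\setminus\pi_v(w_1)$; were $w_6\sim w_3$, then $(w_0;w_6,w_1,w_2,w_3)$ would be a $4$--wheel, so $w_6\nsim w_3$ and the union is not a simplex. A concrete counterexample to both of your auxiliary claims: take the $5$--wheel $(c;a_1,\ldots,a_5)$ and attach one extra vertex $v$ joined only to $a_1$. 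This complex is flag, simply connected, has no $4$--wheels and no $5$-- or $6$--wheels with pendant triangles, so it satisfies $SD_2^{\ast}(7)$ vacuously. Here $a_3\sim a_4$ lie on $S_3(v)$, $\pi_v(a_3)=\langle a_2,c\rangle$ and $\pi_v(a_4)=\langle c,a_5\rangle$ are incomparable, $a_2\nsim a_5$, yet the lemma holds because $\pi_v^2(a_3)=\pi_v^2(a_4)=\{a_1\}$. The same example refutes your linear-ordering claim for projections of vertices of a common simplex in a sphere (the justification cannot work: the Proposition~\ref{SD_n-max} argument concerns metric projections onto a fixed simplex, and incomparability of $\pi_v(u')$, $\pi_v(u'')$ for adjacent $u',u''$ yields no violation of convexity of $B_{n-2}$), and, most importantly, it refutes your final reduction target: ``for adjacent $u_1,u_2\in S_{n-1}$ in a common simplex the simplices $\pi_v(u_1),\pi_v(u_2)$ are nested'' is a \emph{strengthening} of the lemma (nesting of first rather than second projections) and is simply false in $SD_2^{\ast}(7)$ complexes. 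Hence no amount of case bookkeeping in your last step can produce the forbidden wheel from incomparability of first projections alone. (A minor additional slip: $\pi_v^2(w_i)=\bigcap_{u\in\pi_v(w_i)}\pi_v(u)$, so even granting a linear order you would want the \emph{smallest} $\pi_v(u)$, not the largest.)

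The paper's proof avoids all of this by never discarding $w_1,w_2$ and by deriving the contradiction directly from incomparability of the \emph{second} projections, using four consecutive spheres. After disposing of the trivial cases ($\pi_v(w_j)\subseteq\pi_v(w_i)$, or $n=2$), it takes $w_0\in\pi_v(\langle w_1,w_2\rangle)$, witnesses $w_3,w_6$ of incomparability of the first projections as above, then witnesses $w_4\in\pi_v(\langle w_0,w_3\rangle)\setminus\pi_v(\langle w_0,w_6\rangle)$ and $w_5\in\pi_v(\langle w_0,w_6\rangle)\setminus\pi_v(\langle w_0,w_3\rangle)$ of incomparability of the second projections, and finally $t\in\pi_v(\langle w_4,w_5\rangle)\subseteq S_{n-3}(v)$. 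This yields a $6$--wheel with pendant triangle $(w_0;w_4,w_5,w_6,w_1,w_2,w_3;t)$, and $SD_2^{\ast}(7)$ would force a vertex $z$ with the whole configuration in $B_1(z)$, which is impossible since $d(w_1,t)=3$. Note that the distance-$3$ contradiction is only available because the configuration spans $S_n$ through $S_{n-3}$; your sketch, having projected everything down to $S_{n-1}$ and $S_{n-2}$, has no analogue of it.
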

\dow
By Theorem \ref{logl}, $X$ is a weakly systolic complex.
If $\pi_v(w_j)\subseteq \pi_v(w_i)$ then $\pi_v^2(w_i)\subseteq \pi_v^2(w_j)$ and we are done. If $n=2$ then $\pi_v^2(w_1)=\pi_v^2(w_2)=v$. Thus for the rest of the proof we assume that $n\geqslant 3$ and  $\pi_v(w_j)\nsubseteq \pi_v(w_i)$ for any $i\neq j$.
Then the following vertices exist: $w_0\in \pi_v(\langle w_1,w_2\rangle)$; $w_3 \in \pi_v(w_2)\setminus \pi_v(w_1)$; $w_6 \in \pi_v(w_1)\setminus \pi_v(w_2)$.

We argue by contradiction. Suppose that $\pi_v^2(w_i)\nsubseteq \pi_v^2(w_j)$ for any $i\neq j$; $i,j=1,2$. Then we can find vertices:
$w_4\in \pi_v(\langle w_0,w_3\rangle)\setminus \pi_v(\langle w_0,w_6\rangle)$; $w_5\in \pi_v(\langle w_0,w_6\rangle)\setminus \pi_v(\langle w_0,w_3\rangle)$; $t\in \pi_v(\langle w_4,w_5\rangle)$.
Observe that then there exists a $6$--wheel with a pendant triangle $\widehat W=(w_0;w_4,w_5,w_6,w_1,w_2,w_3;t)$ in $X$.
By the condition $SD_2^{\ast}(7)$, there is a vertex $z$ with $\widehat W\subseteq B_1(z,X)$. But this contradicts the fact that $d(w_1,t)=3$. Hence $\pi_v^2 (w_i)\subseteq \pi_v^2 (w_j)$ for some $i\neq j$; $i,j=1,2$.
\kon

\begin{lem}[Thin bigons]
\label{bigons}
Let $v,w$ be two vertices in a simply connected $SD_2^{\ast}(7)$ complex $X$, and let $\gamma=(v_0=v,v_1,\ldots,v_n=w)$ be a $1$--skeleton geodesic between them. For any other geodesic $\gamma'=(v_0'=v,v_1',\ldots,v_n'=w)$ from $v$ to $w$
we have $d(v_i,v_i')\leqslant 1$, for every $i=0,1,\ldots,n$.
\end{lem}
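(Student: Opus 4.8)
The plan is to reduce everything to the weakly systolic setting and then argue by downward induction on $i$. By Theorem~\ref{logl}, $X$ is weakly systolic, so I may freely use the simple descent property $SD_n(v)$ (Lemma~\ref{sd simpl}, i.e.\ Definition~\ref{sdn(A)new}), the fact that projections of simplices are simplices, convexity of balls (Corollary~\ref{ballvert}), and — crucially — the strict geodesic contraction Lemma~\ref{strcon}, which is available precisely because $X$ is $SD_2^{\ast}(7)$. The claim $d(v_i,v_i')\leqslant 1$ is proved by reverse induction on $i$, the case $i=n$ being trivial since $v_n=v_n'=w$, and the case $i=0$ being trivial since $v_0=v_0'=v$.

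For the inductive step, assume $d(v_{i+1},v_{i+1}')\leqslant 1$ with $1\leqslant i\leqslant n-1$. As $\gamma,\gamma'$ are geodesics issuing from $v$, we have $v_i\in X_{v_{i+1}}\cap B_i(v)=\pi_v(v_{i+1})$ and $v_i'\in X_{v_{i+1}'}\cap B_i(v)=\pi_v(v_{i+1}')$, and by the vertex condition of $\wt{SD}_i(v)$ (Definition~\ref{sdn(A)}) both $\pi_v(v_{i+1})$ and $\pi_v(v_{i+1}')$ are single simplices. If $v_{i+1}=v_{i+1}'$, then $v_i$ and $v_i'$ lie in one simplex and we are done. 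Otherwise $v_{i+1}\sim v_{i+1}'$ by the inductive hypothesis, and applying the edge condition of $\wt{SD}_i(v)$ to the edge $v_{i+1}v_{i+1}'\subseteq S_{i+1}(v)$ yields a vertex $u$ in the non-empty simplex $\pi_v(v_{i+1})\cap\pi_v(v_{i+1}')\subseteq S_i(v)$; since $u$ lies in a common simplex with $v_i$ and in a common simplex with $v_i'$, we get $d(v_i,v_i')\leqslant 2$.

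It remains to exclude $d(v_i,v_i')=2$, and this is where negative curvature enters. Suppose $v_i\nsim v_i'$ and $v_i\neq u\neq v_i'$. Applying Lemma~\ref{strcon} to the adjacent sphere vertices $v_{i+1}\sim v_{i+1}'$ gives, after relabeling, $\pi_v^2(v_{i+1})\subseteq\pi_v^2(v_{i+1}')$; choosing a vertex $p$ in this (non-empty) simplex produces $p\in S_{i-1}(v)$ adjacent to every vertex of $\pi_v(v_{i+1})\cup\pi_v(v_{i+1}')$, in particular to $v_i$, $v_i'$ and $u$. Now the vertices $v_i,u,v_i'$ form a length-two path whose endpoints are non-adjacent, with the common neighbour $p$ one sphere lower; combining this with the geodesic neighbours $v_{i+1},v_{i+1}'$ (and, if needed, one further round of projection into $B_{i-1}(v)$ or $B_{i-2}(v)$), I would assemble a $5$--wheel or $6$--wheel with a pendant triangle which contains two vertices at distance $3$. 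This contradicts condition (b) of $SD_2^{\ast}(7)$ (Definition~\ref{sd2*k}), so $v_i\sim v_i'$ or $v_i=v_i'$, completing the induction.

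The hard part is the last paragraph: turning the configuration $\{v_i,v_i',u,p\}$ (possibly enlarged by $v_{i+1},v_{i+1}'$ or additional projected vertices) into a \emph{genuine} wheel with a pendant triangle, i.e.\ checking that the relevant cycle is full and of length $5$ or $6$, and disposing of the short-circuits that $4$--wheel-freeness of $X$ would otherwise force. I expect this to require a brief case analysis on which of the incidences $v_i\sim v_{i+1}'$, $v_i'\sim v_{i+1}$, etc.\ actually occur: in each degenerate case one uses again that $\pi_v(-)$ of a simplex is a simplex to collapse $v_i$ and $v_i'$ into a common simplex, and only the non-degenerate case produces the forbidden wheel.
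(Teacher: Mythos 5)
Your reduction is sound up to the last step: downward induction, the dichotomy $v_{i+1}=v_{i+1}'$ versus $v_{i+1}\sim v_{i+1}'$, the vertex $u\in\pi_v(\langle v_{i+1},v_{i+1}'\rangle)$ from the edge condition, and the vertex $p\in\pi_v^2(v_{i+1})\subseteq\pi_v^2(v_{i+1}')$ from Lemma~\ref{strcon} all match the paper's argument (your $u,p$ are the paper's $z,z'$). But the decisive point --- which you explicitly leave as ``I would assemble a $5$--wheel or $6$--wheel with a pendant triangle containing two vertices at distance $3$'' --- is exactly the content of the proof, and your sketch of how to get it does not work. A bare $5$--wheel $(u;v_{i+1},v_{i+1}',v_i',p,v_i)$ is perfectly legal in a weakly systolic complex; condition (b) of Definition~\ref{sd2*k} only bites once you attach a pendant triangle whose apex is far from some wheel vertex, and all vertices obtained by further projection \emph{towards $v$} (into $B_{i-1}(v)$ or $B_{i-2}(v)$, as you suggest) necessarily lie at level $\geqslant i-1$, because any apex of a pendant triangle on an edge of this cycle is adjacent to a cycle vertex at level $\geqslant i$; such an apex is not forced to be at distance $3$ from any wheel vertex, so no contradiction is guaranteed from the $v$--side alone.

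The missing idea is to use the \emph{other} endpoint $w$: since $\gamma,\gamma'$ are geodesics of the same length from $v$ to $w$, the adjacent vertices $v_{i+1},v_{i+1}'$ are equidistant from $w$, so the edge condition (E) of Definition~\ref{sdn(A)} applied around $w$ yields a vertex $t\in\pi_w(\langle v_{i+1},v_{i+1}'\rangle)$, i.e.\ $t\sim v_{i+1},v_{i+1}'$ with $d(v,t)\geqslant i+2$. The paper then takes the $5$--wheel with pendant triangle $\widehat W=(u;v_{i+1},v_{i+1}',v_i',p,v_i;t)$ (fullness of the cycle is immediate: $p$ is two levels below $v_{i+1},v_{i+1}'$, and $v_{i+1}\sim v_i'$ or $v_{i+1}'\sim v_i$ would put $v_i,v_i'$ in a common projection simplex, contradicting $d(v_i,v_i')=2$), and condition (b) of $SD_2^{\ast}$ would give a vertex $z_0$ with $\widehat W\subseteq B_1(z_0)$, which is impossible because $d(t,p)\geqslant (i+2)-(i-1)=3>2$. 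So the negative curvature hypothesis is used twice in two different ways --- $SD_2^{\ast}(7)$ inside Lemma~\ref{strcon}, and $SD_2^{\ast}$ via a pendant triangle pointing towards $w$ --- and your proposal supplies the first use but not the second; without bringing $w$ into play the argument cannot be completed.
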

\dow
Assume that there exists $i$ such that $d(v_i,v_i')=2$. Let moreover $i$ be the biggest number with this property. Then it is clear that $d(v_{i+1},v_{i+1}')=1$. By Theorem \ref{strcon}, we have that, w.l.o.g.
$\pi_v(\pi_v(v_{i+1})) \subseteq \pi_v(\pi_v(v_{i+1}')$. Thus there exists a vertex $z'$ in $S_{i-1}(v,X)$ such that $z' \sim v_i, v_{i}', z$, for some $z\in \pi_v(\langle v_{i+1},v_{i+1}'\rangle)$. Observe that for a vertex $t\in \pi_w(\langle v_{i+1},v_{i+1}'\rangle)$, we have that there exists a $5$--wheel with a pendant triangle $\widehat W=(z;v_{i+1},v_{i+1}',v_i',z',v_i;t)$. Then, by the condition $SD_2^{\ast}$ we have that there exists a vertex $z_0$ with $\widehat W\subseteq B_1(z_0,X)$. But this contradicts the fact that $d(t,z')=3$. Thus the lemma follows.
\kon

\begin{tw}[$SD_2^{\ast}(7)$ implies hyperbolicity]
\label{hyp}
The universal cover of an $SD_2^{\ast}(7)$ complex is Gromov hyperbolic.
In particular, groups acting geometrically by automorphisms on simply connected $SD_2^{\ast}(7)$ complexes are Gromov hyperbolic.
\end{tw}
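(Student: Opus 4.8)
The plan is to deduce the theorem from the thin bigon criterion of Papasoglu. Assume first that $X$ is connected (otherwise argue on each component) and let $\widetilde X$ be its universal cover. The conditions defining $SD_2^{\ast}(7)$ --- the absence of $4$--wheels, and the condition on $l$--wheels with a pendant triangle for $l=5,6$ --- only concern full subcomplexes contained in a ball of radius at most $2$ about some vertex, and a simplicial covering map is a local isomorphism; hence $\widetilde X$ is again an $SD_2^{\ast}(7)$ complex. It is simply connected, and since $SD_2^{\ast}(7)$ implies $SD_2^{\ast}$, Theorem~\ref{logl} shows that $\widetilde X$ is weakly systolic. Thus the hypotheses of Lemma~\ref{bigons} (and of Lemma~\ref{strcon} behind it) are available for $\widetilde X$.

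Next I would apply Lemma~\ref{bigons}: for any two $1$--skeleton geodesics $\gamma=(v_0,\ldots,v_n)$ and $\gamma'=(v_0',\ldots,v_n')$ in $\widetilde X$ with the same endpoints one has $d(v_i,v_i')\leqslant 1$ for all $i$. So the two sides of any geodesic bigon in the graph $\widetilde X^{(1)}$, with the combinatorial metric, synchronously $1$--fellow-travel, and in particular lie within Hausdorff distance $1$ of one another. By Papasoglu's criterion~\cite{Papa} --- used in contrapositive form already in the proof of Theorem~\ref{noflat} --- a graph all of whose geodesic bigons are uniformly thin is Gromov hyperbolic; therefore $\widetilde X^{(1)}$, equivalently $\widetilde X$ with its combinatorial metric, is Gromov hyperbolic. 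For the last assertion, if $G$ acts geometrically by automorphisms on a simply connected $SD_2^{\ast}(7)$ complex $Y$, then $Y$ is its own universal cover, so $Y^{(1)}$ is Gromov hyperbolic by the above; cocompactness forces a bound on the dimensions of simplices, hence $Y^{(1)}$ is locally finite and $G$ acts on it geometrically, so $G$ is quasi-isometric to $Y^{(1)}$ by the Milnor--\v{S}varc lemma and is therefore Gromov hyperbolic.

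All the geometric content sits in Lemmas~\ref{strcon} and \ref{bigons}, so the proof of the theorem itself is essentially an assembly. I do not anticipate a genuine obstacle; the two points deserving a little care are checking that the local $SD_2^{\ast}(7)$ conditions are inherited by the universal cover (this uses that the relevant wheels are full, cf.\ Lemma~\ref{fulwheel}, and hence map isomorphically under the covering), and noting that the synchronous $1$--fellow-traveling supplied by Lemma~\ref{bigons} is exactly the input required by Papasoglu's criterion.
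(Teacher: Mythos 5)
Your argument is correct and is essentially the paper's own proof: Theorem~\ref{hyp} is deduced there in one line from the thin-bigon Lemma~\ref{bigons} together with Papasoglu's criterion~\cite{Papa}. The extra details you supply --- that the universal cover inherits the $SD_2^{\ast}(7)$ property (the paper records this separately as Lemma~\ref{cover}) and the Milnor--\v{S}varc step for the group statement --- are exactly the routine points the paper leaves implicit.
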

\dow
This follows immediately from Lemma \ref{bigons}, by Papasoglu's criterion \cite{Papa}.
\kon

\rem Another proof of hyperbolicity could be similar to the one of hyperbolicity of $7$--systolic groups given in \cite{JS1}. This would use filling diagrams; cf.\ e.g.\ \cite{OCh}.

\subsection{Examples}
\label{exneg}

Here we provide some examples of weakly systolic $SD_2^{\ast}(7)$ complexes. In particular they include $7$--systolic groups (cf.\ Proposition \ref{7syst}) and CAT(-1) cubical groups (cf.\ Corollary \ref{cat-1c}); compare also Section \ref{syst}.

The following is a direct consequence of Definition \ref{sd2*k}.
\begin{prop}
\label{7syst}
A locally $k$--large complex is an $SD_2^{\ast}(k)$ complex for $k\geqslant 6$.
\end{prop}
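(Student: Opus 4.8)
The plan is to observe that local $k$-largeness rules out \emph{all} wheels of circumference below $k$, so that clause (a) of Definition~\ref{sd2*k} holds directly and clause (b) holds vacuously. First I would recall the two facts the argument relies on: a locally $k$-large complex $X$ is by definition flag, and in a flag complex the link $X_v$ of any vertex $v$ is a full subcomplex of $X$.

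The key step I would prove is the claim: \emph{if $X$ is locally $k$-large, then $X$ contains no $l$-wheel with $4\le l<k$.} To establish it, suppose $(v_0;v_1,\dots,v_l)$ is such a wheel. Then $C=(v_1,\dots,v_l,v_1)$ is a full $l$-cycle of $X$ and $v_0\sim v_i$ for every $i$. By flagness each triple $\{v_0,v_i,v_{i+1}\}$ (indices mod $l$) spans a triangle of $X$, so every edge $v_iv_{i+1}$ lies in the link $X_{v_0}$, whence $C\subseteq X_{v_0}$. Since $C$ has no chord in $X$ it has no chord in $X_{v_0}$, and, because $l\ge 4$ forbids triangles among the $v_i$, the span of $C$ in $X_{v_0}$ is $C$ itself; thus $C$ is a full $l$-cycle in $X_{v_0}$. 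This contradicts the $k$-largeness of $X_{v_0}$, since $l<k$.

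Given the claim, the proposition follows immediately. Taking $l=4$, which is legitimate as $k\ge 6>4$, shows $X$ has no $4$-wheel, i.e.\ condition (a) of Definition~\ref{sd2*k}. For each $l$ with $5\le l<k$ the claim shows $X$ has no $l$-wheel, hence a fortiori no $l$-wheel with a pendant triangle, so condition (b) is satisfied with nothing to check. Therefore $X$ is an $SD_2^{\ast}(k)$ complex.

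I do not expect a genuine obstacle here: the entire content is the transfer of a full $l$-cycle from $X$ into the link $X_{v_0}$, and the only point requiring a moment's care is that passing to the link creates no new edges or simplices among the vertices $v_i$ — which is precisely where flagness is used. (The same proof in fact works for $k\ge 5$; I would record the statement only for $k\ge 6$ since $SD_2^{\ast}(6)=SD_2^{\ast}$ is the first interesting case, and this recovers Lemma~\ref{systsd2*}.)
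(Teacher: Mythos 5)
Your proof is correct and is exactly the argument the paper intends: Proposition~\ref{7syst} is stated there as a direct consequence of Definition~\ref{sd2*k}, the point being precisely your observation that a wheel $(v_0;v_1,\ldots,v_l)$ with $4\leqslant l<k$ would put a full $l$--cycle into the link $X_{v_0}$ (using flagness and fullness of links), contradicting local $k$--largeness, so condition (a) holds and condition (b) holds vacuously. No gaps; the transfer of the full cycle into the link is justified correctly.
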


The proof of the following is the same as the one of Proposition \ref{5lnotr}.

\begin{prop}[Loc.\ $5$--large no-$\Delta$ $\Rightarrow$ $SD_2^{\ast}(k)$]
\label{5lnotrn}
Let $Y$ be a locally $5$--large no-$\Delta$ cell complex.
Then $Th(Y)$ satisfies the $SD_2^{\ast}(k)$ property, for every $k\geqslant 6$. \end{prop}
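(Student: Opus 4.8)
The plan is to run the proof of Proposition~\ref{5lnotr} essentially verbatim, with ``$5$-wheel'' replaced by ``$l$-wheel'' for all $5\leqslant l<k$. Condition~(a) of Definition~\ref{sd2*k} is immediate: $Th(Y)$ is flag by Lemma~\ref{notrth} and locally $5$-large by Lemma~\ref{k-l.thick}, hence contains no $4$-wheels. For condition~(b), fix an $l$-wheel with a pendant triangle $\widehat W=(v_0;v_1,\ldots,v_l;t)$ in $Th(Y)$ with $5\leqslant l<k$. If $v_0\sim t$ then $v_0$ is a filling vertex, so assume $v_0\nsim t$; as in Lemma~\ref{fulwheel}, the absence of $4$-wheels then already forces $t\nsim v_3$ and $t\nsim v_l$.

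Next I would reproduce the no-$\Delta$ step of Proposition~\ref{5lnotr} without change. Choose cells $c,c_1,c_2,c_3$ of $Y$ containing $\{v_0,v_1,v_2\}$, $\{v_0,v_2,v_3\}$, $\{v_0,v_1,v_l\}$, $\{v_1,v_2,t\}$ respectively (each of these is a clique in $Th(Y)$, hence lies in a common cell of $Y$). The faces $c\cap c_1$, $c\cap c_2$, $c\cap c_3$ of $c$ pairwise intersect, in $v_0$, $v_2$, $v_1$ respectively, so by the no-$\Delta$ property they share a vertex $w$, which then satisfies $w\sim v_0,v_1,v_2$ (from $c$), $w\sim v_3$ (from $c_1$), $w\sim v_l$ (from $c_2$) and $w\sim t$ (from $c_3$); moreover $w\notin\{v_0,v_1,v_2,v_3,v_l,t\}$ since, e.g., $v_0\notin c_3$ because $v_0\nsim t$. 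Hence $\widehat W\subseteq B_1(w,Th(Y))$ as soon as we also know $w\sim v_4,\ldots,v_{l-1}$.

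For $l=5$ this last point is exactly the final line of the proof of Proposition~\ref{5lnotr}: the $4$-cycle $(w,v_3,v_4,v_5,w)$ has $v_0$ joinable to all four of its vertices, so it cannot be full ($Th(Y)$ has no $4$-wheels), and since $v_3\nsim v_5$ its only chord is $w\sim v_4$. For $l\geqslant 6$ the analogous cycle $(w,v_3,\ldots,v_l,w)$ need not be full, and here I would argue by induction on $l$, using the locally filling vertex $w$ to cut that cycle into shorter ones: when it is full, $(v_0;w,v_3,v_4,\ldots,v_l;v_2)$ is an $(l-1)$-wheel with a pendant triangle (the triangle $\langle w,v_3,v_2\rangle$ sits on the edge $wv_3$), so the inductive hypothesis supplies a vertex $z$ with $z\sim v_0,w,v_2,v_3,\ldots,v_l$; a ``no $4$-wheel'' argument applied to the $4$-cycle $(z,v_l,v_1,v_2,z)$ --- legitimate because $v_l\nsim v_2$ for $l\geqslant 5$ --- then forces $z\sim v_1$, so $z$ sees all of $v_0,v_1,\ldots,v_l$. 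The case in which the cycle already has a chord $wv_j$ splits it into two strictly shorter cycles sharing $w$ and is handled analogously by recursion.

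The step I expect to be the main obstacle is the final one: promoting $z$ to a genuine filling vertex of $\widehat W$ by establishing $z\sim t$. The $4$-cycle device fails here, since the common neighbours of $t$ that $z$ already sees, namely $v_1$ and $v_2$, are themselves joined by an edge; instead one must exhibit a common cell of $Y$ through $z$ and $t$, plausibly by a further application of the no-$\Delta$ property to the cells carrying the cliques $\langle z,v_1,v_2,w\rangle$, $\langle v_1,v_2,w,t\rangle$, $\langle v_1,v_2,t\rangle$, together with the (routine but necessary) bookkeeping showing that all the auxiliary cycles used above are non-degenerate. Once this is in place, $z$ (respectively $w$ when $l=5$) is the required vertex, the induction closes, and $Th(Y)$ satisfies $SD_2^{\ast}(k)$ for every $k\geqslant 6$.
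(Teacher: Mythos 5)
Your base step and the no-$\Delta$ step are fine and coincide with the paper's proof of Proposition~\ref{5lnotr}, as does the $4$--wheel argument that finishes the case $l=5$; the preliminary observation that $t\nsim v_3,v_l$ once $t\nsim v_0$ is also correct. The trouble starts with the induction for $l\geqslant 6$, and it is a genuine gap, not a bookkeeping issue. First, the inductive hypothesis gives you nothing when applied to the reduced configuration $(v_0;w,v_3,\ldots,v_l;v_2)$: its pendant vertex $v_2$ is adjacent to its hub $v_0$, so that wheel with pendant triangle is already contained in $B_1(v_0,Th(Y))$ and condition (b) of Definition~\ref{sd2*k} is witnessed trivially by $v_0$ itself. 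The hypothesis therefore does \emph{not} supply a vertex $z$ with $z\sim v_0,w,v_2,v_3,\ldots,v_l$ --- it may simply return $v_0$ --- and with $z=v_0$ the subsequent $4$--wheel argument is empty and the needed conclusion $z\sim t$ is false, since you assumed $v_0\nsim t$. To make the induction bite you would have to prove a strictly stronger statement (e.g.\ existence of a filling vertex adjacent to \emph{all} listed vertices, or one lying in a prescribed cell), which you do not do.

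Second, even granting such a $z$, the step you yourself flag as the main obstacle, $z\sim t$, is exactly where the argument fails: your reduction deletes precisely the two wheel vertices $v_1,v_2$ that anchor $t$, so nothing carries adjacency to $t$ through the induction, and the proposed extra application of the no-$\Delta$ condition to cells through $\{z,v_1,v_2,w\}$, $\{v_1,v_2,w,t\}$, $\{v_1,v_2,t\}$ can only produce yet another vertex lying in those three cells, adjacent to $z,w,v_1,v_2,t$ but with no control over $v_3,\ldots,v_l$; the same problem then recurs. The chord case (``split into shorter cycles and recurse'') is likewise not an argument, since the pieces are not wheels with pendant triangles attached to $t$. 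In effect your proposal establishes only $k=6$, which is Proposition~\ref{5lnotr} itself; a single use of no-$\Delta$ merges only three cells, so for $l\geqslant 6$ some genuinely new mechanism for keeping $t$ in play is required. (The paper gives no details either --- it asserts the proof is ``the same'' as for Proposition~\ref{5lnotr} --- and the place where you got stuck is exactly where that assertion needs substantiation; but as written your argument does not prove the statement for any $k\geqslant 7$.)
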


As a corollary we get the following strengthening of Corollary \ref{cat-1cc}.

\begin{cor}[Thickening of CAT(-1) c.c.]
\label{cat-1c}
Let $Y$ be a simply connected locally $5$--large cubical complex (i.e.\ CAT(-1) cubical complex). Then $Th(Y)$ is weakly systolic
$SD_2^{\ast}(k)$ complex, for every $k\geqslant 6$.
\end{cor}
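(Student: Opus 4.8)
This corollary only repackages results already proved: it adds the combinatorial negative curvature $SD_2^{\ast}(k)$ to the conclusion of Corollary~\ref{cat-1cc}. The plan is therefore to split the assertion into its two halves and invoke the appropriate earlier statements.

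First, the weak systolicity of $Th(Y)$ (and hence of groups acting geometrically on $Y$) is exactly the content of Corollary~\ref{cat-1cc}, so nothing new is needed there. It remains to establish the $SD_2^{\ast}(k)$ property for every $k\geqslant 6$. For this I would run the same argument that proves Corollary~\ref{cat-1cc}, but appealing to the sharper Proposition~\ref{5lnotrn} in place of Proposition~\ref{5lnotr}. Concretely: since $Y$ is locally $5$--large it is in particular locally flag, so Lemma~\ref{flag thick} supplies the no-$\Delta$ input; Lemma~\ref{k-l.thick} then gives that $Th(Y)$ is locally $5$--large, and, together with the no-$\Delta$ input via Lemma~\ref{notrth}, that $Th(Y)$ is flag; finally Proposition~\ref{5lnotrn} yields that $Th(Y)$ satisfies the $SD_2^{\ast}(k)$ property for all $k\geqslant 6$. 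Combining the two halves proves the corollary.

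I expect no real obstacle here. The only point that deserves a moment of attention is verifying that the hypotheses of Proposition~\ref{5lnotrn}, namely local $5$--largeness together with the no-$\Delta$ condition, actually hold for a cubical complex $Y$; but this is precisely where Lemma~\ref{flag thick} (a reformulation of the clique-Helly property of median graphs) enters, exactly as in the proof of Corollary~\ref{cat-1cc}. Since Proposition~\ref{5lnotrn} was noted to have the same proof as Proposition~\ref{5lnotr}, the only substantive difference from Corollary~\ref{cat-1cc} is the bookkeeping of the wheel sizes $5\leqslant l<k$ permitted in condition (b) of Definition~\ref{sd2*k}, which changes nothing in the cube-complex computation carried out in Proposition~\ref{5lnotr}.
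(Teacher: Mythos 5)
Your proposal is correct and matches the paper's (implicit) argument: the paper states Corollary~\ref{cat-1c} as an immediate strengthening of Corollary~\ref{cat-1cc}, obtained by feeding the same inputs (Lemma~\ref{k-l.thick}, Lemma~\ref{flag thick}, flagness via Lemma~\ref{notrth}) into Proposition~\ref{5lnotrn} instead of Proposition~\ref{5lnotr}, exactly as you do.
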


Similarly, following the proof of Proposition \ref{wsbuild}, we obtain the following.

\begin{prop}[$\mr {Th}(\Phi)$ satisfies $SD_2^{\ast}(k)$]
\label{sdkbuild}
Let $\Phi$ be a right-angled hyperbolic building.
Then $Th(Y)$ satisfies the $SD_2^{\ast}(k)$ property, for every $k\geqslant 6$. \end{prop}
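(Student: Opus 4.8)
The plan is to reread the proof of Proposition~\ref{wsbuild} --- which is itself modelled on that of Proposition~\ref{5lnotr} --- and to push it one notch further, so as to obtain the stronger local condition $SD_2^{\ast}(k)$ of Definition~\ref{sd2*k} instead of merely $SD_2^{\ast}$. Throughout, the role played in Proposition~\ref{5lnotr} by the cells of a simple no-$\Delta$ cell complex is taken over by the stars $\mathrm{St}_v$ of vertices $v$ of $\Phi$: Lemma~\ref{notrst} supplies the no-$\Delta$-of-stars property, Lemma~\ref{l5lb} gives local $5$-largeness of $Th(\Phi)$ (replacing Lemma~\ref{k-l.thick}), and Lemma~\ref{flthb} gives flagness and contractibility (replacing Lemmas~\ref{notrth} and \ref{borsuk}). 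Note that, since $SD_2^{\ast}(k)$ implies $SD_2^{\ast}$ and $Th(\Phi)$ is simply connected, the ``weakly systolic'' half of the statement is already contained in Proposition~\ref{wsbuild} (alternatively it follows again from Theorem~\ref{logl}), so the only genuinely new content is the verification of condition (b) of $SD_2^{\ast}(k)$ for all $l$-wheels with a pendant triangle, $5\le l<k$.

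Condition (a) of Definition~\ref{sd2*k} is immediate: $Th(\Phi)$ is flag and locally $5$-large (Lemmas~\ref{flthb}, \ref{l5lb}), so no link contains a full $4$-cycle and hence $Th(\Phi)$ contains no $4$-wheel. For condition (b), fix $l$ with $5\le l<k$ and let $\widehat W=(v_0;v_1,\ldots,v_l;t)$ be an $l$-wheel with a pendant triangle $\langle v_1,v_2,t\rangle$ in $Th(\Phi)$. Each of the triangles $\langle v_0,v_1,v_2\rangle$, $\langle v_0,v_2,v_3\rangle$, $\langle v_0,v_1,v_l\rangle$, $\langle v_1,v_2,t\rangle$ is a simplex of $Th(\Phi)$, hence by Definition~\ref{thbuild} lies in a common star of $\Phi$; call these stars $c,c_1,c_2,c_3$. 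These four stars pairwise intersect, so --- arguing exactly as in the proof of Lemma~\ref{notrst}, by folding $c$ to a cube of $\Sigma$ (Lemma~\ref{starcub}) and using the clique-Helly property of median graphs (Lemma~\ref{flag thick}), which gives that faces of a single star meeting pairwise meet in a common vertex --- the three faces $c\cap c_1$, $c\cap c_2$, $c\cap c_3$ of $c$ have a common vertex $w$. Since $w$ shares a star with each of $v_0,v_1,v_2,v_3,v_l,t$, it is joined to all of them by an edge of $Th(\Phi)$.

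It then remains to show that $w$ is joined to the remaining rim vertices $v_4,\ldots,v_{l-1}$, so that $\widehat W\subseteq B_1(w,Th(\Phi))$, and this is the step I expect to be the main obstacle. For $l=5$ it is immediate: the cycle $(w,v_3,v_4,v_5,w)$ cannot be full, since otherwise $(v_0;w,v_3,v_4,v_5)$ would be a $4$-wheel, and the only possible chord $v_3\sim v_5$ is excluded by fullness of the rim, so $w\sim v_4$. For $l\ge 6$ the plan is to argue by induction on $l$: the vertices $w,v_3,\ldots,v_l$ all lie in the link $Th(\Phi)_{v_0}$, so $(w,v_3,\ldots,v_l,w)$ is a cycle there whose only chords can be of the form $w\sim v_j$; taking $j$ least in $\{4,\ldots,l\}$ with $w\sim v_j$, the cycle $(w,v_3,\ldots,v_j,w)$ is full of length $j-1$ with all vertices adjacent to $v_0$, so $j-1=4$ contradicts the absence of $4$-wheels, while for $j-1\ge 5$ this is a $(j-1)$-wheel with hub $v_0$ carrying the pendant triangle $\langle w,v_3,v_2\rangle$, to which the inductive hypothesis applies. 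The delicate part --- and where I expect the real work to lie --- is the bookkeeping: one must keep the auxiliary vertices produced by these repeated applications of the no-$\Delta$-of-stars property inside controlled faces of the fixed star $c$, so that a single vertex ends up adjacent to all of $\widehat W$ (including $t$, which is why the star $c_3$ must be carried along throughout). The tools available for this are precisely the no-$\Delta$-of-stars property, the Helly property of faces of the star $c$, and the absence of $4$-wheels, exactly as in the loc.\ $5$-large no-$\Delta$ case of Proposition~\ref{5lnotrn}.
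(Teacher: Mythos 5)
Your first half is exactly the paper's route: the paper proves this proposition by referring back to Proposition~\ref{wsbuild} (itself modelled on Proposition~\ref{5lnotr}), i.e.\ by producing, from the stars of $\Phi$ containing $\{v_0,v_1,v_2\}$, $\{v_0,v_2,v_3\}$, $\{v_0,v_1,v_l\}$, $\{v_1,v_2,t\}$ and the no-$\Delta$-of-stars property (Lemma~\ref{notrst}, via folding to $\Sigma$ and the Helly property there), a vertex $w$ adjacent to $v_0,v_1,v_2,v_3,v_l,t$, and then closing the $l=5$ case with the no-$4$-wheel trick. Up to and including the base case $l=5$ your argument is correct and agrees with the paper.

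The genuine gap is the inductive step for $l\geqslant 6$, which is precisely the content of this proposition beyond what Proposition~\ref{wsbuild} already gives, and which you explicitly leave unfinished. Your induction does not close: choosing $j$ minimal with $w\sim v_j$ and applying the inductive hypothesis to the $(j-1)$-wheel with pendant triangle $(v_0;w,v_3,\ldots,v_j;v_2)$ only yields \emph{some} vertex $u$ adjacent to $v_0,w,v_2,v_3,\ldots,v_j$; this neither produces a contradiction (unlike the $j-1=4$ case, since full $5$-cycles and longer are perfectly allowed in the $5$-large links of $Th(\Phi)$) nor gives any control of $u$ relative to $v_1$, $t$, or the remaining rim vertices $v_{j+1},\ldots,v_{l-1}$. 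The obstruction you yourself name --- that the star $c_3$ containing $t$ is not known to meet the stars spanning the far side of the rim, so the Helly argument cannot be run on the whole configuration at once, and each appeal to the smaller-wheel hypothesis ``forgets'' $t$ and part of the rim --- is exactly the mathematical content that is missing; deferring it as ``bookkeeping'' does not dispose of it, and as set up (a single $w$ fixed at the start, plus sub-wheel dominations with no mechanism to merge them) the scheme has no evident way to force one vertex to dominate all of $\widehat W$. So the proposal is a plan whose decisive step is absent, rather than a proof of the $SD_2^{\ast}(k)$ condition for $k>6$.
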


\subsection{Gromov boundary}
\label{bdry}
Let $v$ be a vertex of a weakly systolic complex $X$.
Then, for every $n>0$, we can define a map $\pi_v \colon ((S_{n+1}(v,X))')^{(0)} \to ((S_{n}(v,X))')^{(0)}$ as follows. For a barycenter $w$ of a simplex $\sigma$ in $S_{n+1}(v,X)$, its image $\pi_v(w)$ is the barycenter of $\pi_v(\sigma)$ (here $\pi_v$ as in Definition \ref{sdn(A)new}). There should be no confusion between this new definition of $\pi_v$ and the one from Definition \ref{sdn(A)new}.

Let $v$ be a vertex of an $SD_2^{\ast}(7)$ complex $X$.
By Lemma \ref{strcon}, we have immediately the following.

\begin{lem}
\label{newpi}
Let $X$ be a simply connected $SD_2^{\ast}(7)$ complex $X$.
Then for every vertex $v\in X$ and for every $n>0$ the map
$\pi_v^2=\pi_v \circ \pi_v \colon ((S_{n+2}(v,X))')^{(0)} \to ((S_{n}(v,X))')^{(0)}$extends to a simplicial map
$\pi_v^2 \colon (S_{n+2}(v,X))' \to (S_{n}(v,X))'$.
\end{lem}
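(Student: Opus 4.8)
The plan is to verify directly that the vertex assignment $b_{\sigma}\mapsto b_{\pi_v^2(\sigma)}$ --- where, for a simplex $\sigma$ of $S_{n+2}(v,X)$, $\pi_v^2(\sigma)=\pi_v(\pi_v(\sigma))$ with $\pi_v$ the projection of Definition~\ref{sdn(A)new} --- carries simplices of $(S_{n+2}(v,X))'$ to simplices of $(S_n(v,X))'$. First I would record well-definedness. By Theorem~\ref{logl} the complex $X$ is weakly systolic, hence by Lemma~\ref{sd simpl} it satisfies $SD_m(v)$ for every $m$. Applying $SD_{n+1}(v)$ to $\sigma\in S_{n+2}(v,X)$ shows $\pi_v(\sigma)=X_\sigma\cap B_{n+1}(v,X)$ is a non-empty simplex; since each of its vertices is a neighbor of $\sigma$ (whose vertices lie at distance $n+2$ from $v$) and lies in $B_{n+1}(v,X)$, it lies in $S_{n+1}(v,X)$. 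Applying $SD_n(v)$ once more to $\pi_v(\sigma)$ gives that $\pi_v^2(\sigma)$ is a non-empty simplex of $S_n(v,X)$, so $b_{\pi_v^2(\sigma)}$ is a genuine vertex of $(S_n(v,X))'$.

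Next I would use that a first barycentric subdivision is a flag complex --- it is the order complex of the face poset --- so a vertex map between two such subdivisions extends to a simplicial map as soon as it carries each edge to an edge or a vertex; indeed, any simplex of the source is a clique, its image is then a set of pairwise adjacent-or-equal vertices, and flagness of the target produces a spanning simplex. For our map this reduces to: whenever $\sigma\subsetneq\tau$ in $S_{n+2}(v,X)$, the simplices $\pi_v^2(\sigma)$ and $\pi_v^2(\tau)$ are comparable under inclusion. The elementary point is that $\pi_v$ reverses inclusions: $\sigma\subseteq\tau$ forces $X_\tau\subseteq X_\sigma$, hence $\pi_v(\tau)\subseteq\pi_v(\sigma)$; applying this again, $\pi_v^2(\sigma)\subseteq\pi_v^2(\tau)$. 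Thus along any chain $\sigma_0\subsetneq\sigma_1\subsetneq\cdots\subsetneq\sigma_k$ in $S_{n+2}(v,X)$ --- that is, along any simplex of $(S_{n+2}(v,X))'$ --- the images satisfy $\pi_v^2(\sigma_0)\subseteq\pi_v^2(\sigma_1)\subseteq\cdots\subseteq\pi_v^2(\sigma_k)$ in $S_n(v,X)$, and their distinct members span a simplex of $(S_n(v,X))'$. The crucial instance of this comparability --- the one attached to an edge $\langle w_1,w_2\rangle$ and its vertices --- is exactly what Lemma~\ref{strcon} records, which is why the extension is immediate.

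I do not anticipate any genuine obstacle. The only points needing a little care are the bookkeeping showing that $\pi_v(\sigma)$ in fact lands on the sphere $S_{n+1}(v,X)$, not merely in the ball $B_{n+1}(v,X)$ (so that a second application of $\pi_v$ is legitimate and $\pi_v^2(\sigma)$ lands in $S_n(v,X)$), together with keeping straight that it is the order-\emph{reversing} behavior of $\pi_v$ that produces the order-\emph{preserving} behavior of $\pi_v^2$ --- precisely what turns chains into chains and makes the vertex map simplicial.
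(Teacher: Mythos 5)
Your argument is correct, and it is in fact more self-contained than the paper's, which disposes of the lemma with a one-line appeal to Lemma~\ref{strcon}. Your route is purely order-theoretic: after the well-definedness bookkeeping (weak systolicity of $X$ via Theorem~\ref{logl} and Lemma~\ref{sd simpl}, plus the observation that $\pi_v(\sigma)$ lands on the sphere $S_{n+1}(v,X)$, so the second projection applies), you use that $\sigma\subseteq\tau$ forces $X_\tau\subseteq X_\sigma$, hence $\pi_v(\tau)\subseteq\pi_v(\sigma)$ and then $\pi_v^2(\sigma)\subseteq\pi_v^2(\tau)$; an order-preserving map of face posets induces a simplicial map of order complexes, i.e.\ of barycentric subdivisions. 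This is sound, and it actually shows the statement holds for any weakly systolic complex --- the $SD_2^{\ast}(7)$ hypothesis is not used by your argument (it is of course needed where the lemma is exploited later, e.g.\ in Lemma~\ref{pi-1asf} and Theorem~\ref{bdryinv}). One inaccuracy to fix: your closing sentence claims that the ``crucial instance'' of the needed comparability is exactly Lemma~\ref{strcon}. It is not. Lemma~\ref{strcon} compares $\pi_v^2(w_1)$ with $\pi_v^2(w_2)$ for two \emph{adjacent vertices} $w_1\sim w_2$ of the sphere; but $b_{w_1}$ and $b_{w_2}$ are not adjacent in $(S_{n+2}(v,X))'$, so that comparability is irrelevant to simpliciality on the subdivision. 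The comparability your proof needs (and proves) is between $\pi_v^2$ of a simplex and of one of its faces, which comes from the link-nesting argument alone. So either drop the reference to Lemma~\ref{strcon} or rephrase it as a remark that your inclusion is a strengthening, in this barycentric setting, of the kind of contraction that Lemma~\ref{strcon} provides at the vertex level.
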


\begin{tw}[Gromov boundary]
\label{bdryinv}
The Gromov boundary of an $SD_2^{\ast}(7)$ complex $X$ is homeomorphic to the inverse limit $\mr {inv \; lim}_{n\to \infty}\lk (S_{2n}(v,X))', \pi_v^2\rk$, for every vertex $v\in X$.
\end{tw}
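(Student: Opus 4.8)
By Theorem~\ref{hyp} the complex $X$, being weakly systolic and hence its own universal cover, is Gromov hyperbolic; I will assume, as holds in all intended applications (e.g.\ whenever $X$ carries a geometric group action), that $X$ is locally finite, so that each $(S_{2n}(v,X))'$ is a finite complex and $L:=\varprojlim_n\lk (S_{2n}(v,X))',\pi_v^2\rk$ is compact and metrizable. Fix the base vertex $v$. The plan is to construct mutually inverse maps
$$
\Psi\colon\partial_\infty X\longrightarrow L,\qquad \Phi\colon L\longrightarrow\partial_\infty X,
$$
and then to prove that $\Phi$ is continuous; since $L$ is compact and $\partial_\infty X$ is Hausdorff, such a $\Phi$ is automatically a homeomorphism.

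First I would define $\Psi$. A point $\xi\in\partial_\infty X$ is represented by a $1$--skeleton geodesic ray $\rho=(\rho(0)=v,\rho(1),\ldots)$. Viewing $\rho(2k)$ as a $0$--cell of $(S_{2k}(v,X))'$ and iterating the simplicial maps of Lemma~\ref{newpi}, set $p_n^{(k)}:=\pi_v^{2(k-n)}(\rho(2k))\in(S_{2n}(v,X))'$ for $k\geqslant n$. A quantitative form of the strict geodesic contraction (Lemma~\ref{strcon}) should show that $p_n^{(k)}$ converges, as $k\to\infty$, to a point $p_n(\rho)$ whose carrier simplex stabilises those of the $p_n^{(k)}$; continuity of $\pi_v^2$ then gives $\pi_v^2(p_{n+1}(\rho))=p_n(\rho)$, so $(p_n(\rho))_n\in L$. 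If $\rho'$ represents the same $\xi$, then $\rho,\rho'$ fellow-travel, and applying the thin bigon Lemma~\ref{bigons} to the geodesics $v\to\rho(m)$, $v\to\rho'(m)$ for large $m$ and then projecting shows $p_n(\rho)=p_n(\rho')$; hence $\Psi(\xi):=(p_n(\rho))_n$ is well defined.

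Next I would define $\Phi$ and check that the two compositions are identities. Given $(p_n)_n\in L$ with $p_n$ in the open simplex $\sigma_n$ of $S_{2n}(v,X)$, the identity $\pi_v^2(p_{n+1})=p_n$ encodes a two-step descent compatibility between $\sigma_{n+1}$ and $\sigma_n$. Using $SD_m(v)$ (available by Lemma~\ref{sd simpl}) for $m=2n+1$ and $m=2n+2$, I would pick vertices $w_n\in\sigma_n$ and intermediate vertices in the odd spheres so that consecutive $w_n,w_{n+1}$ are joined by a length--$2$ geodesic segment lying inside the prescribed spheres; concatenating yields a path $\rho$ from $v$ which is a genuine geodesic ray because the balls $B_j(v,X)$ are convex (Corollary~\ref{ballvert}) and $\rho$ meets each sphere exactly once. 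Set $\Phi((p_n)_n):=[\rho]$. Then $\Psi\circ\Phi=\mathrm{id}_L$ by unwinding the definitions, and $\Phi\circ\Psi=\mathrm{id}$ amounts to showing that two geodesic rays from $v$ with the same $\Psi$--image are asymptotic: the common sequence records the simplices that both rays traverse, and the quantitative contraction bounds how fast two such rays can diverge, forcing them to fellow-travel. In particular $\Phi$ is bijective.

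Finally, for the topology: a basic open set of $L$ fixes the coordinates $0,\ldots,N$, and its $\Phi$--image is the set of boundary points all of whose representing rays cross $S_{2N}(v,X)$ through the carrier of $p_N$ --- a ``shadow'' of that simplex --- and such shadows form a neighbourhood basis of $\partial_\infty X$ (standard for proper hyperbolic spaces, again using thin bigons to pin down which simplex a ray crosses); so $\Phi$ is continuous and the proof is complete. I expect the main obstacle to be the quantitative refinement of Lemma~\ref{strcon} used twice above: to make the iterated projections $\pi_v^{2(k-n)}(\rho(2k))$ converge (equivalently, to stabilise carrier simplices), and to bound the divergence of two rays with equal projection sequences. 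The remaining parts --- realising a coherent sequence by a ray through iterated simple descent, and the shadow description of the boundary topology --- follow the systolic template and are routine.
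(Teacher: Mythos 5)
The paper itself gives no written-out argument here --- Theorem~\ref{bdryinv} is proved by the single sentence that the proof is the same as that of \cite{O-ib7scg}*{Lemma 4.1}, transplanted to the present setting via Lemmas~\ref{strcon}, \ref{newpi} and \ref{bigons} --- so your outline must stand on its own. Its general architecture is reasonable, and your map $\Phi$ is essentially right: the thread condition forces the carrier simplices to satisfy $\sigma_n=\pi_v^2(\sigma_{n+1})$ exactly, so any choice of vertices $w_n\in\sigma_n$ and of intermediate vertices in $\pi_v(\sigma_{n+1})$ gives a path whose distance from $v$ grows by one at each step, hence a geodesic ray, well defined up to $1$--fellow-travelling.

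However, there is a genuine structural error in the claim that $\Psi$ and $\Phi$ are mutually inverse. Your $\Psi$ is built from the iterated simplicial images $\pi_v^{2(k-n)}(\rho(2k))$; these are images of vertices, hence vertices of $(S_{2n}(v,X))'$ (barycenters), and a convergent sequence in the discrete (under local finiteness, finite) vertex set of $(S_{2n}(v,X))'$ is eventually constant, so every coordinate of $\Psi(\xi)$ is a barycenter. On the other hand $\Phi$ depends only on the carrier simplices of a thread. If these two maps were mutually inverse, every point of $L=\varprojlim_n\{(S_{2n}(v,X))',\pi_v^2\}$ would be a thread of barycenters, so $L$ would embed into a product of finite discrete sets and be totally disconnected; this is false in general --- e.g.\ for the thickening of a CAT($-1$) square complex of a hyperbolic surface group (covered by Corollary~\ref{cat-1c}) the boundary, and hence $L$ by the very theorem being proved, is a circle. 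So the step ``$\Psi\circ\Phi=\mathrm{id}_L$ by unwinding the definitions'' fails, and the actual difficulty of the theorem is left untouched: the natural map $L\to\partial_\infty X$ sees only the carrier data, so one must prove that a boundary point determines the entire thread, barycentric coordinates included (equivalently, that the two-step projections contract the transverse, non-combinatorial freedom away in the limit, and that two threads over asymptotic rays coincide). This is precisely where Lemma~\ref{strcon} has to be exploited quantitatively, and your outline contains no argument of this kind; by comparison, the unproved stabilization of the iterated projections $\pi_v^{2(k-n)}(\rho(2k))$, which you yourself flag, is a secondary issue.
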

\dow
The proof is the same as the one of \cite[Lemma 4.1]{O-ib7scg}.
\kon

\rem Theorem \ref{bdryinv} above is an analogue of \cite[Lemma 4.1]{O-ib7scg}.
The previous result appeared to be very useful for studying the topology of the boundary of $7$--systolic groups and its algebraic consequences; cf.\ \cites{O-ib7scg, Sw-propi, Zaw}.
The new result (Theorem \ref{bdryinv}) provides a nice combinatorial description of the Gromov boundary for some more classical groups, e.g.\ CAT(-1) cubical groups (cf.\ Corollary \ref{cat-1c}) and some hyperbolic buildings (cf.\ Proposition \ref{sdkbuild}). We believe it can be useful for various purposes.

\subsection{Quasi-convex subgroups}
\label{qconv}

In this subsection we prove analogues of some results of \cite{HS}. The goal is to show that quasi-convex subgroups of groups acting geometrically on weakly systolic $SD_2^{\ast}(7)$ complexes are themselves acting geometrically on weakly systolic $SD_2^{\ast}(7)$ complexes --- Corollary \ref{qcsbgp}. This provides new examples of weakly systolic groups.

\begin{lem}[Y-lemma; cf.\ {\cite[Lemma 5.1]{HS}}]
\label{Ylem}
Let $v$ be a vertex in a weakly systolic complex $X$. Let $v_1, v_2$ be vertices at distance $n$ from $v$ and with $d(v_1,v_2)=g\leqslant n$. Then there is
a geodesic of length $n-d$ with starting at $v$ that extends to a geodesic to either of the vertices $v_i$.
\end{lem}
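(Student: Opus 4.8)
The idea is to build the common initial geodesic by descending from the ball of radius $n$ toward $v$, using the simple descent property to keep track of both $v_1$ and $v_2$ simultaneously. First I would set $g = d(v_1,v_2)$ and note that $v_1, v_2 \in S_n(v,X)$. The plan is to produce a sequence of vertices $u_0 = v, u_1, \ldots, u_{n-g}$ forming a geodesic from $v$, together with geodesics from $u_{n-g}$ to $v_1$ and from $u_{n-g}$ to $v_2$, each of length $g$. I will obtain this by a downward induction: using the $SD_n(v)$ property (available since $X$ is weakly systolic, via Lemma \ref{sd simpl}), I repeatedly replace the pair $(v_1, v_2)$ by a pair of vertices one step closer to $v$, as long as the two vertices remain at the same distance from $v$ and that distance exceeds $g$. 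Concretely, if $w_1, w_2 \in S_m(v,X)$ with $d(w_1, w_2) \le g$ and $m > g$, I want to find $w_1', w_2' \in S_{m-1}(v,X)$ with $w_i' \sim w_i$ and $d(w_1', w_2') \le d(w_1,w_2)$ — or, better, with $w_1' = w_2'$ exactly when forced.

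The key step is the descent move itself. Take a geodesic $\delta = (w_1 = x_0, x_1, \ldots, x_g = w_2)$ between $w_1$ and $w_2$; by convexity of $B_m(v,X)$ (Corollary \ref{ballvert}) this geodesic lies in $S_m(v,X)$. For each edge $x_j x_{j+1}$ of $\delta$ the simplex $\pi_v(\langle x_j, x_{j+1}\rangle) = X_{\langle x_j,x_{j+1}\rangle} \cap B_{m-1}(v)$ is a nonempty simplex by $SD_m(v)$; pick a vertex $y_j$ in it, so $y_j \sim x_j, x_{j+1}$ and $d(v, y_j) = m-1$. Then $(y_0, y_1, \ldots, y_{g-1})$ is a path in $S_{m-1}(v,X)$ of length at most $g-1$ joining a neighbor of $w_1$ to a neighbor of $w_2$; after passing to a geodesic between $y_0$ and $y_{g-1}$ we get $w_1', w_2' \in S_{m-1}(v,X)$ with $w_1' \sim w_1$, $w_2' \sim w_2$, and $d(w_1', w_2') \le g$. (One must check the neighboring relations survive: the argument for Claim 1 in the proof of Theorem \ref{noflat} — that consecutive $y_j$'s are distinct and adjacent, using the absence of $4$-wheels and the $SD_2^\ast$ consequence — is exactly the tool, and it relies only on weak systolicity here.) Prepending $w_i$ to a geodesic $v \to w_i'$ keeps geodesity since $d(v,w_i) = d(v,w_i')+1$.

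Iterating this $n - g$ times starting from $(v_1, v_2)$ produces $w_1^{(n-g)}, w_2^{(n-g)} \in S_g(v,X)$ at distance $\le g$ from each other, together with geodesics $G_i$ from $v$ through the intermediate $w_i^{(j)}$'s down to $v_i$. The final point is to extract the common initial segment: the two descent paths need not literally coincide, but at the bottom level I can use Lemma \ref{bbac} (convexity of $B_j(\sigma)$ for the simplex $\sigma$ spanned by a common neighbor pair) or simply Lemma \ref{sd vert} to arrange that the descents agree — alternatively, run a single descent controlled by $\pi_v$ of the \emph{simplex} $\langle w_1^{(j)}, \ldots\rangle$ rather than choosing vertices independently, so that one sequence $u_0 = v, u_1, \ldots, u_{n-g}$ serves both. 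The main obstacle I anticipate is precisely this bookkeeping: ensuring the two extracted geodesics share the \emph{same} prefix of length $n-g$ rather than two different geodesics of that length, and handling the degenerate sub-cases where the distance $d(w_1^{(j)}, w_2^{(j)})$ strictly drops below $g$ at some stage (in which case the argument only gets easier, but the indexing must be adjusted). I expect this to be resolved by choosing, at each step, $\pi_v$ of the whole simplex $\langle w_1^{(j)}, w_2^{(j)}\rangle$ when that is an edge, and by Lemma \ref{sd vert} when a common neighbor exists, which forces a single canonical descent vertex and hence a single geodesic from $v$.
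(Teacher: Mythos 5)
Your overall strategy --- descending level by level towards $v$ by projecting the edges of a geodesic between the two tracked vertices and using the vertex condition (V) to see that consecutive projections are adjacent --- is sound and is in the spirit of the argument the paper actually relies on (the paper gives no independent proof: it cites the systolic Y-lemma of Haglund--\'Swi\c atkowski and substitutes Corollary \ref{ballvert} for their Corollary 4.10). But as written your plan has two concrete gaps. First, the claim that a geodesic $\delta$ between $w_1,w_2\in S_m(v,X)$ lies in $S_m(v,X)$ ``by convexity of $B_m(v,X)$'' is not justified: convexity only places $\delta$ inside the ball $B_m(v,X)$, and $\delta$ may dip into $B_{m-1}(v,X)$, in which case the edges meeting the dipping vertices are not simplices of $S_m(v,X)$ and their projections are not provided by $SD_m(v)$. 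This is repairable --- a dip only helps: if $x_j$ is the first and $x_k$ the last vertex of $\delta$ at distance $m-1$ from $v$, project only the edges of the initial and final segments lying in the sphere, splice through the subgeodesic from $x_j$ to $x_k$, and use condition (V) at $x_{j-1}$ and $x_{k+1}$ to glue; one even gets a connection of length at most $h-2$ --- but the case must be treated, and as stated the descent step is incomplete.

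Second, and more seriously, the endgame is not a proof. Carrying only the bound $d(w_1^{(j)},w_2^{(j)})\leqslant g$ down to level $g$ leaves you with two geodesics from $v$ that need not share a prefix of length $n-g$, and the remedies you propose do not close this: projecting ``the simplex $\langle w_1^{(j)},w_2^{(j)}\rangle$'' only makes sense when the two tracked vertices are adjacent, and neither Lemma \ref{sd vert} nor Lemma \ref{bbac} forces two independently chosen descents to coincide. The fix is already contained in your own construction and you should use it: the path $(y_0,\ldots,y_{g-1})$ has length at most $g-1$, so the distance between the tracked vertices drops by at least one at each level (you state this bound and then discard it, keeping only $\leqslant g$). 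Consequently after at most $g$ descents the two trails merge at a single vertex $u$ with $d(v,u)\geqslant n-g$ and $d(u,v_1)=d(u,v_2)=n-d(v,u)$; taking the vertex $u'$ at distance $n-g$ from $v$ on a geodesic from $v$ to $u$ gives $d(u',v_1)=d(u',v_2)=g$, and since $d(v,u')+d(u',v_i)=n$, any geodesic from $v$ to $u'$ is the desired common prefix of length $n-g$. With these two repairs your argument becomes a complete, self-contained proof, more explicit than the paper's citation.
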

\dow
The proof is the same as the one of \cite[Lemma 5.1]{HS} --- one uses Lemma \ref{ballvert} instead of \cite[Corollary 4.10]{HS}.
\kon

\begin{lem}[cf.\ {\cite[Lemma 5.2]{HS}}]
\label{52HS}
Let $X$ be a weakly systolic $SD_2^{\ast}(7)$ complex. Let $(v_0, v_1,\ldots, v_n)$, $(v_0,v_1',\ldots,v_n')$ be two geodesics with the same origin and such that $d(v_1,v_1')=2$. Then $n <5 +d(v_n,v_n')$.
\end{lem}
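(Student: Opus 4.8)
The plan is to argue by contradiction. Write $g:=d(v_n,v_n')$ and suppose $n\geqslant 5+g$; recall that $X$ is weakly systolic by Theorem~\ref{logl}. The vertices $v_n,v_n'$ both lie at distance $n$ from $v_0$ and $d(v_n,v_n')=g\leqslant n$, so the Y-lemma (Lemma~\ref{Ylem}) yields a geodesic $\delta=(v_0=u_0,u_1,\ldots,u_{n-g})$ of length $n-g\geqslant 5$ starting at $v_0$ that extends to a geodesic $\delta_1$ from $v_0$ to $v_n$ and to a geodesic $\delta_2$ from $v_0$ to $v_n'$. Since $\gamma$ and $\delta_1$ are geodesics from $v_0$ to $v_n$, and $\gamma'$ and $\delta_2$ are geodesics from $v_0$ to $v_n'$, the thin bigons lemma (Lemma~\ref{bigons}) gives $d(v_i,u_i)\leqslant 1$ and $d(v_i',u_i)\leqslant 1$ for all $0\leqslant i\leqslant n-g$. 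For $i=1$ this forces $2=d(v_1,v_1')\leqslant d(v_1,u_1)+d(u_1,v_1')\leqslant 2$, hence $v_1\sim u_1\sim v_1'$ with $v_1,u_1,v_1'$ pairwise distinct and $v_1\nsim v_1'$.

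Next I would show that this ``splitting'' of $\gamma$ and $\gamma'$ around the common spine $\delta$ persists: by induction on $i$, using the vertex and edge conditions of weak systolicity (the simplices $\pi_{v_0}(w)$ for $w$ in a sphere about $v_0$, together with convexity of balls, Corollary~\ref{ballvert}) and the absence of $4$--wheels, one checks that for each $1\leqslant i\leqslant n-g$ the vertices $v_i,u_i,v_i'$ are pairwise distinct, $v_i\sim u_i\sim v_i'$ and $v_i\nsim v_i'$ --- with a controlled variant recorded in the degenerate subcases where a geodesic momentarily meets the spine (there one replaces $u_i$ by the common neighbour of $v_i$ and $v_i'$ in $S_i(v_0,X)$ supplied by the edge condition). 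The role of the assumption $n-g\geqslant 5$ is that it makes the spine long enough that, after projecting this configuration one level further towards $v_0$ exactly as in the proof of Lemma~\ref{strcon}, one can exhibit a $6$--wheel with a pendant triangle $\widehat W$ in $X$ whose pendant vertex $t$ lies at distance $3$ from the centre of $\widehat W$. Since $X$ has the $SD_2^{\ast}(7)$ property, some vertex $z$ satisfies $\widehat W\subseteq B_1(z,X)$, which is incompatible with $d(t,\cdot)=3$; this contradiction proves $n<5+g$.

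I expect the middle step to be the main obstacle: verifying that the first-step divergence genuinely survives all the way along the spine, and then isolating the precise six vertices (plus the pendant vertex) that span the forbidden wheel, is delicate combinatorial bookkeeping of the same flavour as --- but longer than --- the wheel constructions in the proofs of Lemma~\ref{bigons} and Lemma~\ref{strcon}, and the degenerate cases (a geodesic touching the spine) need separate care. A cleaner alternative would be to follow the systolic argument of \cite[Lemma~5.2]{HS} line by line, replacing each use of systolicity there by its weakly systolic counterpart (Corollary~\ref{ballvert}, Lemma~\ref{bigons}, Lemma~\ref{strcon}); I would expect the constant ``$5$'' to emerge from the ``$7$'' in $SD_2^{\ast}(7)$ in the same way the ``$7$'' enters the proof of Lemma~\ref{strcon}.
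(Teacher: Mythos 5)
Your setup is the right one and matches the paper's: argue by contradiction, invoke Lemma~\ref{Ylem} to get a geodesic of length $n-g\geqslant 5$ from $v_0$ extending to geodesics to both $v_n$ and $v_n'$, apply Lemma~\ref{bigons} to get $d(u_i,v_i)\leqslant 1$ and $d(u_i,v_i')\leqslant 1$, and aim to contradict $SD_2^{\ast}(7)$ by producing a wheel with a pendant triangle that cannot be dominated. But the entire content of the lemma lies in the step you explicitly defer as ``delicate combinatorial bookkeeping'', so as it stands there is a genuine gap: you have reproduced the scaffolding, not the proof.

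Moreover, the route you sketch for filling that gap is not the one that works. You propose an induction showing that the three-strand configuration $v_i\sim u_i\sim v_i'$, $v_i\nsim v_i'$ persists for all $i\leqslant n-g$ and that the forbidden wheel appears only after pushing the whole picture one level back toward $v_0$; nothing of the sort is needed, and it is unclear how such a global persistence statement would be established without already having the contradiction. In the paper the contradiction is extracted entirely within distance $4$ of $v_0$, by a short case analysis: first one rules out $u_2\sim v_1$ (and $u_2\sim v_1'$) via the $5$--wheel with pendant triangle $(u_1;u_2,v_2',v_1',v_0,v_1;u_3)$, whose domination would force $d(v_0,u_3)\leqslant 2$ against $d(v_0,u_3)=3$; then the case $v_2\sim v_2'$ is killed by repeating the same analysis two steps further (producing a $5$-- or $6$--wheel centered at $u_3$ with pendant vertex $v_2$, whose dominating vertex $u$ feeds back into the wheel $(u_1;v_2,v_2',v_1',v_0,v_1;u)$); finally the case $v_2\nsim v_2'$ yields the $6$--wheel with pendant triangle $(u_1;v_2,u_2,v_2',v_1',v_0,v_1;v_3)$, again contradicting $SD_2^{\ast}(7)$ because $d(v_0,v_3)=3$. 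Note also that in all these wheels the distance-$3$ obstruction is between the pendant vertex and the rim vertex $v_0$, not the centre of the wheel as you state. Your fallback suggestion --- transposing \cite[Lemma~5.2]{HS} with systolicity replaced by Corollary~\ref{ballvert}, Lemma~\ref{bigons} and the wheel conditions --- is indeed essentially what the paper does, but identifying the specific wheels above and checking the adjacency/non-adjacency facts that make them wheels with pendant triangles is precisely the work your proposal leaves undone.
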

\dow
If $n\leqslant 5$ then, by thinness of bigons (cf.\ Lemma \ref{bigons}), we are done. Assume that $n\geqslant 6$.
\medskip

\noindent
\emph{(Step 1.)}
We argue, as in \cite[proof of Lemma 5.2]{HS}, by contradiction. Suppose that $d(v_n,v_n')\leqslant n-5$. By Lemma \ref{Ylem}, there is a geodesic $(v_0,u_1,u_2,u_3,u_4,u_5)$ which extends to geodesics from $v_0$ to either of the vertices $v_n$ or $v_n'$. By Lemma \ref{bigons} we have $d(u_i.v_i)\leqslant 1 \geqslant d(u_i,v_i')$ for $i=1,2,3,4,5$.
By weak systolicity we have that $v_2\nsim v_1'$, $v_1\nsim v_2'$ and $v_2\neq v_2'$.
\medskip

\noindent
\emph{(Step 2.)}
Suppose that $u_2\sim v_1$. Then $u_2\nsim v_1'$, $v_2'\sim u_1$, and one of vertices $u_3,v_3'$, say $u_3$, spans a simplex with $\langle u_2.v_2' \rangle$. By the $SD_2^{\ast}$ property there exists a vertex $u'$ such that the $5$--wheel with a pendant triangle $(u_1;u_2,v_2',v_1',v_0,v_1;u_3)$ is contained in $B_1(u',X)$. This contradicts the fact that $d (v_0,u_3)=3$. Thus $u_2\nsim v_1$ and, analogously, $u_2\nsim  v_1'$.
This implies that $v_2\neq u_2\neq v_2'$ and $u_1\sim v_2,v_2'$.
\medskip

\noindent
\emph{(Step 3.)}
Suppose that $v_2\sim v_2'$. If either of the vertices $v_3,u_3,v_3'$, say $v_3$, is joined with $v_2$ and $v_2'$, then we have a $5$--wheel with a pendant triangle $(u_1;v_2,v_2',v_1',v_0,v_1;v_3)$ which contradicts, by the property $SD_2^{\ast}$ as above, the fact that $d(v_0,v_3)=3$.
Thus none of the vertices $v_3,u_3,v_3'$ is joined simultaneously with $v_2$ and $v_2'$. Thus, by the weak systolicity $v_3\nsim v_3'$ (otherwise there is a $4$--cycle $(v_3,v_3',v_2',v_2,v_3)$ that has to have a diagonal) and, in particular $v_3\neq u_3 \neq v_3'$.

Now, proceeding as in Step 1. and Step 2. (replacing indexes $i$ by $i+2$) we get that $u_3\sim v_4,v_4'$, and $v_4\neq v_4' \neq u_4 \neq v_4$, $v_3\nsim v_4'$, and $v_3'\nsim v_4$. Then we have a $5$--wheel $(u_3;u_2,v_3,v_4,v_4',v_3';v_2)$ or a $6$--wheel $(u_3;u_2,v_3,v_4,u_4,v_4',v_3';v_2)$. In both cases, by the property $SD_2^{\ast}(7)$, there exists a vertex $u$ with $u\sim v_2,v_3',v_4$.
By the weak systolicity, considering the cycle $(v_2,u,v_3',v_2',v_2)$ we have $u\sim v_2'$. But this gives a $5$--wheel with a pendant triangle
$(u_1;v_2,v_2',v_1',v_0,v_1;u)$,
which contradicts, by the property $SD_2^{\ast}$ as above, the fact that $d(v_0,u)=3$.
\medskip

\noindent
\emph{(Step 4.)}
We have thus $v_2\nsim  v_2'$. Then, as in Step 1. and Step 2. we conclude
(replacing indexes $i$ by $i+1$) that $u_2\sim v_3$. Thus we have a $6$--wheel with a pendant triangle $(u_1;v_2,u_2,v_2',v_1',v_0,v_1;v_3)$. This contradicts again, by the property $SD_2^{\ast}(7)$, the fact that $d(v_0,v_3)=3$.

This is the final contradiction that finishes the proof.
\kon

\begin{cor}[cf.\ {\cite[Corollary 5.3]{HS}}]
\label{53HS}
Let $v_1, v_2,v$ be vertices of a weakly systolic $SD_2^{\ast}(7)$ complex $X$, and suppose that
$d(v_1, v_2)\leqslant d$ and $d(v_1,x)=d(v_2,x)=n\geqslant d+5$. Denote by $\sigma_1,\sigma_2$ the
projections of the vertex $v$ on the spheres $S_{n-1}(v_1,X)$ and $S_{n-1}(v_2,X)$ respectively.
Then $\sigma_1 \cup \sigma_2$ spans a simplex of $X$.
\end{cor}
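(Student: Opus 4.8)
The plan is to deduce the statement from a single application of Lemma~\ref{52HS}. Write $\sigma_1=\pi_{v_1}(v)=X_v\cap B_{n-1}(v_1)$ and $\sigma_2=\pi_{v_2}(v)=X_v\cap B_{n-1}(v_2)$ for the two projection simplices in question (non-empty simplices by the vertex condition (V) of Definition~\ref{sdn(A)}); in particular every vertex of $\sigma_1$ and of $\sigma_2$ is adjacent to $v$. Since $X$ is flag, and since $\sigma_1$ and $\sigma_2$ are themselves simplices, to show that $\sigma_1\cup\sigma_2$ spans a simplex it suffices to check that for every vertex $w_1\in\sigma_1$ and every vertex $w_2\in\sigma_2$ one has $w_1=w_2$ or $w_1\sim w_2$. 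As $w_1$ and $w_2$ are both adjacent to $v$, we have $d(w_1,w_2)\leqslant 2$, so the whole problem reduces to ruling out the possibility $d(w_1,w_2)=2$.

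Next I would produce two geodesics issuing from the common vertex $v$. For $w_1\in\sigma_1$ we have $d(v_1,w_1)\leqslant n-1$ (because $w_1\in B_{n-1}(v_1)$) and $d(v_1,w_1)\geqslant d(v_1,v)-1=n-1$ (because $w_1\sim v$), hence $d(v_1,w_1)=n-1$. Concatenating a $1$--skeleton geodesic from $v_1$ to $w_1$ with the edge $w_1v$ and reversing, we obtain a geodesic $\gamma=(v=t_0,\,t_1=w_1,\,t_2,\ldots,t_n=v_1)$ of length exactly $n$. In the same way, for $w_2\in\sigma_2$ we obtain a geodesic $\gamma'=(v=t_0',\,t_1'=w_2,\,t_2',\ldots,t_n'=v_2)$ of length exactly $n$.

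Now suppose for contradiction that $d(w_1,w_2)=2$. Then $\gamma$ and $\gamma'$ are two geodesics with the same origin $v$, both of length $n$, and $d(t_1,t_1')=d(w_1,w_2)=2$, so the hypotheses of Lemma~\ref{52HS} are met. That lemma gives $n<5+d(t_n,t_n')=5+d(v_1,v_2)\leqslant 5+d\leqslant n$, using $d(v_1,v_2)\leqslant d$ and the standing assumption $n\geqslant d+5$; this is a contradiction. Hence $d(w_1,w_2)\leqslant 1$, and by the reduction of the first paragraph $\sigma_1\cup\sigma_2$ spans a simplex of $X$.

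I do not expect a genuine obstacle here: the content has been isolated in Lemma~\ref{52HS}, and what remains is bookkeeping. The two points that warrant a moment of care are verifying that the geodesics extracted really have length exactly $n$ (so that Lemma~\ref{52HS} is invoked with the parameters $n$, $d(v_n,v_n')=d(v_1,v_2)$ rather than smaller ones), and noticing that the inequality $n\geqslant d+5$ is precisely what makes the displayed chain collapse to $n<n$.
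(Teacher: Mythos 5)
Your proof is correct and is essentially the argument the paper intends: the paper's proof of Corollary~\ref{53HS} simply invokes the proof of \cite[Corollary 5.3]{HS} verbatim with Lemma~\ref{52HS} in place of \cite[Lemma 5.2]{HS}, which is exactly the reduction you carry out (flagness reduces the claim to pairwise adjacency of cross vertices, geodesics of length exactly $n$ from $v$ through $w_1$ and $w_2$ to $v_1$ and $v_2$, and Lemma~\ref{52HS} together with $n\geqslant d+5$ rules out $d(w_1,w_2)=2$).
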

\dow
The proof follows verbatim the one of {\cite[Corollary 5.3]{HS}} --- one uses Lemma \ref{52HS} instead of {\cite[Lemma 5.2]{HS}}.
\kon

\begin{de}[Quasi-convexity]
\label{qconvde}
Given $K > 0$, we say that a subcomplex $Y$ in a connected simplicial
complex $X$ is \emph{$K$--quasi-convex} whenever the following holds: for any geodesic $(v_0,\ldots,v_n)$ in $X$ such that $v_0,v_n\in Y$ we have $d(v_i,Y)\leqslant K$ for $i=0,1,\ldots,n$. A
subcomplex $Y$ is \emph{quasi-convex} if it is $K$--quasi-convex for some $K$.

A subgroup $H$ of a Gromov hyperbolic group $G$ is \emph{quasiconvex}
if $H$ is a quasiconvex subset in the Cayley graph $C(G,S)$ for some finite generating set $S$ of $G$.
\end{de}

The following theorem together with its proof are analogues of \cite[Theorem 5.5]{HS} and the proof there.

\begin{tw}[Convex neighborhood of quasi-convex]
\label{qconv>conv}
Let $X$ be a locally finite weakly systolic $SD_2^{\ast}(7)$ complex. Let $Y\subseteq X$ be its $K$--quasi-convex subcomplex, for some $K>0$. Then there exists an integer $n(K)$ such that for every $n\geqslant n(K)$ the ball $B_n(Y,X)$ is convex.
\end{tw}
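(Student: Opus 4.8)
The plan is to reduce the statement to $3$--convexity and then exclude a single bad configuration by means of Corollary~\ref{53HS}. By Lemma~\ref{lco>co}, since $B_n(Y,X)$ is full and connected (as $Y$ is), it is convex once it is $3$--convex, so I would argue by contradiction. Assume that, for some $n$ which will be required to be $\geq n(K)$, there is a $1$--skeleton geodesic $(x,z,y)$ in $X$ with $x,y\in B_n(Y,X)$ but $z\notin B_n(Y,X)$. Then $d(z,Y)=n+1$, and since $z\sim x$, $z\sim y$, we get $d(x,Y)=d(y,Y)=n$. Choose $p\in Y$ with $d(p,x)=n$ and $q\in Y$ with $d(q,y)=n$; as $d(z,Y)=n+1$ and $d(p,z)\leq d(p,x)+1=n+1$, we obtain $d(p,z)=n+1$, and likewise $d(q,z)=n+1$.

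The first and main step is to bound $d(p,q)$ by a quantity depending only on $K$ and $X$. Here I would invoke that $X$ is Gromov hyperbolic: being $SD_2^{\ast}(7)$ and (as a weakly systolic complex) simply connected, it coincides with its own universal cover, which by Theorem~\ref{hyp} is $\delta$--hyperbolic for some $\delta=\delta(X)$. Fix a $1$--skeleton geodesic $[p,q]$. By $K$--quasi-convexity of $Y$, every vertex of $[p,q]$ lies within $K$ of $Y$, hence $d(z,Y)\leq d(z,[p,q])+K$. On the other hand, $\delta$--hyperbolicity gives $d(z,[p,q])\leq (p\mid q)_z+D_0$ for a constant $D_0=D_0(\delta)$, where $(p\mid q)_z=\tfrac12\bigl(d(z,p)+d(z,q)-d(p,q)\bigr)=(n+1)-\tfrac12 d(p,q)$. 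Combining these with $d(z,Y)=n+1$ yields $d(p,q)\leq 2(D_0+K)=:d$. I then set $n(K):=d+4$, so that for $n\geq n(K)$ one has $d(p,z)=d(q,z)=n+1\geq d+5$.

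The second step is a direct application of Corollary~\ref{53HS} with $v_1=p$, $v_2=q$ and distinguished vertex $z$: since $d(p,q)\leq d$ and $d(p,z)=d(q,z)\geq d+5$, the union $\sigma_1\cup\sigma_2$ spans a simplex of $X$, where $\sigma_1$ (resp.\ $\sigma_2$) is the projection of $z$ onto the sphere $S_n(p,X)$ (resp.\ $S_n(q,X)$). By Lemma~\ref{sd simpl} these projections are exactly $\pi_p(z)=X_z\cap B_n(p)$ and $\pi_q(z)=X_z\cap B_n(q)$. Now $x$ is a neighbor of $z$ with $d(x,p)=n$, so $x$ is a vertex of $\sigma_1$; similarly $y$ is a vertex of $\sigma_2$. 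Since $X$ is flag and $\sigma_1\cup\sigma_2$ spans a simplex, $x\sim y$, contradicting $d(x,y)=2$. This contradiction shows that $B_n(Y,X)$ is $3$--convex, hence convex, for every $n\geq n(K)$.

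I expect the bounding of $d(p,q)$ to be the genuine obstacle: the estimate must be uniform in $n$ and depend only on $K$ and $X$, and this is precisely where the global negative curvature encoded by the $SD_2^{\ast}(7)$ condition is indispensable --- for a merely weakly systolic $X$ no such $n(K)$ need exist. If one prefers not to quote global $\delta$--hyperbolicity, the same bound can be extracted combinatorially by iterating the thin-bigon and geodesic-contraction estimates (Lemmas~\ref{bigons}, \ref{strcon} and \ref{52HS}), as is done in \cite{HS}, while the final contradiction --- Corollary~\ref{53HS} together with flagness --- remains unchanged.
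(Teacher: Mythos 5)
Your proposal is correct, and its skeleton --- reduction to $3$--convexity via Lemma~\ref{lco>co}, a uniform bound $d=d(K)$ on the distance between nearest points of $Y$, and an application of Corollary~\ref{53HS} --- coincides with the paper's, but you diverge at two points. First, the paper obtains $d(K)$ by quoting the existence of quasi-projections onto quasi-convex subsets from the proof of Theorem 5.5 in \cite{HS}, whereas you derive the bound $d(p,q)\leqslant 2(D_0+K)$ directly from Gromov hyperbolicity (Theorem~\ref{hyp}) via the estimate $d(z,[p,q])\leqslant (p\mid q)_z+D_0$ combined with $K$--quasi-convexity; this is essentially the same underlying fact, but your version is self-contained given Theorem~\ref{hyp} (one should only remark why $X$ is its own universal cover, i.e.\ why a locally finite weakly systolic complex is simply connected --- its balls are finite, so the retractions behind Proposition~\ref{sdncontr} apply to any loop). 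Second, your endgame is genuinely shorter: with $d(p,z)=d(q,z)=n+1\geqslant d+5$ you apply Corollary~\ref{53HS} once to $p,q,z$ and note that $x$ and $y$ themselves lie in the two projection simplices, so flagness gives $x\sim y$ immediately; the paper instead takes the set $A$ of \emph{all} nearest points of $Y$ to the offending vertex, spans a simplex $\tau$ by all the corresponding projections (using flagness and local finiteness) and derives the contradiction from convexity of $B_{n-1}(\tau,X)$ (Lemma~\ref{bbac}). Your shortcut is valid and avoids both Lemma~\ref{bbac} and the local-finiteness/dimension argument. One small repair: connectedness of $B_n(Y,X)$ does not follow ``as $Y$ is'' connected ($Y$ is not assumed connected); it follows from $K$--quasi-convexity once $n\geqslant K$, which your choice $n(K)=2(D_0+K)+4\geqslant K$ does ensure --- this is exactly how the paper handles it by setting $n(K)=\max\lk K,d(K)+4\rk$.
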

\dow
The proof follows the one of \cite[Theorem 5.5]{HS}.

$X$ admits \emph{quasi-projections} on quasi-convex subsets (see \cite[proof of Theorem 5.5]{HS}), hence there exists a natural number $d=d(K)$ such that for every vertex $v\in X$ and every vertices $v_1,v_2\in Y$ with $d(v,v_1)=d(v,v_2)=d(v,Y)$ we have $d(v_1,v_2)\leqslant d$.

Set $n(K)=\max \lk K, d(K)+4 \rk$ and observe that for every $n\geqslant n(K)$ the ball $B_n(Y,X)$ is connected by quasi-convexity. Thus, in view of Lemma \ref{lco>co} it is enough to check the local $3$--convexity of $B_n(Y,X)$; cf.\ Definition \ref{3convex}.
Obviously, it is enough to consider only the local convexity at vertices $v$ with $d(v,Y)=n$. Let $v_1\sim v_2 \sim v_3$ be distinct vertices in $X_v$, with $v_1\nsim v_3$ and $v_1,v_3\in B_n(Y,X)$.
Suppose that $v_2 \notin B_n(Y,X)$, i.e.\ $d(v_2,Y)=n+1$. We show that this leads to a contradiction and it will finish the proof.
\medskip

Let $A\subseteq Y$ be the set of all vertices of $Y$ at distance $n+1$ from $v_2$. By the properties of quasi-projection, the diameter of $A$ is at most $d$. We claim that the set of simplices $\lk \pi_{v_2}(w) | \; \; w\in A \rk$
span a simplex $\tau$ in $X$. Indeed, since $d(v_2,Y)=n+1\geqslant d(K)+5$, it follows from Corollary \ref{53HS} that any pair of such simplices span a simplex in $X$. Thus, by flagness and local finite dimensionality, the claim follows.
It follows that $v_2 \in S_n(\tau,X)$ and $v_1,v_3\in S_{n-1}(\tau,X)$.
However, since $n\geqslant 4$ and, by Lemma \ref{bbac}, the ball $B_{n-1}(\tau,X)$ is convex, we have that $v_1\sim v_3$ --- contradiction.
\kon

The next result provides, in particular, other examples of weakly systolic groups.

\begin{cor}[Quasi-convex subgroups]
\label{qcsbgp}
Let $G$ be a group acting geometrically by automorphisms on a weakly systolic $SD_2^{\ast}(7)$ complex $X$. Then any quasi-convex subgroup $H$ of $G$ is convex cocompact, i.e.\ there exists a convex $H$--invariant subcomplex $Y$ on which $H$ acts geometrically. In particular $H$ acts geometrically on a weakly systolic $SD_2^{\ast}(7)$ complex.
\end{cor}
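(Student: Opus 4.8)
The plan is to obtain the required subcomplex $Y$ as a sufficiently large combinatorial neighborhood of an $H$--orbit, with Theorem~\ref{qconv>conv} doing the essential work; Lemma~\ref{conv>contr} then hands back weak systolicity of $Y$, and a short argument using $3$--convexity shows that the local condition $SD_2^{\ast}(7)$ descends from $X$ to $Y$.

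First I would record the standing facts about the ambient action. Since $G$ acts geometrically on $X$ by simplicial automorphisms, the action is cocompact, so $X$ has only finitely many $G$--orbits of simplices; together with finiteness of vertex stabilizers this forces $X$ to be locally finite and finite--dimensional, so that the hypotheses of Theorem~\ref{qconv>conv} are met. Moreover $X$ is (by assumption) weakly systolic and $SD_2^{\ast}(7)$, and by Theorem~\ref{hyp} the group $G$ is Gromov hyperbolic. Fix a vertex $x_0\in X$; by the \v Svarc--Milnor lemma the orbit map $g\mapsto gx_0$ is a quasi-isometry from $G$ (with a word metric) to $X^{(0)}$, and since quasi-convexity of subsets of hyperbolic spaces is a quasi-isometry invariant, the $H$--orbit $Hx_0$ is a quasi-convex subset of $X$. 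Hence the full subcomplex $Y_0=\langle Hx_0\rangle$, whose vertex set is exactly $Hx_0$, is a $K$--quasi-convex subcomplex of $X$ for some $K>0$, and it is plainly $H$--invariant.

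Next I would apply Theorem~\ref{qconv>conv} to $Y_0$: there is an integer $n=n(K)$ such that $Y:=B_n(Y_0,X)$ is convex. Since $Y_0$ is $H$--invariant and $H$ acts by automorphisms preserving the combinatorial metric, $Y=B_n(Y_0,X)$ is $H$--invariant as well. By Lemma~\ref{conv>contr} the convex subcomplex $Y$ is weakly systolic. To see that $Y$ also satisfies $SD_2^{\ast}(7)$, note that $Y$ is full in $X$, so a non-adjacency of vertices in $Y$ is a non-adjacency in $X$; consequently every $4$--wheel in $Y$ is a $4$--wheel in $X$ (giving condition (a)), and every $l$--wheel with a pendant triangle $\widehat W=(v_0;v_1,\ldots,v_l;t)$ contained in $Y$, with $5\le l<7$, is one in $X$. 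The condition $SD_2^{\ast}(7)$ for $X$ then yields a vertex $z\in X$ with $\widehat W\subseteq B_1(z,X)$; since $z$ is adjacent to the non-adjacent vertices $v_1,v_3\in Y$, the path $(v_1,z,v_3)$ is a geodesic of $X$ with endpoints in $Y$, and the $3$--convexity of the convex subcomplex $Y$ (Lemma~\ref{lco>co}) forces $z\in Y$. Thus $Y$ is a weakly systolic $SD_2^{\ast}(7)$ complex.

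Finally I would verify that $H$ acts geometrically on $Y$. Properness is inherited from the action of $G$ on $X$. For cocompactness, observe that, as $Y_0^{(0)}=Hx_0$, one has $Y=B_n(Y_0,X)=\bigcup_{h\in H}hB_n(x_0,X)$; since $X$ is locally finite, $B_n(x_0,X)$ is a finite subcomplex, so $Y$ is covered by finitely many $H$--orbits of simplices, i.e.\ the $H$--action on $Y$ is cocompact. This produces the convex $H$--invariant subcomplex on which $H$ acts geometrically, and together with the preceding paragraph it shows that $H$ acts geometrically on a weakly systolic $SD_2^{\ast}(7)$ complex, as claimed. The genuine content of the argument is all packed into Theorem~\ref{qconv>conv}; everything here is assembly, the only point requiring a little care being the transport of quasi-convexity from the Cayley graph to $X$ (which is why hyperbolicity, via Theorem~\ref{hyp}, is invoked) and the verification that $SD_2^{\ast}(7)$ survives in the neighborhood $Y$.
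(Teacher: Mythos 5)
Your proof is correct and follows essentially the same route as the paper: the published proof just invokes Theorem~\ref{qconv>conv} (citing the proof of \cite[Corollary 5.8]{HS} for the reduction to an $H$--invariant quasi-convex subcomplex, which is exactly your Milnor--\v{S}varc/hyperbolicity step via Theorem~\ref{hyp}) together with Lemma~\ref{conv>contr}. The only cosmetic difference is that where the paper appeals to Lemma~\ref{cover} for the $SD_2^{\ast}(7)$ property of $Y$, you verify it directly from fullness and $3$--convexity of the convex neighborhood, which works just as well.
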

\dow
This follows immediately from Theorem \ref{qconv>conv}; cf.\ the proof of \cite[Corollary 5.8]{HS}. The last assertion follows from Lemma \ref{conv>contr} and Lemma \ref{cover}.
\kon

\section{Weakly systolic complexes with $SD_2^{\ast}$ links}
\label{sec-lasf}

In this section we study a class of weakly systolic complexes, whose asymptotic properties resemble very much the ones of systolic complexes; cf.\ Subsections \ref{coninf} and \ref{SHA}.
Those are the so called weakly systolic complexes with $SD_2^{\ast}$ links.
They provide many new (i.e.\ a priori not systolic) examples of (highly dimensional) groups with interesting asphericity properties; cf.\ Theorem \ref{coninfth}, Theorem \ref{shatw}, remarks afterwards, and \cites{O-chcg,OS}.

\begin{de}
\label{lasf}
A flag simplicial complex $X$ is called a \emph{complex with $SD_2^{\ast}$ links} (respectively a \emph{complex with $SD_2^{\ast}(k)$ links}) if $X$ and every of its links satisfy the property $SD_2^{\ast}$ (respectively $SD_2^{\ast}(k)$); cf.\ Definition \ref{sd2*k}.

%A simply connected complex with $SD_2^{\ast}$ links is called a \emph{weakly systolic complex with $SD_2^{\ast}$ links} (cf.\ Theorem \ref{logl}).
\end{de}

The following proposition is the key result in showing various asphericity properties of weakly systolic complexes with $SD_2^{\ast}$ links.

\begin{prop}
\label{fullasf}
Let $X$ be an $n$--dimensional ($n< \infty$) flag simplicial complex.
Then the following three conditions are equivalent.

i) $X$ is a complex with $SD_2^{\ast}(k)$ links.

ii) $X$ does not contain $4$--wheels and full $i$--wheels with pendant triangles (i.e.\ $5$--wheels with pendant triangles being full subcomplexes of $X$), for $i=5,\ldots,k-1$.

iii) Every full subcomplex of $X$ satisfies the $SD_2^{\ast}(k)$ property. In particular every full subcomplex of $X$ is aspherical.
\end{prop}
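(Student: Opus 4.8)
The plan is to run the cycle of implications $(iii)\Rightarrow(i)\Rightarrow(ii)\Rightarrow(iii)$ and then add the asphericity remark. Two standing observations will be used throughout. First, since $X$ is flag, the link $X_\sigma$ of any simplex $\sigma$ is a \emph{full} subcomplex of $X$: if a vertex set of $X_\sigma$ spans a simplex of $X$, then together with $\sigma$ it is pairwise joined, hence spans a simplex of $X$ by flagness, so it spans a simplex of $X_\sigma$; the same applies to links of links, because $(X_\sigma)_\tau=X_{\sigma\cup\tau}$. Second, if $Y$ is full in $X$ then a set of vertices of $Y$ spans a simplex in $Y$ exactly when it does in $X$; consequently a $4$--wheel, an $i$--wheel with a pendant triangle, a full cycle, etc.\ contained in $Y$ is the same configuration when viewed in $X$, and conversely such a configuration of $X$ whose vertices lie in $Y$ already lies in $Y$.

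For $(iii)\Rightarrow(i)$ one simply notes that $X$ itself and all its links are full subcomplexes of $X$, hence satisfy $SD_2^{\ast}(k)$ by $(iii)$, which is precisely the statement that $X$ is a complex with $SD_2^{\ast}(k)$ links. For $(i)\Rightarrow(ii)$: condition $(i)$ gives $SD_2^{\ast}(k)$ for $X$, so $X$ has no $4$--wheels. Suppose $X$ contained a full $i$--wheel with a pendant triangle $\widehat W=(v_0;v_1,\ldots,v_i;t)$ with $5\leqslant i<k$. Being full, $\widehat W$ has non-adjacent vertices, so no vertex of $\widehat W$ is joinable with all the others; by condition (b) for $X$ there is a vertex $w_1\notin\widehat W$ joinable with every vertex of $\widehat W$. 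Then $\widehat W$ is still a full $i$--wheel with a pendant triangle inside the link $X_{w_1}$ (the defining incidences persist, and $X_{w_1}$ is a full subcomplex of $X$), which again satisfies $SD_2^{\ast}(k)$ by $(i)$; this yields $w_2\in X_{w_1}$, i.e.\ $w_2\sim w_1$, $w_2\notin\widehat W\cup\{w_1\}$, joinable with all of $\widehat W$. Iterating inside $X_{\langle w_1,w_2\rangle}$, and so on, produces pairwise joined vertices $w_1,w_2,\ldots$, each joinable with every vertex of $\widehat W$; already $\dim X+2$ of them span a simplex of dimension $>\dim X$, a contradiction. Hence no such $\widehat W$ exists, which is $(ii)$.

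The core is $(ii)\Rightarrow(iii)$. Given $(ii)$, let $Y$ be a full subcomplex of $X$; I verify $SD_2^{\ast}(k)$ for $Y$. A $4$--wheel in $Y$ is a $4$--wheel in $X$, excluded by $(ii)$, so condition (a) holds. For condition (b), take an $i$--wheel with a pendant triangle $\widehat W=(v_0;v_1,\ldots,v_i;t)$ in $Y$, $5\leqslant i<k$, and look for a vertex joinable (equivalently in $Y$ or in $X$) with $v_0,\ldots,v_i,t$. If $t\sim v_0$, then $v_0$ works. If $t\nsim v_0$, I would first use that $X$ has no $4$--wheels to exclude $t\sim v_3$ and $t\sim v_i$: if $t\sim v_3$, then the $4$--cycle $v_1,v_0,v_3,t$ is full ($v_1\nsim v_3$ because the rim is a full cycle, and $v_0\nsim t$), and $v_2$ is joinable with all four of its vertices, so $(v_2;v_1,v_0,v_3,t)$ is a $4$--wheel of $X$ --- contradicting $(ii)$; symmetrically $t\nsim v_i$. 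If, in addition, $t$ is joinable with none of the interior rim vertices $v_4,\ldots,v_{i-2}$, then $\{v_0,\ldots,v_i,t\}$ spans exactly $\widehat W$ in $X$, so $\widehat W$ is a \emph{full} $i$--wheel with a pendant triangle in $X$, again excluded by $(ii)$; so this situation cannot occur. In the remaining situation $t\sim v_a$ for some $4\leqslant a\leqslant i-2$, and the plan is to isolate inside $\widehat W$ a strictly smaller wheel with a pendant triangle (or a $4$--wheel) to which $(ii)$, or one of the cases already settled, applies, thereby locating the central vertex. \textbf{I expect this last sub-case --- extracting the centre when the apex $t$ meets interior rim vertices, so that $\widehat W$ is not full in $X$ and $(ii)$ cannot be invoked verbatim --- to be the only delicate point; everything else is either formal or a short clique-counting / diagram argument.}

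Finally, for the asphericity assertion: $SD_2^{\ast}(k)$ implies $SD_2^{\ast}$ (for $k\geqslant 6$), so by Theorem~\ref{logl} the universal cover of the finite-dimensional full subcomplex $Y$ is weakly systolic, hence contractible by Proposition~\ref{sdncontr}; therefore $Y$ is aspherical. Thus the overall obstacle is concentrated in the single sub-case of $(ii)\Rightarrow(iii)$ described above, the remaining content being the two elementary observations at the start plus a clique-counting argument and a couple of explicit $4$--wheel detections.
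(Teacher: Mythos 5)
Your (iii)$\Rightarrow$(i), your (i)$\Rightarrow$(ii) (iterated links against the dimension bound) and your asphericity remark via Theorem~\ref{logl} coincide with the paper's proof. The genuine gap is exactly where you flag it, in (ii)$\Rightarrow$(iii): the sub-case $t\nsim v_0$ with $t$ adjacent to an interior rim vertex (for $i=5$ this is $t\sim v_4$; note also that your range $v_4,\ldots,v_{i-2}$ should be $v_4,\ldots,v_{i-1}$, so for $i=5$ your text accidentally declares the analysis complete). This case is not merely delicate: the announced plan of extracting a smaller wheel or a $4$--wheel inside $\widehat W$ cannot work. Consider the flag complex on $v_0,\ldots,v_5,t$ whose edges are exactly those of the $5$--wheel $(v_0;v_1,\ldots,v_5)$, the triangle $\langle v_1,v_2,t\rangle$, and the single extra edge $tv_4$. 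Every induced $4$--cycle of this complex contains both $t$ and $v_4$, which have no common neighbour, so there are no $4$--wheels; its only $5$--wheel is $(v_0;v_1,\ldots,v_5)$, its only wheel with a pendant triangle is $\widehat W$, which is not full, and it contains no $i$--wheels for $i\geqslant 6$. Hence condition (ii) holds for every $k$, yet no vertex is adjacent to all of $v_0,\ldots,v_5,t$. So neither (ii) nor your settled cases can locate the dominating vertex from within the configuration, and with condition (b) of Definition~\ref{sd2*k} read as you read it --- over all, not necessarily full, wheels with pendant triangles --- this seven-vertex complex satisfies (ii) while failing $SD_2^{\ast}$, so the implication you are attempting cannot be closed from hypothesis (ii) alone.

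For comparison, the paper's own proof of (ii)$\Rightarrow$(iii) does not attempt your case analysis at all: it only checks that a $4$--wheel, respectively a full $i$--wheel with a pendant triangle, contained in a full subcomplex $Y$ is also one in $X$, i.e.\ it verifies condition (b) only for full wheels with pendant triangles and is silent about the non-full configuration that blocks you. Elsewhere (Lemma~\ref{fulwheel}) the adjacency $t\sim v_4$ is excluded only by invoking condition (b) for $X$ itself, whose dominating vertex turns that adjacency into a $4$--wheel --- input that is unavailable under (ii). So you have correctly identified a real soft spot rather than overlooked a routine step, but as written your proposal does not prove the stated equivalence; to finish one must either treat condition (b) as a condition on full wheels with pendant triangles (as the paper's argument implicitly does) or prove (iii) directly from (i), using the $SD_2^{\ast}(k)$ property of $X$ as in Lemma~\ref{fulwheel} to rule out the interior adjacencies before passing to full subcomplexes.
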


\begin{proof}
$(i)\Rightarrow (ii)$.
By the definition of the $SD_2^{\ast}(k)$ property $X$ does not contain $4$--wheels.

Assume, by contradiction, that $\widehat W=(v_0;v_1,\ldots,v_i;t)$ is a full $i$--wheel with a pendant triangle in $X$ (cf.\ Definition \ref{wheels}).

Observe that then $v_0\nsim t$.
Since $X$ satisfies the property $SD_2^{\ast}(k)$,
there exists a vertex $w_1\neq v_0$ such that $\widehat W\subseteq X_{w_1}$.
By the assumption on $X$ we have that $X_{w_1}$ satisfies the property
$SD_2^{\ast}(k)$ so that there exists a vertex $w_2\in X_{w_1}$ with
$\widehat W\subseteq (X_{w_1})_{w_2}=X_{\langle w_1,w_2\rangle}$.
We can continue this process until we get vertices $w_1,\ldots,w_{n-1}$ such that
$\widehat W\subseteq X_{\langle w_1,\ldots,w_{n-1}\rangle}$.
However this is a contradiction, since $X_{\langle w_1,\ldots,w_{n-1}\rangle}$ is at most $1$--dimensional and thus cannot contain $\widehat W$.
\medskip

$(ii)\Rightarrow (iii).$
Let $Y$ be a full subcomplex of $X$.
Then $Y$ is a flag complex and we have to check both conditions: (a) and (b) from Definition \ref{sd2*k}.

For (a) observe that if $W\subset Y$ is a $4$--wheel then, since $Y$ is full in $X$, the complex $W$ is a $4$--wheel in $X$, too. Thus, by $(ii)$, $Y$ cannot contain $4$--wheels.

Similarly, for (b), observe that if $\widehat W$ is a full $i$--wheel with a pendant triangle in $Y$, $i=5,\ldots,k-1$, then $\widehat W$ is also a full $i$--wheel with a pendant triangle in $X$. Thus, by $(ii)$, $Y$ does not contain full $i$--wheels with pendant triangles.

The last assertion follows from the fact that complexes satisfying the property $SD_2^{\ast}(k)$ are aspherical --- Theorem \ref{logl}.
\medskip

$(iii)\Rightarrow (i).$ This implication is clear, since $X$, and every its link are full subcomplexes of $X$.
\end{proof}

\rem Observe that the implications $(ii)\Rightarrow (iii) \Rightarrow (i)$ hold also without the assumption about finiteness of dimension.

\begin{cor}[Loc.\ $k$--large complexes]
\label{systsd}
For $k\geqslant 6$, a locally $k$--large complex is a complex with $SD_2^{\ast}(k)$ links.
\end{cor}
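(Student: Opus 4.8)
The plan is to reduce the statement directly to Proposition~\ref{7syst}, by observing that the class of (flag) locally $k$--large complexes is closed under passing to links. Recall that $X$ being a complex with $SD_2^{\ast}(k)$ links means precisely that $X$ itself, and every link $X_\sigma$ (for $\sigma$ a simplex of $X$), satisfies the $SD_2^{\ast}(k)$ property. So it suffices to check that $X$ and each $X_\sigma$ are flag and locally $k$--large, and then invoke Proposition~\ref{7syst} (which applies because $k\geqslant 6$) for each of them in turn.

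First I would record two elementary permanence facts. Since a locally $k$--large complex is by definition flag, and links of flag complexes are flag --- if vertices of $X_\sigma$ are pairwise joined by edges in $X_\sigma$, then in $X$ they are pairwise joined and each is joined to every vertex of $\sigma$, so flagness of $X$ makes them span a simplex together with $\sigma$ --- every link $X_\sigma$ is flag. Second, unwinding the definition of links from Section~\ref{prel} yields the link-of-a-link identity $(X_\sigma)_\tau=X_{\langle \sigma\cup\tau\rangle}$ for $\tau\in X_\sigma$: a simplex $\rho$ lies in $(X_\sigma)_\tau$ iff it is disjoint from $\sigma\cup\tau$ and $\langle\rho\cup\sigma\cup\tau\rangle\in X$, which is exactly membership in $X_{\langle\sigma\cup\tau\rangle}$ (here $\langle\sigma\cup\tau\rangle$ is a simplex of $X$, as $\sigma$ and $\tau$ are disjoint and joinable). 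Consequently every link of $X_\sigma$ is itself a link of $X$, hence $k$--large by hypothesis; so $X_\sigma$ is locally $k$--large.

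With these in hand the conclusion is immediate: Proposition~\ref{7syst} applied to the flag, locally $k$--large complex $X$ shows $X$ satisfies $SD_2^{\ast}(k)$, and the same proposition applied to each flag, locally $k$--large link $X_\sigma$ shows every link of $X$ satisfies $SD_2^{\ast}(k)$, which is the definition of a complex with $SD_2^{\ast}(k)$ links. There is essentially no obstacle here; the only point requiring care is the routine bookkeeping with iterated links. Alternatively, one could route the argument through Proposition~\ref{fullasf}, using its implication $(ii)\Rightarrow(i)$ (valid with no dimension hypothesis, by the Remark following it): a $4$--wheel $(v_0;v_1,\ldots,v_4)$, or a full $i$--wheel with pendant triangle $(v_0;v_1,\ldots,v_i;t)$ with $5\leqslant i\leqslant k-1$, would produce a full cycle of length $<k$ in the link $X_{v_0}$, contradicting local $k$--largeness, so condition $(ii)$ of Proposition~\ref{fullasf} holds.
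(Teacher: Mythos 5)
Your proposal is correct; the paper states this corollary without proof, and its placement immediately after Proposition~\ref{fullasf} (and the remark removing the finite-dimensionality hypothesis) indicates exactly the route you offer as the alternative: a $4$--wheel or a full $i$--wheel with pendant triangle ($5\leqslant i\leqslant k-1$) would force a full cycle of length $<k$ in the link of the hub, so condition (ii) holds and $(ii)\Rightarrow(i)$ applies. Your primary route, via Proposition~\ref{7syst} together with the observation that links of a flag locally $k$--large complex are again flag and locally $k$--large (using $(X_\sigma)_\tau=X_{\langle\sigma\cup\tau\rangle}$), is an equivalent repackaging of the same wheel-in-link observation, so there is nothing to add.
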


\rem There are weakly systolic complexes with $SD_2^{\ast}$ links that are not systolic. In \cite{OS} we provide a construction of such complexes, equipped with a geometric group action, in arbitrarily high (cohomological) dimension.

\subsection{Finitely presented subgroups}
\label{sfps}
Here we prove that finitely presented subgroups of
fundamental groups of complexes with $SD_2^{\ast}(k)$ links act geometrically on weakly systolic complexes with $SD_2^{\ast}(k)$ links; cf.\ Theorem \ref{fps} below. The proof follows almost verbatim
the proof of an	 analogous theorem by Dani Wise \cite[Theorem 5.7]{Wis} concerning the case of torsion-free systolic groups.

A version of the following definition was given in \cite[Definition 5.4]{Wis}.

\begin{de}[Full tower]
\label{fullt}
A map $Y\to X$ of connected flag simplicial complexes is a \emph{full tower} if it can be expressed as the composition
$$
Y=X_n\hookrightarrow \widehat X_{n-1}\to X_{n-1} \hookrightarrow \cdots \hookrightarrow \widehat X_1 \to X_1=X,
$$
where the maps are alternately inclusions of full subcomplexes and covering maps.

Let $f\colon Z\to X$ be a map of connected flag simplicial complexes. A map $g\colon Z\to Y$ is a \emph{full tower lift} of $f$ if there is a full tower $h\colon Y\to X$ such that the following diagram commutes:

$$   \begindc{\commdiag}[3]  \obj(12,1)[a]{$Z$}  \obj(35,1)[b]{$X$} \obj(35,15)[c]{$Y$}  \mor{a}{b}{$f$}  \mor{c}{b}{$h$}  \mor{a}{c}{$g$}  \enddc  $$

The full tower lift $g$ is \emph{compact} if $Y$ is compact and $g$ is \emph{maximal} if for every full tower lift $g'\colon Z\to Y'$ of $g$ the map $Y'\to Y$ is an isomorphism.
\end{de}

The following crucial lemma is a version of \cite[Lemma 5.5]{Wis}. For completeness we provide its proof here. It is the same as the proof of Wise' lemma.

\begin{lem}[Maximal full tower lift]
\label{maxlift}
Let $Z, X$ be flag simplicial complexes and let moreover $Z$ be connected and finite, and $X$ be locally finite.
Then every simplicial map $f\colon Z\to X$ has a maximal compact full tower lift.
\end{lem}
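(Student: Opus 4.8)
The plan is to follow Wise's argument \cite{Wis}: construct the lift by a canonical \emph{lift-and-span} iteration, and prove that the iteration terminates by exhibiting a well-founded complexity bounded in terms of $Z$. Throughout, the point of always passing to spans is exactly to keep the complexes small.

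First I would produce an initial compact full tower lift. Put $Y_1=\langle f(Z)\rangle$ and let $g_1\colon Z\to Y_1$ be $f$ with codomain restricted to $Y_1$. Since $Z$ is finite and all simplices are finite, $Y_1$ is a finite (hence compact), connected, flag subcomplex of $X$, and $Y_1\hookrightarrow X$ is a full subcomplex inclusion, so $g_1$ is a compact full tower lift of $f$ in the sense of Definition~\ref{fullt}. Note $V(Y_1)=g_1(V(Z))$, so $|V(Y_1)|\le N:=|V(Z)|$. Now for the iterative step: suppose we have a compact full tower lift $g_j\colon Z\to Y_j$ with $Y_j=\langle g_j(Z)\rangle$ (so $|V(Y_j)|\le N$). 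Let $\Gamma_j=\mathrm{im}\big(\pi_1(Z)\to\pi_1(Y_j)\big)$ and let $p_j\colon\widehat Y_j\to Y_j$ be the connected covering corresponding to $\Gamma_j$, which exists as simplicial complexes are semilocally simply connected. Then $g_j$ lifts to $\widehat g_j\colon Z\to\widehat Y_j$; set $Y_{j+1}=\langle\widehat g_j(Z)\rangle\subseteq\widehat Y_j$ and let $g_{j+1}\colon Z\to Y_{j+1}$ be $\widehat g_j$ with restricted codomain. Again $Y_{j+1}$ is finite, connected and flag, $V(Y_{j+1})=g_{j+1}(V(Z))$, and $Y_{j+1}\hookrightarrow\widehat Y_j\xrightarrow{p_j}Y_j$ is a full tower with $p_j\circ\widehat g_j=g_j$, so $g_{j+1}$ is a compact full tower lift of $g_j$, hence of $f$.

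Two facts then drive the argument. \emph{(i)} If $\Gamma_j=\pi_1(Y_j)$, then $g_j$ is already maximal: in any full tower lift $g'\colon Z\to Y'$ of $g_j$ one argues by induction on the length of the tower $Y'\to Y_j$ that it is an isomorphism, using that the first covering of $Y_j$ occurring in it is one through which $g_j$ factors, hence corresponds to a subgroup of $\pi_1(Y_j)$ containing $\Gamma_j=\pi_1(Y_j)$ and is trivial, and that the first full subcomplex occurring contains the image of $Z$, hence all vertices of $Y_j$, hence is $Y_j$. \emph{(ii)} If $\Gamma_j\neq\pi_1(Y_j)$, then $p_j$ is a nontrivial covering, and $p_j|_{Y_{j+1}}\colon Y_{j+1}\to Y_j$ is not an isomorphism, since its inverse would be a section of a nontrivial connected covering over a connected base. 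Moreover, since $p_j\circ\widehat g_j=g_j$ and $g_j(Z)$ spans $Y_j$, the map $p_j|_{Y_{j+1}}$ is onto on vertices; and coverings being locally injective, once $|V(Y_{j+1})|=|V(Y_j)|$ it is bijective on vertices, hence injective on simplices, hence (not being an isomorphism) not onto on simplices. Therefore, along the iteration $|V(Y_j)|$ is non-decreasing and bounded by $N$, and at every step with $p_j$ nontrivial either $|V(Y_{j+1})|>|V(Y_j)|$, or $|V(Y_{j+1})|=|V(Y_j)|$ and $Y_{j+1}$ has strictly fewer simplices than $Y_j$. Thus, in the order on pairs (number of vertices, number of simplices) in which the first coordinate is non-decreasing, capped at $N$, and the second strictly decreases whenever the first is stationary, there can be only finitely many nontrivial steps; at the first step where $p_j$ is trivial, $\Gamma_j=\pi_1(Y_j)$, and by \emph{(i)} this $g_j$ is the desired maximal compact full tower lift.

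\textbf{Main obstacle.} The heart of the proof is exactly this termination/complexity analysis, and the nontrivial inputs are: the a priori bound that each $Y_j$, being a span of $g_j(Z)$, has at most $|V(Z)|$ vertices; the fact that a covering map restricted to the span of the lifted image becomes injective on vertices as soon as the vertex count stabilizes; and the "a section makes the covering trivial" observation ruling out a nontrivial covering that returns an isomorphic complex. The remaining ingredients — existence of the covers, flagness and connectedness of full subcomplexes and of covers of flag complexes, and connectedness of spans of connected finite sets — are routine and would be dispatched quickly.
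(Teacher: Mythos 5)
Your construction is exactly the one in the paper (and in Wise's original argument): iterate the two moves ``pass to the cover corresponding to $\mathrm{im}(\pi_1(Z))$'' and ``take the span of the lifted image''. Where you genuinely diverge is the termination step, which is the heart of the lemma. The paper argues by contradiction: since every $X_i$ ($i\geqslant 2$) has at most $|V(Z)|$ vertices, there are finitely many isomorphism types among the $X_i$, and then it invokes Wise's rigidity result \cite[Lemma 6.3]{Wisa} (a combinatorial immersion between compact complexes of the same isomorphism type is an isomorphism) to force the maps $X_{i+1}\to X_i$ to become inclusions, which must stabilize. You instead run a self-contained counting argument: the vertex count $|V(Y_j)|=|g_j(V(Z))|$ is non-decreasing and bounded by $|V(Z)|$; when it stabilizes, the covering restricted to the span is vertex-bijective, hence simplex-injective, and it cannot be an isomorphism as long as the covering is nontrivial (an inverse would be a section, making $p_{j*}$ surjective and contradicting $\Gamma_j\neq\pi_1(Y_j)$), so the simplex count strictly drops; the lexicographic well-order on (vertex deficit, simplex count) then terminates the process. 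You also spell out, as your fact \emph{(i)}, why surjectivity of $\pi_1(Z)\to\pi_1(Y_j)$ together with $Y_j=\langle g_j(Z)\rangle$ yields maximality --- a point the paper (following Wise) leaves implicit in its proof-by-contradiction phrasing. The trade-off: the paper's route is shorter on the page but outsources the key finiteness to an external lemma and a slightly delicate pigeonhole on isomorphism types, while yours is longer but elementary, avoids the citation, and makes the maximality criterion explicit; both are correct, and both tacitly assume (as the definition of a full tower intends) that the intermediate complexes in a tower are connected.
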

\begin{proof}

We construct the tower inductively as follows. Let $X_1=X$. For $i\geqslant 1$ let $\widehat X_i$ be the based covering space of $X_i$ corresponding to the image of $\pi_1(Z)$ and let $X_{i+1}$ be the span of the image of the lift $Z\to \widehat X_i$.

If no maximal full tower existed than we would have an infinite sequence of immersions $\cdots \to X_{i+1} \to X_i \to \cdots \to X_1$.
Observe that the number of vertices in every $X_i$ is bounded by the number of vertices in $Z$. Thus there is a uniform bound on the number of  simplices in $X_i$'s and hence there are finitely many isomorphism types of $X_i$'s.
Let $X_1,X_2,\ldots,X_M$ represent all the types.

We claim that $X_{i+1}\to X_i$ is injective for $i\geqslant M$.
To show this we observe that $X_{i+1}$ is isomorphic $X_l$, for some $1\leqslant l \leqslant M$, and the map $X_{i+1}\to X_l$ is a combinatorial immersion between compact complexes. By \cite[Lemma 6.3]{Wisa} we have that $X_{i+1}\to X_l$ is an isomorphism and it follows that $X_{i+1}\to X_i$ is injective.

By the claim we have that $\cdots \to X_{M+2}\to X_{M+1} \to X_M$ is a sequence of inclusions of subcomplexes. It has to terminate because of the uniform bound on the number of simplices in $X_i$'s.
\end{proof}

\begin{lem}(Coverings)
\label{cover}
Let $h\colon Y\to X$ be a covering of a complex $X$ satisfying the property $SD_2^{\ast}(k)$, $k\geqslant 6$. Then $Y$ satisfies the property $SD_2^{\ast}(k)$.
\end{lem}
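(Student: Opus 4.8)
The plan is to work entirely from one structural fact: a covering map $h\colon Y\to X$ of flag simplicial complexes restricts, for every vertex $\widetilde v$ of $Y$, to a simplicial isomorphism $h|_{B_1(\widetilde v,Y)}\colon B_1(\widetilde v,Y)\to B_1(h(\widetilde v),X)$ (here one uses that $Y$ is flag, being a cover of the flag complex $X$, so that the closed star of $\widetilde v$ equals $B_1(\widetilde v,Y)$). Two elementary consequences will be used throughout: distinct adjacent vertices of $Y$ have distinct images in $X$; and if $Z\subseteq B_1(p,X)\subseteq X$ is a full subcomplex of $B_1(p,X)$ all of whose vertices lie in $B_1(p,X)$, then $Z$ is full in $X$ as well, since $B_1(p,X)$ is itself full in $X$. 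In particular $h$ carries full subcomplexes of $B_1(\widetilde v,Y)$ to full subcomplexes of $X$.

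Condition (a) is then immediate: a $4$--wheel $(\widetilde v_0;\widetilde v_1,\dots,\widetilde v_4)$ of $Y$ lies in $B_1(\widetilde v_0,Y)$, so its image under $h$ is a $4$--wheel of $X$, contradicting that $X$ has none. For condition (b), let $\widehat W=(\widetilde v_0;\widetilde v_1,\dots,\widetilde v_l;\widetilde t)$ be an $l$--wheel with a pendant triangle in $Y$, $5\leqslant l<k$. If $\widetilde v_0\sim\widetilde t$, then every vertex of $\widehat W$ is a neighbour of $\widetilde v_0$, so $\widehat W\subseteq B_1(\widetilde v_0,Y)$ and we take $\widetilde v=\widetilde v_0$; so assume $\widetilde v_0\nsim\widetilde t$.

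Next I would check that $h(\widehat W)$ is again an $l$--wheel with a pendant triangle in $X$. The isomorphism $h|_{B_1(\widetilde v_0,Y)}$ shows that $h(\widetilde v_0),\dots,h(\widetilde v_l)$ are distinct and span an $l$--wheel in $X$ (the full $l$--cycle relations among the $\widetilde v_i$ are preserved in both directions by the isomorphism, and then pass from $B_1(h(\widetilde v_0),X)$ to $X$). It remains to see $h(\widetilde t)\ne h(\widetilde v_j)$ for every $j$: for $j=1,2$ because $\widetilde t$ is adjacent to, and distinct from, $\widetilde v_j$; for $j\in\{0,l\}$ because $\widetilde t$ and $\widetilde v_j$ both lie in $B_1(\widetilde v_1,Y)$, on which $h$ is injective; and for $3\leqslant j\leqslant l-1$ because $h(\widetilde t)=h(\widetilde v_j)$ together with the edge $h(\widetilde v_1)h(\widetilde t)$ (from the pendant triangle) would produce a diagonal $h(\widetilde v_1)\sim h(\widetilde v_j)$ of the full $l$--cycle already established in $X$. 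Hence $h(\widehat W)$ is an $l$--wheel with a pendant triangle in $X$, and property $SD_2^{\ast}(k)$ for $X$ yields a vertex $v\in X$ with $h(\widehat W)\subseteq B_1(v,X)$, i.e.\ $v\sim h(\widetilde v_0),\dots,h(\widetilde v_l),h(\widetilde t)$. Since $v\sim h(\widetilde v_0)$, we have $v\in B_1(h(\widetilde v_0),X)$; let $\widetilde v$ be its unique preimage in $B_1(\widetilde v_0,Y)$. The isomorphism at $\widetilde v_0$ gives $\widetilde v\sim\widetilde v_0,\widetilde v_1,\dots,\widetilde v_l$, and then the isomorphism at $\widetilde v_1$ — where both $\widetilde v$ and $\widetilde t$ now lie, with $h(\widetilde v)=v\sim h(\widetilde t)$ — gives $\widetilde v\sim\widetilde t$. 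Thus $\widetilde v$ is adjacent to every vertex of $\widehat W$; as $Y$ is flag and $B_1(\widetilde v,Y)$ is full, $\widehat W\subseteq B_1(\widetilde v,Y)$, which is condition (b) for $Y$.

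The step I expect to require the most care is exactly the verification that $h(\widehat W)$ is non-degenerate, i.e.\ a genuine $l$--wheel with a pendant triangle: because $\widehat W$ is not contained in $B_1$ of any single vertex — this failure being precisely the ``nonpositive curvature'' encoded in the configuration — one cannot appeal to a single local isomorphism, and the cases $3\leqslant j\leqslant l-1$ of $h(\widetilde t)=h(\widetilde v_j)$ must be excluded using the $l$--cycle structure transported into $X$ rather than local injectivity of a star restriction. Everything else is routine bookkeeping with the family of local isomorphisms $B_1(\widetilde v,Y)\to B_1(h(\widetilde v),X)$ and the observation that being full in a $1$--ball implies being full in $X$.
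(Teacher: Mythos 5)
Your proof is correct and follows essentially the same route as the paper's: push $\widehat W$ forward to an $l$--wheel with a pendant triangle in $X$ (using that the covering restricts to isomorphisms $B_1(\widetilde v,Y)\to B_1(h(\widetilde v),X)$), apply $SD_2^{\ast}(k)$ in $X$, and lift the resulting vertex back to $Y$. The paper's own argument is just a terser version of this, omitting the non-degeneracy of $h(\widehat W)$ and the adjacency-transfer details of the lift that you spell out.
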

\begin{proof}
By the definition of a covering map $Y$ does not contain $4$--cycles.
Thus, we need only to check that the condition (b) of Definition \ref{sd2*k} is satisfied by $Y$. Let $\widehat W$ be an $i$--wheel with a pendant triangle contained in $Y$, for $i=5,\ldots,k-1$. Then, since $h$ is a covering map, we have that $h(\widehat W)$ is an $i$--wheel with a pendant triangle in $X$. By the $SD_2^{\ast}(k)$ property for $X$, there exists a vertex $v\in X$ with $h(\widehat W)\subseteq B_1(v,X)$. Then $\widehat W\subseteq B_1(v',X)$ for some $v'\in Y$ with $h(v')=v$. This finishes the proof.
\end{proof}

\begin{tw}[Finitely presented subgroups]
\label{fps}
Let $k\geqslant 6$ and let $X$ be a compact complex with $SD_2^{\ast}(k)$ links.
Then every finitely presented subgroup of $\pi_1(X)$ is a fundamental group of a finite complex with $SD_2^{\ast}(k)$ links.
\end{tw}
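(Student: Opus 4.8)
The plan is to imitate D.~Wise's proof of the analogous statement for torsion-free systolic groups \cite[Theorem~5.7]{Wis} essentially verbatim, with Lemma~\ref{maxlift}, Lemma~\ref{cover} and Proposition~\ref{fullasf} taking over the roles of their systolic counterparts and ``$SD_2^{\ast}(k)$ links'' replacing ``systolic''. First I would realize the subgroup $H$ by a finite simplicial complex over $X$. Fix a base vertex of $X$ and a finite presentation $H=\langle g_1,\dots,g_m\mid r_1,\dots,r_p\rangle$. Each $g_i$ is represented by an edge-loop in $X^{(1)}$, and each relator $r_j$, being trivial in $\pi_1(X)$, is represented by an edge-loop bounding a simplicial disk diagram $D_j\to X$ (simplicial approximation). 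Gluing the disks $D_j$ along their boundaries to the wedge of the generator edge-loops, and subdividing where necessary to remain in the simplicial category, produces a finite connected simplicial complex $Z$ together with a simplicial map $f\colon Z\to X$ for which $\pi_1(Z)\cong H$ and $f_\ast\colon\pi_1(Z)\to\pi_1(X)$ is the inclusion of $H$; in particular $f_\ast$ is injective. Note that $X$, being compact, is locally finite.

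Next I would apply Lemma~\ref{maxlift} to $f$ to obtain a maximal compact full tower lift $g\colon Z\to Y$ with full tower $h\colon Y\to X$, so that $h\circ g=f$ and $Y$ is finite. Since $h_\ast g_\ast=f_\ast$ is injective, $g_\ast$ is injective. Maximality forces $g_\ast$ to be onto: otherwise the covering $p\colon Y''\to Y$ corresponding to the proper subgroup $g_\ast\pi_1(Z)<\pi_1(Y)$ lifts $g$ to $\widetilde g\colon Z\to Y''$, and the span $Y'=\langle\widetilde g(Z)\rangle$ provides a full tower lift $g'\colon Z\to Y'$ of $g$ (with full tower $Y'\hookrightarrow Y''\xrightarrow{p}Y$); but then the composite $\pi_1(Y')\to\pi_1(Y'')\xrightarrow{p_\ast}\pi_1(Y)$ has image contained in $g_\ast\pi_1(Z)\subsetneq\pi_1(Y)$, so $Y'\to Y$ cannot be an isomorphism, contradicting maximality. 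Hence $g_\ast$ is an isomorphism and $\pi_1(Y)\cong\pi_1(Z)\cong H$.

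It remains to check that $Y$ is a complex with $SD_2^{\ast}(k)$ links. For this it suffices to observe that, among finite-dimensional flag complexes, being a complex with $SD_2^{\ast}(k)$ links is inherited by coverings and by full subcomplexes; an induction along the tower $Y=X_n\hookrightarrow\widehat X_{n-1}\to X_{n-1}\hookrightarrow\cdots\to X_1=X$ then carries the property from $X$ to $Y$, all the intermediate complexes being finite-dimensional. For a covering $\widetilde W\to W$ the total space satisfies $SD_2^{\ast}(k)$ by Lemma~\ref{cover}, and since a covering map restricts to an isomorphism on the link of every simplex, all links of $\widetilde W$ satisfy $SD_2^{\ast}(k)$ as well; thus $\widetilde W$ has $SD_2^{\ast}(k)$ links. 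For a full subcomplex $V\subseteq W$, fullness is transitive, so every full subcomplex of $V$ is a full subcomplex of $W$ and hence satisfies $SD_2^{\ast}(k)$ by Proposition~\ref{fullasf}(iii) applied to $W$; applying the implication (iii)$\Rightarrow$(i) of that proposition to $V$ shows $V$ has $SD_2^{\ast}(k)$ links. Consequently $Y$ is a finite complex with $SD_2^{\ast}(k)$ links and $\pi_1(Y)\cong H$, which is exactly the claim.

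The step I expect to demand the most care is the first one: one must produce a genuinely \emph{simplicial} map $f\colon Z\to X$, with $Z$ finite connected and $\pi_1(Z)\cong H$, \emph{without} subdividing $X$ itself, since any subdivision of $X$ would destroy the $SD_2^{\ast}(k)$ conditions on wheels. The only other delicate point, the maximality/surjectivity argument inside the tower construction, is Wise's and is imported unchanged.
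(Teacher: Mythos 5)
Your proposal is correct and follows essentially the same route as the paper: realize $H$ by a simplicial map $f\colon Z\to X$ of compact complexes, apply Lemma~\ref{maxlift} to get a maximal compact full tower lift, deduce $\pi_1(Y)\cong H$ from maximality and injectivity of $f_\ast$, and propagate the $SD_2^{\ast}(k)$--links condition down the tower via Lemma~\ref{cover} and Proposition~\ref{fullasf}. The extra details you supply (the explicit construction of $Z$, the surjectivity-by-maximality argument, and the link-preservation under covers and full subcomplexes) are exactly what the paper leaves implicit, and your worry about not subdividing $X$ is handled by simplicial approximation subdividing only the domain.
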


\begin{proof}
Let $H$ be a finitely presented subgroup of $\pi_1(X)$ and let $f\colon Z\to X$
be a simplicial map of compact complexes such that $\pi_1(Z)=H$, and $f_{\ast}\colon \pi_1(Z)\to \pi_1(X)$ is an isomorphism on $H$.

By Lemma \ref{maxlift} there exists a maximal compact full tower lift
$g\colon Y\to X$ of $f$. By the definition of a full tower, by Proposition \ref{fullasf}, and by Lemma \ref{cover}, we have that $Y$ is a weakly systolic complex with $SD_2^{\ast}(k)$ links.

The map $\pi_1(Z)\to \pi_1(Y)$ is surjective because the tower is maximal and is injective because $f_{\ast}$ factors through it.
\end{proof}

\rem
If, in the considerations above, we replace everywhere ``complex(es) with $SD_2^{\ast}(k)$ links" by ``locally $k$--large complexes", then we get Wise's result together with its proof.

\subsection{Connectedness at infinity}
\label{coninf}

Here we prove analogues of results from \cite{O-ciscg}, in the case of weakly systolic complexes with $SD_2^{\ast}$ links and groups acting on them geometrically.

Recall, cf.\ e.g.\ \cites{O-ciscg,OS}, that for a group $G$ acting geometrically by automorphisms on a simplicial complex $X$, the \emph{$n$--th homotopy groups at infinity vanish},
denoted by $\pi_n^{\infty}(X)=0$ and $\pi_n^{\infty}(G)=0$, iff for every compact $K\subseteq X$ there exists a compact subset $L\supseteq K$ of $X$ with the following property. For every map $f\colon S^n=\partial B^{n+1}\to X\setminus L$, of the $n$--dimensional sphere $S^n$, there exists a map $F\colon B^{n+1}\to X\setminus K$ extending $f$, i.e.\ $F|_{S^n}=f$. We say that $X$ (respectively $G$) is \emph{simply connected at infinity} if $X$ (respectively $G$) has one end and $\pi_1^{\infty}(X)=0$ (respectively $\pi_1^{\infty}(G)=0$).

\begin{tw}[Connectedness at infinity]
\label{coninfth}
Let $G$ be a group acting geometrically on a weakly systolic complex $X$ with $SD_2^{\ast}$ links. Then $\pi_n^{\infty}(X)=0$ and $\pi_n^{\infty}(G)=0$, for all $n \geqslant 2$. Moreover $G$ is not simply connected at infinity.
\end{tw}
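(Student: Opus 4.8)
Since the $G$--action on $X$ is geometric, $X$ is connected, locally finite and finite dimensional (cocompactness bounds the dimension), and by the very definitions recalled above the assertions $\pi_n^\infty(G)=0$ and ``$G$ is not simply connected at infinity'' are literally the assertions $\pi_n^\infty(X)=0$ and ``$X$ is not simply connected at infinity''. So the plan is to prove separately: (1) $\pi_n^\infty(X)=0$ for all $n\geqslant 2$, and (2) $X$ is not simply connected at infinity. The whole weight of (1) rests on Proposition~\ref{fullasf}: every full subcomplex of $X$ is aspherical.

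For (1) I would argue as follows. Given a compact $K\subseteq X$, choose a vertex $v$ and an $r$ with $K\subseteq B_{r-2}(v,X)$, and set $L=B_r(v,X)$, a finite full subcomplex containing $K$. Given a map $f\colon S^n\to X\setminus L$, triangulate $S^n$ finely and replace $f$ by a simplicial approximation $g$; by the standard estimates $g$ and the homotopy $f\simeq g$ stay inside the union of closed simplices of $X$ meeting $f(S^n)$, and each such simplex has all its vertices at distance $\geqslant r-1$ from $v$, hence lies in $X\setminus B_{r-2}(v,X)\subseteq X\setminus K$. Let $Y=\langle g(S^n)\rangle$ be the span of the image of $g$. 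Then $Y$ is a full subcomplex of $X$; its vertices lie at distance $\geqslant r-1$ from $v$, and since $B_{r-2}(v,X)$ is full this forces $Y\cap B_{r-2}(v,X)=\emptyset$, so $Y\subseteq X\setminus K$. By Proposition~\ref{fullasf} the complex $Y$ is aspherical, hence $\pi_n(Y)=0$ and $g\colon S^n\to Y$ bounds a map $F\colon B^{n+1}\to Y\subseteq X\setminus K$; composing with $f\simeq g$ (which already lives in $X\setminus K$) shows that $f$ is null-homotopic in $X\setminus K$. This gives $\pi_n^\infty(X)=0$. Note this breaks down at $n=1$ precisely because $\pi_1(Y)\neq 0$ in general, which is the source of (2).

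For (2): if $G$ is finite or has at least two ends it does not have one end, so nothing has to be proved; thus I may assume $G$ is one-ended and must establish $\pi_1^\infty(X)\neq 0$. The plan here is to follow the systolic case in \cite{O-ciscg}: fix a vertex $v$ and study the pro-homotopy type of the complement of the balls $B_i(v,X)$ through the inverse system of ``annular'' subcomplexes around $v$, with bonding maps induced by the canonical retractions $B_{i+1}(v,X)\to B_i(v,X)$ (cf.\ Lemma~\ref{defretr} and the proof of Proposition~\ref{sdncontr}). One shows that the spheres $S_i(v,X)$ are not simply connected for $i$ large, and that the bonding maps do not kill the corresponding non-trivial $\pi_1$--classes, so that the inverse limit of the $\pi_1$'s of the annuli is non-trivial; this yields $\pi_1^\infty(X)\neq 0$. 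Both ingredients are to be controlled by weak systolicity together with the $SD_2^{\ast}$ condition on links, which via Proposition~\ref{fullasf} keeps all the relevant full subcomplexes (spheres, annuli, and any candidate filling discs/annuli) aspherical and combinatorially ``thin''. I expect the genuinely delicate point — and the main obstacle — to be the survival statement, i.e.\ that a non-trivial loop lying far out does not bound a disc that also stays far out; it is here that the argument of \cite{O-ciscg} has to be transcribed, with hereditary asphericity (Proposition~\ref{fullasf}) taking over the role played by minimal disc diagrams in the systolic setting.
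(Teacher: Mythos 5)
Your part (1) is essentially the paper's argument: the paper proves Theorem~\ref{coninfth} by declaring that the proofs of \cite[Theorems 3.1 and 3.2]{O-ciscg} go through verbatim, and the proof of the vanishing of $\pi_n^{\infty}$, $n\geqslant 2$, in \cite{O-ciscg} is exactly the hereditary-asphericity argument you wrote out, with Proposition~\ref{fullasf} now supplying the asphericity of the full subcomplex spanned by a far-out sphere. Your simplicial-approximation bookkeeping is fine (in fact a closed simplex meeting $X\setminus B_r(v,X)$ has all vertices at distance at least $r$ from $v$, which only strengthens your estimate), so this half is correct and follows the same route.

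The genuine gap is in part (2). There you do not give a proof: you state a plan (``far spheres $S_i(v,X)$ are not simply connected'' and ``the non-trivial $\pi_1$--classes survive under the bonding maps'') and explicitly leave its key step as an unresolved obstacle. But those two assertions are precisely the content of the statement ``$G$ is not simply connected at infinity''; neither is a formal consequence of weak systolicity plus Proposition~\ref{fullasf}. In particular, the non-simple-connectivity of far spheres (or annuli) is where one must genuinely combine the hypotheses --- one-endedness, non-compactness of $X$ coming from the infinite geometric action, contractibility and convexity of balls, and asphericity of full subcomplexes --- and it is exactly this combination, not minimal disc diagrams, that the argument of \cite[Theorem 3.2]{O-ciscg} runs on (the systolic proof already uses hereditary asphericity, the ball retractions and convexity, all of which are available here via Proposition~\ref{sdncontr}, Lemma~\ref{defretr}, Corollary~\ref{ballvert} and Proposition~\ref{fullasf}). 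The paper's proof of this half consists of transcribing that argument; your proposal defers the transcription and with it the entire ``moreover'' clause of the theorem, so as it stands the second assertion of Theorem~\ref{coninfth} is unproved.
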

\dow
The proof follows verbatim the proofs of \cite[Theorem 3.1 and Theorem 3.2]{O-ciscg}.
\kon

\begin{cor}
\label{exact}
Let $0\to K \to G \to H\to 0$ be a short exact sequence
of infinite finitely presented groups. If $G$ acts geometrically on a weakly systolic complex with $SD_2^{\ast}$ links
then neither $K$ nor $H$ has one end.
\end{cor}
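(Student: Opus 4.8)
The plan is to derive a contradiction from Theorem~\ref{coninfth} --- which asserts that $G$ is \emph{not} simply connected at infinity --- by invoking the classical theorems of Jackson and Mihalik on the topology at infinity of short exact sequences, which in the present situation yield the opposite conclusion. Since $K$ and $H$ are infinite and finitely presented, each of them has one, two, or infinitely many ends; so the failure of the conclusion would mean precisely that at least one of $K$, $H$ is one-ended, and it is this alternative that I want to rule out.

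So I would argue by contradiction: assume one of the two groups occurring in $1\to K\to G\to H\to 1$ is one-ended. Then $1\to K\to G\to H\to 1$ is a short exact sequence of infinite finitely presented groups with at least one one-ended factor, so by B.~Jackson's extension theorem (\emph{End invariants of group extensions}), together with M.~Mihalik's results on semistability and simple connectivity at infinity of group extensions, the total group $G$ \emph{is} simply connected at infinity; in particular $G$ is one-ended with $\pi_1^{\infty}(G)=0$. (The one-endedness of $G$ is also elementary: a group with more than one end splits over a finite subgroup by Stallings, which cannot happen for an extension of an infinite group by an infinite one.) On the other hand $G$ acts geometrically on a weakly systolic complex with $SD_2^{\ast}$ links, so Theorem~\ref{coninfth} says that $G$ is not simply connected at infinity --- a contradiction. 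Hence neither $K$ nor $H$ is one-ended, which is the assertion of the corollary; this is exactly the route taken in the systolic case in \cite{O-ciscg}.

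The only non-formal step is the appeal to the Jackson--Mihalik extension theorem, and the care needed there is entirely about matching hypotheses: one must check that ``short exact sequence of infinite finitely presented groups with one one-ended factor (in either position)'' does indeed force the total group to be simply connected at infinity, and that the meaning of ``simply connected at infinity'' in that literature coincides with the one fixed in Section~\ref{coninf} (which builds one-endedness into the definition). I expect this bibliographic bookkeeping to be the main --- really the only --- obstacle, since Theorem~\ref{coninfth} already supplies the other half of the dichotomy for free.
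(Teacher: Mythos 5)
Your proposal is correct and follows essentially the same route as the paper: the paper simply defers to the proof of \cite[Corollary 3.3]{O-ciscg}, which is exactly this argument --- assume one of $K$, $H$ is one-ended, invoke the Jackson--Mihalik extension results to conclude $G$ would be simply connected at infinity, and contradict Theorem~\ref{coninfth}. Your remarks on matching the definition of simple connectivity at infinity and on the one-endedness of $G$ are the same bookkeeping implicit in that reference.
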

\dow
The proof is the same as the one of \cite[Corollary 3.3]{O-ciscg}.
\kon

\rems
1) Theorem \ref{coninfth} yields serious restriction on groups acting geometrically on weakly systolic complexes with $SD_2^{\ast}$ links.
In particular they cannot be isomorphic to the fundamental groups of closed manifolds covered by $\mathbb R^n$, for $n\geqslant 3$. See \cite{O-ciscg} for other non-examples.

2) In \cite{OS} we prove actually, basing on Proposition \ref{fullasf}, that groups acting geometrically on weakly systolic complexes with $SD_2^{\ast}$ links are asymptotically hereditarily aspherical (shortly AHA); cf.\ \cite{JS2}.
This implies the results about connectedness at infinity
above and has many other consequences; cf.\ \cite{OS}.

\subsection{SHA Gromov boundaries}
\label{SHA}
Recall, cf.\ \cite{Dav} (see also \cite{O-ib7scg}), that a metric space $Z$ is \emph{strongly hereditarily aspherical} (shortly \emph{SHA}) if it can be embedded
in the Hilbert cube $Q$ in such a way that for each $\epsilon> 0$ there exists an $\epsilon$--covering $\mathcal U$ of $Z$ by open subsets of $Q$, where the union of any subcollection of elements of $\mathcal U$ is
aspherical. This notion was introduced by Robert J. Daverman \cite{Dav} and its significance follows from the fact that a cell-like map defined on a strongly hereditarily aspherical
compactum does not raise dimension.

\begin{lem}
\label{pi-1asf}
Let $v$ be a vertex of a weakly systolic $SD_2^{\ast}(7)$ complex $X$ whose links are $SD_2^{\ast}$ complexes and let $\pi_v^2=\pi_v \circ \pi_v \colon S_{i+2}(v,X) \to S_i(v,X)$ be the projection (as in Lemma \ref{newpi}). Then, for every subcomplex $L$ of $S_{i}(v,X)$, its preimage $(\pi_v^2)^{-1}(L)\subseteq S_{i+2}(v,X)$ is aspherical.
\end{lem}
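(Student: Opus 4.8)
The plan is to reduce the statement to Proposition~\ref{fullasf}, which asserts that every full subcomplex of a complex with $SD_2^{\ast}$ links is aspherical. By Theorem~\ref{logl}, $X$ is weakly systolic, so the projections $\pi_v$ and $\pi_v^2$ are defined (and $\pi_v^2$ is simplicial after barycentric subdivision, by Lemma~\ref{strcon} and Lemma~\ref{newpi}); by hypothesis $X$ is a complex with $SD_2^{\ast}$ links, so all of its full subcomplexes are aspherical. The elementary but decisive observation is that each sphere $S_n(v,X)$ is, by definition, the \emph{span} of the set of its vertices, hence a \emph{full} subcomplex of $X$; and a full subcomplex of a full subcomplex is again full. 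Thus it suffices to exhibit $(\pi_v^2)^{-1}(L)$ as a full subcomplex of $S_{i+2}(v,X)$: indeed it will then be full in $X$, and Proposition~\ref{fullasf} gives asphericity at once.

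Next I would make the preimage explicit and check fullness. Among simplices of the sphere, $\sigma\subseteq\tau$ implies $X_\tau\subseteq X_\sigma$, hence $\pi_v(\sigma)\supseteq\pi_v(\tau)$, so $\pi_v$ reverses inclusions and $\pi_v^2$ preserves them; therefore the set of vertices $w$ of $S_{i+2}(v,X)$ with $\pi_v^2(w)\in L$ spans a subcomplex $N$ that is full by construction. One then checks that $N$ is indeed the preimage $(\pi_v^2)^{-1}(L)$ — equivalently, that $\pi_v^2$ respects spans in the following sense: if $w_1,\dots,w_m$ are pairwise joinable vertices with each $\pi_v^2(w_j)\in L$, spanning a simplex $\sigma\in S_{i+2}(v,X)$, then $\pi_v^2(\sigma)$ again lies over $L$. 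Flagness gives $X_\sigma=\bigcap_j X_{w_j}$, so $\pi_v(\sigma)=\bigcap_j\pi_v(w_j)$ and $\pi_v^2(\sigma)=X_{\pi_v(\sigma)}\cap B_i(v,X)$; the crux is thus to show that any vertex of $B_i(v,X)$ joinable with all of $\pi_v(\sigma)$ is already joinable with all of $\pi_v(w_j)$ for some $j$.

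This local statement is the main obstacle, and here I would follow closely the proof of the analogous lemma in \cite{O-ib7scg}: the tools are convexity of balls around vertices and around simplices (Corollary~\ref{ballvert}, Lemma~\ref{bbac}), the coherence of the projection maps (Lemma~\ref{sd vert}, Lemma~\ref{sd simpl}), and an induction carried out inside the links of simplices of the sphere — which is legitimate precisely because the links of $X$ are $SD_2^{\ast}$ complexes, exactly as in the proof of Proposition~\ref{fullasf}. Granting this local analysis, $(\pi_v^2)^{-1}(L)=N$ is a full subcomplex of the sphere $S_{i+2}(v,X)$, hence a full subcomplex of $X$, and Proposition~\ref{fullasf} shows it is aspherical, which completes the proof.
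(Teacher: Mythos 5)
Your skeleton does match the paper's route: the paper proves this lemma by running the argument of \cite[Lemma 3.4]{O-ib7scg}, with Proposition~\ref{fullasf} and Theorem~\ref{logl} replacing the systolic ingredients, and your reduction --- spheres are spans hence full, so it suffices to show $(\pi_v^2)^{-1}(L)$ is the (full) span of its vertices and then quote Proposition~\ref{fullasf} --- is exactly that strategy. You have also correctly isolated the point on which everything turns: for pairwise joinable $w_1,\dots,w_m\in S_{i+2}(v,X)$ with $\pi_v^2(w_j)\in L$, one must show that $\pi_v^2(\sigma)$, which a priori only \emph{contains} each $\pi_v^2(w_j)$, is still a simplex of $L$ (and since $L$ is an arbitrary subcomplex, not assumed full, it is not even enough to know its vertices lie in $L$).

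The gap is that this crux is precisely the mathematical content of the lemma, and you neither prove it nor point at the ingredient that proves it. The tools you propose for it --- Corollary~\ref{ballvert}, Lemma~\ref{bbac}, Lemmas~\ref{sd vert} and \ref{sd simpl}, and an induction in links justified by the $SD_2^{\ast}$--links hypothesis --- are all available in \emph{every} weakly systolic complex (and the links hypothesis is only needed where you already used it, namely to apply Proposition~\ref{fullasf} to the full preimage), yet the statement you need is false without $SD_2^{\ast}(7)$: in the systolic plane, which is weakly systolic with $SD_2^{\ast}$ links, an edge $\sigma=\langle w_1,w_2\rangle$ of $S_{i+2}(v)$ in the interior of a side of the hexagonal sphere has $\pi_v^2(w_1)$ and $\pi_v^2(w_2)$ two distinct vertices while $\pi_v^2(\sigma)$ is the edge they span, so for $L$ consisting of those two vertices without the edge the preimage is not the span of its vertices. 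Hence the deferred step must use the negative--curvature hypothesis, and the paper tells you how: in the transplanted proof of \cite[Lemma 3.4]{O-ib7scg} the role of \cite[Lemma 3.1]{O-ib7scg} is played by Lemma~\ref{strcon}, whose nestedness of the double projections of adjacent vertices (this is where $SD_2^{\ast}(7)$ enters, via the $6$--wheel condition) is what lets one conclude that $\pi_v^2(\sigma)$ is a face of the largest $\pi_v^2(w_j)$ and hence lies in $L$. In your write-up Lemma~\ref{strcon} appears only in the remark that $\pi_v^2$ is simplicial, so as it stands the heart of the lemma is asserted ("follow the reference") rather than proved, and with an incorrect dictionary of substitutions.
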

\dow
The proof follows the one of \cite[Lemma 3.4]{O-ib7scg}. Instead of \cite[Proposition 2.1]{O-ib7scg} and \cite[Theorem 2.5]{O-ib7scg} we use, respectively, Proposition \ref{fullasf} and Theorem \ref{logl}. Instead of \cite[Lemma 3.1]{O-ib7scg} we use Lemma \ref{strcon}.
\kon

\begin{tw}[SHA Gromov boundary]
\label{shatw}
Let $G$ be a group acting geometrically by automorphisms on
a weakly systolic $SD_2^{\ast}(7)$ complex $X$ whose links are $SD_2^{\ast}$ complexes. Then the Gromov boundary of $G$ is strongly hereditarily aspherical (SHA).
\end{tw}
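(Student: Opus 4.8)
The plan is to combine the inverse–limit description of the Gromov boundary from Theorem~\ref{bdryinv} with the asphericity of preimages from Lemma~\ref{pi-1asf}, and then to verify Daverman's covering criterion for strong hereditary asphericity over the tower of combinatorial spheres of $X$.

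First I would fix the setting. Since $X$ is weakly systolic and satisfies $SD_2^{\ast}(7)$, Theorem~\ref{hyp} shows that $X$ is Gromov hyperbolic; as $G$ acts on it geometrically, $G$ is Gromov hyperbolic and $\partial G$ is homeomorphic to $\partial X$. The geometric action forces $X$ to be locally finite and finite dimensional, so the combinatorial spheres and their barycentric subdivisions $P_n:=(S_{2n}(v,X))'$ are finite polyhedra, and by Theorem~\ref{bdryinv} we have $\partial X\cong Z:=\varprojlim\{P_n,\pi^2_v\}$. Being an inverse limit of finite-dimensional polyhedra, $Z$ is a finite-dimensional metric compactum, so I fix an embedding $Z\hookrightarrow Q$ into the Hilbert cube; write $p_n\colon Z\to P_n$, and $q_n\colon P_n\to P_N$ $(n\ge N)$ for the composition of bonding maps.

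Next, given $\epsilon>0$, I would construct the required $\epsilon$-cover. Because $Z$ is the boundary of a hyperbolic space realized as this tower, the mesh of the system tends to $0$, i.e.\ $\mathrm{diam}\,p_n^{-1}(x)\to 0$ uniformly in $x\in P_n$ (two boundary points with the same projection at level $n$ fellow-travel up to distance about $2n$), exactly as in \cite{O-ib7scg}; so I pick $N$ with this diameter small, and a subdivision of $P_N$ fine enough that open stars of its vertices are small. Pulling those open stars back by $p_N$ and thickening each very slightly to an open subset of $Q$, so slightly that no homotopy type among the sets or their unions is changed, yields an open $\epsilon$-cover $\mathcal U$ of $Z$. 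It remains to show that the union $W$ of an arbitrary subcollection $\mathcal V\subseteq\mathcal U$ is aspherical. Such a $W$ is homotopy equivalent to $p_N^{-1}(U)$, where $U$ is a union of open stars, and $U$ deformation retracts onto a subcomplex $L$ of a subdivision of $P_N$, whence $W\simeq p_N^{-1}(L)=\varprojlim_{n\ge N}\{q_n^{-1}(L),\pi^2_v\}$. Since the $\pi^2_v$-preimage of a subcomplex is again a subcomplex, iterating Lemma~\ref{pi-1asf}, whose hypotheses are exactly those of the present theorem, gives that each $q_n^{-1}(L)$ with $n\ge N+1$ is aspherical. Finally, following \cite{O-ib7scg}, the restricted bonding maps $\pi^2_v\colon q_{n+1}^{-1}(L)\to q_n^{-1}(L)$ have contractible point preimages (the preimage of a barycenter $b_\tau$ is the span of the barycenters of the simplices projecting exactly onto $\tau$, contractible by weak systolicity), so they are cell-like, hence fine homotopy equivalences between finite-dimensional compact ANRs; therefore the inverse limit $p_N^{-1}(L)$ is shape equivalent to the aspherical polyhedron $q_{N+1}^{-1}(L)$, and being an open subset of $Q$ it is an ANR, so this shape equivalence is a homotopy equivalence and $W\simeq p_N^{-1}(L)$ is aspherical. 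This verifies Daverman's condition, so $Z\cong\partial G$ is SHA.

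The hard part will be the last step: exchanging "inverse limit" with "aspherical". Lemma~\ref{pi-1asf} only delivers asphericity at each finite stage, and an inverse limit of aspherical compacta need not be aspherical in general; what rescues the argument is that the bonding maps are cell-like, which is where weak systolicity and the $SD_2^{\ast}$ condition on links, through Lemma~\ref{pi-1asf} and Proposition~\ref{fullasf}, really enter. This is precisely the point carried out in \cite{O-ib7scg} for $7$-systolic complexes, and the only thing that needs checking here is that the combinatorics of the fibres goes through verbatim once Lemma~\ref{pi-1asf} and Theorem~\ref{bdryinv} replace their $7$-systolic counterparts; everything else is the standard nerve-and-thickening bookkeeping, and degenerate cases (finite or two-ended $G$) are trivial.
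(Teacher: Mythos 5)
Your overall strategy coincides with the paper's: identify $\partial G$ with the inverse limit of Theorem~\ref{bdryinv} and feed the asphericity of preimages from Lemma~\ref{pi-1asf} into a Daverman-type criterion. But the paper does not reprove that criterion --- its proof consists of invoking \cite[Proposition 2.9]{O-ib7scg} (following the proof of \cite[Theorem 4.2]{O-ib7scg}) for the sequence $\{(S_{2n}(v,X))',\pi_v^2\}$ --- and it is exactly at the point where you try to substitute your own argument for that proposition that a genuine gap appears. The content of the criterion is the construction of an open cover of $Z\subset Q$ all of whose subfamily unions are aspherical, and this cannot be obtained by pulling back open stars under $p_N$ and ``thickening so slightly that no homotopy type is changed'': the sets $p_N^{-1}(U)$ and $p_N^{-1}(L)$ are (open, resp.\ closed) subsets of the boundary compactum $Z$ and are typically not ANRs, while any open subset $W$ of $Q$ is an ANR, so no thickening, however small, need preserve homotopy type, and no single neighborhood of the compactum $p_N^{-1}(L)$ in $Q$ need be homotopy equivalent to it --- neighborhoods only capture its shape, pro-homotopically. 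Your subsequent repair is also shaky on two counts: the asserted cell-likeness of the restricted bonding maps (contractibility, indeed even nonemptiness and surjectivity onto $q_n^{-1}(L)$, of point preimages of $\pi_v^2$) is proved nowhere in the paper and is not part of the cited machinery; and the final upgrade ``shape equivalence $\Rightarrow$ homotopy equivalence because it is an open subset of $Q$, hence an ANR'' conflates the compactum $p_N^{-1}(L)$ with the open set $W$. (Two smaller slips: a deformation retraction $U\simeq L$ downstairs does not lift along $p_N$, which is not a fibration; and after subdividing $P_N$ your subcomplexes $L$ are no longer subcomplexes of the spheres to which Lemma~\ref{pi-1asf} applies --- one avoids the subdivision by taking $N$ large, since smallness comes from hyperbolicity of the visual metric.)

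The mechanism that actually closes the argument, and which \cite[Proposition 2.9]{O-ib7scg} packages, is different from yours: the covering sets are built from a telescope realization of the inverse system inside $Q$, so that the union $W$ of any subfamily is homotopy equivalent to (or at least homotopy dominated by) the infinite telescope over the restricted system $\{q_n^{-1}(L)\}_{n\geqslant N}$. Asphericity of such a $W$ then follows from Lemma~\ref{pi-1asf} alone: a map of a sphere into the telescope has compact image, hence lies in a finite subtelescope, which deformation retracts onto one of the stages $q_n^{-1}(L)$, and that stage is aspherical. No cell-likeness of the bonding maps, no shape equivalence between $p_N^{-1}(L)$ and a single polyhedron, and no identification $W\simeq p_N^{-1}(L)$ is needed or used. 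If you want a self-contained proof rather than the paper's citation, you should either reproduce that telescope construction or quote the criterion verbatim; as written, your construction of the cover does not establish Daverman's condition.
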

\dow
The proof goes along the lines of the proof of \cite[Theorem 4.2]{O-ib7scg} applying \cite[Proposition 2.9]{O-ib7scg} to the inverse sequence $\lk (S_{2n}(v,X))', \pi_v^2 \rk$ (for some vertex $v$ of X; cf.\ Theorem \ref{bdryinv}) and using Lemma \ref{pi-1asf} above.
\kon

\rems
1) The only known up to now high dimensional Gromov hyperbolic groups with SHA boundaries were $7$--systolic groups; cf.\ \cite{O-ib7scg}. Theorem \ref{shatw} extends those results to some new systolic groups. For example, if $X$ is a simply connected locally $6$--large cubical complex, then, by Corollary \ref{cat-1cc}, its thickening $Th(X)$ is a weakly systolic $SD_2^{\ast}(k)$ complex (for every $\geqslant 6$) with $6$--large (by Lemma \ref{k-l.thick}) links. Thus the Gromov boundary of $X$ is SHA. Examples of groups acting geometrically on such complexes include right-angled Coxeter groups with $6$--large nerves (then the corresponding cubical complex is the Davis complex).

2) In \cite{OS}, we present a simple construction of highly dimensional groups acting on complexes with $SD_2^{\ast}(k)$ links, $k\geqslant 6$. Those complexes can be non-systolic, thus we get a priori new examples of highly dimensional hyperbolic groups with SHA boundary.

3) Jacek {\' S}wi{\polhk a}tkowski \cite{Sw-propi} introduced a property of \emph{pro-$\pi_1$--saturation} and proved that the Gromov boundary of a $7$--systolic group is pro-$\pi_1$--saturated. This seems to be stronger than SHA. It is likely that the Gromov boundary of a weakly systolic $SD_2^{\ast}(7)$ complex with $SD_2^{\ast}$ links is pro-$\pi_1$--saturated. In particular we believe that it is relatively easy to show e.g.\ that boundaries of right-angled Coxeter groups with $6$--large nerves are pro-$\pi_1$--saturated.
%=======================================================================

\begin{bibdiv}

\begin{biblist}

\bib{AnFa}{article}{
   author={Anstee, Richard P.},
   author={Farber, Martin},
   title={On bridged graphs and cop-win graphs},
   journal={J. Combin. Theory Ser. B},
   volume={44},
   date={1988},
   number={1},
   pages={22--28},
   issn={0095-8956},
   review={\MR{923263 (89h:05053)}},
}

\bib{BaCh}{article}{
   author={Bandelt, Hans-J{\"u}rgen},
   author={Chepoi, Victor},
   title={Metric graph theory and geometry: a survey},
   conference={
      title={Surveys on discrete and computational geometry},
   },
   book={
      series={Contemp. Math.},
      volume={453},
      publisher={Amer. Math. Soc.},
      place={Providence, RI},
   },
   date={2008},
   pages={49--86},
   review={\MR{2405677 (2009h:05068)}},
}

\bib{Bjorn}{article}{
   author={Bj{\"o}rner, Anders},
   title={Nerves, fibers and homotopy groups},
   journal={J. Combin. Theory Ser. A},
   volume={102},
   date={2003},
   number={1},
   pages={88--93},
   issn={0097-3165},
   review={\MR{1970978 (2004a:55018)}},
   doi={10.1016/S0097-3165(03)00015-3},
}

\bib{BCC+}{article}{
    title     ={Bucolic complexes},
    author={Bre{\v{s}}ar, Bo{\v{s}}tjan},
    author={Chalopin, J\'er\'emie},
    author={Chepoi, Victor},
    author={Gologranc, Tanja},
    author={Osajda, Damian},
    journal={Adv. Math.},
    date={2013},
    volume={243},
    pages={127--167},
    doi={10.1016/j.aim.2013.04.009}
}

\bib{BrHa}{book}{
   author={Bridson, Martin R.},
   author={Haefliger, Andr{\'e}},
   title={Metric spaces of nonpositive curvature},
   series={Grundlehren der Mathematischen Wissenschaften [Fundamental
   Principles of Mathematical Sciences]},
   volume={319},
   publisher={Springer-Verlag},
   place={Berlin},
   date={1999},
   pages={xxii+643},
   isbn={3-540-64324-9},
   review={\MR{1744486 (2000k:53038)}},
}

\bib{CCO}{article}{
    title     ={Proof of two Maurer's conjectures on basis graphs of matroids},
    author={Chalopin, J\'er\'emie},
    author={Chepoi, Victor},
    author={Osajda, Damian},
    status    ={submitted},
    date={2012},
    eprint    ={arXiv:1212.6879}
}

\bib{Ch-class}{article}{
   author={Chepoi, Victor},
   title={Classification of graphs by means of metric triangles},
   language={Russian},
   journal={Metody Diskret. Analiz.},
   number={49},
   date={1989},
   pages={75--93, 96},
   issn={0136-1228},
   review={\MR{1114014 (92e:05041)}},
}

\bib{Ch-bridged}{article}{
   author={Chepoi, Victor},
   title={Bridged graphs are cop-win graphs: an algorithmic proof},
   journal={J. Combin. Theory Ser. B},
   volume={69},
   date={1997},
   number={1},
   pages={97--100},
   issn={0095-8956},
   review={\MR{1426753 (97g:05150)}},
}

\bib{Ch-CAT}{article}{
   author={Chepoi, Victor},
   title={Graphs of some ${\rm CAT}(0)$ complexes},
   journal={Adv. in Appl. Math.},
   volume={24},
   date={2000},
   number={2},
   pages={125--179},
   issn={0196-8858},
   review={\MR{1748966 (2001a:57004)}},
}

\bib{OCh}{article}{
    title     ={Dismantlability of weakly systolic complexes and applications},
    author    ={Chepoi, Victor}
    author    ={Osajda, Damian},
    journal={Trans. Amer. Math. Soc.},
    date={2013},
    status    ={to appear}
}

\bib{Dav}{article}{
   author={Daverman, Robert J.},
   title={Hereditarily aspherical compacta and cell-like maps},
   journal={Topology Appl.},
   volume={41},
   date={1991},
   number={3},
   pages={247--254},
   issn={0166-8641},
   review={\MR{1135101 (93b:54033)}},
   doi={10.1016/0166-8641(91)90007-9},
}

\bib{Davi-b}{book}{
   author={Davis, Michael W.},
   title={The geometry and topology of Coxeter groups},
   series={London Mathematical Society Monographs Series},
   volume={32},
   publisher={Princeton University Press},
   place={Princeton, NJ},
   date={2008},
   pages={xvi+584},
   isbn={978-0-691-13138-2},
   isbn={0-691-13138-4},
   review={\MR{2360474 (2008k:20091)}},
}

\bib{DO}{article}{
   author={Dymara, Jan},
   author={Osajda, Damian},
   title={Boundaries of right-angled hyperbolic buildings},
   journal={Fund. Math.},
   volume={197},
   date={2007},
   pages={123--165},
   issn={0016-2736},
   review={\MR{2365885 (2009a:20050)}},
   doi={10.4064/fm197-0-6},
}

\bib{ECH}{book}{
   author={Epstein, David B. A.},
   author={Cannon, James W.},
   author={Holt, Derek F.},
   author={Levy, Silvio V. F.},
   author={Paterson, Michael S.},
   author={Thurston, William P.},
   title={Word processing in groups},
   publisher={Jones and Bartlett Publishers},
   place={Boston, MA},
   date={1992},
   pages={xii+330},
   isbn={0-86720-244-0},
   review={\MR{1161694 (93i:20036)}},
}

\bib{Gr-hg}{article}{
   author={Gromov, Misha},
   title={Hyperbolic groups},
   conference={
      title={Essays in group theory},
   },
   book={
      series={Math. Sci. Res. Inst. Publ.},
      volume={8},
      publisher={Springer},
      place={New York},
   },
   date={1987},
   pages={75--263},
   review={\MR{919829 (89e:20070)}},
}

\bib{Ha}{article}{
    title     ={Complexes simpliciaux hyperboliques
                de grande dimension},
    author    ={Haglund, Fr\' ed\' eric},
    status    ={preprint},
    journal   ={Prepublication Orsay},
    volume    ={71},
    date      ={2003},
    eprint    ={http://www.math.u-psud.fr/~haglund/cpl_hyp_gde_dim.pdf}
}

\bib{HS}{article}{
   author={Haglund, Fr{\'e}d{\'e}ric},
   author={{\'S}wi{\c{a}}tkowski, Jacek},
   title={Separating quasi-convex subgroups in 7-systolic groups},
   journal={Groups Geom. Dyn.},
   volume={2},
   date={2008},
   number={2},
   pages={223--244},
   issn={1661-7207},
   review={\MR{2393180 (2009i:20083)}},
   doi={10.4171/GGD/37},
}

\bib{JS1}{article}{
   author={Januszkiewicz, Tadeusz},
   author={{\'S}wi{\c{a}}tkowski, Jacek},
   title={Simplicial nonpositive curvature},
   journal={Publ. Math. Inst. Hautes \'Etudes Sci.},
   number={104},
   date={2006},
   pages={1--85},
   issn={0073-8301},
   review={\MR{2264834 (2007j:53044)}},
   doi={10.1007/s10240-006-0038-5},
}

\bib{JS2}{article}{
   author={Januszkiewicz, Tadeusz},
   author={{\'S}wi{\c{a}}tkowski, Jacek},
   title={Filling invariants of systolic complexes and groups},
   journal={Geom. Topol.},
   volume={11},
   date={2007},
   pages={727--758},
   issn={1465-3060},
   review={\MR{2302501 (2008d:20079)}},
   doi={10.2140/gt.2007.11.727},
}

\bib{JS3}{article}{
   author={Januszkiewicz, Tadeusz},
   author={{\'S}wi{\c{a}}tkowski, Jacek},
   title={Non-positively curved developments of billiards},
   journal={J. Topol.},
   volume={3},
   date={2010},
   number={1},
   pages={63--80},
   issn={1753-8416},
   review={\MR{2608477 (2011d:20082)}},
   doi={10.1112/jtopol/jtq001},
}

\bib{O-ciscg}{article}{
   author={Osajda, Damian},
   title={Connectedness at infinity of systolic complexes and groups},
   journal={Groups Geom. Dyn.},
   volume={1},
   date={2007},
   number={2},
   pages={183--203},
   issn={1661-7207},
   review={\MR{2319456 (2008e:20064)}},
}

\bib{O-ib7scg}{article}{
   author={Osajda, Damian},
   title={Ideal boundary of 7-systolic complexes and groups},
   journal={Algebr. Geom. Topol.},
   volume={8},
   date={2008},
   number={1},
   pages={81--99},
   issn={1472-2747},
   review={\MR{2377278 (2009b:20075)}},
   doi={10.2140/agt.2008.8.81},
}

\bib{O-chcg}{article}{
   author={Osajda, Damian},
   title={A construction of hyperbolic Coxeter groups},
   journal={Comment. Math. Helv.},
   volume={88},
   date={2013},
   number={2},
   pages={353--367},
   issn={0010-2571},
   review={\MR{3048190}},
   doi={10.4171/CMH/288},
}

\bib{OS}{article}{
    title     ={On hereditarily aspherical groups},
    author    ={Osajda, Damian},
    author    ={{\'S}wi{\c{a}}tkowski, Jacek},
    status    ={submitted},
    date={2013},
    eprint    ={arXiv:1304.7651}
}

\bib{Papa}{article}{
   author={Papasoglu, Panos},
   title={Strongly geodesically automatic groups are hyperbolic},
   journal={Invent. Math.},
   volume={121},
   date={1995},
   number={2},
   pages={323--334},
   issn={0020-9910},
   review={\MR{1346209 (96h:20073)}},
   doi={10.1007/BF01884301},
}

\bib{Prz-noflat}{article}{
   author={Przytycki, Piotr},
   title={Systolic groups acting on complexes with no flats are
   word-hyperbolic},
   journal={Fund. Math.},
   volume={193},
   date={2007},
   number={3},
   pages={277--283},
   issn={0016-2736},
   review={\MR{2289773 (2007m:20066)}},
   doi={10.4064/fm193-3-4},
}

\bib{Sw-bi}{article}{
   author={{\'S}wi{\c{a}}tkowski, Jacek},
   title={Regular path systems and (bi)automatic groups},
   journal={Geom. Dedicata},
   volume={118},
   date={2006},
   pages={23--48},
   issn={0046-5755},
   review={\MR{2239447 (2007d:20071)}},
   doi={10.1007/s10711-005-9003-6},
}

\bib{Sw-propi}{article}{
   author={{\'S}wi{\c{a}}tkowski, Jacek},
   title={Fundamental pro-groups and Gromov boundaries of 7-systolic groups},
   journal={J. Lond. Math. Soc. (2)},
   volume={80},
   date={2009},
   number={3},
   pages={649--664},
   issn={0024-6107},
   review={\MR{2559121}},
   doi={10.1112/jlms/jdp047},
}

\bib{Wek}{thesis}{
    title     ={Topologiczne własno\' sci brzeg\' ow prostok\c atnych grup Coxetera},
    language={Polish},
    author    ={Weksej, Jaros\l aw},
    organization={Wroc{\l}aw University},
    date      ={2012},
    type      ={M.Sc. thesis}
}

\bib{Wisa}{article}{
   author={Wise, Daniel T.},
   title={Sectional curvature, compact cores, and local quasiconvexity},
   journal={Geom. Funct. Anal.},
   volume={14},
   date={2004},
   number={2},
   pages={433--468},
   issn={1016-443X},
   review={\MR{2062762 (2005i:53043)}},
}

\bib{Wis}{article}{
    title     ={Sixtolic complexes and their fundamental groups},
    author    ={Wise, Daniel T.},
    status    ={unpublished manuscript},
    date={2003}
}

\bib{Zaw}{article}{
   author={Zawi\' slak, Pawe\l},
   title={Trees of manifolds and boundaries of systolic groups},
   journal={Fund. Math.},
   volume={207},
   date={2010},
   pages={71--99},
   doi={10.4064/fm207-1-4},
}

\end{biblist}
\end{bibdiv}

%\end{thebibliography}

\end{document}